\title{Boundaries of relative factor graphs and subgroup classification for automorphisms of free products}
\author{Vincent Guirardel and Camille Horbez}
\begin{document}
\maketitle
\newtheorem{de}{Definition} [section]
\newtheorem{theo}[de]{Theorem} 
\newtheorem{thmbis}{Theorem} 
\newtheorem{prop}[de]{Proposition}
\newtheorem{lemma}[de]{Lemma}
\newtheorem{cor}[de]{Corollary}
\newtheorem{propd}[de]{Proposition-Definition}

\theoremstyle{remark}
\newtheorem{rk}[de]{Remark}
\newtheorem*{rk*}{Remark}
\newtheorem{ex}[de]{Example}
\newtheorem{question}[de]{Question}

\normalsize

\newcommand{\coucou}[1]{\footnote{#1}\marginpar{$\leftarrow$}}
\newcommand{\Ccomment}[1]{\Cmodif\marginpar{\tiny\begin{center}\textcolor{blue}{#1}\end{center}}} 
\newcommand{\Ccom}[1]{\Cmodif\marginpar{\tiny\begin{center}\textcolor{blue}{#1}\end{center}}} 
\newcommand{\Vcomment}[1]{\Vmodif\marginpar{\tiny\begin{center}\textcolor{red}{#1}\end{center}}}
\newcommand{\Vmodif}{$\textcolor{red}{\clubsuit}$}
\newcommand{\Cmodif}{$\textcolor{blue}{\spadesuit}$}
\newcommand{\Cmod}{$\textcolor{blue}{\spadesuit}$}

\addtolength\topmargin{-.5in}
\addtolength\textheight{1.in}
\addtolength\oddsidemargin{-.045\textwidth}
\addtolength\textwidth{.09\textwidth}

\newcommand{\Stab}{\mathrm{Stab}}
\newcommand{\Inn}{\mathrm{Inn}}
\newcommand{\Fix}{\mathrm{Fix}}
\newcommand{\Aut}{\mathrm{Aut}}
\newcommand{\Out}{\mathrm{Out}} 
\newcommand{\ie}{i.~e. }
\newcommand{\imp}{\Rightarrow}
\newcommand{\ra}{\rightarrow}
\newcommand{\m}{^{-1}}
\newcommand{\dunion}{\sqcup}
\newcommand{\eps}{\varepsilon}
\renewcommand{\epsilon}{\varepsilon}
\newcommand{\calf}{\mathcal{F}}
\newcommand{\cala}{\mathcal{A}}
\newcommand{\cali}{\mathcal{I}}
\newcommand{\caly}{\mathcal{Y}}
\newcommand{\calx}{\mathcal{X}}
\newcommand{\calz}{\mathcal{Z}}
\newcommand{\calo}{\mathcal{O}}
\newcommand{\calb}{\mathcal{B}}
\newcommand{\calq}{\mathcal{Q}}
\newcommand{\calu}{\mathcal{U}}
\newcommand{\calg}{\mathcal{G}}
\newcommand{\call}{\mathcal{L}}
\newcommand{\cald}{\mathcal{D}}
\newcommand{\calj}{\mathcal{J}}
\newcommand{\calr}{\mathcal{R}}
\newcommand{\calh}{\mathcal{H}}
\newcommand{\Term}{\mathrm{Term}}
\newcommand{\ol}{\overline}
\newcommand{\AT}{\mathcal{AT}}
\newcommand{\FAT}{\mathcal{F\!AT}}
\newcommand{\Z}{\mathcal{Z}}
\newcommand{\Zmax}{{\mathcal{Z}_\mathrm{max}}}
\newcommand{\bbR}{\mathbb{R}}
\newcommand{\bbP}{\mathbb{P}}
\newcommand{\bbZ}{\mathbb{Z}}
\newcommand{\actson}{\curvearrowright}
\newcommand{\es}{\emptyset}
\newcommand{\grp}[1]{\langle #1 \rangle}
\newcommand{\ngrp}[1]{\langle\langle #1 \rangle\rangle}
\newcommand{\dg}{\dagger}
\newcommand{\xra}[1]{\xrightarrow{#1}}
\renewcommand{\Im}{\mathrm{Im}}
\newcommand{\Red}{\mathrm{Red}}
\newcommand{\ZA}{\mathcal{Z\!A}}
\newcommand{\Mod}{\mathrm{Mod}}
\newcommand{\Vol}{\mathrm{Vol}}
\newcommand{\Stabis}{\mathrm{Stab^{is}}}

\newcommand{\FF}{\mathrm{FF}}
\newcommand{\FS}{\mathrm{FS}}
\newcommand{\ZF}{{\Z\mathrm{F}}}
\newcommand{\ZmaxF}{{\Zmax \mathrm{F}}}
\newcommand{\ZS}{{\Z\mathrm{S}}}
\newcommand{\RC}{\mathrm{RC}}
\newcommand{\ZRC}{{\Z_{\mathrm{RC}}}}
\newcommand{\ZRCF}{{\ZRC\mathrm{F}}}
\newcommand{\bbN}{{\mathbb{N}}}
\newcommand{\ad}{\mathrm{ad}}

\makeatletter
\edef\@tempa#1#2{\def#1{\mathaccent\string"\noexpand\accentclass@#2 }}
\@tempa\rond{017}
\makeatother

\newcommand{\smallsim}{\smallsym{\mathrel}{\sim}}

\makeatletter
\newcommand{\smallsym}[2]{#1{\mathpalette\make@small@sym{#2}}}
\newcommand{\make@small@sym}[2]{%
  \vcenter{\hbox{$\m@th\downgrade@style#1#2$}}%
}
\newcommand{\downgrade@style}[1]{%
  \ifx#1\displaystyle\scriptstyle\else
    \ifx#1\textstyle\scriptstyle\else
      \scriptscriptstyle
  \fi\fi
}
\makeatother

\newcommand{\simAT}{{\smallsim}_{\!\AT}}
\newcommand{\simFAT}{{\smallsim}_{\!\FAT}}
\newcommand{\simZA}{{\smallsim}_{\mkern-1mu \ZA}}

\begin{abstract}
Given a countable group $G$ splitting as a free product $G=G_1\ast\dots\ast G_k\ast F_N$, we establish classification results for subgroups of the group $\text{Out}(G,\calf)$ of all outer automorphisms of $G$ that preserve the conjugacy class of each $G_i$.  
We show that every finitely generated subgroup $H\subseteq\text{Out}(G,\calf)$ either contains a relatively fully irreducible automorphism, or else it virtually preserves the conjugacy class of a proper free factor relative to the decomposition (the finite generation hypothesis on $H$ can be dropped for $G=F_N$, or more generally when $G$ is toral relatively hyperbolic).
In the first case, either $H$ virtually preserves a nonperipheral conjugacy class in $G$, or else $H$ contains an atoroidal automorphism. The key geometric tool to obtain these classification results is a description of the Gromov boundaries of relative versions of the free factor graph $\FF$ and the $\Z$-factor graph $\ZF$, as spaces of equivalence classes of arational trees (respectively relatively free arational trees). We also identify the loxodromic isometries of $\FF$ with the fully irreducible elements of $\Out(G,\calf)$, and loxodromic isometries of $\ZF$ with the fully irreducible atoroidal outer automorphisms.
\end{abstract}

\section*{Introduction}

Ivanov's celebrated classification theorem \cite{Iva92} for subgroups of the mapping class group of a compact, connected, hyperbolic surface $S$, associates to any subgroup $H$ of $\text{Mod}(S)$ a maximal decomposition of $S$ into proper  essential subsurfaces, which are 
invariant up to isotopy under a finite index subgroup $H^0$ of $H$. For each subsurface
 $\Sigma$ of the decomposition, either $H^0$ contains a mapping class whose restriction to $\Sigma$ is pseudo-Anosov, or else $H^0$ has trivial image in the mapping class group of $\Sigma$. The proof of this decomposition theorem has two steps:
\begin{enumerate}
\item One first shows that any subgroup of  $\text{Mod}(S)$ either contains a pseudo-Anosov mapping class of $S$, or else virtually fixes the isotopy class of a curve $c$ on $S$.
\item When there is an invariant curve $c$, one then argues by induction on the topological complexity of the surface, working in each of the connected components of $S\setminus c$. 
\end{enumerate} 

Finding an analogue of Ivanov's decomposition theorem for the group $\text{Out}(F_N)$ of outer automorphisms of a finitely generated free group has been a fruitful topic over the past decade. A first classification result for finitely generated subgroups of $\text{Out}(F_N)$ was established by Handel--Mosher in \cite{HM09}, and then extended to arbitrary subgroups of $\text{Out}(F_N)$ in \cite{Hor14-4}: every subgroup of $\text{Out}(F_N)$ either contains a fully irreducible automorphism, or else virtually preserves the conjugacy class of a proper free factor of $F_N$ (this is the analogous statement to Step~1 from the surface case). While Handel--Mosher's proof relies on train-track theory, the approach in \cite{Hor14-4} consists in using 
the action of $\text{Out}(F_N)$ on a hyperbolic graph, and a study of harmonic measures on the boundary of Culler--Vogtmann's outer space.
However, the inductive step from the surface case (Step~2 above) is more intricate for free groups: in order to get a full decomposition theorem, one indeed also needs to understand subgroups of $\Out(F_N)$ made of automorphisms that preserve a system of proper free factors of $F_N$.

A full decomposition theorem for finitely generated subgroups of $\Out(F_N)$ was obtained more recently by Handel--Mosher \cite{HM20}, with a proof again relying on 
train-track theory: this basically associates to every finitely generated subgroup $H$ of $\text{Out}(F_N)$ a maximal, virtually $H$-invariant filtration of $F_N$ by nested systems of free factors, such that for any extension $A\sqsubseteq B$ in this filtration (except possibly from a few sporadic cases), there exists an automorphism in $H$ that does not virtually preserve any intermediate free factor.

In the present paper, we generalize Handel--Mosher's full decomposition theorem to the context of automorphisms of free products; our new approach also enables us to remove the finite generation assumption in the full decomposition theorem for free groups. Our approach is similar to \cite{Hor14-4} and consists in classifying the subgroups of the automorphism group of a free product with respect to the dynamics of their action on a relative version of the free factor graph. A key ingredient in our proof is to understand the Gromov boundary of this graph. 

We finally mention that very recently, using the main results from the present paper, Clay and Uyanik have established \cite{CU} another alternative for subgroups of $\Out(F_N)$, namely: every subgroup of $\Out(F_N)$ either contains an atoroidal automorphism, or else virtually fixes a nontrivial conjugacy class of $F_N$.

\paragraph*{Subgroup classification for automorphisms of free products.}

Our setting is the following. Let $G$ be a countable group that splits as a free product $G:=G_1\ast\dots\ast G_k\ast F_N$, where all subgroups $G_i$ are nontrivial, and $F_N$ is a free group of rank $N$. This might be for example a Grushko decomposition of a finitely generated group (i.e.\ a maximal decomposition of $G$ as a free product), but it need not be: the case where $G$ is a free group and all subgroups $G_i$ are free factors is also of interest, and is precisely what one needs to carry out inductive arguments.
We denote by $\calf=\{[G_1],\dots,[G_k]\}$ the collection of all $G$-conjugacy classes of the subgroups $G_i$. Elements or subgroups of $G$ that are conjugate into one of the factors $G_i$ are called \emph{peripheral}. 
Throughout this introduction, we will assume that either $k+N\ge 3$, or else $G=F_2$; in other words, in addition to trivial cases, we exclude the so called \emph{sporadic decompositions}
of the form $G=G_1*G_2$ with $\calf=\{[G_1],[G_2]\}$ or $G=G_1*\bbZ$ with  $\calf=\{[G_1]\}$.
We denote by $\Out(G,\calf)$ the subgroup of $\Out(G)$ made of all outer automorphisms $\Phi$ such that for every $i\in\{1,\dots,k\}$, one has $\Phi([G_i])=[G_i]$. 

There is a notion of a free factor of $G$ with respect to the above free product decomposition, which is as follows. First, one defines a \emph{$(G,\calf)$-free splitting} 
as a nontrivial, minimal, simplicial tree, equipped with a simplicial $G$-action, in which each peripheral subgroup $G_i$ fixes a point $x_i$, and all edge stabilizers are trivial. A \emph{$(G,\mathcal{F})$-free factor} is then a subgroup of $G$ which arises as a point stabilizer in some $(G,\calf)$-free splitting. We say that a $(G,\calf)$-free factor is \emph{proper} if it is nontrivial, not conjugate into any subgroup $G_i$, and not equal to $G$ (this last condition being automatic if one defines, as we did, free splittings to be nontrivial).
An automorphism $\Phi\in\text{Out}(G,\mathcal{F})$ is \emph{fully irreducible} if no power $\Phi^k$ (with $k\neq 0$) preserves the conjugacy class of a proper $(G,\mathcal{F})$-free factor. Our first main result is the following (see Theorem~\ref{alternative-HM} for a more complete statement).

\begin{thmbis}\label{intro-alt-1}
Let $(G,\calf)$ be a nonsporadic free product, and let $H$ be a subgroup of $\text{Out}(G,\mathcal{F})$. Assume either that $H$ is finitely generated, or else that $G$ is a toral relatively hyperbolic group (e.g.\ a finitely generated free group).
\\ Then either 
\begin{enumerate}
\item[(1)] $H$ virtually preserves the conjugacy class of a proper $(G,\calf)$-free factor, or else 
\item[(2)] $H$ contains a fully irreducible outer automorphism; in this case, either 
\begin{enumerate}
\item[(2a)] $H$ contains a nonabelian free subgroup in which all nontrivial elements are fully irreducible, or else
\item[(2b)] $H$ is virtually a semidirect product  
$H^1\rtimes\langle\Phi\rangle$, where $\Phi$ is fully irreducible and $H^1$ contains no fully irreducible element.
\end{enumerate}
\end{enumerate}
\end{thmbis}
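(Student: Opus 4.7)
The plan is to study the action of $H$ on the relative free factor graph $\FF=\FF(G,\calf)$, whose vertices are the conjugacy classes of proper $(G,\calf)$-free factors. The main geometric inputs, all established earlier in the paper, are: $\FF$ is Gromov hyperbolic, the Gromov boundary $\partial\FF$ is the space of equivalence classes of arational $(G,\calf)$-trees, and the loxodromic isometries of $\FF$ are exactly the fully irreducible elements of $\Out(G,\calf)$. Given these, I would run the standard classification of group actions on a hyperbolic space and read off each case of the theorem.

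For the first dichotomy (case (1) versus case (2)), the argument goes as follows. If the $H$-orbits on $\FF$ are bounded, then $H$ virtually stabilizes a vertex of $\FF$, yielding a virtually $H$-invariant proper $(G,\calf)$-free factor, which is case (1). If the $H$-orbits are unbounded but $H$ contains no loxodromic isometry of $\FF$ (the parabolic case), then $H$ fixes a unique point $\xi\in\partial\FF$, and by the boundary description $\xi$ represents the equivalence class of some arational $(G,\calf)$-tree $T$. Split according to the type of $T$: in the relatively free arational case, the stabilizer up to projective equivalence is virtually cyclic and generated by a fully irreducible element (by an extension of Bestvina--Reynolds to the free-product setting, which the paper provides), contradicting the parabolic assumption; in the geometric case, the surface (or relative orbifold) dual to $T$ determines a proper $(G,\calf)$-free factor $H$-invariant up to finite index, again giving case~(1). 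Otherwise $H$ contains a loxodromic isometry of $\FF$, which by the identification is fully irreducible, placing us in case (2).

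In case (2), pick a fully irreducible $\Phi\in H$ with fixed points $T^\pm\in\partial\FF$ and examine the $H$-action on the pair $\{T^+,T^-\}$. If this pair is not virtually $H$-invariant, some $g\in H$ satisfies $g\{T^+,T^-\}\cap\{T^+,T^-\}=\es$; then $\Phi$ and $g\Phi g\m$ are fully irreducibles with disjoint endpoint pairs on $\partial\FF$, and Ping Pong on sufficiently high powers produces a nonabelian free subgroup of $H$ whose nontrivial elements are all loxodromic on $\FF$, hence all fully irreducible, giving case (2a). Otherwise a finite index subgroup $H'$ fixes both $T^+$ and $T^-$. Translation length along the quasi-axis of $\Phi$ defines (after possibly passing to a further finite index subgroup to kill bounded error) a homomorphism $\tau:H'\to\bbR$ with discrete cyclic image generated by the class of some power $\Phi^m$, and kernel $H^1$ consisting of isometries with bounded action on the axis, so elliptic on $\FF$ and in particular non-fully-irreducible. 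This yields the virtual semidirect decomposition $H'\simeq H^1\rtimes\langle\Phi^m\rangle$ required by case (2b).

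To remove the finite generation assumption when $G$ is toral relatively hyperbolic, I would adapt the strategy of \cite{Hor14-4}: choose an admissible probability measure $\mu$ on $H$ and study $\mu$-stationary measures on $\partial\FF$. In the absence of a fully irreducible element in $H$ the harmonic measure must be concentrated on boundary points whose stabilizer contains a finite index subgroup of $H$, and the same case analysis on arational trees as in the parabolic step then produces an invariant proper free factor. The principal obstacle is the parabolic case of the first dichotomy, which requires the delicate stabilizer results for arational trees in the free-product setting (both relatively free and geometric); dropping finite generation adds the further difficulty of controlling harmonic measures on $\partial\FF$ for non-finitely-generated subgroups, where one needs a nondegeneracy statement for the induced boundary action that holds in the generality of toral relatively hyperbolic groups.
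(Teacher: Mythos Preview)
Your proposal has several genuine gaps.

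First, the bounded-orbits case: you assert that if $H$-orbits in $\FF$ are bounded then $H$ virtually stabilizes a vertex. This is false in general for actions on non-proper hyperbolic spaces, and $\FF$ is not locally finite. The paper handles this via Proposition~\ref{bounded-finite-ff}, which uses a stationary-measure argument on $\mathbb{P}\overline{\calo}$ (pushing forward to the countable set of finite collections of free factors) to produce a virtually invariant free factor. This is not a technicality one can bypass.

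Second, and more seriously, your analysis of the parabolic/fixed-boundary-point case is incorrect on both branches. In the relatively free arational case you claim the projective stabilizer is virtually cyclic ``by an extension of Bestvina--Reynolds''; the paper explicitly points out (Remark~\ref{rk_vcyclique}) that this fails for free products. In the arational surface case you say the orbifold determines a virtually $H$-invariant proper free factor; but arational surface trees are arational, so by definition no proper free factor is elliptic or has a non-Grushko action---what the orbifold determines is a \emph{quadratic} conjugacy class, not a free factor. The paper's actual argument (Theorem~\ref{thm_stabT}) is quite different: one passes from the boundary point to an actual homothety class $[T]$ of an arational tree, and uses the scaling-factor homomorphism $\lambda_T:\Stab([T])\to\mathbb{R}_+^*$, whose image is cyclic. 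If $\lambda_T$ is nontrivial on $H^0$ one gets the semidirect product (2b) directly; if trivial, $H^0\subseteq\Stabis(T)$, and then a structural theorem from \cite{GL} (piecewise-$G$ isometries) shows $H^0$ virtually fixes a free splitting.

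Third, you have misidentified where the hypothesis (finitely generated or toral relatively hyperbolic) is used. It is not needed for the random-walk argument, which works for arbitrary countable $H$. It enters only through the ``finite fix type'' condition (Definition~\ref{dfn_fft}, Lemma~\ref{lem_FFT_RH_FG}) required to apply the piecewise-$G$ theorem in the analysis of $\Stabis(T)$.
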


\begin{rk*}
  If $(G,\calf)=(F_N,\es)$ and $H$ satisfies Assertion (2b) then $H$ is virtually cyclic \cite[Corollary~1.3]{KL}, but this is not true
in general (see Remark \ref{rk_vcyclique}).
\end{rk*}

\begin{rk*} When $G$ is a toral relatively hyperbolic group, the factors
$G_1,\dots, G_k$ are automatically finitely generated, but in general,
we only assume that they are countable.
\end{rk*}

\indent We also prove the following variation over Theorem~\ref{intro-alt-1}, extending a theorem of Uyanik \cite{Uya14} (see Theorem~\ref{thm_hyperbolic2} for a more complete statement). An automorphism $\Phi\in\text{Out}(G,\mathcal{F})$ is \emph{atoroidal} if no power $\Phi^k$ (with $k\neq 0$) preserves a nonperipheral conjugacy class. 

\begin{thmbis}\label{intro-alt-2}
Let $(G,\calf)$ be a nonsporadic free product, and let $H\subseteq\text{Out}(G,\mathcal{F})$ be a subgroup. Assume either that $H$ is finitely generated, or else that $G$ is a toral relatively hyperbolic group (e.g.\ a finitely generated free group).
\\ Then either 
\begin{enumerate}
\item $H$ virtually preserves a nonperipheral conjugacy class of $G$, or the conjugacy class of a proper $(G,\calf)$-free factor, or else 
\item $H$ contains a fully irreducible atoroidal outer automorphism; in this case, either 
\begin{enumerate}
\item[(2a)] $H$ contains a nonabelian free subgroup in which all nontrivial elements are fully irreducible and atoroidal, or else
\item[(2b)] $H$ is virtually a semidirect product $H^1\rtimes\langle\Phi\rangle$, where $\Phi$ is fully irreducible and atoroidal,
and $H^1$ contains no fully irreducible element and no atoroidal element.
\end{enumerate}
\end{enumerate}
\end{thmbis}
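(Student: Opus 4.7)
The plan is to follow the same template as for Theorem~\ref{intro-alt-1}, but using the action of $H$ on the relative $\Z$-factor graph $\ZF$ rather than on the free factor graph $\FF$. Three ingredients established earlier in the paper are essential: $\ZF$ is Gromov hyperbolic, its loxodromic isometries are exactly the fully irreducible atoroidal elements of $\Out(G,\calf)$, and $\partial \ZF$ is identified with $\simZA$-equivalence classes of relatively free arational $(G,\calf)$-trees.

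The first step is to apply a general subgroup trichotomy for isometric actions on a Gromov hyperbolic space (in the spirit of Gromov, as refined by Caprace--de Cornulier--Monod--Tessera, Hamenst\"adt and Osin) to the action $H \actson \ZF$. This produces three cases: (i) $H$ has bounded orbits in $\ZF$; (ii) $H$ has unbounded orbits but contains no loxodromic, in which case $H$ fixes a unique point of $\partial \ZF$; (iii) $H$ contains a loxodromic, i.e.\ a fully irreducible atoroidal element. In case (iii), a north--south ping-pong argument on $\partial\ZF$, combined with the identification of loxodromics, yields the dichotomy (2a)--(2b): either two independent loxodromics in $H$ generate a nonabelian free subgroup whose nontrivial elements are all loxodromic (hence fully irreducible and atoroidal), or else $H$ is virtually a cyclic extension $H^1 \rtimes \langle \Phi \rangle$ with $\Phi$ loxodromic and $H^1$ pointwise fixing the pair of axis endpoints, so that $H^1$ contains no loxodromic isometry of $\ZF$ and in particular no fully irreducible element and no atoroidal element.

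It remains to show that in cases (i) and (ii), conclusion~(1) holds. We first apply Theorem~\ref{intro-alt-1} to $H$: if $H$ virtually preserves a proper $(G,\calf)$-free factor we are done, so we may assume $H$ contains a fully irreducible $\Phi$. Since $\Phi$ is not loxodromic on $\ZF$, it must be non-atoroidal, and hence some power of $\Phi$ preserves a nonperipheral conjugacy class $[g]$. The task is to upgrade this into an $H$-virtual invariant. In the parabolic case (ii), $H$ fixes a class $[T] \in \partial\ZF$ of relatively free arational trees, and therefore preserves the projectivized length function of $T$; combining this invariance with the existence of a fully irreducible non-atoroidal element of $H$, and following the scheme of Uyanik~\cite{Uya14}, one shows that $H$ must virtually preserve a nonperipheral conjugacy class. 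In the bounded case (i), a parallel argument applies, exploiting the fact that the $H$-orbit of $[g]$ remains controlled in $\ZF$ to extract an $H$-virtually invariant finite collection of nonperipheral conjugacy classes.

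The main obstacle is expected to lie in the non-loxodromic cases: extracting a genuinely $H$-virtually invariant nonperipheral conjugacy class from the combined data of an $H$-fixed relatively free arational tree in $\partial \ZF$ (or a bounded orbit) together with a single fully irreducible non-atoroidal element. The extension to non-finitely-generated $H$ in the toral relatively hyperbolic setting should then proceed via a Noetherianity argument on chains of virtually $H$-invariant proper free factors and nonperipheral conjugacy classes, in the same spirit as~\cite{Hor14-4} and the analogous extension of Theorem~\ref{intro-alt-1}.
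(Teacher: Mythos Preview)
Your overall strategy --- analyse the $H$-action on the hyperbolic graph $\ZF$ via Gromov's classification, using that loxodromics are exactly the fully irreducible atoroidal automorphisms and that $\partial_\infty\ZF\simeq\FAT/{\simAT}$ --- matches the paper's. However, there is a genuine logical gap in your derivation of~(2b), and your treatment of the non-loxodromic cases diverges from (and is vaguer than) the paper's.

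\textbf{The gap in (2b).} You write that $H^1$ ``contains no loxodromic isometry of $\ZF$ and in particular no fully irreducible element and no atoroidal element''. This ``in particular'' is false: an element is loxodromic on $\ZF$ exactly when it is fully irreducible \emph{and} atoroidal, so the absence of loxodromics only rules out elements that are simultaneously both. An atoroidal but reducible automorphism, or a fully irreducible non-atoroidal one, is elliptic on $\ZF$ and is not excluded by your argument. The paper obtains the stronger conclusion differently: from a virtually fixed point in $\partial_\infty\ZF$ one passes to a finite-index $H^0$ fixing the projective class $[T]$ of a \emph{relatively free} arational tree, and sets $H^1=H^0\cap\Stabis(T)$. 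The key input is then Corollary~\ref{cor_totalement_reductible} (resting on the piecewise-$G$ structure of isometric stabilizers from \cite{GL}): $\Stabis(T)$ contains no fully irreducible and no atoroidal element. Since $H$ contains a fully irreducible, $H^0\not\subseteq\Stabis(T)$, and cyclicity of the scaling image gives $H^0=H^1\rtimes\langle\Phi\rangle$ with $\Phi$ fully irreducible; finally $\Phi$ is atoroidal because $T$ is relatively free (a $\Phi$-periodic nonperipheral class would be elliptic in $T$). Your ping-pong/axis-endpoint picture does not supply the analysis of $\Stabis(T)$ that is doing the real work here.

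\textbf{Cases (i) and (ii).} Your horocyclic case (ii) is mis-routed. If $H$ contains a fully irreducible $\Phi$ and virtually fixes a point of $\partial_\infty\ZF$, the argument above forces $\Phi$ to be atoroidal, hence loxodromic on $\ZF$ --- so case~(ii) together with a fully irreducible element in $H$ is simply impossible, and there is nothing to ``upgrade'' via a Uyanik-style scheme. For the bounded case~(i), the paper does not try to promote a single $\Phi$-periodic class $[g]$ to an $H$-virtual invariant; instead it uses the stationary-measure argument of Proposition~\ref{bounded-finite-zf}: a $\mu$-stationary measure on $\mathbb{P}\overline\calo$ either charges $\mathbb{P}\FAT$ (contradicting boundedness), or charges non-arational trees (yielding a virtually invariant proper free factor), or charges arational surface trees (yielding a virtually invariant quadratic, hence nonperipheral, conjugacy class). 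Your ``the $H$-orbit of $[g]$ remains controlled in $\ZF$'' sketch does not provide a mechanism for finiteness of an $H$-invariant set of conjugacy classes. Finally, the extension beyond finitely generated $H$ is not via a Noetherianity argument on chains of free factors: the paper isolates a \emph{finite fix type} condition (Definition~\ref{dfn_fft}), automatic when $H$ is finitely generated or $G$ is toral relatively hyperbolic, which is what makes the $\Stabis(T)$ analysis go through.
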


If Assertion 1 holds but $H$ does not preserve a proper $(G,\calf)$-free factor, then
in fact $H$ fixes a \emph{quadratic} conjugacy class as in Definition~\ref{def-quadratic} (see also Lemma \ref{quadratic})
which roughly means that it comes from a boundary curve of a certain $2$-orbifold. 
See also Theorem~\ref{thm_Uyanik} where, in the case of the free group with $\calf=\es$, 
this is interpreted as saying that $H$ is contained in an extended mapping class group, a result due to Uyanik \cite{Uya14}.
\\
\\
\indent Our proof of Theorem~1 relies on understanding the dynamics of $\Out(G,\calf)$ on a hyperbolic graph, namely the \emph{free factor graph} $\FF$, and requires describing the points in the Gromov boundary of $\FF$ in terms of certain $G$-actions on $\mathbb{R}$-trees (points in the boundary correspond to equivalence classes of \emph{arational trees}, as explained later in this introduction). From this, the proof goes on using an argument from \cite{Hor14-5} 
(and related to \cite{KM96})
to prove Theorem~1 in the particular case where $G=F_N$ and $\calf=\emptyset$, and further used in \cite{Hor14-3} to prove a Tits alternative for automorphism groups of free products. We now briefly explain this argument.

The key point is to prove that if $H\subseteq\Out(G,\calf)$ is a subgroup, then either $H$ virtually fixes  the conjugacy class of a proper $(G,\calf)$-free factor, or else $H$ has unbounded orbits in $\FF$. In the second case, using Gromov's classification of isometric group actions on hyperbolic spaces, we deduce that either $H$ contains a purely loxodromic nonabelian free subgroup, or else that $H$ virtually fixes a point in $\partial_\infty\FF$ -- and we understand stabilizers of arational trees using work of Levitt and the first author \cite{GL}.

To prove the key point, let $\mu$ be a probability measure supported on $H$. Let $\nu$ be a $\mu$-harmonic  
probability measure on the closure of the corresponding projectified outer space (see below for definitions). If $\nu$ is concentrated on the space of arational trees, by projecting to the free factor graph, we see that a typical random walk on $H$ will diverge towards infinity in $\FF$; in particular $H$-orbits in $\FF$ are unbounded. Otherwise, as one can canonically associate to any nonarational tree a finite set of proper $(G,\calf)$-free factors, we get a $\mu$-stationary measure $\overline{\nu}$ on the countable set of all these free factors; the finite collection of all factors with maximal $\overline{\nu}$-measure is then $H$-invariant.
\\
\\
\indent Our proof of Theorem~2 is similar, using the dynamics of $\Out(G,\calf)$ on another hyperbolic graph, the \emph{$\calz$-factor graph} $\ZF$. The rest of this introduction is devoted to presenting the graphs $\FF$ and $\ZF$ and our description of their boundaries.

\paragraph*{Hyperbolic $\Out(G,\calf)$-graphs.}

In recent years, there has been a huge development of hyperbolic complexes associated with actions of outer automorphisms of free groups or free products, some of them having many quasi-isometric versions. We hope that the present paper will help clarifying the global picture, and the relationships between all these complexes. We now quickly describe the four graphs we need, and refer to Section~\ref{sec-graphs} for a detailed discussion.

The four graphs we use are the  \emph{free splitting graph} $\FS$,
the \emph{$\calz$-splitting graph} $\ZS$,
   the \emph{free factor graph} $\FF$ and the \emph{$\calz$-factor graph} $\ZF$. As we will see, they fit in the following diagram:
       $$\xymatrix{\FS \ar[r]\ar[d] &\ZS\ar[d]\ar[dl]_j \\
       \FF\ar[r] & \ZF}$$
where all the maps are coarsely surjective, coarsely alignment-preserving, all of them except $j$ being induced by inclusions.

We recall that the \emph{free splitting graph} $\FS$ is the graph whose vertices are the 
$(G,\calf)$-free splittings (up to equivariant homeomorphism), where two free splittings are joined by an edge if they have a common refinement. Hyperbolicity of the free splitting graph was proved by Handel--Mosher (\cite{HM12} for free groups, \cite{HM14} for free products). If one replaces free splittings by $\calz$-splittings of $(G,\calf)$ in the definition 
(i.e.\ simplicial, minimal nontrivial $(G,\calf)$-trees in which all edge stabilizers are either trivial or cyclic and nonperipheral), then one gets the \emph{$\calz$-splitting graph} $\ZS$, whose hyperbolicity was proved in \cite{Man12} for free groups and in \cite{Hor14-2} for free products\footnote{\textbf{Notational warning:} In \cite{Man12,Hor14-2}, the $\calz$-splitting graph is denoted as $FZ$; however, the notation $\ZS$ looked better to us: in this way, $\FS$ and $\ZS$ are the two splitting graphs, and $\FF$ and $\ZF$ will be the corresponding factor graphs, which naturally arise as their electrifications.}.

A convenient definition of the \emph{free factor graph} $\FF$ is the following `electrification' of the free splitting graph $\FS$:
the graph $\FF$ is the graph obtained from $\FS$ by adding, for each proper $(G,\calf)$-free factor $A\subseteq G$, 
an edge between any two free splittings in which $A$ is elliptic.
Equivalently, the vertices of $\FF$ are the $(G,\calf)$-free splittings, two splittings being joined by an edge if they are compatible or share a common nonperipheral elliptic element. 
Our definition of the free factor graph is quasi-isometric to the versions considered by Hatcher--Vogtmann \cite{HV} (or its natural adaptation in \cite{BF14})
and Handel--Mosher in \cite{HM14}, 
except in low-complexity cases where our definition needs no special treatment.
Hyperbolicity of the free factor graph was proved by Bestvina--Feighn \cite{BF14} for free groups, and by Handel--Mosher \cite{HM14} for free products (apart from one low-complexity case). 

Analogously, the \emph{$\calz$-factor graph} $\ZF$ is obtained by `electrifying' $\ZS$: 
$\ZF$ is the graph whose vertices are the (homeomorphism classes of) $\calz$-splittings of $(G,\calf)$, 
two splittings being joined by an edge if they are compatible or share a common nonperipheral elliptic element.

The inclusions between the four graphs yield the horizontal and vertical arrows in the diagram above. Using the map $\FF\ra \ZF$ and an argument by Kapovich-Rafi \cite{KR14}, we show that the $\calz$-factor graph is Gromov hyperbolic (see \cite{Man14} for free groups).
In the context of free groups, the $\calz$-factor graph turns out to be quasi-isometric to the intersection graph in the version of Mann \cite{Man14} or to Dowdall--Taylor's co-surface graph \cite{DT16}. 

The graph $\FF$ can also be viewed as an electrification of $\ZS$.
Recall that an element is \emph{simple} if it is contained in a proper $(G,\calf)$-free factor.
The graph obtained from $\ZS$ by adding an edge when two splittings share a common nonperipheral \emph{simple}  elliptic element is quasi-isometric to $\FF$ (see Lemma \ref{lem_FF2}). 
Using this description, the inclusion yields the map $j:\ZS\ra\FF$.

As an aside, we also show in Section~\ref{sec-zmax} that we get a quasi-isometric graph if we only allow for splittings over trivial or maximally cyclic subgroups in the definition (this is different from what happens with splitting graphs, where using $\Z$-splittings or $\Zmax$-splittings yield different graphs up to quasi-isometry \cite{Hor14-2}).

\paragraph*{Gromov boundaries.} 
All the maps $$\FS \ra \ZS  \overset{j}{\ra}   \FF\ra  \ZF$$
are coarsely surjective and coarsely alignment-preserving.
Using a theorem of Dowdall--Taylor \cite[Proposition~3.2]{DT16},
this gives the following topological inclusions between their Gromov boundaries: 
$$\partial_\infty \FS \hookleftarrow \partial_\infty\ZS  \hookleftarrow   \partial_\infty\FF\hookleftarrow  \partial_\infty\ZF.$$
When describing $\partial_\infty\FF$ and $\partial_\infty\ZF$, we can thus take advantage of the description of $\partial_\infty\ZS$ given in \cite{Hor14-2}, and we only need to determine which subsets stay at infinity of $\FF$ and $\ZF$.
The boundary of $\FS$ is more complicated: one can show that there is no subset of $\ol\calo$ that would make
a statement analogous to Theorem~\ref{intro-bdy-ff} below work for $\partial_\infty\FS$, and more specifically that there is no
equivariant continuous closed surjective map from a subset of $\ol\calo$ to $\partial_\infty\FS$.
A description of $\partial_\infty\FS$ for free groups has been announced by Bestvina--Feighn--Reynolds.

The Gromov boundary of the free factor graph of a free group was described by Bestvina--Reynolds \cite{BR13} and independently Hamenstädt \cite{Ham13} in terms of a notion of arational trees in the boundary of Culler--Vogtmann's outer space. We extend their description to the context of free products.

To explain this description, we first recall the notion of the \emph{(unprojectivized) outer space} $\calo$ of a free product, as introduced by Levitt and the first author in \cite{GL07}. The space $\calo$ is the space of \emph{Grushko trees} (of $G$ relative to $\calf$), i.e.\ simplicial metric relatively free $(G,\calf)$-trees with trivial edge stabilizers (here a \emph{$(G,\calf)$-tree} is a $G$-action on a tree where all subgroups in $\calf$ are elliptic, and it is \emph{relatively free} if these are the only point stabilizers). Its closure $\overline{\mathcal{O}}$ in the space of nontrivial, minimal $G$-actions on $\mathbb{R}$-trees was identified in \cite{Hor14-1} as the so-called \emph{very small} $(G,\calf)$-trees. 
A tree $T\in\overline{\calo}\setminus\calo$ is said to be \emph{arational} if the action of every proper $(G,\mathcal{F})$-free factor on its minimal subtree in $T$ is simplicial and relatively free. We denote by $\mathcal{AT}$ the subspace of $\overline{\calo}$ made of arational trees. Two arational trees $T,T'$ are equivalent, which we denote by $T\simAT T'$, if there exist  $G$-equivariant alignment-preserving bijections
between them. In the following statement, the map $\psi_{\FF}$ is the  $\Out(G,\calf)$-equivariant map from $\calo$ to $\FF$ obtained by forgetting the metric.

\begin{thmbis}\label{intro-bdy-ff}
There is a unique $\text{Out}(G,\mathcal{F})$-equivariant homeomorphism 
$$\partial\psi_{\FF}:\mathcal{AT}/\!\raisebox{-.2em}{{$\simAT$}}\to\partial_{\infty}\FF$$ 
such that for all $T\in\mathcal{AT}$ and all sequences $(T_n)_{n\in\mathbb{N}}\in\mathcal{O}^{\mathbb{N}}$ converging to $T$, the sequence $(\psi_{\FF}(T_n))_{n\in\mathbb{N}}$ converges to $\partial\psi_{\FF}(T)$. 
\\ Moreover, if $(T_n)_{n\in\mathbb{N}}\in\mathcal{O}^{\mathbb{N}}$ converges to $T\in\ol\calo$ which is not arational, then $\psi_{\FF}(T_n)$ has no accumulation point in $\partial_{\infty}\FF$.
\end{thmbis}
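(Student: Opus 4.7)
The plan is to follow the template of Bestvina--Reynolds \cite{BR13} and Hamenst\"adt \cite{Ham13}, adapted to the free product setting by factoring $\psi_{\FF}$ through the coarsely alignment-preserving map $j:\ZS\to\FF$ and invoking the Dowdall--Taylor embedding $\partial_\infty\FF\hookrightarrow\partial_\infty\ZS$ together with the description of $\partial_\infty\ZS$ from \cite{Hor14-2}. The argument splits into three tasks: (i) show that arational approximations produce sequences in $\FF$ that Gromov-converge to infinity; (ii) show that non-arational approximations produce sequences with no accumulation in $\partial_\infty\FF$; (iii) descend the resulting assignment to $\AT/\simAT$ and verify it is a homeomorphism.

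I would begin with task (ii), the ``moreover'' statement, since it will simultaneously control the behavior of $\partial\psi_{\FF}$ outside $\AT$. If $T\in\ol\calo\setminus\AT$, by definition some proper $(G,\calf)$-free factor $A$ has minimal subtree $T_A$ that is either non-simplicial or not relatively free; canonically associating to $T$ a finite collection of such factors (in the spirit of \cite{Hor14-5}) gives a uniformly bounded subset of $\FF$ consisting of free splittings in which one of them is elliptic. Given $T_n\to T$, simplicial Grushko refinements of $T_n$ produce such splittings at bounded $\FF$-distance from $\psi_{\FF}(T_n)$, ruling out any accumulation of $\psi_{\FF}(T_n)$ in $\partial_\infty\FF$.

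For task (i), given $T\in\AT$ and $T_n\to T$ in $\calo$, the identification of $\partial_\infty\ZS$ in \cite{Hor14-2} already guarantees that $\psi_{\ZS}(T_n)$ converges to a point of $\partial_\infty\ZS$ attached to $[T]$. Pushing forward through $j$ and using the Dowdall--Taylor criterion, the limit lies in $\partial_\infty\FF$ precisely when $\psi_{\FF}(T_n)$ leaves every ball of $\FF$. Assume for contradiction that a subsequence $\psi_{\FF}(T_{n_k})$ remains in a bounded subset; by the definition of edges of $\FF$ and a pigeonhole argument, one extracts a proper $(G,\calf)$-free factor $A$ that is elliptic in infinitely many $T_{n_k}$. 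Passing to the limit, $A$-orbits in $T$ remain bounded, so the action of $A$ on $T_A$ is either not simplicial or admits a nontrivial arc stabilizer, contradicting arationality. This defines $\partial\psi_{\FF}([T])\in\partial_\infty\FF$.

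It remains to check that $\partial\psi_{\FF}$ descends to a homeomorphism $\AT/\simAT\to\partial_\infty\FF$. Well-definedness on $\simAT$-classes is immediate from the characterization by approximants, since an equivariant alignment-preserving bijection $T\to T'$ matches approximating sequences compatibly. Injectivity factors through the inclusion $\AT/\simAT\hookrightarrow\partial_\infty\ZS$ from \cite{Hor14-2} composed with the Dowdall--Taylor embedding $\partial_\infty\FF\hookrightarrow\partial_\infty\ZS$. Surjectivity uses coarse surjectivity of $\psi_{\FF}$: any point at infinity of $\FF$ is approximated by $\psi_{\FF}(T_n)$ for some $T_n\in\calo$, and extracting a subsequential limit $T\in\ol\calo$, task (ii) forces $T\in\AT$. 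Continuity at $[T]$ follows from the convergence characterization, and together with metrizability of both sides this gives a homeomorphism. The main obstacle I anticipate is the no-interior-accumulation step in task (i): the contradiction requires upgrading a bounded $\FF$-projection to an honest reducing free factor of $T$, which in the free group case is supplied by folding/train-track arguments, and in the $(G,\calf)$-setting demands careful bookkeeping of peripheral subgroups to ensure that the extracted factor is proper relative to $\calf$ rather than conjugate into some $G_i$.
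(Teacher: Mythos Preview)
Your overall architecture---factor through $j:\ZS\to\FF$, use Dowdall--Taylor to identify $\partial_\infty\FF$ with a subset of $\partial_\infty\ZS$, and compare with the description of $\partial_\infty\ZS$ from \cite{Hor14-2}---matches the paper. But the heart of the argument, your task~(i), has a genuine gap.

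You claim that if $\psi_{\FF}(T_{n_k})$ stays in a bounded set, then ``by the definition of edges of $\FF$ and a pigeonhole argument'' one extracts a single proper free factor $A$ elliptic in infinitely many $T_{n_k}$. This does not follow. The trees $T_{n_k}$ are Grushko trees: every proper free factor is elliptic in each of them, so that condition is vacuous; and more to the point, being within distance $M$ of a basepoint $U$ in $\FF$ only says that for each $n$ there is \emph{some} path of length $M$ to $U$, along which the intermediate splittings and their shared elliptic elements may vary wildly with $n$. No pigeonhole reduces this to a single factor, and your passage ``$A$-orbits in $T$ remain bounded'' has no starting point.

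The paper closes this gap with a Kobayashi-type argument (Proposition~\ref{arational-at-infinity}). One fixes geodesics $S_n^0=\psi_{\FF}(T_n),S_n^1,\dots,S_n^M=U$, passes to subsequential limits $T^i\in\overline{\calo}$ of each column $(S_n^i)_n$, and then propagates arationality from $T^0=T$ to $T^M=U$ one step at a time. The crucial input making each step work is the duality theorem from \cite{GH15-1} (Theorem~\ref{gh} here): if $(T_n),(T'_n)$ share nonperipheral simple elliptic elements and $T_n\to T\in\AT$, then $T'_n\to T'\in\AT$ with $T'\simAT T$. This is precisely the substitute for the currents-based unique-duality used in \cite{BR13,Ham13}, and your proposal never invokes it.

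Your task~(ii) also misfires slightly: you appear to be arguing that $\psi_{\FF}(T_n)$ is \emph{bounded} when $T$ is not arational, but that is false in general (take $T$ a $\calz$-compatible tree approached along an unbounded path in $\ZS$). The correct statement is only that no accumulation occurs in $\partial_\infty\FF$, and the paper proves it by splitting into the cases $T\in\ZA\setminus\AT$ (handled via Proposition~\ref{non-arational-visible} and Dowdall--Taylor) and $T\notin\ZA$ (handled by pushing forward the bounded-Gromov-product conclusion from $\ZS$).
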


Bestvina--Reynolds's and Hamenstädt's proofs of this statement in the context of free groups (i.e.\ for $G=F_N$ and $\calf=\emptyset$) rely on a unique duality property of arational $F_N$-trees, namely, the equivalence class of an arational tree is determined by its dual current. This duality statement allows for a Kobayashi-type argument showing that arational trees are `at infinity' in $\FF$. In our previous paper \cite{GH15-1}, we established a unique duality statement for arational trees, although this was phrased in terms of laminations as currents turn out to be badly adapted to the context of free products (see the discussion in \cite[Section~3]{GH15-1}). Again, this duality statement is used to show that sequences in $\calo$ converging to an arational tree have unbounded image in $\FF$.\\

We also describe the Gromov boundary of the $\calz$-factor graph $\ZF$, generalizing a theorem of Dowdall--Taylor \cite{DT16} for free groups.  
We denote by $\FAT$ the set of relatively free arational $(G,\calf)$-trees  (these are precisely the arational trees that do not come from a lamination on a  $2$-orbifold).  
In the following statement, $\psi_{\ZF}$ is the  $\Out(G,\calf)$-equivariant map from $\calo$ of $\ZF$ obtained by forgetting the metric.

\begin{thmbis}\label{intro-bdy-I}
There is a unique $\text{Out}(G,\mathcal{F})$-equivariant homeomorphism $$\partial\psi_{\ZF}:\FAT/\!\raisebox{-.2em}{$\simAT$}\to\partial_{\infty}\ZF$$ such that for all $T\in\FAT$ and all sequences $(T_n)_{n\in\mathbb{N}}\in\mathcal{O}^{\mathbb{N}}$ converging to $T$, the sequence $(\psi_{\ZF}(T_n))_{n\in\mathbb{N}}$ converges to $\partial\psi_{\ZF}(T)$. 
\\ Moreover, if $(T_n)_{n\in\mathbb{N}}\in\mathcal{O}^{\mathbb{N}}$ converges to a tree $T\in\overline{\calo}\setminus\FAT$, then $\psi_{\ZF}(T_n)$ has no accumulation point in $\partial_{\infty}\ZF$.
\end{thmbis}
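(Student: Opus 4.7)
\textbf{Step 1 (embedding of boundaries).} The natural map $\FF \to \ZF$ induced by the inclusion of vertex sets is coarsely Lipschitz, coarsely surjective, and coarsely alignment-preserving: this follows from the respective definitions of the two graphs. By \cite[Proposition~3.2]{DT16}, this induces an $\Out(G,\calf)$-equivariant topological embedding of Gromov boundaries $\partial_\infty\ZF \hookrightarrow \partial_\infty\FF$, whose image consists of those $\xi \in \partial_\infty\FF$ such that every sequence in $\FF$ converging to $\xi$ has unbounded image in $\ZF$. Composing with the homeomorphism $\partial_\infty\FF \cong \mathcal{AT}/\!\simAT$ of Theorem~\ref{intro-bdy-ff}, we obtain an equivariant embedding $\iota:\partial_\infty\ZF \hookrightarrow \mathcal{AT}/\!\simAT$. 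The theorem then reduces to showing that the image of $\iota$ equals $\FAT/\!\simAT$: the ``moreover'' clause for $T \in \overline{\calo} \setminus \mathcal{AT}$ is immediate from the corresponding statement in Theorem~\ref{intro-bdy-ff} combined with the embedding, and the remaining assertions correspond to the two inclusions addressed in Steps~2 and~3.

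\textbf{Step 2 (only $\FAT$-classes can survive).} Given $T \in \mathcal{AT} \setminus \FAT$, the structure theorem for non-relatively-free arational $(G,\calf)$-trees developed in \cite{GH15-1} realizes $T$ as the dual tree of a mixing measured lamination on a compact $2$-orbifold $\Sigma$ occurring as a vertex orbifold in a canonical graph-of-groups decomposition $S_T$ of $(G,\calf)$; the boundary curves of $\Sigma$ are non-peripheral cyclic conjugacy classes in $G$ that are elliptic in $T$, and $S_T$ is a $\calz$-splitting. Fix such a boundary curve $c$. For any sequence $T_n \to T$ in $\calo$, the translation length of $c$ on $T_n$ tends to $0$; a folding argument then produces, for each $n$, a $\calz$-splitting $\tilde S_n$ that is compatible with $T_n$ and in which $\langle c\rangle$ is elliptic. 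Consequently $d_{\ZF}(\psi_{\ZF}(T_n),\tilde S_n) \le 1$ by compatibility and $d_{\ZF}(\tilde S_n,S_T) \le 1$ since $c$ is a common non-peripheral elliptic element, so $\psi_{\ZF}(T_n)$ remains at bounded $\ZF$-distance from $S_T$. Hence $[T]$ is not in the image of $\iota$, which in view of Step~1 gives both that $[T]$ is excluded from $\partial_\infty\ZF$ and that $\psi_{\ZF}(T_n)$ has no accumulation point in $\partial_\infty\ZF$.

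\textbf{Step 3 ($\FAT$-classes survive) and main obstacle.} Given $T \in \FAT$, by Step~1 it suffices to exhibit a single sequence $T_n \to T$ in $\calo$ with $\psi_{\ZF}(T_n)$ unbounded in $\ZF$. Suppose by contradiction that along some subsequence $\psi_{\ZF}(T_n)$ remains within uniformly bounded $\ZF$-distance of a fixed $\calz$-splitting. Unraveling these bounded $\ZF$-geodesics, and using that each edge of $\ZF$ is either a compatibility edge or comes from a common non-peripheral elliptic element (and that $\psi_{\ZF}(T_n)$ itself, being a Grushko splitting, has no non-peripheral elliptic element), one extracts along a further subsequence a non-peripheral conjugacy class $[g]$ whose translation length on $T_n$ tends to $0$; passing to the limit produces a non-peripheral conjugacy class elliptic in $T$, contradicting $T \in \FAT$. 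This step is the principal obstacle: the extra edges of $\ZF$ over $\FF$ (coming from cyclic splittings and common-elliptic pairs) mean that the unboundedness of $\psi_{\FF}(T_n)$ in $\FF$ given by Theorem~\ref{intro-bdy-ff} does not transfer mechanically to $\ZF$, and propagating the free-action property of $T$ along a bounded $\ZF$-geodesic requires the unique duality between arational $(G,\calf)$-trees and their dual laminations established in \cite{GH15-1}.
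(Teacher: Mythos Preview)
Your Step~1 matches the paper's approach: apply Dowdall--Taylor to the coarsely alignment-preserving map $\FF\to\ZF$ and then determine which $\simAT$-classes of arational trees lie in $\partial_{\ZF}\FF\subseteq\partial_\infty\FF$. However, Steps~2 and~3 both contain genuine gaps.

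In Step~2, the claimed folding argument cannot work as stated. A $\calz$-splitting $\tilde S_n$ compatible with a Grushko tree $T_n$ is necessarily a free splitting: in any common refinement $\hat S$, every edge either survives in the collapse to $T_n$ (hence has trivial stabilizer) or lies in a subtree collapsed to a vertex of $T_n$ (whose stabilizer is peripheral), so no edge of $\hat S$---and therefore no edge of $\tilde S_n$---has nonperipheral cyclic stabilizer. But $c$, being the unique unused boundary curve, is non-simple, so it is not elliptic in any free splitting; hence no such $\tilde S_n$ exists. The paper avoids this by exploiting the Dowdall--Taylor framework more carefully: to show that the class of an arational surface tree $T$ is \emph{not} in $\partial_{\ZF}\FF$, it suffices (by the negation of condition~(2) in Lemma~\ref{lem_eqv_bord}) to exhibit \emph{one} sequence in $\calo$ converging to some $T'\simAT T$ whose $\ZF$-image is bounded (Proposition~\ref{non-fat}). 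The paper constructs such a sequence by approximating the lamination on $\Sigma$ by simple closed curves, whose dual $\calz$-splittings all have the boundary element $c_0$ elliptic, and then refining by compatible free splittings; it does not attempt to control an arbitrary sequence $T_n\to T$. The ``moreover'' clause for $T\in\AT\setminus\FAT$ then follows formally from Theorem~\ref{dt}, not from a direct boundedness argument.

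In Step~3 the reduction is stated incorrectly: exhibiting a single sequence $T_n\to T$ with $\psi_{\ZF}(T_n)$ unbounded does not place $[T]$ in $\partial_{\ZF}\FF$; by Lemma~\ref{lem_eqv_bord} one needs this for every sequence (or equivalently one sequence accumulating in $\partial_\infty\ZF$, which is strictly stronger than unbounded). More substantively, the extraction you sketch does not go through: the first edge of a $\ZF$-geodesic leaving the Grushko tree $T_n$ is indeed a compatibility edge, but the common-elliptic elements appearing along later edges are elliptic in intermediate $\calz$-splittings, with no a priori bound on their translation length in $T_n$ itself. The paper's argument (Proposition~\ref{fat}) is precisely the Kobayashi-type argument you allude to at the end: pass to projective limits $T^0,\dots,T^M$ of the vertices along the geodesics as $n\to\infty$, and use \cite[Corollary~13.2]{GH15-1} (the duality statement \emph{without} the simplicity hypothesis on the common elliptic element) together with closedness of compatibility to propagate $T^i\in\FAT$ along the path, contradicting $T^M=U$.
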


\paragraph*{Loxodromic isometries of $\FF$ and $\ZF$.} We finally mention that our arguments also enable us to determine precisely which automorphisms in $\Out(G,\calf)$ act loxodromically on either the free factor graph or the $\calz$-factor graph. The following theorem generalizes previous work of Bestvina--Feighn \cite{BF14}, Mann \cite{Man14} and Gupta \cite{Gup2}.

\begin{thmbis}
An automorphism $\Phi\in\text{Out}(G,\calf)$ acts loxodromically on $\FF$ if and only if $\Phi$ is fully irreducible. It acts loxodromically on $\ZF$ if and only if it is fully irreducible and atoroidal. 
\end{thmbis}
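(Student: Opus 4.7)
The plan is to identify loxodromicity in each graph with the existence of two distinct fixed points at infinity, and then to read off the two conditions from the boundary descriptions provided by Theorems~\ref{intro-bdy-ff} and~\ref{intro-bdy-I}.

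For the free factor graph, first suppose $\Phi$ is not fully irreducible: some $\Phi^k$ with $k\neq 0$ preserves the conjugacy class of a proper $(G,\calf)$-free factor $A$. The collection of $(G,\calf)$-free splittings in which $A$ is elliptic is then $\Phi^k$-invariant and forms a single electrified $A$-star in $\FF$, hence has diameter at most $1$; so $\Phi$ has bounded $\FF$-orbits. Conversely, if $\Phi$ is fully irreducible, I would invoke the North--South dynamics of fully irreducible elements on compactified projectivized outer space, as established for free products in~\cite{Hor14-1}, to produce attracting and repelling trees $T_\pm\in\AT$ with $\Phi^{\pm n}T_0\to T_\pm$ in $\overline\calo$ for any $T_0\in\calo$. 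Theorem~\ref{intro-bdy-ff} then gives $\psi_{\FF}(\Phi^{\pm n}T_0)\to\partial\psi_{\FF}(T_\pm)$ in $\partial_\infty\FF$, and the two limits are distinct (otherwise $T_+$ and $T_-$, which are stretched by inverse factors under $\Phi$, would be $\simAT$-equivalent, which is impossible). Two distinct fixed points at infinity for an isometry of a hyperbolic graph force loxodromicity.

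For the $\calz$-factor graph, suppose first that $\Phi$ acts loxodromically on $\ZF$. It fixes two distinct points of $\partial_\infty\ZF$ which, by Theorem~\ref{intro-bdy-I}, correspond to distinct $\simAT$-classes of relatively free arational trees $T_\pm\in\FAT$. Via the topological inclusion $\partial_\infty\ZF\hookrightarrow\partial_\infty\FF$, they remain distinct in $\partial_\infty\FF$, so $\Phi$ is loxodromic on $\FF$ and hence fully irreducible by the previous paragraph. If moreover $\Phi^k([g])=[g]$ for some nonperipheral $g$, then writing $\Phi\cdot T_+=\lambda T_+$ with $\lambda>1$, the identity $\ell_{T_+}(g)=\ell_{T_+}(\phi^k(g))=\lambda^k\ell_{T_+}(g)$ forces $\ell_{T_+}(g)=0$, making $g$ elliptic in the relatively free tree $T_+$, a contradiction; hence $\Phi$ is atoroidal. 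Conversely, if $\Phi$ is fully irreducible and atoroidal, its attracting/repelling trees $T_\pm\in\AT$ must lie in $\FAT$: a non-relatively-free arational $(G,\calf)$-tree arises, via the structural results of~\cite{GH15-1}, from a dual $2$-orbifold whose boundary curves give a canonical finite family of nonperipheral conjugacy classes; being permuted by $\Phi$, this family would contain a $\Phi$-periodic element, contradicting atoroidality. Theorem~\ref{intro-bdy-I} then produces two distinct fixed points of $\Phi$ in $\partial_\infty\ZF$, yielding loxodromicity.

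The main obstacle I anticipate is the final step: extracting a $\Phi$-periodic nonperipheral conjugacy class from the dual $2$-orbifold structure of a non-relatively-free arational attracting tree of a fully irreducible automorphism requires a careful understanding, in the free product setting, of how $\Phi$ acts on the canonical boundary system of this orbifold. Once this is in hand, the rest of the argument is a dynamical reading of Theorems~\ref{intro-bdy-ff} and~\ref{intro-bdy-I}.
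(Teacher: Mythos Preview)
Your approach differs from the paper's and has two gaps worth naming.

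First, you cite \cite{Hor14-1} for North--South dynamics of fully irreducibles on $\mathbb{P}\overline{\calo}$ in the free product setting. That paper identifies $\overline{\calo}$ with the space of very small trees; it does not establish North--South dynamics. The paper instead uses the existence of a $\Phi$-invariant folding line $L:\mathbb{R}\to\calo$ (Francaviglia--Martino \cite[Theorem~8.23]{FM}), which projects to a $\Phi$-invariant unparametrized quasi-geodesic in $\FF$ by Proposition~\ref{fz-ff}. One then only needs this line to be unbounded in $\FF$: the folding line has a limit $T\in\overline{\calo}$ as $t\to\infty$, which is $\Phi$-invariant; if $T$ were not arational, the canonical finite set of reducing factors attached to $T$ (from \cite{Rey12,Hor14-3}) would be $\Phi$-invariant, contradicting full irreducibility. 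Hence $T\in\AT$, and Theorem~\ref{boundary-ff} gives unboundedness. This sidesteps both full North--South dynamics and any comparison of $T_+$ with $T_-$.

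Second, even granting North--South dynamics, your argument that $T_+\not\simAT T_-$ because they are ``stretched by inverse factors'' is incomplete: the relation $\simAT$ is defined via the observers' topology, not the metric, and a $\simAT$-class is a finite-dimensional simplex of projective length functions (\cite[Proposition~13.5]{GH15-1}). The automorphism $\Phi$ can act on that simplex with two distinct projective fixed points carrying different eigenvalues, so differing scaling factors do not by themselves rule out $T_+\simAT T_-$.

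Finally, the obstacle you anticipate in the $\ZF$ case is easier than you fear: a non-relatively-free arational tree is arational surface (Definition~\ref{dfn_arat-surf} and \cite{Rey12,Hor14-3}), and such a tree has a \emph{unique} conjugacy class of nonperipheral cyclic point stabilizers, namely the fundamental group of the single unused boundary component. This class is therefore directly $\Phi$-invariant, not merely periodic among a finite family, which immediately contradicts atoroidality. This is exactly how the paper handles it.
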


\paragraph*{Organization of the paper.} Section~\ref{sec-1} reviews general background about automorphisms of free products and coarsely alignment-preserving maps between hyperbolic spaces. In Section \ref{sec-graphs}, we introduce various quasi-isometric versions of the graphs $\FF$ and $\ZF$, and prove their hyperbolicity. Section~\ref{sec-bdy} is devoted to the description of the Gromov boundaries of $\FF$ and $\ZF$. In Section~\ref{sec-loxo}, we use these descriptions to identify the loxodromic isometries of $\FF$ and $\ZF$. In Section~\ref{sec-bdd}, we show that every subgroup of $\Out(G,\calf)$ with bounded orbits in $\FF$ virtually fixes the conjugacy class of a proper $(G,\calf)$-free factor, and we also establish a similar result for subgroups having bounded orbits in $\ZF$. In Section~\ref{sec-5}, we study stabilizers of arational trees.
The proof of our two classification theorems (Theorem~1 and Theorem~2) is completed in Section~\ref{sec-appli}. 

\paragraph*{Acknowledgments.} 
The first author acknowledges support from the Institut Universitaire de France and from the Centre Henri Lebesgue (Labex ANR-11-LABX-0020-01). 
The second author acknowledges support from the Agence Nationale de la Recherche under Grant ANR-16-CE40-0006, and is grateful to the Fields Institute for its hospitality during the `Thematic Program on Teichmüller Theory and its Connections to Geometry, Topology and Dynamics' in Fall 2018, where the last stage of this project was completed.

\setcounter{tocdepth}{1}
\tableofcontents

\section{Background on free products and relative automorphisms}\label{sec-1}

\subsection{General definitions}

Let $G_1,\dots,G_k$ be a finite collection of nontrivial countable groups, let $F_N$ be a free group of rank $N$, and let $$G:=G_1\ast\dots\ast G_k\ast F_N.$$ We let $\mathcal{F}:=\{[G_1],\dots,[G_k]\}$ be the finite collection of all $G$-conjugacy classes of the subgroups $G_i$; we call it a \emph{free factor system} of $G$. The \emph{complexity} of the free product $(G,\calf)$ is defined as $\xi(G,\calf):=(k+N,N)$; complexities are ordered lexicographically. The free product $(G,\mathcal{F})$ is \emph{sporadic} if $\xi(G,\calf)\le (2,1)$, and \emph{nonsporadic} otherwise. Sporadic cases correspond to the following: either 
\begin{itemize}
\item $G=\{1\}$ and $\calf=\emptyset$ (equivalently $\xi(G,\calf)=(0,0)$), or
\item $G=G_1$ and $\calf=\{[G_1]\}$ (equivalently $\xi(G,\calf)=(1,0)$), or
\item $G=\mathbb{Z}$ and $\calf=\emptyset$ (equivalently $\xi(G,\calf)=(1,1)$), or
\item $G=G_1\ast G_2$ and $\calf=\{[G_1],[G_2]\}$ (equivalently $\xi(G,\calf)=(2,0)$), or
\item $G=G_1\ast\mathbb{Z}$ and $\calf=\{[G_1]\}$ (equivalently $\xi(G,\calf)=(2,1)$).
\end{itemize}
Subgroups or elements of $G$ which are conjugate into one of the subgroups of $\mathcal{F}$ will be called \emph{peripheral}.
We denote by $\text{Out}(G,\mathcal{F})$ the subgroup of $\text{Out}(G)$ made of all outer automorphisms $\Phi$ such that for every $i\in\{1,\dots,k\}$, one has $\Phi([G_i])=[G_i]$. We denote by $\text{Out}(G,\mathcal{F}^{(t)})$ the subgroup of $\text{Out}(G)$ made of all outer automorphisms which have a representative in $\Aut(G)$ that acts as a conjugation by an element $g_i\in G$ on each peripheral subgroup $G_i$.

By a theorem of Kurosh \cite{Kur}, every subgroup $A\subseteq G$ decomposes as a free product $A=(\ast_j H_j)\ast F$, 
where each $H_j$ is peripheral 
(there might be infinitely many $H_j$ in general), and $F$ is a free group (maybe of infinite rank)
and where a subgroup of $A$ is peripheral if and only if it is $A$-conjugate into some $H_j$. 
We denote by $\calf_{|A}$ the collection of all $A$-conjugacy classes of the subgroups $H_j$ from the above decomposition of $A$.

\subsection{Actions on trees, outer space and its closure}

A \emph{$(G,\mathcal{F})$-tree} is an $\mathbb{R}$-tree $T$ equipped with an isometric action of $G$, in which all peripheral subgroups of $G$ are elliptic (i.e.\ each of them fixes a point in $T$). The $G$-action on $T$ is \emph{trivial} if $G$ fixes a point, \emph{minimal} if $T$ does not contain any proper nonempty $G$-invariant subtree. The $G$-action on $T$ is \emph{relatively free} if all elliptic subgroups in $T$ are peripheral. 

A \emph{Grushko $(G,\mathcal{F})$-tree} is a minimal, simplicial metric relatively free $(G,\mathcal{F})$-tree with trivial edge stabilizers. Two Grushko $(G,\mathcal{F})$-trees are \emph{equivalent} if there exists a $G$-equivariant isometry between them. The \emph{unprojectivized outer space} $\mathcal{O}$ is defined \cite{GL07} to be the space of all equivalence classes of Grushko $(G,\mathcal{F})$-trees. The \emph{projectivized outer space} $\mathbb{P}\mathcal{O}$ is defined as the space of homothety classes of trees in $\mathcal{O}$. The spaces $\calo$ and $\mathbb{P}\calo$ come equipped with right actions of $\text{Out}(G,\mathcal{F})$, given by precomposing the actions (these can be turned into left actions by letting $\Phi.T:=T.\Phi^{-1}$ for all $T\in \mathcal{O}$ and all $\Phi\in\text{Out}(G,\mathcal{F})$). The closure $\overline{\mathcal{O}}$ of outer space in the space of all $G$-actions on $\mathbb{R}$-trees, equipped with the Gromov--Hausdorff equivariant topology, was identified in \cite{Hor14-1} with the space of all \emph{very small} $(G,\calf)$-trees, i.e.\ trees whose arc stabilizers are either trivial, or cyclic, root-closed and nonperipheral, and whose tripod stabilizers are trivial. Its projectivization $\mathbb{P}\overline{\calo}$ is compact, see \cite[Theorem~4.2]{CM87} and \cite{Hor14-1}.

A \emph{generalized branch point} in a $(G,\calf)$-tree  is a point $x\in T$ which is either a branch point (i.e.\ $T\setminus\{x\}$ has at least $3$ connected components) or an inversion point (i.e.\ a point $x\in T$ such that $T\setminus \{x\}$ has exactly $2$ connected components, and these
components are exchanged by $G_x$).
Notice that for $T\in\ol\calo$, points with nontrivial stabilizer in $T$ are generalized branch points; inversion points
may only occur if one of the peripheral groups $G_i$ is isomorphic to $\mathbb{Z}/2\mathbb{Z}$.
By \cite{Lev94} (see also \cite[Theorem~4.16]{Hor14-1} for free products), every tree $T\in\overline{\calo}$ splits in a unique way as a graph of actions in the sense of \cite{Lev94}, in such a way that
\begin{itemize}
\item vertices of the decomposition correspond to orbits of connected components of the closure of the set of generalized branch points, 
\item edges of the decomposition correspond to orbits of maximal arcs whose interior contain no generalized branch point. 
\end{itemize}
In particular, vertex groups act with dense orbits on the corresponding subtree of $T$ (maybe a point). The Bass–-Serre tree of the underlying graph of groups is very small (maybe trivial); it is called the \emph{Levitt decomposition} of $T$.

Notice that whenever a group $G$ acts on an $\mathbb{R}$-tree and contains a hyperbolic element, 
there is a unique subtree of $T$ on which the $G$-action is minimal. In particular, if $H$ is a subgroup of $G$ 
containing a hyperbolic element or fixing a unique point, then the $H$-action on $T$ admits a minimal subtree, which we call the \emph{$H$-minimal subtree} of $T$ (which we denote by $T_H$). The action of $H$ on $T$ is \emph{simplicial} if the $H$-minimal subtree is isometric to a simplicial tree.

\subsection{Maps between trees}

From now on, all maps between $G$-trees will be $G$-equivariant.

\paragraph*{Alignment-preserving maps.} Given two $(G,\mathcal{F})$-trees $T$ and $T'$, a map $f:T\to T'$ is \emph{alignment-preserving} if the $f$-image of every segment in $T$ is a segment in $T'$ (segments are allowed to be reduced to a point). If there exists a $G$-equivariant alignment-preserving map from $T$ to $T'$, we say that $T$ is a \emph{refinement} of $T'$. If $T$ and $T'$ are simplicial, we also say that $T'$ is a \emph{collapse} of $T$ and that $f$ is a \emph{collapse map} (topologically, $T'$ may be obtained from $T$ by collapsing a $G$-invariant collection of edges).
 Two trees are \emph{compatible} if they have a common refinement. 

We will use the following fact several times. 
\begin{lemma}[{\cite[Lemma 6.11]{Hor14-1}}]\label{lem_tirer}
  Let $T$ be a minimal, simplicial $(G,\calf)$-tree, whose edge stabilizers are all cyclic (they may be finite or peripheral). 
\\ Then $T$ is compatible with a $(G,\calf)$-free splitting.
\end{lemma}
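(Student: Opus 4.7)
The strategy is to construct a common refinement $\hat T$ of $T$ with some $(G,\calf)$-free splitting $S$: first blow up each vertex of $T$ by a Grushko tree of its vertex group, then extract $S$ by collapsing the original edges of $T$ in this blow-up.

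By the Kurosh subgroup theorem, each vertex group $G_v$ of $T$ decomposes as $G_v=H_{v,1}\ast\cdots\ast H_{v,k_v}\ast F_v$, where the $H_{v,j}$ are the nontrivial intersections of $G_v$ with conjugates of peripheral subgroups of $G$, and $F_v$ is free; this realizes $(G_v,\calf_{|G_v})$ as a free product. Every nontrivial edge stabilizer $C_e$ of $T$ incident at $v$ is cyclic and peripheral in $G$, so up to $G_v$-conjugation it lies in a unique $H_{v,j(e)}$. Now choose, $G$-equivariantly, a Grushko tree $T_v$ for $(G_v,\calf_{|G_v})$ at each vertex orbit $[v]$: this is a $G_v$-free splitting with trivial edge stabilizers and peripheral vertex stabilizers. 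Blow up $v$ in $T$ by $T_v$, attaching each incident edge $e$ of $T$ at the peripheral vertex of $T_v$ whose stabilizer contains $C_e$ (trivial-stabilizer edges may be attached anywhere). The resulting tree $\hat T$ is a $(G,\calf)$-tree refining $T$: collapsing the edges inherited from the $T_v$'s (each of trivial $G$-stabilizer) recovers $T$, since every $T_v$ is connected and thus contracts to a single point with stabilizer $G_v$.

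The edges of $\hat T$ split into ``new'' edges coming from the $T_v$'s (of trivial $G$-stabilizer), and ``old'' edges coming from $T$ (of cyclic $G$-stabilizer). Let $S$ be the tree obtained from $\hat T$ by collapsing all old edges: the surviving edges are the new ones, so $S$ has trivial edge stabilizers. Passing to the $G$-minimal subtree, $S$ is a $(G,\calf)$-free splitting, and $\hat T$ refines both $T$ and $S$ (collapse new edges for $T$, old edges for $S$), proving their compatibility.

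\textbf{Main obstacle.} The only delicate point is verifying that the minimal subtree of $S$ is nontrivial. This could fail only if every $T_v$ reduces to a point, i.e.\ if every vertex group $G_v$ of $T$ is itself (a conjugate of) a single peripheral subgroup. In that degenerate case, a direct analysis shows that $T$ is already a free splitting, and the lemma holds with $\hat T=S=T$: indeed, distinct conjugates of peripheral factors intersect trivially in the free product $G$, so any edge $e$ of $T$ between two vertices carrying different peripheral stabilizers has $C_e=\{1\}$; and the alternative situation -- two adjacent vertices carrying the same cyclic peripheral stabilizer with $C_e$ equal to the common peripheral -- is ruled out by checking that the associated graph of groups fails to have $G$ as its fundamental group. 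Hence the construction always produces the desired compatible free splitting.
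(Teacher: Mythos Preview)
Your argument has a genuine gap. You assert that ``every nontrivial edge stabilizer $C_e$ of $T$ incident at $v$ is cyclic and peripheral in $G$,'' but this is not part of the hypothesis and is false in general. The parenthetical ``(they may be finite or peripheral)'' only emphasizes that finite or peripheral cyclic edge groups are \emph{allowed}; it does not exclude infinite nonperipheral cyclic edge stabilizers. Indeed, the primary application of the lemma in this paper is to $\calz$-splittings, whose nontrivial edge stabilizers are nonperipheral \emph{by definition}. When $C_e$ is nonperipheral, it acts hyperbolically on every Grushko tree $T_v$ of $(G_v,\calf_{|G_v})$, so there is no vertex of $T_v$ to which the edge $e$ can be attached, and your blow-up construction breaks down at exactly this point.

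The paper does not give its own proof of this lemma (it is cited from \cite{Hor14-1}), so there is no in-paper argument to compare with. What is missing from your approach is the following: to refine $T$ to a tree with a trivial-stabilizer edge, one needs a vertex $v$ such that $G_v$ admits a nontrivial $(G_v,\calf_{|G_v})$-free splitting in which all \emph{incident edge groups} (not just the peripheral subgroups of $G_v$) are elliptic. Your argument establishes this only when the incident edge groups happen to be peripheral. Handling the general case requires an additional step---for instance a Shenitzer-type argument, or the ``pulling'' move alluded to by the lemma's label \emph{tirer}, which slides an edge across a free splitting of a vertex group even when the edge group is hyperbolic there. Once this is in place, the blow-up/collapse manoeuvre you describe goes through. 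Your treatment of the genuinely peripheral case, and your discussion of the ``main obstacle'', are essentially correct though somewhat informal.
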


In particular, every edge stabilizer of $T$ is contained in a proper $(G,\calf)$-free factor because
it is elliptic in any splitting compatible with $T$.

\paragraph*{Morphisms and folding paths.} Let $T$ and $T'$ be two $(G,\calf)$-trees. A \emph{morphism} $f:T\to T'$ is a map such that every segment of $T$ can be subdivided into finitely many subsegments, in such a way that $f$ is an isometry when restricted to any of these subsegments. A morphism $f:T\to T'$ is \emph{optimal} if every point $x\in T$ is contained in the interior of a segment $I$ such that $f_{|I}$ is an isometry. A \emph{folding path guided by $f$} is a continuous family $(T_t)_{t\in\mathbb{R}_+}$ of trees, together with a collection of morphisms $f_{t_1,t_2}:T_{t_1}\to T_{t_2}$ for all $0\le t_1<t_2$, such that 

\begin{itemize}
\item there exists $L\in\mathbb{R}$ such that for all $t\ge L$, we have $T_t=T'$, and
\item we have $f_{0,L}=f$, and 
\item for all $0\le t_1<t_2<t_3$, we have $f_{t_1,t_3}=f_{t_2,t_3}\circ f_{t_1,t_2}$.
\end{itemize}

\noindent Given two $(G,\mathcal{F})$-trees $T$ and $T'$, a \emph{folding path} from $T$ to $T'$ is a folding path guided by some morphism $f:T\to T'$. It is \emph{optimal} if $f$ is optimal (this implies that $f_{t_1,t_2}$ is optimal for all $t_1<t_2$).

\subsection{Coarsely alignment-preserving maps between hyperbolic graphs}\label{sec-dt}

Let $X$ and $Y$ be two geodesic metric spaces. Let $K\ge 0$. A triple of points $(a,b,c)\in X^3$ is \emph{$K$-aligned} if 
$ d_X(a,b)+d_X(b,c)\leq d_X(a,c)+K$ (if $(a,b,c)$ are $0$-aligned, we just say that they are \emph{aligned}). A map $\phi:X\to Y$ is \emph{coarsely alignment-preserving} if there exists $K\ge 0$ such that $\phi$ maps triples of aligned points to triples of $K$-aligned points. It is \emph{coarsely surjective} if there exists $K\ge 0$ such that for all $y\in Y$, we have $d_Y(y,\phi(X))\le K$. 
The following result of Kapovich--Rafi gives a criterion for checking the existence of a coarsely alignment-preserving map between two connected graphs $X,Y$, where $X$ is Gromov hyperbolic, and enables at the same time to deduce hyperbolicity of $Y$ from hyperbolicity of $X$.

\begin{prop}(Kapovich--Rafi \cite[Proposition 2.5]{KR14})\label{KR}
Let $(X,d_X)$ and $(Y,d_Y)$ be connected graphs, so that $X$ is Gromov hyperbolic. Assume that there exists a Lipschitz map $\phi:X\to Y$ sending vertices to vertices and edges to edge paths, and surjective on vertices, and that there exists $K>0$ such that for all $x,x'\in X$, if $d_Y(\phi(x),\phi(x'))\le 1$, then the $\phi$-image of any geodesic segment joining $x$ to $x'$ has $d_Y$-diameter at most $K$.\\
Then $Y$ is Gromov hyperbolic, and $\phi$ is coarsely alignment-preserving. 
\end{prop}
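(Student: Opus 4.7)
The plan is to apply a guessing-geodesics criterion in the style of Masur--Minsky and Bowditch, using the $\phi$-images of $X$-geodesics as a family of distinguished paths in $Y$. For each pair of vertices $y,y'\in Y$, I would choose lifts $x\in\phi^{-1}(y)$, $x'\in\phi^{-1}(y')$ (possible by surjectivity on vertices) and an $X$-geodesic $[x,x']$, and set $\gamma_{y,y'}:=\phi([x,x'])$; this is a legitimate edge-path in $Y$ because $\phi$ sends edges to edge-paths. A preliminary observation is that any two such choices $\phi([x,x'])$ and $\phi([\tilde x,\tilde x'])$ with $\phi(x)=\phi(\tilde x)=y$ and $\phi(x')=\phi(\tilde x')=y'$ are at uniformly bounded Hausdorff distance in $Y$: subdivide a $Y$-geodesic from $y$ to itself (trivially) to conclude $\phi([x,\tilde x])$ and $\phi([x',\tilde x'])$ have $d_Y$-diameter at most $K$ by hypothesis, and combine with $\delta$-thinness of the quadrilateral $\{x,x',\tilde x',\tilde x\}$ in $X$ plus the Lipschitz constant $L$ of $\phi$.

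Next I would verify the two conditions of the guessing-geodesics criterion for the family $\{\gamma_{y,y'}\}$. Condition~(a): whenever $d_Y(y,y')\le 1$, the path $\gamma_{y,y'}$ has $d_Y$-diameter at most $K$, which is exactly the hypothesis. Condition~(b): for any three vertices $y_1,y_2,y_3$, after fixing common lifts $x_i\in\phi^{-1}(y_i)$, the $\delta$-thinness of the $X$-triangle on $\{x_1,x_2,x_3\}$ together with the Lipschitz constant $L$ of $\phi$ implies that each side $\gamma_{y_i,y_j}$ lies in the $L\delta$-neighbourhood of the union of the other two sides in $Y$; the preliminary observation ensures this slim-triangle bound does not depend on the initial choice of lifts. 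By the criterion, $Y$ is then Gromov hyperbolic and each $\gamma_{y,y'}$ lies within uniformly bounded Hausdorff distance of any $Y$-geodesic from $y$ to $y'$; in particular, $\phi$-images of $X$-geodesics are uniform $Y$-quasi-geodesics.

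The coarse alignment-preservation of $\phi$ then follows by a standard Morse-lemma argument: if $(a,b,c)$ is an aligned triple in $X$, pick an $X$-geodesic from $a$ to $c$ passing through $b$; its $\phi$-image is a uniform $Y$-quasi-geodesic containing $\phi(a),\phi(b),\phi(c)$ in that order, so $\phi(b)$ is within uniformly bounded distance of some $Y$-geodesic from $\phi(a)$ to $\phi(c)$, which yields a uniform bound on $d_Y(\phi(a),\phi(b))+d_Y(\phi(b),\phi(c))-d_Y(\phi(a),\phi(c))$. The main obstacle is the book-keeping around choices of lifts and geodesics, in particular verifying the slim-triangle condition in a way that is genuinely independent of these choices; the preliminary lift-comparison lemma, obtained by iterating hypothesis (a) along a $Y$-path, is precisely what makes this work.
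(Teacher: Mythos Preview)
The paper does not give its own proof of this proposition: it is stated with attribution to Kapovich--Rafi \cite[Proposition~2.5]{KR14} and used as a black box. Your sketch is essentially the argument in the original Kapovich--Rafi paper: push $X$-geodesics forward under $\phi$ to obtain a family of candidate quasi-geodesics in $Y$, verify the Bowditch/Masur--Minsky guessing-geodesics criterion (short sides have bounded image by hypothesis, triangles are slim because $X$ is hyperbolic and $\phi$ is Lipschitz), and deduce both hyperbolicity of $Y$ and the quasi-geodesic property of the images, from which coarse alignment-preservation follows by the Morse lemma.

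Your handling of the bookkeeping is correct. The key point you identify---that two choices of lifts $x,\tilde x$ of the same $y$ give $\phi$-images of $[x,\tilde x]$ with diameter at most $K$ (since $d_Y(\phi(x),\phi(\tilde x))=0\le 1$)---combined with thin quadrilaterals in $X$ is exactly what makes the slim-triangle condition independent of choices. One small inaccuracy: in your final sentence you describe this lift-comparison as ``obtained by iterating hypothesis~(a) along a $Y$-path,'' but in fact no iteration is needed in this step; a single application of the hypothesis suffices because the two lifts project to the same vertex. Otherwise the argument is sound and matches the source.
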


Following \cite{DT16}, if $X$ and $Y$ are Gromov hyperbolic geodesic metric spaces, and $\phi:X\to Y$ is coarsely alignment-preserving, we define $\partial_YX$ as the subspace of $\partial_\infty X$ made of all equivalence classes of  quasigeodesic rays whose $\phi$-image is unbounded in $Y$. The following result of Dowdall--Taylor identifies the Gromov boundary of $Y$ with this subspace of $\partial_\infty X$.

\begin{theo}(Dowdall--Taylor \cite[Theorem 3.2]{DT16})\label{dt}
Let $X,Y$ be two Gromov hyperbolic geodesic metric spaces, and let $\phi:X\to Y$ be a coarsely surjective, coarsely alignment-preserving map.
\\ Then there exists a homeomorphism $$\partial\phi:\partial_YX\to\partial_\infty Y$$ such that for all $\xi\in\partial_YX$ and all sequences $(x_n)\in X^\mathbb{N}$ converging to $\xi$, the sequence $(\phi(x_n))_{n\in\mathbb{N}}$ converges to $\partial\phi(\xi)$.
\\ Furthermore, if $(x_n)_{n\in\mathbb{N}}\in X^\mathbb{N}$ converges to some $\xi\in\partial_\infty X\setminus\partial_YX$, then $(\phi(x_n))_{n\in\mathbb{N}}$ has no accumulation point in $\partial_\infty Y$.
\end{theo}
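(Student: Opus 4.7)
My plan is to prove the theorem by establishing a single key inequality relating Gromov products under $\phi$, and then deriving all claims — the definition of $\partial\phi$, bijectivity, continuity, and the ``moreover'' part — from it as consequences.

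The central lemma is the following coarse Gromov product inequality: there exists a constant $C\geq 0$ such that for all $x,y,z \in X$,
$$(\phi(x)\,|\,\phi(y))_{\phi(z)} \;\leq\; (x\,|\,y)_z + C.$$
To prove it, one chooses a point $p$ on a geodesic $[x,y]$ realizing (up to $\delta$) the Gromov product, so $d_X(z,p)\leq (x|y)_z + \delta$. Since the triple $(x,p,y)$ is aligned in $X$, its image $(\phi(x),\phi(p),\phi(y))$ is $K$-aligned in $Y$, which gives $(\phi(x)|\phi(y))_{\phi(p)} \leq K/2$. Combined with coarse Lipschitz control of $\phi$ on short segments — which is automatic in the applications since $\phi$ comes from Proposition~\ref{KR} and sends edges to edge paths — this bounds $d_Y(\phi(z),\phi(p))$ by $O(d_X(z,p))$, yielding the claimed inequality.

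With this inequality in hand, I define $\partial\phi$ as follows. Given $\xi \in \partial_Y X$, choose a geodesic ray $\gamma:\mathbb{R}_+\to X$ from a basepoint $x_0$ to $\xi$ whose $\phi$-image is unbounded. For any $s<t<u$, the triple $(\gamma(s),\gamma(t),\gamma(u))$ is aligned in $X$, so its $\phi$-image is $K$-aligned in $Y$. This gives a coarse monotonicity $d_Y(\phi(x_0),\phi(\gamma(u))) \geq d_Y(\phi(x_0),\phi(\gamma(t))) - K$ for $t\leq u$, forcing $d_Y(\phi(x_0),\phi(\gamma(t)))\to \infty$, and moreover yields $(\phi(\gamma(t))|\phi(\gamma(u)))_{\phi(x_0)} \to \infty$ as $t,u\to\infty$. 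Hence $\phi\circ\gamma$ converges to a point $\partial\phi(\xi)\in\partial_\infty Y$. Well-definedness is immediate from the key lemma: any two rays $\gamma,\gamma'$ to $\xi$ have $(\gamma(t)|\gamma'(t'))_{x_0}\to\infty$ as $t,t'\to\infty$, which by the inequality transfers to $Y$, forcing the two sequences of images to converge to the same boundary point. The same transfer proves continuity ($\xi_n\to\xi$ in $\partial_Y X$ is characterized by Gromov products in $X$ tending to infinity, which implies the corresponding Gromov products in $Y$ tend to infinity), and injectivity (for $\xi\neq\xi'$ the Gromov products $(\gamma(t)|\gamma'(t'))_{x_0}$ stay bounded, and so by the lemma do their $\phi$-images, ruling out $\partial\phi(\xi)=\partial\phi(\xi')$).

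For surjectivity, given $\eta \in \partial_\infty Y$, I pick a sequence $y_n\to\eta$ and use coarse surjectivity to choose $x_n\in X$ with $d_Y(\phi(x_n),y_n)\leq K$; the sequence $(x_n)$ is then unbounded in $X$ (by coarse Lipschitz control of $\phi$), and by a standard compactness argument one extracts a subsequence converging to some $\xi\in\partial_\infty X$, which lies in $\partial_Y X$ and satisfies $\partial\phi(\xi)=\eta$ by construction. Finally, for the ``moreover'' part: if $(x_n)\to\xi\in\partial_\infty X\setminus\partial_Y X$, then by definition every quasigeodesic ray to $\xi$ has bounded $\phi$-image. Approximating $(x_n)$ by a geodesic ray to $\xi$ and using coarse Lipschitz control of $\phi$, the sequence $(\phi(x_n))$ stays at bounded $d_Y$-distance from a bounded set in $Y$, and therefore has no accumulation point in $\partial_\infty Y$.

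The main obstacle is the coarse Gromov product inequality above. The alignment hypothesis is phrased on triples, but Gromov products are associated to pairs based at a third point, so one has to insert an auxiliary point $p$ on a realizing geodesic and control the $\phi$-distance $d_Y(\phi(z),\phi(p))$ by $d_X(z,p)$. This is the step where a coarse Lipschitz property (or, as in Proposition~\ref{KR}, a bound on the $\phi$-diameter of images of segments of bounded length) is essential; once it is in place, the remainder of the proof is standard hyperbolic boundary theory.
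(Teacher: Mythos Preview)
Your central inequality $(\phi(x)\,|\,\phi(y))_{\phi(z)} \le L\,(x\,|\,y)_z + C$ is correct (modulo the multiplicative constant you dropped), but you invoke it in the wrong direction at the crucial step. For well-definedness and continuity you write that ``$(\gamma(t)\,|\,\gamma'(t'))_{x_0}\to\infty$ \dots\ by the inequality transfers to $Y$''; however the inequality only gives an \emph{upper} bound on the $Y$-Gromov product, so large $X$-products do not force large $Y$-products. Indeed that implication is false in general: when $\xi\in\partial_\infty X\setminus\partial_Y X$, approximating sequences have $X$-products tending to infinity while their $Y$-products stay bounded --- precisely the content of the ``furthermore'' clause. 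What actually proves $\phi(x_n)\to\partial\phi(\xi)$ for an arbitrary sequence $x_n\to\xi\in\partial_Y X$ is an alignment argument you already used once: the geodesic $[x_0,x_n]$ passes within $O(\delta)$ of a point $p_n$ on the fixed ray $\gamma$ with $d_X(x_0,p_n)\to\infty$; alignment of $(x_0,p_n,x_n)$ gives $(\phi(p_n)\,|\,\phi(x_n))_{\phi(x_0)}\ge d_Y(\phi(x_0),\phi(p_n))-K/2\to\infty$, and since $\phi(p_n)\to\partial\phi(\xi)$ the hyperbolic inequality finishes it.

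Your ``moreover'' argument has a related error: you claim $(\phi(x_n))$ stays at bounded $Y$-distance from the bounded set $\phi(\gamma)$, but $x_n$ need not stay close to $\gamma$ in $X$ (points on shrinking horospheres based at $\xi$ can drift arbitrarily far sideways). The paper does not reprove the first assertion --- it is cited from Dowdall--Taylor --- and only supplies a short argument for the ``furthermore'' clause (Remark~\ref{rk-dt} and Lemma~\ref{lem_eqv_bord}): any geodesic $[x_n,x_m]_X$ passes close to the ray $\rho$, so by your key inequality (now in the correct direction) the Gromov products $(\phi(x_n)\,|\,\phi(x_m))$ are uniformly bounded, ruling out accumulation in $\partial_\infty Y$ without ever asserting that $\phi(x_n)$ is bounded. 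Finally, you do not address continuity of $(\partial\phi)^{-1}$, which is needed for the homeomorphism claim.
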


\begin{rk}\label{rk-dt}
The second assertion is not explicitly stated in \cite{DT16}. It follows from the following useful observation.
\end{rk}

\begin{lemma}\label{lem_eqv_bord}
In the situation of Theorem \ref{dt},
  given $\xi\in\partial_\infty X$, the following are equivalent:
  \begin{enumerate}
  \item $\xi\in \partial_Y X$, i.e.\ any quasi-geodesic ray in $X$ representing $\xi$ has unbounded image in $Y$ under $\phi$,
  \item for any sequence $(x_n)_{n\in\mathbb{N}}\in X^\mathbb{N}$ converging to $\xi$, $\phi(x_n)$ is unbounded,
  \item for any sequence $(x_n)_{n\in\mathbb{N}}\in X^\mathbb{N}$ converging to $\xi$, $\phi(x_n)$ converges to a point in $\partial_\infty Y$ (namely $\partial \phi(\xi)$),
  \item there exists a sequence $(x_n)_{n\in\mathbb{N}}\in X^\mathbb{N}$ converging to $\xi$ such that $\phi(x_n)$ has an accumulation point in $\partial_\infty Y$.
  \end{enumerate}
\end{lemma}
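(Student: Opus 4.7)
The plan is to establish the cyclic chain of implications $(1)\Rightarrow(3)\Rightarrow(2)\Rightarrow(4)\Rightarrow(1)$. The implication $(1)\Rightarrow(3)$ is exactly the first conclusion of Theorem~\ref{dt}, and $(3)\Rightarrow(2)$ is immediate: a sequence converging to a point of $\partial_\infty Y$ must escape every bounded set of $Y$.

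The remaining two implications rely on the following standard fact from hyperbolic geometry, which I use as a black box. Given any sequence $(y_n)$ in $X$ with $y_n\to\xi$, one can extract a subsequence $(y_{n_k})$ such that the concatenation
$$\eta \;:=\; [x_0,y_{n_1}]\cdot[y_{n_1},y_{n_2}]\cdot[y_{n_2},y_{n_3}]\cdots$$
is a quasi-geodesic ray in $X$ representing $\xi$. One extracts inductively so that the segments $[y_{n_k},y_{n_{k+1}}]$ are long enough and the Gromov products $(y_{n_{k-1}}|y_{n_{k+1}})_{y_{n_k}}$ remain bounded (which is possible since $y_n\to\xi$ forces $(y_n|y_m)_{x_0}\to\infty$), then invokes the local-to-global principle for quasi-geodesics in a hyperbolic space. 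The resulting $\eta$ passes through each $y_{n_k}$ by construction.

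For $(2)\Rightarrow(4)$, let $(x_n)$ be any sequence converging to $\xi$. By $(2)$, the sequence $\phi(x_n)$ is unbounded, and after extracting we may further assume that $d_Y(\phi(x_0),\phi(x_{n_k}))\to\infty$; simultaneously we arrange the extraction so that the broken-geodesic construction applies, producing a quasi-geodesic ray $\eta$ representing $\xi$ and passing through every $x_{n_k}$. Because $\eta$ is a quasi-geodesic, any three points $\eta(s),\eta(t),\eta(u)$ with $s\le t\le u$ form a uniformly aligned triple in $X$, so by coarse alignment-preservation of $\phi$ their images form a $K'$-aligned triple in $Y$ for some uniform constant $K'$. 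A direct manipulation of the Gromov product then yields, for all $0\le m\le n$,
$$\bigl(\phi(\eta(m))\bigm|\phi(\eta(n))\bigr)_{\phi(\eta(0))}\;\ge\;d_Y\bigl(\phi(\eta(0)),\phi(\eta(m))\bigr)-\tfrac{K'}{2}.$$
Since $\phi\circ\eta$ contains the unbounded subsequence $\phi(x_{n_k})$, the right-hand side tends to infinity along some subsequence of indices $m$, so the corresponding $\phi(\eta(m))$ converge to a point of $\partial_\infty Y$. Taking $y_m:=\eta(m)$ produces a sequence converging to $\xi$ in $X$ whose $\phi$-image has an accumulation point in $\partial_\infty Y$, proving $(4)$.

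For $(4)\Rightarrow(1)$, extract a subsequence of the sequence $(x_n)$ provided by $(4)$ so that $\phi(x_n)\to\zeta\in\partial_\infty Y$. Applying the broken-geodesic construction to $(x_n)$ yields a quasi-geodesic ray $\eta$ representing $\xi$ and passing through every $x_{n_k}$; then $\phi\circ\eta$ contains the unbounded sequence $\phi(x_{n_k})$, so $\eta$ is a quasi-geodesic ray representing $\xi$ whose $\phi$-image is unbounded. This is precisely the statement that $\xi\in\partial_YX$. The principal technical ingredient in all of this is the broken-geodesic construction; once it is in place, both nontrivial implications reduce to routine use of coarse alignment-preservation and the definition of convergence to $\partial_\infty Y$.
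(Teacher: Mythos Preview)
Your proof is correct, but it takes a more elaborate route than the paper's. The paper notes that $3\Rightarrow 2\Rightarrow 1$ and $3\Rightarrow 4$ are immediate and $1\Rightarrow 3$ is Theorem~\ref{dt}, so only $4\Rightarrow 3$ needs an argument. This is done via $\neg 1\Rightarrow\neg 4$: pick a quasi-geodesic ray $\rho$ to $\xi$ with $\phi(\rho)$ bounded; then for any sequence $x_n\to\xi$, after extraction every geodesic $[x_n,x_m]_X$ contains a point at bounded distance from $\rho$, and coarse alignment-preservation forces the Gromov products $(\phi(x_n)\mid\phi(x_m))$ to remain bounded, precluding any accumulation point in $\partial_\infty Y$.

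Your broken-geodesic construction is a legitimate alternative and a standard tool, but your step $(2)\Rightarrow(4)$ does real work that can be bypassed: since $(3)\Rightarrow(4)$ is trivial (any sequence witnesses it), the cycle $(1)\Rightarrow(3)\Rightarrow(4)\Rightarrow(1)$ together with the immediate $(3)\Rightarrow(2)$ would suffice, leaving only $(4)\Rightarrow(1)$ to the construction. One small point worth making explicit: in $(4)\Rightarrow(1)$ you exhibit a \emph{single} ray with unbounded $\phi$-image, which gives $\xi\in\partial_YX$ in the sense of the definition preceding Theorem~\ref{dt}; the ``any ray'' formulation in item~(1) then follows a posteriori from the equivalence with~(2).
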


\begin{proof}
Clearly, $3\imp 2\imp 1$ and the first assertion of Theorem \ref{dt} says $1\imp 3$.
Since $3\imp 4$, it suffices to prove $4\imp 3$. 
Let $\rho$ be a quasigeodesic ray in $X$ converging to $\xi$. 
If $\xi\in \partial_Y X$, then the first assertion of Theorem \ref{dt} says that $3$ holds.
So we assume that $\phi(\rho)$ is bounded in $Y$, and prove that for any sequence $x_n$ converging to $\xi$, $\phi(x_n)$ has no accumulation point.
Then up to extracting a subsequence, 
one can assume that for all $n\neq m$ a geodesic segment $[x_n,x_m]_X$ contains a point at bounded distance from $\rho$. 
It follows that the set of Gromov products $(\phi(x_n)|\phi(x_m))$ with $n\neq m$ is bounded. This implies that $(\phi(x_n))_{n\in\mathbb{N}}$ has no accumulation point in $\partial_\infty Y$. 
\end{proof}

\begin{rk}
When $X$ and $Y$ are endowed with an isometric action of a group $\Gamma$, and the map $\phi$ is coarsely $\Gamma$-equivariant, then the homeomorphism $\partial\phi$ is $\Gamma$-equivariant.
\end{rk}

\subsection{JSJ splittings}

We will use some arguments relying on the theory of JSJ decompositions of groups in Sections~\ref{sec-z} and~\ref{sec-zmax}. We now review some definitions from this theory (see \cite{GL16} for more details). Let $G$ be a group, and let $\calh$ be a collection of subgroups of $G$. 
A \emph{cyclic splitting} $S$ of $(G,\calh)$ is a minimal $(G,\calh)$-tree whose edge stabilizers are cyclic (we will allow finite and trivial cyclic groups).
A cyclic splitting of $(G,\calh)$ is \emph{universally elliptic} if for every cyclic splitting $S'$ of $(G,\calh)$, 
every edge stabilizer in $S$ is elliptic in $S'$. A \emph{cyclic JSJ splitting} of $(G,\calh)$ is a cyclic splitting of $(G,\calh)$ which is universally elliptic, and maximal for domination with respect to this condition (we recall that a splitting $S$ \emph{dominates} a splitting $S'$ if there exists a $G$-equivariant map $S\to S'$, or equivalently every point stabilizer in $S$ is elliptic in $S'$). 

Recall that $(G,\{[G_1],\dots,[G_k]\})$ is relatively finitely presented if there exists $r<\infty$ and $\calr$ a finite subset of $G_1*\dots *G_k *F_r$ 
such that $G\simeq (G_1*\dots *G_k *F_r)/\ngrp{\calr}$.
In particular, our free product $(G,\calf)$ is always relatively finitely presented.
If $(G,\calh)$ is relatively finitely presented
then cyclic JSJ decompositions do exist.

\section{Hyperbolic $\text{Out}(G,\calf)$-graphs}\label{sec-graphs}

In this section, we present three $\text{Out}(G,\calf)$-graphs, namely the $\mathcal{Z}$-splitting graph, the free factor graph and the $\calz$-factor graph. All graphs are equipped with the simplicial metric. We give several possible models for these graphs, and establish that the different models are all quasi-isometric. We provide proofs of the hyperbolicity of the free factor and the $\calz$-factor graph (Handel--Mosher have already given a proof of the hyperbolicity of the free factor graph in \cite{HM14}, however we give a proof that includes one more low-complexity case).

\subsection{The $\mathcal{Z}$-splitting graph: review}

A \emph{$\mathcal{Z}$-splitting} of $(G,\mathcal{F})$ is a nontrivial, minimal, simplicial $(G,\mathcal{F})$-tree, all of whose edge stabilizers are either trivial, or cyclic and nonperipheral. The \emph{$\mathcal{Z}$-splitting graph} $\ZS$ is the graph whose vertices are the equivariant homeomorphism classes of  $\mathcal{Z}$-splittings of $(G,\mathcal{F})$, two distinct vertices being joined by an edge if the corresponding splittings are compatible. The graph $\ZS$ admits a natural right action of $\text{Out}(G,\mathcal{F})$, by precomposition of the actions. The graph $\ZS$, equipped with the simplicial metric, is Gromov hyperbolic \cite{Man12,Hor14-2}.

\subsection{The free factor graph}\label{sec-ff}

\subsubsection{Quasi-isometric models}\label{sec-models-ff}
A \emph{free splitting} of $(G,\calf)$ is a nontrivial, minimal, simplicial $(G,\calf)$-tree, in which all edge stabilizers are trivial. We will take the following as our main definition of the free factor graph (this is different from the traditional one, which will be given in Definition~\ref{ff3} below and justifies the name \emph{free factor graph}). In the sequel, we will sometimes write $\FF$ to denote the free factor graph, identified with any quasi-isometric model. 
 
\begin{de}[\emph{\textbf{Free factor graph, version 1}}] \label{dfn_ff1}
The \emph{free factor graph} $\FF_1$ is the simplicial graph whose vertices are the free splittings of $(G,\calf)$, in which two vertices are joined by an edge if the corresponding splittings are compatible or have a common nonperipheral elliptic element.
\end{de}

Equivalently, two free splittings are joined by an edge in $\FF_1$ if and only if  they are compatible or there is a proper free factor which fixes a point in both of them.

\begin{rk}
In the case where $(G,\calf)$ is sporadic, one checks that $\FF_1$ is either empty or bounded. More precisely,
\begin{itemize}
\item if $\xi(G,\calf)\le (1,0)$, i.e.\ either $G=\{1\}$ or $G=G_1$ with $\calf=\{[G_1]\}$, then $\FF_1$ is empty;
\item if $\xi(G,\calf)=(1,1)$, i.e.\ $G=\mathbb{Z}$ and $\calf=\emptyset$, then $\FF_1$ is a point (corresponding to the 
action of $\bbZ$ on a line by translations);
\item if $\xi(G,\calf)=(2,0)$, i.e.\ $G=G_1\ast G_2$ and $\calf=\{[G_1],[G_2]\}$, then $\FF_1$ is a point (corresponding to this splitting of $G$);
\item if $\xi(G,\calf)=(2,1)$, i.e.\ $G=G_1\ast$ and $\calf=\{[G_1]\}$, then the graph $\FF_1$ is a star of diameter $2$ (the central vertex corresponds to the HNN extension $G=G_1\ast$, and all other vertices correspond to splittings of the form $G=G_1\ast\langle a\rangle$ with $a\in G$).
\end{itemize}
\end{rk}

In order to view $\FF$ as an electrification of the $\calz$-splitting graph $\ZS$, it will be convenient to use the following version of the free factor graph (we prove below that the two versions are quasi-isometric to each other). We recall that an element $g\in G$ is \emph{simple} if it is contained in some proper $(G,\calf)$-free factor.

\begin{de}[\textbf{\emph{Free factor graph, version 2}}] \label{dfn_ff2}
We define $\FF_2$ as the simplicial graph whose vertices are the  $\calz$-splittings of $(G,\calf)$, in which two splittings are joined by an edge if they are compatible or have a common nonperipheral \textbf{simple} elliptic element.
\end{de}

Notice that every element of $G$ which is elliptic in a free splitting of $(G,\calf)$ is simple. This provides a natural inclusion map $i:\FF_1\to \FF_2$.

\begin{lemma}\label{lem_FF2}
The inclusion map $i:\FF_1\to \FF_2$ is a quasi-isometry. 
\end{lemma}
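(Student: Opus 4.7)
The plan is to verify the three standard criteria for a quasi-isometry: that $i$ is Lipschitz, coarsely surjective, and admits a coarsely Lipschitz quasi-inverse. First, the inclusion $i$ is $1$-Lipschitz because any element elliptic in a free splitting lies in a vertex stabilizer, which is by definition a $(G,\calf)$-free factor, hence is a simple element; so every edge of $\FF_1$ (either a compatibility edge or a shared nonperipheral elliptic edge) also defines an edge of $\FF_2$. Coarse surjectivity follows immediately from Lemma~\ref{lem_tirer}: every $\calz$-splitting $S$ has cyclic edge stabilizers, so it is compatible with some free splitting $T$, giving $d_{\FF_2}(S,T)\leq 1$.

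The core of the proof is to construct a coarsely Lipschitz quasi-inverse $\pi\colon\FF_2\to\FF_1$. For each $\calz$-splitting $S$, the plan is to pick a free splitting $\pi(S)$ compatible with $S$ (taking $\pi(T)=T$ when $T$ is already a free splitting). A useful preliminary observation is that if $T$ and $T'$ are two free splittings both compatible with a common $\calz$-splitting $S$, then $d_{\FF_1}(T,T')\leq 2$. Indeed, if $S$ has a nontrivial edge stabilizer $\langle c\rangle$, then examining the smallest common refinement of $T$ (respectively $T'$) with $S$ shows that the unique preimage edge carrying $c$ cannot map onto an edge of $T$ (as $T$ has trivial edge stabilizers), so it collapses and places $c$ in a vertex stabilizer of $T$; the same holds for $T'$, making $c$ a shared nonperipheral elliptic element of $T$ and $T'$. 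Otherwise all edge stabilizers of $S$ are trivial, hence $S$ is itself a free splitting and $T-S-T'$ is a length-$2$ path in $\FF_1$ via compatibility edges.

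From this preliminary observation, the compatibility case of the Lipschitz bound is clean. If $S_1,S_2$ are compatible $\calz$-splittings, let $R$ be their smallest common refinement: by minimality every edge of $R$ maps to an edge in at least one $S_i$ (else one could further collapse $R$), so edge stabilizers of $R$ are contained in cyclic subgroups, and Lemma~\ref{lem_tirer} applied to $R$ yields a free splitting $T^*$ compatible with $R$, and hence with both $S_1$ and $S_2$. Two applications of the preliminary give $d_{\FF_1}(\pi(S_1),\pi(S_2))\leq 4$.

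The main obstacle is the remaining case, where $S_1,S_2$ share a simple nonperipheral elliptic element $c$ but are not compatible, since a priori $c$ need not be elliptic in either $\pi(S_i)$: the blown-up subtree over the vertex of $S_i$ containing $c$ could make $c$ hyperbolic in the common refinement with $\pi(S_i)$. To resolve this, I would refine the choice of $\pi$ as follows: at the vertex $v$ of $S_i$ carrying $c$, blow up $v$ by a splitting of $\Stab_{S_i}(v)$ relative to its peripheral subgroups and incident edge stabilizers, in which $c$ is elliptic and whose new edge stabilizers are trivial or cyclic. Existence of such a splitting uses the simplicity of $c$ in $G$ together with the induced relative free factor structure on $\Stab_{S_i}(v)$. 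This produces a $\calz$-splitting $S_i'$ refining $S_i$ in which $c$ lies in a controlled vertex stabilizer, and then Lemma~\ref{lem_tirer} applied to $S_i'$ yields a free splitting $\pi(S_i)$ in which $c$ remains elliptic. The two splittings $\pi(S_1)$ and $\pi(S_2)$ then share the nonperipheral element $c$, so they are joined by a single edge in $\FF_1$, giving the uniform Lipschitz bound and completing the quasi-isometry. The technically hardest step is arranging the relative splitting of $\Stab_{S_i}(v)$ making $c$ elliptic while respecting the peripheral and incident-edge data.
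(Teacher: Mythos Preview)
Your overall architecture matches the paper's: define a quasi-inverse by choosing a compatible free splitting, bound the diameter of the set of such choices (your ``preliminary observation'' is exactly the paper's argument), and handle the compatibility edge via a common refinement plus Lemma~\ref{lem_tirer}. The divergence, and the gap, is in the last case.

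For the shared simple elliptic element $c$, your plan is to blow up $S_i$ at the vertex $v$ carrying $c$ into a $\calz$-splitting $S_i'$ with $c$ still elliptic, and then apply Lemma~\ref{lem_tirer} to $S_i'$ to get a free splitting $\pi(S_i)$ ``in which $c$ remains elliptic''. This last assertion is unjustified: Lemma~\ref{lem_tirer} only produces \emph{some} compatible free splitting, with no control over which elliptic elements of $S_i'$ stay elliptic. The element $c$ could very well become hyperbolic in the free splitting handed to you. You have also not established that the desired blowup of $G_v$ (relative to $\calf_{|G_v}$ together with the incident edge groups, with $c$ elliptic) always exists; simplicity of $c$ in $(G,\calf)$ does not transparently descend to simplicity in $G_v$ relative to that enlarged peripheral structure, since the incident edge groups of $S_i$ need not be elliptic in the ambient free splitting witnessing simplicity of $c$.

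What is actually needed is precisely the strengthening of Lemma~\ref{lem_tirer} that the paper states and proves as Lemma~\ref{lem_depliage}: given a simplicial $(G,\calf)$-tree with cyclic or peripheral edge stabilizers and a simple nonperipheral element $a$ elliptic in it, there exists a compatible $(G,\calf)$-free splitting in which $a$ is still elliptic. The paper proves this by blowing up in the opposite direction (start from a one-edge free splitting $S_0$ in which $a$ is elliptic, blow it up using the action on $T$, and run a carefully ordered folding sequence toward $T$), and the argument is genuinely delicate. Your sketch correctly locates the hard step but does not supply it.
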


\begin{proof}
The map $i$ is clearly Lipschitz; we will define a quasi-inverse of $i$. Given a $\calz$-splitting $T$ of $(G,\calf)$, we let 
$\Tilde \theta(T)$ be the set of all free splittings of $(G,\calf)$ that are compatible with $T$. By Lemma \ref{lem_tirer}, this set is nonempty. 
We claim that $\Tilde \theta(T)$ has bounded diameter in $\FF_1$. Indeed, if $T$ has a nontrivial edge stabilizer $E$, then $E$ is simple by Lemma~\ref{lem_tirer}, and it is elliptic in all trees in $\Tilde \theta(T)$, so $\Tilde \theta(T)$ has diameter at most 1.
If $T$ is a free splitting, then all trees in $\Tilde \theta(T)$ are at distance at most 1 from $T$ in $\FF_1$, so $\Tilde \theta(T)$ has diameter at most 2. We now define $\theta:\FF_2\ra \FF_1$ by sending a tree $T$ to an element in $\Tilde \theta(T)$.

The above argument shows that $\theta\circ i$ is at distance at most $1$ from the identity.
Similarly, if $T$ is a $\calz$-splitting, then $i\circ \theta(T)$ is a free splitting compatible with $T$, so $i\circ \theta$ is at distance at most $1$ from the identity. Therefore $\theta$ is a quasi-inverse of $i$.

To see that $\theta$ is Lipschitz, let $T_1$ and $T_2$ be two $\calz$-splittings of $(G,\calf)$ such that $d_{\FF_2}(T_1,T_2)=1$: we want to show that $d_{\FF_1}(\theta(T_1),\theta(T_2))$ is bounded. First assume that $T_1$ and $T_2$ are compatible, i.e.\ they have a common refinement $T$. Then by Lemma~\ref{lem_tirer}, there exists a free splitting $S$ compatible with $T$. Then $S$ is compatible with $T_1$ and $T_2$, showing that $\Tilde \theta(T_1)\cap\Tilde \theta(T_2)\neq \es$, so $\theta(T_1)$ and $\theta(T_2)$ are at bounded distance. The case where there exists a simple nonperipheral element $a\in G$ that is elliptic in both $T_1$ and $T_2$ is a consequence of the following lemma (extending Lemma~\ref{lem_tirer}), applied to both splittings $T_1$ and $T_2$.
\end{proof}

\begin{lemma}\label{lem_depliage}
Let $T$ be a minimal, simplicial $(G,\calf)$-tree
with cyclic or peripheral edge stabilizers, and let $a\in G$ be a nonperipheral simple element that is elliptic in $T$. 
\\ Then there exists a free splitting of $(G,\calf)$ that is compatible with $T$, in which $a$ is elliptic. 
\end{lemma}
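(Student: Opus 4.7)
The plan is to extend the proof strategy of Lemma~\ref{lem_tirer}. Let $v$ be a vertex of $T$ fixed by $a$, with vertex stabilizer $G_v$, and let $\calf_v^+$ denote the family of conjugacy classes in $G_v$ consisting of the peripheral subgroups of $G$ landing at $v$ together with the stabilizers of edges of $T$ incident to $v$. Lemma~\ref{lem_tirer} produces a free splitting $S_0$ compatible with $T$; I would take a common refinement $R_0$ of $T$ and $S_0$. The preimage $Y_v\subseteq R_0$ of $v$ under $R_0\to T$ is a $G_v$-tree with trivial edge stabilizers in which every member of $\calf_v^+$ is elliptic. If $a$ happens to be elliptic in $Y_v$, it is elliptic in $S_0$ and we are done.

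Otherwise, the plan is to replace $Y_v$ by a new $G_v$-tree $Y_v'$ of the same kind (trivial edges, every member of $\calf_v^+$ elliptic) in which $a$ is elliptic, and substitute $Y_v'$ for $Y_v$ (equivariantly, for every $G$-translate) in $R_0$. Because the attaching data at $v$ only involves members of $\calf_v^+$, the resulting tree $R_0'$ is still a refinement of $T$. Its non-trivially stabilized edges are exactly the $G$-orbit of edges inherited from $T$; collapsing them produces a tree $S$ with trivial edge stabilizers, in which every peripheral subgroup of $G$ is elliptic (because peripheral ellipticity is preserved in $Y_v'$ and at every other vertex blow-up of $R_0$). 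Thus $S$ is a $(G,\calf)$-free splitting compatible with $T$, and $a$ is elliptic in $S$ since it is elliptic in $Y_v'$.

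The heart of the proof is therefore the existence of $Y_v'$, which amounts to showing that $a$ belongs to a proper $(G_v,\calf_v^+)$-free factor. To prove this, I would use simplicity of $a$ in $(G,\calf)$ to choose a proper $(G,\calf)$-free factor $A$ containing $a$, and analyze its action on $T$. In the easy case where $A$ is elliptic in $T$, one can conjugate so that $A\subseteq G_v$; then $A$ itself appears as a vertex stabilizer in the restriction to $G_v$ of any $(G,\calf)$-free splitting with $A$ elliptic, which after checking ellipticity of $\calf_v^+$ exhibits $A$ as a proper $(G_v,\calf_v^+)$-free factor. In the case where $A$ has a nontrivial minimal subtree $T_A\subseteq T$, one combines the Kurosh decomposition of $A$ (as a $(G,\calf)$-free factor, it is a free product of peripheral subgroups and a free group) with the Bass--Serre decomposition of $A$ coming from its action on $T_A$ (whose edge stabilizers are intersections of $A$ with cyclic or peripheral subgroups of $G$), to extract from the vertex stabilizer $A\cap G_v$ a proper $(G_v,\calf_v^+)$-free factor containing $a$.

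The main obstacle is precisely this last extraction when $A$ acts hyperbolically on $T$: one must show that the edge stabilizers of $A$ acting on $T_A$ interact nicely enough with the free product structure of $A$ to pass a free factor property down to $G_v$ relative to the enlarged family $\calf_v^+$. This is where the hypothesis that the edges of $T$ carry only cyclic or peripheral stabilizers will be used in an essential way, presumably via a folding-type argument on the graph of groups induced on $A$.
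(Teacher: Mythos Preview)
Your approach is genuinely different from the paper's, but it has a real gap at exactly the point you flag as the ``main obstacle''.

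The paper does not start from a free splitting compatible with $T$ and try to make $a$ elliptic; it goes the other way. It starts from a one-edge free splitting $S_0$ in which $a$ is elliptic (which exists by simplicity of $a$), blows up $S_0$ using the actions of its vertex groups on $T$ to obtain a tree $\hat S$ with a morphism $f:\hat S\to T$, and then runs a carefully ordered folding sequence from $\hat S$ towards $T$. The delicate part is to organize the folds (by type, with a maximality condition on edge stabilizers) so that one can identify the last tree $S_j$ in the sequence still possessing an edge with trivial stabilizer, and then argue that $S_{j+1}=T$. This makes $T$ compatible with $S_j$, and collapsing the nontrivially stabilized edges of $S_j$ gives the desired free splitting. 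At no point does the paper need to know anything about the internal free-factor structure of a vertex group of $T$.

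Your strategy, by contrast, reduces everything to a local statement at the vertex $v$: that $a$ is elliptic in some nontrivial $G_v$-tree with trivial edge stabilizers in which all members of $\calf_v^+$ are elliptic. This is the crux, and you have not proved it. In fact your ``easy case'' already fails as written: when $A$ is elliptic in $T$ and $A\subseteq G_v$, restricting an arbitrary $(G,\calf)$-free splitting with $A$ elliptic to $G_v$ gives a $G_v$-tree in which the \emph{nonperipheral} cyclic incident edge groups of $T$ have no reason to be elliptic, so you do not get a $(G_v,\calf_v^+)$-splitting. More fundamentally, $\calf_v^+$ need not be a free factor system of $G_v$ at all (the incident edge groups can be arbitrary cyclic subgroups of $G_v$), so the passage from ``$a$ simple in $(G,\calf)$'' to ``$a$ simple in $(G_v,\calf_v^+)$'' is not a formal Kurosh-type statement. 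Your closing remark that this step ``presumably'' needs a folding argument is telling: if you unwind what such an argument would look like, you are essentially reconstructing the paper's proof inside $G_v$, so the local reduction does not buy a simplification.

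A secondary issue: Lemma~\ref{lem_tirer} as stated requires cyclic edge stabilizers, while the present lemma allows arbitrary peripheral edge stabilizers, so your very first step (producing $S_0$ compatible with $T$) is not justified by that citation when $T$ has a non-cyclic peripheral edge group. This is likely repairable, but it is another place where the reduction to Lemma~\ref{lem_tirer} is not as clean as advertised.
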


\begin{proof} 
Since $a$ is simple, there exists a free splitting $S_0$ of $(G,\calf)$ with only one orbit of edges in which $a$ is elliptic. Let $\hat S$ be a blowup of $S_0$ obtained by looking at the action of the vertex stabilizers of $S_0$ on $T$: there is an equivariant map $f:\hat S\ra T$, isometric on the connected components of the complement of the edges coming from $S_0$. One can write $f$ as a composition of a collapse map and folds and note that all intermediate trees are minimal.

Let $E\subseteq \hat S$ be the set of all edges of $\hat{S}$ with trivial stabilizer that are collapsed by $f$, and let $S'$ be the tree obtained from $\hat{S}$ by collapsing all edges in $E$. Assume first that $S'$ contains no edge with trivial stabilizer. Then $S'$ is mapped
isometrically to $T$ by $f$: indeed, $f$ is an isometry when restricted to one component of the complement of the edges with trivial stabilizers in $\hat S$, and in addition two edges in distinct components have stabilizers which are not contained in a common elementary  (i.e.\ cyclic or peripheral) subgroup, so $f$ cannot fold two edges in distinct components without creating an edge with nonelementary stabilizer. Therefore $\hat S$ is compatible with $T$ in this case.

We now assume that $S'$ contains an edge with trivial stabilizer. Up to replacing $\hat S$ by $S'$ and subdividing $S'$ and $T$, we may assume that 
$f$ maps edges to edges and $T$ has no inversion (i.e.\ no element $g\in G$ flips the two endpoints of an edge of $T$).
We will write $f$ as a composition of folds and find a free splitting compatible with $T$ in which $a$ is elliptic.
Given a simplicial $(G,\calf)$-tree $S_1$ and two edges $e,e'$ incident on a common vertex $v$,
the \emph{fold} defined by $(e,e')$ is the quotient  $S_1\ra S_2$  obtained
from $S_1$ by equivariantly identifying $e$ with $e'$.
If $e$ is incident on $v$ and $H\subseteq G_v$, the \emph{fold} defined by $(e,H)$
is the quotient $S_1\ra S_2$  obtained
from $S_1$ by identifying all edges in $H.e$ and making this equivalence relation equivariant.
If $(e,e')$ are in the same $G_v$-orbit, say $e'=g.e$ for some $g\in G_v$, then folding $(e,e')$ is the same
as folding $(e,\grp{g})$. Moreover, if $e$ and $e'$ are in the same orbit, but not in the same $G_v$-orbit,  then $f$ cannot factor through the fold $(e,e')$ as otherwise $T$ would have an inversion.
Therefore, we can write $f$ as a concatenation of folds defined by $(e,H)$, and folds defined by $(e,e')$ where $e$ and $e'$ are in distinct orbits. 

We note that if $T'$ is obtained from $T$ by a fold, then $T$ and $T'$ are compatible,
a common refinement being obtained by folding the involved edges on half of their length.

We say that a fold is
\begin{itemize}
\item \emph{of type 1} if it is defined by a pair of edges $(e,e')$ in distinct orbits, and both have nontrivial stabilizer,
\item \emph{of type 2} if it is defined by a pair of edges $(e,e')$ in distinct orbits, and at least one of them has trivial stabilizer,
\item \emph{of type 3} if it is a fold defined by $(e,H)$. 
\end{itemize}
We construct inductively a folding path $S'=S_1\ra S_2\ra \dots \ra S_k$ through which $f$ factors as follows,
and satisfying the following maximality condition:
if some edge $e$ of $S_i$ has nontrivial stabilizer, then its stabilizer coincides with the stabilizer of its image in $T$.
We note that this condition is automatically satisfied by $S'$.
Now starting from $S_i$, if the map $S_i\ra T$ factors through a fold of type $j\in \{1,2,3\}$, but not of type $j'<j$, we perform such a fold to define $S_{i+1}$.
Additionally, we claim that if no fold of type 1 or 2 is possible, then one can perform
a fold of type $3$ defined by $(e,H)$ where $H\subseteq G_v$ is the full stabilizer of the image of $e$ in $T$.
This will guarantee that $S_{i+1}$ still satisfies the maximality condition.

To prove the claim, consider a pair of  edges $e,ge$ incident on $v$ and identified in $T$. Let $H$ be the stabilizer of the image of $e$ in $T$ (it contains $g$).
We know that $H$ fixes a vertex $u\in S_i$, either because $H$ is peripheral, or because $H$ is cyclic and thus contains $\grp{g}$ with finite index.
All edges in $[u,v]$ have nontrivial stabilizer (they are fixed by $g$). If $[u,v]$ is not mapped injectively in $T$, then one can perform
a fold, among two adjacent edges $(e'_1,e'_2)$ in $[u,v]$. Since there is no possible fold of type 1 or 2, $e'_2=he'_1$ for some $h\in H$, which is impossible
since $h$ fixes $u$. So $[u,v]$ is mapped injectively into $T$, and it follows that $H$ fixes the image of $[u,v]$ in $T$.
Since all edges of $[u,v]$ have nontrivial stabilizer, the maximality property shows that all edges of $[u,v]$ are fixed by $H$.
Thus, one can perform the fold $(e,H)$, and the claim is proved.

Now for some $k$, one must have that the map $S_k\ra T$ is an isomorphism since folds of type 1 or 2
decrease the number of orbits of edges, and folds of type 3 decrease the number of orbits of edges with trivial stabilizer.

Let $S_j$ be the last tree along the folding sequence that contains an edge with trivial stabilizer. 

We claim that $S_{j+1}=T$; this will conclude the proof as $T$ will then be compatible with $S_j$, and therefore with the free splitting determined by any edge of $S_j$ with trivial stabilizer (and $a$ is elliptic in this free splitting). 
We now prove the claim. The fold $f_j:S_j\to S_{j+1}$ is either of type 2 defined by a pair $(e,e')$ of edges, exactly one (say $e$) having nontrivial stabilizer, or it is of type 3, defined by $(e,H)$, and $e$ has trivial stabilizer. 
Assume towards a contradiction that $S_{j+1}\neq T$. 

Since all edges of $S_{j+1}$ have nontrivial stabilizer, our maximality condition implies that the fold $S_{j+1}\ra S_{j+2}$
is defined by a pair of edges $e_1,e_2\subset S_{j+1}$ in distinct orbits and having the same nontrivial stabilizer.
The preimage in $S_j$ of any edge of $S_{j+1}$ consists either in a single edge,
or in a set of edges having a common vertex. Assume first that for $i\in\{1,2\}$ the preimage of $e_i$ is a single edge $\tilde e_i\subset S_j$. 
In particular, $e_i$ and $\tilde{e_i}$ have the same stabilizer.
If $\tilde e_1$ and $\tilde e_2$ were adjacent, then one could have folded them together, contradicting that we cannot perform a fold of type 1 in $S_j$.
It follows that the path joining $\tilde e_1$ to $\tilde e_2$ is non-degenerate. This path has to contain two adjacent edges that are identified in $S_{j+1}$,
so at least one of them has trivial stabilizer.
This contradicts that $\tilde e_1$ and $\tilde e_2$ have the same non-trivial stabilizer.

Thus, we may assume that the preimage  of $e_1$ does not consist of a single edge.
We denote by $E_1$ the collection of edges that are mapped to $e_1$, and by $w$ their common vertex.
We note that $G_{e_1}=G_{e_2}$ fixes $w$.
Since $e_1$ and $e_2$ are not in the same orbit, the preimage of $e_2$ is a single edge $\tilde e_2$.
Consider $\tilde e_1$ in $E_1$, chosen so that $\tilde e_1$ has non-trivial stabilizer if $f_j$ is a fold of type $2$.
We claim that $\tilde e_1$ and $\tilde e_2$ are adjacent.
Otherwise, the path joining them contains a pair of folded edges, and hence an edge with trivial stabilizer; this contradicts the fact that $\tilde e_2$ and $w$ are both fixed by $G_{e_2}$. 

Now, if $f_j$ is of type 2, then $\tilde{e}_1$ and $\tilde{e}_2$ are two adjacent edges with nontrivial stabilizer that are identified in $T$, contradicting the fact that we could not perform a fold of type 1. If $f_j$ is of type 3, then $\tilde{e}_1$ and $\tilde{e}_2$ are identified in $T$ and one of them has nontrivial stabilizer, contradicting the  fact that we could not perform a fold of type 2.
\end{proof}

\paragraph{Hatcher--Vogtmann's and Handel--Mosher's models.}

In this paragraph, we relate our definition to Hatcher--Vogtmann's \cite{HV}  and Handel--Mosher's  \cite{HM14}. This explains why it is legitimate
to call it the free factor graph. This will not be used in the paper.

The natural adaptation of Hatcher--Vogtmann's definition from \cite{HV} is the following (notice that they were defining the complex of free factors as an $\Aut(F_N)$-complex, but there is a natural analogue of their definition for $\Out(F_N)$, as considered in \cite{BF14}).

\begin{de}[\emph{\textbf{Free factor graph, version 3}}]\label{ff3}
We define $\FF_3$ as the simplicial graph whose vertices are the conjugacy classes of nonperipheral proper $(G,\mathcal{F})$-free factors, in which two vertices $[A]$ and $[B]$ are joined by an edge whenever there are representatives in their conjugacy classes such that either $A\varsubsetneq B$ or $B\varsubsetneq A$. 
\end{de}

Handel--Mosher's definition from  \cite{HM14} is in terms of \emph{free factor systems}. We find convenient to describe it in terms of deformation spaces
of free splittings, but this is strictly equivalent. Recall that $T_1$ \emph{dominates} $T_2$ if there is an equivariant map $T_1\ra T_2$, or equivalently, if vertex stabilizers of $T_1$ are elliptic in $T_2$. One says that $T_1$ and $T_2$ \emph{are in the same deformation space}
if they dominate each other. In particular, two free splittings are in the same deformation space if the two free factor systems defined by
their collections of vertex stabilizers coincide.

\begin{de}[\emph{\textbf{Free factor graph, version 4}}]\label{ff4}
We define $\FF_4$ as the simplicial graph whose vertices are the deformation spaces of $(G,\calf)$-free splittings which are nontrivial and not relatively free, with an edge between two deformation spaces if one dominates the other.
\end{de}

\begin{prop}
The graph $\FF_3$ is quasi-isometric to $\FF_1$ for $\xi(G,\calf)\ge (3,2)$.
\\ The graph $\FF_4$ is quasi-isometric to $\FF_1$ in all nonsporadic cases except for $(G,\calf)=G_1\ast G_2\ast G_3$.
\end{prop}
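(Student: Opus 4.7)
The plan is to construct coarse Lipschitz maps in both directions between $\FF_1$ and each of $\FF_3, \FF_4$, and verify that they are mutual quasi-inverses. The guiding idea is that a free splitting records a finite system of proper $(G,\calf)$-free factors through its vertex stabilizers, and conversely a proper nonperipheral free factor (resp.\ a non-relatively-free deformation space) determines a distinguished family of free splittings.

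For $\FF_1 \sim \FF_3$, I would send $[A]\in\FF_3$ to any free splitting $S_A$ having $A$ as a vertex stabilizer; this is well-defined up to bounded $\FF_1$-error because any two such splittings share $A$ as a common nonperipheral elliptic subgroup (a nonperipheral free factor contains nonperipheral elements), and the map is $1$-Lipschitz since an edge $[A]\subsetneq[B]$ in $\FF_3$ witnesses $A$ as a common elliptic of $S_A$ and $S_B$. Conversely, $\psi_3\colon \FF_1\to\FF_3$ sends $S$ to the conjugacy class of a nonperipheral vertex stabilizer of $S$, except when $S$ is relatively free, in which case I would first replace $S$ by a compatible non-relatively-free free splitting; the existence of such a replacement is exactly what the hypothesis $\xi(G,\calf)\ge(3,2)$ provides via a combinatorial inspection of the Grushko graph of groups. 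The Lipschitz property of $\psi_3$ on compatibility edges follows from comparing vertex stabilizers in a common refinement, and on common-elliptic edges from Lemma~\ref{lem_tirer} and Lemma~\ref{lem_depliage}, which allow one to produce, from a common nonperipheral elliptic element, a common compatible non-relatively-free splitting of bounded complexity. The low-complexity exclusions are genuine: for $(G,\calf)=G_1\ast G_2\ast G_3$ every proper nonperipheral free factor is a conjugate of some $G_i\ast G_j$ and no two of these are nested, while for $(G,\calf)=G_1\ast G_2\ast\bbZ$ the vertex $[G_1\ast G_2]$ is isolated in $\FF_3$ since no proper nonperipheral free factor of $G$ is properly contained in or properly contains $G_1\ast G_2$.

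The argument for $\FF_1\sim\FF_4$ is entirely parallel, with deformation spaces of non-relatively-free free splittings replacing conjugacy classes of proper nonperipheral free factors: the map $\FF_4\to\FF_1$ picks any representative, and the map $\FF_1\to\FF_4$ sends a non-relatively-free splitting to its deformation space and, on a relatively free splitting, to the deformation space of a chosen compatible non-relatively-free splitting. This construction fails only for $(G,\calf)=G_1\ast G_2\ast G_3$, where the three deformation spaces of one-edge splittings $G=(G_i\ast G_j)\ast G_k$ are pairwise incomparable under domination, so $\FF_4$ is discrete while $\FF_1$ remains connected through the Grushko deformation space.

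The main obstacle will be the treatment of the relatively free case, where all vertex stabilizers are peripheral and no nonperipheral free factor is naturally attached to $S$: one must produce a non-relatively-free free splitting in the $\FF_1$-neighborhood of $S$ by a careful analysis of collapses and blow-ups of Grushko, and this is precisely where the low-complexity hypotheses enter. The Lipschitz estimates for the ``common elliptic element'' edge case also demand care and will rely crucially on Lemmas~\ref{lem_tirer} and~\ref{lem_depliage} to turn elliptic-sharing pairs into compatibility-sharing pairs of splittings of bounded complexity.
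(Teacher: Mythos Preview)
Your overall architecture matches the paper's: construct a map $\theta:\FF_1\to\FF_4$ by passing from a free splitting to a non-relatively-free collapse, and a map $\theta':\FF_4\to\FF_1$ by choosing a representative, then check they are Lipschitz quasi-inverses. You also correctly locate the only real difficulty in the relatively free (Grushko) case.

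There is one genuine misstep. Your appeal to Lemma~\ref{lem_tirer} and Lemma~\ref{lem_depliage} for the ``common nonperipheral elliptic'' edges is misplaced: those lemmas manufacture a \emph{free} splitting compatible with a given $\calz$-splitting, but here both $S_1$ and $S_2$ are already free splittings, so the lemmas give you nothing new, and they certainly do not produce a single splitting simultaneously compatible with $S_1$ and $S_2$. The common-elliptic case is in fact the easy one once the main lemma is in hand: if $a$ is elliptic in both $S_i$, the smallest free factor $A'$ containing $a$ is nonperipheral and elliptic in both, so each $\theta(S_i)$ is at bounded distance from the deformation space of a Grushko $(G,\calf\cup\{[A']\})$-tree (or from $[A']$ in $\FF_3$).

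What actually carries the weight, and what your outline does not supply, is the claim that \emph{all} non-relatively-free collapses of a fixed free splitting $T$ lie in a set of bounded diameter in $\FF_4$; this is the paper's Lemma~\ref{lem_ff}. Without it you cannot show $\theta$ is well defined up to bounded error, nor handle compatibility edges whose common refinement is Grushko. The paper's argument is concrete: reduce to $T$ Grushko, take two one-edge collapses determined by edges $e_1,e_2$ of $T/G$, and look at the tree $T'$ obtained by collapsing everything except the orbits of $e_1$ and $e_2$. If $T'$ is not Grushko it is itself a vertex of $\FF_4$ dominating both collapses; if $T'$ \emph{is} Grushko then $(G,\calf)$ has a two-edge Grushko tree, forcing $\xi(G,\calf)\in\{(2,2),(3,1),(3,2)\}$, and an explicit path is exhibited in each case (this is exactly where the exclusion of $G_1*G_2*G_3$ and, for $\FF_3$, the stronger bound $\xi\ge(3,2)$ enter). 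Your sketch gestures at ``a careful analysis of collapses and blow-ups'' but does not isolate this two-edge reduction, which is the crux.
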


We now sketch the proof of the second assertion of the proposition. The first assertion is similar and left to the reader.

Given a free splitting $T$ of $(G,\calf)$, we denote by 
$\Tilde\theta(T)\subseteq \FF_4$ the collection of all nontrivial, non relatively free deformation spaces obtained
by collapsing a  (possibly empty) $F_N$-invariant subset of edges of $T$.
Since $(G,\calf)$ is non-sporadic, $\Tilde\theta(T)$ is non-empty 
and we define $\theta(T)$ by choosing some element in $\Tilde \theta(T)$.

\begin{lemma}\label{lem_ff}
Assume that  $(G,\calf)$ is nonsporadic and $\xi(G,\calf)\neq (3,0)$.\\ 
Then there exists $C>0$ such that for every free splitting $T$ of $(G,\calf)$, the set $\Tilde\theta(T)$ 
has diameter at most $C$ in $\FF_4$. 
In addition $\theta$ defines a quasi-isometry between $\FF_1$ and $\FF_4$.
\end{lemma}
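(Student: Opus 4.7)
The plan is to prove the two assertions in sequence: first I would establish the uniform diameter bound on $\Tilde\theta(T)$, and then use it to show that $\theta$ is a quasi-isometry between $\FF_1$ and $\FF_4$ by exhibiting an explicit quasi-inverse.

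For the diameter bound, the key dichotomy is whether $T$ is itself relatively free. If $T$ is not relatively free, then the deformation space $[T]$ lies in $\FF_4$, and since every element of $\Tilde\theta(T)$ is obtained by collapsing an invariant subset of edges of $T$, the deformation space $[T]$ dominates each element of $\Tilde\theta(T)$. So $[T]$ is a universal neighbor and the diameter is at most $2$. If $T$ is relatively free, $[T]\notin\FF_4$ and one needs a different hub. Given $T_1,T_2\in\Tilde\theta(T)$ with collapsed edge sets $E_1,E_2\subseteq T$, the natural candidate is the common further collapse $T/(E_1\cup E_2)$, which is dominated by both $T_1$ and $T_2$ and which inherits a non-peripheral vertex group from either $T_i$ as soon as it is non-trivial. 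The only potential obstruction is that $E_1\cup E_2$ might cover all edges of $T$; the hypothesis $\xi(G,\calf)\neq(3,0)$ is designed exactly to rule out this degeneracy, by guaranteeing either a loop from the free part $F_N$ or at least four peripheral/free strands, so that an intermediate edge can always be kept uncollapsed.

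For the quasi-isometry, I would define a coarse inverse $\eta\colon\FF_4\to\FF_1$ by picking, for each deformation space $[S]\in\FF_4$, a representative free splitting $\eta([S])=S$. Then $\theta\circ\eta([S])\in\Tilde\theta(S)$ which also contains $[S]$ itself (since $S$ is already non-relatively-free), so the diameter bound forces $\theta\circ\eta$ to be uniformly close to the identity; conversely $\eta\circ\theta(T)$ is a collapse of $T$, hence compatible with $T$ and at $\FF_1$-distance at most $1$. To check that $\theta$ is coarsely Lipschitz, I would examine the two types of edges of $\FF_1$: in the compatibility case, the common refinement $\hat T$ of $T,T'$ satisfies $\Tilde\theta(T)\cup\Tilde\theta(T')\subseteq\Tilde\theta(\hat T)$ (up to identifying free-splitting collapses of $\hat T$ with collapses of $T$ and $T'$), and the diameter bound applied to $\hat T$ closes the argument; in the shared-nonperipheral-elliptic case, both $T$ and $T'$ are automatically non-relatively-free (the vertex group containing the shared element is non-peripheral), so $[T],[T']\in\FF_4$ already, and they can be connected in $\FF_4$ via a non-relatively-free common further collapse in which both vertex groups containing the shared elliptic remain elliptic.

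The main obstacle will be the two ``relatively free'' sub-cases: in the diameter bound when $T$ is rel-free, and analogously in the compatibility step when $\hat T$ is rel-free or in the shared-elliptic step when constructing the common further collapse. In each, one must carefully exploit the hypothesis $\xi(G,\calf)\neq(3,0)$ to produce a non-trivial, non-relatively-free hub. That this case is genuinely special is visible directly: for $G=G_1\ast G_2\ast G_3$ with $\calf=\{[G_1],[G_2],[G_3]\}$ the graph $\FF_4$ consists of three pairwise incomparable deformation spaces with no edges between them, and is therefore not quasi-isometric to the connected graph $\FF_1$.
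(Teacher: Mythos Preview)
Your overall architecture---bound the diameter of $\Tilde\theta(T)$, then exhibit a quasi-inverse---matches the paper's, but the central step in the diameter bound has a genuine gap.

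You assert that when $T$ is relatively free, the hypothesis $\xi(G,\calf)\neq(3,0)$ prevents $E_1\cup E_2$ from covering all edges of $T$. This is false. Take $(G,\calf)=(F_2,\emptyset)$ and $T$ the rose with two petals $a,b$: the only elements of $\Tilde\theta(T)$ are the two one-petal roses obtained by collapsing each petal, and for these $E_1\cup E_2$ is all of $T$. The same obstruction arises for Grushko trees of arbitrary complexity as soon as the collapsed edge sets of $T_1,T_2$ happen to partition the edge orbits. So your proposed hub $T/(E_1\cup E_2)$ degenerates to a point and the argument stops.

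The paper avoids this by going in the opposite direction. After reducing to $T$ Grushko (using $\Tilde\theta(\bar T)\subseteq\Tilde\theta(T)$ when $T$ refines $\bar T$), it picks single edges $e_i$ surviving in $S_i$ and collapses everything \emph{except} the orbits of $e_1,e_2$, producing a two-edge tree $T'$. If $T'$ is not relatively free, it lies in $\FF_4$ at distance $\le 2$ from each $S_i$ (via the one-edge collapses $T'/e_1$ and $T'/e_2$, which are dominated by $S_2$ and $S_1$ respectively). If $T'$ \emph{is} relatively free, then $(G,\calf)$ admits a Grushko tree with only two orbits of edges, forcing $\xi(G,\calf)\in\{(2,2),(3,0),(3,1),(3,2)\}$; with $(3,0)$ excluded, the remaining three cases are dispatched by explicit short paths in Figure~\ref{fig:hm}. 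Crucially, these paths leave $\Tilde\theta(T)$: in the $F_2$ example the two one-petal roses are connected through the amalgam $\langle a\rangle\ast\langle b\rangle$, which is not a collapse of the two-petal rose at all.

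A smaller issue: in your Lipschitz argument for the shared-elliptic edge of $\FF_1$, there is in general no ``common further collapse'' of two non-compatible free splittings $T,T'$ (consider $\langle a,b\rangle\ast\langle c\rangle$ and $\langle a,c\rangle\ast\langle b\rangle$ in $F_3$, sharing the elliptic $a$). The working hub is instead a common \emph{dominating} splitting: if $B$ is the smallest $(G,\calf)$-free factor containing $g$, then $B$ is elliptic in both $T$ and $T'$ (being contained in a vertex group of each), so any Grushko $(G,\calf\vee\{[B]\})$-tree lies in $\FF_4$ and dominates both.
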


\begin{figure}[htb]
\centering
\includegraphics[width=\linewidth]{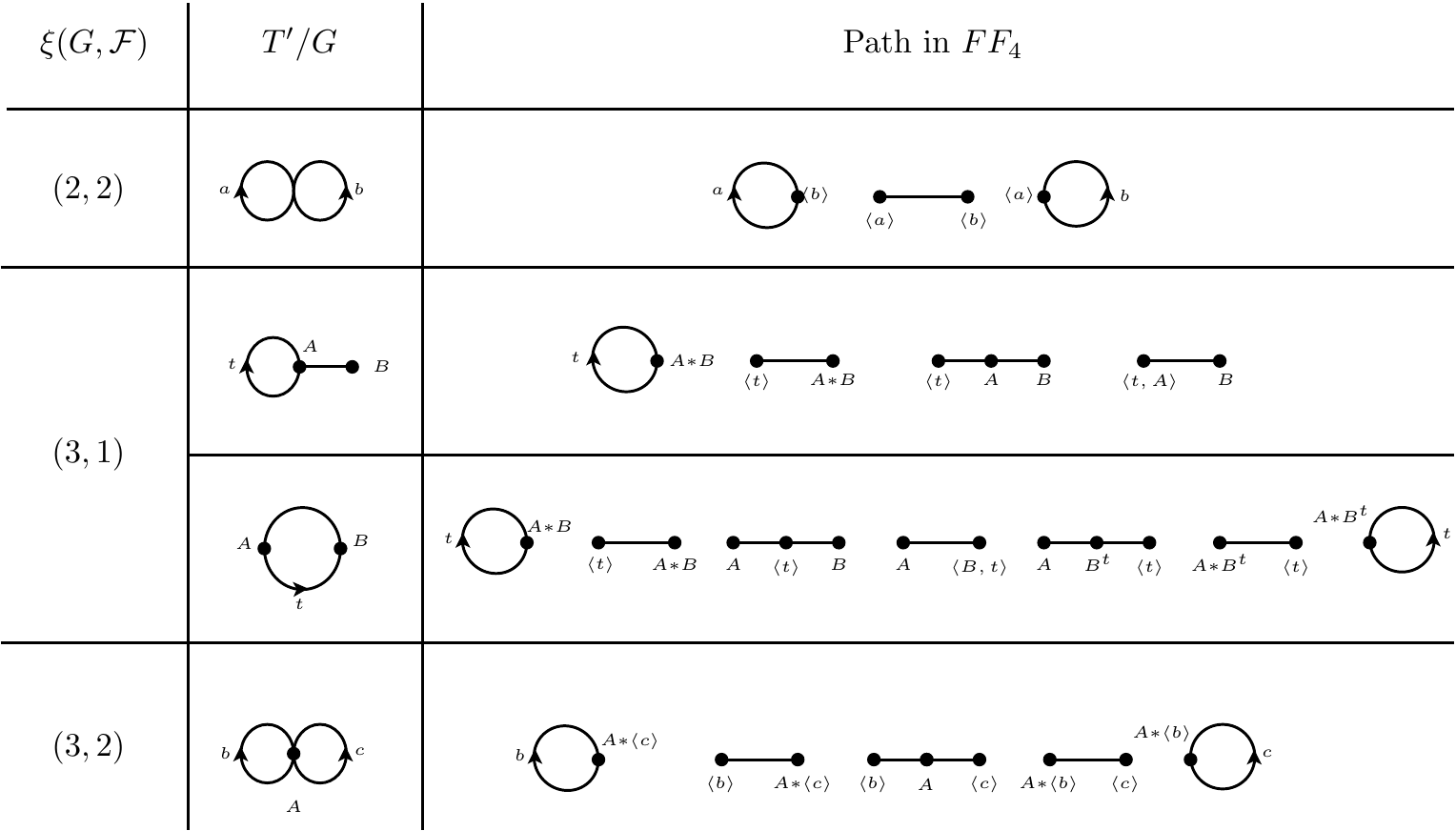}
\caption{Various cases in the proof of Lemma~\ref{lem_ff}.}
\label{fig:hm}
\end{figure}

\begin{proof}
Since $\Tilde\theta(\overline{T})\subseteq\Tilde\theta(T)$ whenever $\overline{T}$ is a collapse of $T$, we can assume without loss of generality that $T$ is a Grushko $(G,\calf)$-tree. 
Let $S_1,S_2$ be two trees obtained by collapsing some sets of edges $\Gamma_1,\Gamma_2\subseteq T/G$. 
Let $e_1$ (resp.\ $e_2$) be an edge in the complement of $\Gamma_1$ (resp.\ $\Gamma_2$) in $T/G$. Let $T'$ be the tree obtained from $T$ by collapsing all edges outside of the orbits of $e_1$ and $e_2$ to points. If $T'$ is not a Grushko tree, then it defines a vertex in $\FF_4$
at distance at most 2 from $S_1$ and $S_2$, and we are done.
If $T'$ is a Grushko tree, then there exists a Grushko $(G,\calf)$-tree with two orbits of edges. The only possibilities for $\xi(G,\calf)$ are thus $(2,2)$, $(3,1)$ and $(3,2)$ (as $(3,0)$ has been excluded). In each of these cases, we have depicted on Figure~\ref{fig:hm} the possible shapes for $T'/G$, and a path in $\FF_4$ between the two one-edge collapses of $T'$.

Let now $\theta':\FF_4\to \FF_1$ be a map assigning to a deformation space of free splittings, any free splitting in this deformation space. It is easy to check that both $\theta$ and $\theta'$ are coarsely Lipschitz, and quasi-inverse of each other. Therefore $\theta$ defines a quasi-isometry. We leave the details to the reader.
\end{proof}

\begin{rk}
We comment on low complexity cases. 
\begin{itemize}
\item When $\xi(G,\calf)\le (2,0)$, then $G=\{1\}$, $G=G_1$, $G=\mathbb{Z}$ or $G=G_1\ast G_2$, and in all these cases $\FF_3$ and $\FF_4$ are empty. 
\item When $\xi(G,\calf)=(2,1)$, then $G=G_1\ast\mathbb{Z}$ and $\FF_3$ and $\FF_4$ are totally disconnected (all proper free factors are cyclic).
\item When $\xi(G,\calf)=(3,0)$, then $G=G_1\ast G_2\ast G_3$ and every proper free factor is of the form $\langle G_i, G_j^a\rangle$, so $\FF_3$ and $\FF_4$ are totally disconnected.
\item When $\xi(G,\calf)=(2,2)$, then $G=F_2$ and all proper free factors are cyclic, so $\FF_3$ is totally disconnected; $\FF_4$ is a subdivision of the Farey graph.
\item When $\xi(G,\calf)=(3,1)$, then $G=G_1\ast G_2\ast\mathbb{Z}$, so all proper free factors are of the form $\langle G_1,G_2^a\rangle$ or $\langle G_1,t\rangle$ or $\langle t\rangle$, and $\FF_3$ is disconnected. 
\end{itemize}
\end{rk}

\subsubsection{Hyperbolicity}

Hyperbolicity of the free factor graph was proved by Bestvina--Feighn \cite{BF14} in the case of free groups, and by Handel--Mosher in \cite[Theorem 1.4]{HM14} in the relative setting for all nonsporadic cases except $(G,\calf)=G_1*G_2*G_3$.
In order to describe $\partial_\infty \FF$, we will need to know that $\FF$ is also an electrification of $\mathcal{Z}S$,
i.e.\ to construct the coarsely alignment preserving map $j:\ZS\ra \FF$ mentioned in the introduction.
This is done by applying Kapovich--Rafi's criterion, and proves simultaneously the hyperbolicity of $\FF$, including the case 
where $(G,\calf)=G_1*G_2*G_3$.

\begin{prop}\label{fz-ff}
The inclusion map
$j:\ZS\to \FF_2$ is a coarsely surjective, coarsely alignment-preserving map from $\ZS$ to $\FF_2$. 
\\ In particular $\FF$ is Gromov hyperbolic, and optimal folding paths between any two trees with trivial edge stabilizers are unparametrized quasi-geodesics with constants that depend only on $(G,\calf)$ (in particular they are uniformly close to geodesics).
\end{prop}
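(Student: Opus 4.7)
The plan is to apply Kapovich--Rafi's criterion (Proposition \ref{KR}) to the inclusion map $j : \ZS \to \FF_2$. First, I observe that $j$ sends vertices to vertices (both graphs have the same vertex set, namely the equivariant homeomorphism classes of $\calz$-splittings of $(G,\calf)$), is the identity on vertices and hence surjective on vertices, and is $1$-Lipschitz because every edge of $\ZS$ (compatibility) is also an edge of $\FF_2$. It therefore remains to produce a uniform constant $K>0$ such that whenever $T_1,T_2$ are $\calz$-splittings with $d_{\FF_2}(T_1,T_2)\le 1$, any $\ZS$-geodesic from $T_1$ to $T_2$ has $\FF_2$-diameter at most $K$.

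If $T_1$ and $T_2$ are compatible, the $\ZS$-geodesic between them is a single edge and there is nothing to prove. The interesting case is when $T_1$ and $T_2$ share a common nonperipheral simple elliptic element $a$. I would then apply Lemma \ref{lem_depliage} to produce free splittings $S_1,S_2$, compatible with $T_1,T_2$ respectively, in which $a$ remains elliptic. Replacing $T_i$ by $S_i$ changes the $\ZS$-distance only by $1$, so it suffices to bound the $\FF_2$-diameter of a $\ZS$-geodesic between two Grushko representatives of $S_1$ and $S_2$ (chosen after adding metric data).

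The key idea is to use an optimal morphism $f:S_1\to S_2$ and the associated optimal folding path $(U_t)_{t\in[0,L]}$. By Handel--Mosher's theorem on folding paths in $\FS$, and the fact that the inclusion $\FS\hookrightarrow\ZS$ is coarsely alignment-preserving, $(U_t)$ is an unparametrized quasi-geodesic in $\ZS$, so any $\ZS$-geodesic from $S_1$ to $S_2$ is uniformly close to it. The crucial observation is that $a$, being elliptic in $U_0=S_1$, remains elliptic in every $U_t$: morphisms send fixed points to fixed points. Moreover, an inductive analysis of optimal folds of Grushko trees (each fold either identifies edges coming from distinct orbits, without ever creating nontrivial edge stabilizers, since the initial edge stabilizers are trivial) shows that each $U_t$ remains a free splitting, hence a $\calz$-splitting. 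Therefore every $U_t$ shares the simple nonperipheral elliptic element $a$ with $S_1$, giving $d_{\FF_2}(S_1,U_t)\le 1$, so the folding path has $\FF_2$-diameter at most $2$, and any $\ZS$-geodesic from $T_1$ to $T_2$ has uniformly bounded $\FF_2$-diameter.

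Proposition \ref{KR} then implies that $\FF_2$ is Gromov hyperbolic and that $j$ is coarsely alignment-preserving; combined with Lemma \ref{lem_FF2}, this gives hyperbolicity of $\FF$. The final assertion about optimal folding paths between trees with trivial edge stabilizers being unparametrized quasi-geodesics in $\FF$ follows by composing with the coarsely alignment-preserving maps $\FS\to\ZS\to\FF_2$ and invoking Handel--Mosher's theorem in $\FS$. The main obstacle in this plan is the verification that the optimal folding path between Grushko trees stays inside the space of free splittings (so that the $\FF_2$-adjacency applies at every intermediate tree); this requires a careful but elementary bookkeeping on the types of folds, in the spirit of the argument in Lemma~\ref{lem_depliage}.
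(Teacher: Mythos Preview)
Your overall strategy matches the paper's (Kapovich--Rafi criterion, Lemma~\ref{lem_depliage}, a folding path with $a$ elliptic throughout), but there is a genuine gap at the central step. Having produced free splittings $S_1,S_2$ compatible with $T_1,T_2$ in which $a$ is elliptic, you then invoke an optimal morphism $f:S_1\to S_2$. Such a morphism need not exist: for $f$ to exist, every vertex stabilizer of $S_1$ must be elliptic in $S_2$, and there is no reason for the free factor of $S_1$ containing $a$ to be elliptic in $S_2$ (take for instance one-edge splittings $G=B_1*C_1$ and $G=B_2*C_2$ with $a\in B_1\cap B_2$ but $B_1$ hyperbolic in $S_2$). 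Your passage to ``Grushko representatives'' does not repair this: if you mean Grushko $(G,\calf)$-refinements then $a$ becomes hyperbolic and your $\FF_2$-diameter bound collapses; if you mean merely equipping $S_i$ with a metric, the existence problem for $f$ remains. A related minor point: your reason why intermediate trees stay free (``initial edge stabilizers are trivial'') is not the right one; what actually forces triviality is the existence of a morphism $U_t\to S_2$ and the triviality of arc stabilizers in the \emph{target}.

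The paper fixes this cleanly by introducing the smallest $(G,\calf)$-free factor $A$ containing $a$ and replacing $S_1,S_2$ by Grushko $(G,\calf\cup\{[A]\})$-trees $S,S'$ (obtained from Lemma~\ref{lem_depliage} followed by a refinement, still at bounded $\ZS$-distance from $T,T'$). These trees lie in the same relative outer space, so an optimal folding path between them exists, stays inside that outer space (hence all intermediate trees are free splittings with $A$, and in particular $a$, elliptic), and is an unparametrized quasigeodesic in $\ZS$ by \cite[Theorem~3.3]{Hor14-2}. This single move simultaneously guarantees existence of the folding path, ellipticity of $a$ along it, and triviality of edge stabilizers, without any fold-by-fold analysis.
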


\begin{proof}
The map $j$ is clearly surjective on vertices. 
In view of Proposition~\ref{KR}, it is thus enough to show that if $T$ and $T'$ are two 
$\Z$-splittings at distance $1$ from each other in $\FF_2$, then there exists a geodesic from $T$ to $T'$ in $\ZS$, whose image in $\FF_2$ has bounded diameter. 
If $T$ and $T'$ are compatible, then $T$ and $T'$ are at distance $1$ in $\ZS$, and we are done. 
We can thus assume that $T$ and $T'$ share a common nonperipheral simple elliptic element $a\in G$. Denoting by $A\subseteq G$ the smallest $(G,\calf)$-free factor that contains $a$, Lemma~\ref{lem_depliage} says that we can find Grushko $(G,\calf\cup\{[A]\})$-trees $S,S'$ at bounded distance from $T,T'$ in $\ZS$. 
Consider an optimal folding path $\gamma$ from $S$ to $S'$ among Grushko $(G,\calf\cup\{[A]\})$-trees. In particular, $a$ is elliptic in all intermediate trees,
so the image of $\gamma$ in $FF_2$ has diameter at most $1$.
Now $\gamma$ is an (unparameterized) quasi-geodesic in $\ZS$ with uniform constants \cite[Theorem~3.3]{Hor14-2},
so it lies at bounded distance from any geodesic from $T$ to $T'$.
\end{proof}

\subsection{The $\calz$-factor graph}\label{sec-z}

In the present section, we define the $\calz$-factor graph, which turns out to be quasi-isometric in the context of free groups to a version of Kapovich--Lustig's intersection graph \cite{KL09} or Dowdall--Taylor's co-surface graph \cite{DT16}.

\subsubsection{Definition and various models}

\begin{de}[\textbf{\emph{$\calz$-factor graph}}]
The \emph{$\calz$-factor graph} $\ZF$ is the graph whose vertices are the $\calz$-splittings of $(G,\calf)$, in which two splittings are joined by an edge if they are compatible or have a common nonperipheral elliptic element.  
\end{de}

Notice that the only difference with the version $\FF_2$ of the relative free factor graph is that the common elliptic elements defining the edges of $\ZF$ are no longer required to be simple. 

Here is an example of trees that are far when viewed in $\FF_2$ but are close in $\ZF$ in the context of free groups.
Let $\Sigma$ be a surface with boundary, and let $\grp{b}$ be the fundamental group of a boundary component. Let $c,c'$ be two simple closed curves in $\Sigma$. The dual splittings $T_c,T_{c'}$ of $\pi_1(\Sigma)$ are cyclic splittings that are at distance at most $1$ in $\ZF$ because $b$ is elliptic in both $T_c$ and $T_{c'}$. If $\Sigma$ has several boundary components, then $b$ is simple so in fact $T_c$ and $T_{c'}$ are at distance at most $1$ in $\FF_2$. 
On the other hand, if $\Sigma$ has a single boundary component, then $c,c'$ can be chosen so that $T_c$ and
$T_{c'}$ are arbitrarily far in $\FF_2$ (for example, one can take $c'$ to be the image of $c$ under a high power of a pseudo-Anosov diffeomorphism $\Phi$ of $\Sigma$, and use the fact that $\Phi$ is a loxodromic isometry of $\FF$ \cite{BF14}).

\begin{figure}[htb]
  \centering
\includegraphics[scale=0.8]{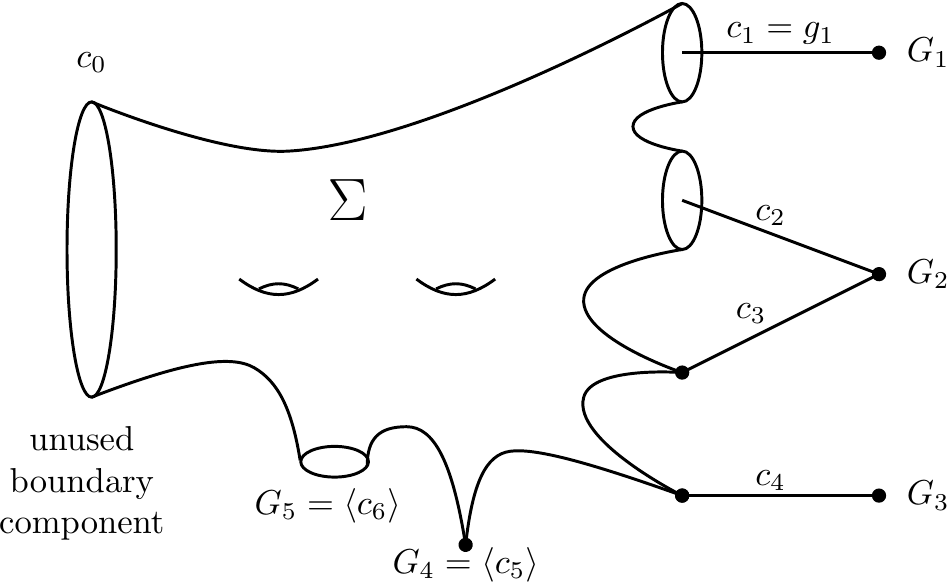}
  \caption{A geometric decomposition.} 
  \label{fig-arat-surf}
\end{figure}

We will actually show below (Theorem~\ref{zfq}) that in general, if $T,T'$ are at distance $1$ from each other in $\ZF$ but far enough in $\FF_2$, then they come from a similar situation involving a surface; this motivates the following definitions.

A \emph{$QH$ vertex} in a $(G,\calf)$-splitting $S$ is a vertex $v$ whose vertex group $G_v$
is identified with the fundamental group of $\Sigma$, a compact, connected (possibly non-orientable) $2$-orbifold with conical singularities,
and such that all incident edge groups and all peripheral subgroups contained in $G_v$ are conjugate into a boundary or conical subgroup of $\pi_1(\Sigma)$. Note that in \cite{GL16}, the definition of QH is more general as it allows orbifolds with mirrors and a possible fiber.

A \emph{geometric decomposition} of $(G,\calf)$ (see Figure \ref{fig-arat-surf}) is a (maybe trivial) splitting of $(G,\calf)$ with a QH vertex group $G_v=\pi_1(\Sigma)$, such that the stabilizer of every edge and of every vertex outside of the orbit of $v$ is peripheral,
and all edge stabilizers are nontrivial and cyclic (possibly finite). 
We call $\Sigma$ the \emph{underlying orbifold} of the decomposition. 
 
Every conical group $\grp{c}$ of $\Sigma$ is necessarily peripheral; it might happen that no incident
edge group is conjugate to $\grp{c}$, in which case $\grp{c}$ is a group in $\calf$.
Given $\grp{c}$ the fundamental group of a boundary component of $\Sigma$, it might happen that no
incident edge group is conjugate in $\grp{c}$, in which case $\grp{c}$ might be peripheral or not.
When $\grp{c}$ is not peripheral, we say that the corresponding boundary component is \emph{unused}.

\begin{rk}
In a geometric decomposition, it is allowed to have several edge groups conjugate into the same boundary
group, or to have an edge group properly contained in a boundary subgroup.
Replacing $S$ by its tree of cylinders $S_c$ for the co-peripheral equivalence relation (i.e.\ $G_e\sim G_{e'}$
if $\grp{G_e,G_{e'}}$ is peripheral), one gets another geometric decomposition where these peculiarities do not appear.
One can then describe $S_c$ as follows:
one of the vertex groups of $S_c$ is the fundamental group of the orbifold $\Sigma$, 
and the other vertex groups are a subcollection of the peripheral subgroups $G_1,\dots,G_k$. 
Then we add some edges amalgamating a boundary or conical subgroup of $G_v$ to a subgroup of some $G_i$.
Choices are made in such a way that for each conical group and each boundary group, there is at most one edge carrying a conjugate of this group.
The peripheral subgroups $G_j$ that do not appear as vertex groups are conjugate to a boundary or conical subgroup of $\Sigma$ which  does not appear as an incident edge group.
\end{rk}

Since $\pi_1(\Sigma)$ is freely indecomposable relative to its boundary subgroups, there has to be at least one unused boundary component.
We note that if $\Sigma$ has several unused boundary components, then the fundamental group of each of them is simple (as can be seen by looking at the free splitting dual to a properly embedded arc with endpoints in another boundary component).

\begin{de}[\textbf{\emph{Quadratic element}}]\label{def-quadratic}
An element $g\in G$ is \emph{quadratic} if it is non-simple, and occurs as a generator of the fundamental group of the (single) unused boundary component in some geometric decomposition of $(G,\calf)$.
\end{de}

See Lemma \ref{quadratic} below for an equivalent characterization of quadratic elements which justifies the terminology.

In the context of free groups, another definition of the $\calz$-factor graph was introduced by Mann \cite{Man14} (where it was called \emph{intersection graph},
and which turns out to be quasi-isometric to Dowdall-Taylor's \emph{co-surface graph} \cite[Proposition~4.1]{DT16}). In his definition, Mann only joins two splittings by an edge if they have a common simple or quadratic elliptic element, as follows. 

\begin{de}[\textbf{\emph{$\calz$-factor graph, quadratic version}}]
We let $\ZF_{q}$ be the graph whose vertices are the $\calz$-splittings of $(G,\calf)$, in which two splittings are joined by an edge if they are compatible or have a common nonperipheral elliptic element \textbf{which is simple or quadratic}.  
\end{de}

There is an obvious inclusion map $i:\ZF_q\to \ZF$. It turns out that this inclusion is actually an  isomorphism, and the two graphs are the same. 
This is a consequence of the following theorem.

\begin{theo}\label{zfq}
Let $T$ and $T'$ be two noncompatible $\calz$-splittings that have a common nonperipheral elliptic element $g$ which is not a proper power.
\\ Then $T$ and $T'$ have a common nonperipheral elliptic element which is simple or quadratic.
\\ More precisely, either $T$ and $T'$ share a nonperipheral simple elliptic element, or the cyclic JSJ decomposition of $G$ relative to $\calf\cup\{\grp{g}\}$
is a geometric decomposition in which $\grp{g}$ is conjugate to the fundamental group of an unused boundary component (in particular $g$ is quadratic). 
\end{theo}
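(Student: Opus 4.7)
The strategy is to analyze the cyclic JSJ decomposition $S$ of $G$ relative to $\calf \cup \{\grp g\}$ and show that, unless $T$ and $T'$ share a simple common elliptic element, $S$ must be a geometric decomposition with $\grp g$ on an unused boundary component. If $g$ itself is simple we are done, so assume $g$ is non-simple. Then $G$ is one-ended relative to $\calf \cup \{\grp g\}$, since any relative free splitting would restrict to a $(G,\calf)$-free splitting in which $g$ is elliptic, contradicting non-simplicity; consequently the cyclic JSJ decomposition $S$ exists. I would then show that every edge stabilizer of $S$ is nontrivial (same one-endedness argument applied to a collapse around any trivial edge) and peripheral: a nonperipheral cyclic edge stabilizer would, by Lemma~\ref{lem_tirer} applied to $S$ as a cyclic $(G,\calf)$-tree, be contained in a proper $(G,\calf)$-free factor and hence be simple, and it would be a common elliptic element of $T$ and $T'$ by the universal ellipticity of JSJ edge stabilizers, placing us in the first alternative.

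Next I would invoke the structure theorem for cyclic JSJ decompositions: any cyclic splitting of $(G,\calf \cup \{\grp g\})$ is obtained from $S$ by enlarging QH vertices along essential simple closed curves and arcs on their orbifolds, followed by a collapse. Non-compatibility of $T$ and $T'$ then forces some QH vertex $v$ of $S$ whose orbifold $\Sigma_v$ carries $T$- and $T'$-induced curve systems with essential intersection, else the enlargements would combine into a common refinement. The main technical task is to eliminate all other non-peripheral structure in $S$: for any vertex $w\neq v$ with $G_w$ non-peripheral, I would extract a simple nonperipheral element of $G_w$ that is elliptic in both $T$ and $T'$. When $w$ is rigid, $G_w$ is elliptic in every cyclic splitting of $(G,\calf \cup\{\grp g\})$, and collapsing $S$ around $w$ and applying Lemma~\ref{lem_tirer} embeds $G_w$ into a proper $(G,\calf)$-free factor; when $w$ is a second QH vertex, the $T$- and $T'$-curve systems on $\Sigma_w$ are compatible (incompatibility being concentrated at $v$), so a suitable curve on $\Sigma_w$ provides the element. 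I expect the main obstacle to be the bookkeeping when $\grp g$ itself lies in a rigid vertex group, since there the refinement argument for rigidity does not directly go through and a more delicate analysis of how $g$ interacts with the compatible free splitting produced by Lemma~\ref{lem_tirer} is needed.

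Once $S$ has a unique QH vertex orbit $v$ and all other vertices are peripheral, it is a geometric decomposition with underlying orbifold $\Sigma_v$. Since $g$ is non-peripheral and elliptic in $S$, it lies in $G_v = \pi_1(\Sigma_v)$, and the QH condition forces $\grp g$ to be conjugate to a boundary subgroup rather than a conical one (conical subgroups are peripheral). This boundary is unused, for otherwise $\grp g$ would be conjugate to an incident edge stabilizer and therefore peripheral. If $\Sigma_v$ carries any additional unused boundary, the remark preceding Definition~\ref{def-quadratic} shows that all unused boundaries are simple, so such a boundary provides a simple common elliptic element and places us in the first alternative. Otherwise $\grp g$ is the unique unused boundary of the geometric decomposition $S$, and $g$ is quadratic by definition.
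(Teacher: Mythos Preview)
Your opening matches the paper's proof: assuming $g$ is not simple, $G$ is freely indecomposable relative to $\calf\cup\{\grp g\}$, the cyclic JSJ tree $T_J$ exists, and its edge stabilizers are nontrivial and peripheral (a nonperipheral one would be simple by the remark after Lemma~\ref{lem_tirer} and elliptic in both $T$ and $T'$ by universal ellipticity). Your concluding paragraph, placing $g$ on the unique unused boundary component, is also correct.

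The gap is in the middle, where you eliminate other nonperipheral vertices of $T_J$. The step ``collapsing $S$ around $w$ and applying Lemma~\ref{lem_tirer} embeds $G_w$ into a proper $(G,\calf)$-free factor'' does not work: Lemma~\ref{lem_tirer} only produces a free splitting $U$ compatible with $S$, and there is no reason the vertex group $G_w$ is elliptic in $U$. Your treatment of a second QH vertex also fails, since the incompatibility of $T$ and $T'$ may arise from intersecting curve systems on \emph{several} QH orbifolds at once, not just on $\Sigma_v$. And the obstacle you flag yourself---$g$ sitting in a rigid vertex---is never resolved.

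The paper handles all three issues with a single observation you are missing: because the edge stabilizers of $T_J$ are peripheral, any vertex $v$ with nonperipheral stabilizer can be blown up using a Kurosh decomposition of $G_v$ (the peripheral incident edge groups reattach at their fixed points). If $T_J$ had two orbits of nonperipheral vertices $v,w$, then $g$ fixes a vertex in (say) the orbit of $w$; blowing up $v$ produces a $(G,\calf)$-free splitting in which $g$ is still elliptic, contradicting non-simplicity of $g$. So there is exactly one orbit of nonperipheral vertices. If that vertex $v$ were rigid, $G_v$ would be elliptic in both $T$ and $T'$; the same blowup now dominates a Grushko $(G,\calf)$-tree, so $(G_v,\calf_{|G_v})$ inherits the full (nonsporadic) complexity of $(G,\calf)$ and hence $G_v$ contains a simple nonperipheral element, again contradicting our standing assumption. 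This Kurosh-blowup argument replaces your curve-system bookkeeping entirely and needs no case analysis on where $g$ sits.
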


\begin{cor}
The inclusion $i$ is a graph isomorphism $\ZF_q\simeq \ZF$.
\qed 
\end{cor}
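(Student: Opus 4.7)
The plan is to observe that $i:\ZF_q\to\ZF$ is a graph monomorphism which is the identity on vertices (the two graphs share the same vertex set, namely equivariant homeomorphism classes of $\calz$-splittings of $(G,\calf)$). So the task reduces to showing that every edge of $\ZF$ is already present in $\ZF_q$. Let $T,T'$ be two distinct $\calz$-splittings joined by an edge in $\ZF$. If they are compatible, the edge is by definition also present in $\ZF_q$, so I may assume they share some common nonperipheral elliptic element $g\in G$ and are noncompatible.

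The first step will be to extract a primitive root in order to meet the hypotheses of Theorem~\ref{zfq}. Since $g$ is nonperipheral, it has infinite order (torsion elements of a free product are always conjugate into a factor, hence peripheral). Write $g=h^n$ with $h$ not a proper power. Then $h$ is nonperipheral (otherwise so would be $g=h^n$), and $h$ is elliptic in any tree in which $g$ is elliptic: if $h$ were hyperbolic with axis $\ell$ and translation length $\|h\|>0$, then $g=h^n$ would translate along $\ell$ by $n\|h\|\neq 0$, contradicting the ellipticity of $g$. Thus $h$ is a common nonperipheral elliptic element of $T$ and $T'$ which is not a proper power.

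The second step will be to invoke Theorem~\ref{zfq} applied to the noncompatible pair $T,T'$ with the common elliptic element $h$. The dichotomy gives either a common nonperipheral simple elliptic element of $T$ and $T'$, or else the conclusion that $h$ itself is quadratic. In either case, $T$ and $T'$ share a common nonperipheral elliptic element which is simple or quadratic, so they are joined by an edge in $\ZF_q$. This shows that $i$ is surjective on edges, hence a graph isomorphism.

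No substantive obstacle is expected here, since the corollary is essentially a direct repackaging of Theorem~\ref{zfq}; the only minor point to verify carefully is the reduction to a primitive root, which relies on the elementary observation that roots of elliptic elements remain elliptic in trees, together with the fact that torsion in $(G,\calf)$ is peripheral.
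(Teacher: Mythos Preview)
Your proof is correct and matches the paper's approach: the paper gives no proof beyond \qed, treating the corollary as an immediate consequence of Theorem~\ref{zfq}, and your argument is precisely the natural unpacking of that implication. The one detail you supply that the paper leaves implicit---passing to a primitive root $h$ of $g$ so as to meet the ``not a proper power'' hypothesis of Theorem~\ref{zfq}---is handled correctly (nonperipheral elements in a free product have infinite cyclic centralizer, so a primitive root exists, is nonperipheral, and remains elliptic wherever $g$ is).
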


\begin{proof}[Proof of Theorem~\ref{zfq}]
Assume that $T$ and $T'$ do not have any simple nonperipheral elliptic element in common, and let $g$ be a common nonperipheral elliptic element, which is not a proper power.
We will prove that $g$ is quadratic.

Since $g$ is not simple, $G$ is freely indecomposable relative to $\calf\cup\{\langle g\rangle\}$. 
Beware that $\calf\cup\{\langle g\rangle\}$ is not a free factor system; so below, when we talk about peripheral subgroups, it is relative to the free factor system $\calf$.
Let $T_J$ be a JSJ splitting of $G$ relative to $\calf\cup\{\langle g\rangle\}$ over the class of cyclic (finite or infinite) subgroups.
Its flexible vertex groups are $QH$ (\cite[Theorem 6.5]{GL16}).

Edge stabilizers in $T_J$ are elliptic in all splittings of $(G,\calf\cup\{\langle g\rangle\})$ over cyclic subgroups, in particular they are elliptic in $T$ and $T'$. By Lemma \ref{lem_tirer},  
edge stabilizers in a cyclic splitting of $(G,\calf)$ are simple (possibly peripheral). Thus, if $T_J$ had an edge with nonperipheral stabilizer, then $T$ and $T'$ would share a simple nonperipheral elliptic element, a contradiction. Hence all edge stabilizers in $T_J$ are peripheral (and nontrivial because $G$ is freely indecomposable relative to $\calf\cup\{\langle g\rangle\}$). 

We claim that $T_J$ contains exactly one orbit of vertices with nonperipheral stabilizer. 
To prove that there is at least one, consider $\hat T$ obtained from $T_J$ by blowing up each vertex $v$ with nonperipheral stabilizer using
a Kurosh decomposition of $G_v$ (this is possible because all edge stabilizers are peripheral).
Then $\hat T$ dominates any Grushko $(G,\calf)$-tree. 
In particular, $\hat T\neq T_J$, which proves that $T_J$ contains at least one vertex with nonperipheral stabilizer.
Now assume that there exist two vertices $v,w$ of $T_J$ not in the same orbit such that $G_v$ and $G_w$ are both nonperipheral. Then up to exchanging $v$ and $w$, we can assume that $g$ fixes a vertex not in the orbit of $v$. The group $G_v$ has a nontrivial Kurosh decomposition, and this can be used to blowup $T_J$ at $G_v$ into a free splitting of $(G,\calf)$ relative to $g$. This shows that $g$ is simple, a contradiction.

We now claim that the unique vertex $v\in T_J/G$ with nonperipheral vertex group is a QH vertex. If not, $G_v$ is universally elliptic with respect to all cyclic splittings of $G$ relative to $\calf\cup\{\langle g\rangle\}$, so every element of $G_v$ is elliptic in both $T$ and $T'$. Blowing up $v$ using a Kurosh decomposition of $G_v$ yields a tree $\hat T$ dominating a Grushko $(G,\calf)$-tree;
since $(G,\calf)$ is nonsporadic, this Kurosh decomposition cannot be of the form $A\ast B$, so $G_v$ contains a simple element.
It follows that $T$ and $T'$ share a nonperipheral simple elliptic element, a contradiction.
\end{proof}

\subsubsection{Hyperbolicity}

The proof of the following theorem is due to Mann \cite{Man14} in the context of free groups. We extend it to the case of free products.

\begin{theo}\label{zf-hyp}
There exists a coarsely surjective, coarsely alignment-preserving map from $\FF$ to $\ZF$.
\\ In particular $\ZF$ is Gromov hyperbolic. 
\end{theo}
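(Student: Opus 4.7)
The plan is to apply Kapovich--Rafi's criterion (Proposition~\ref{KR}) to the natural map $\phi\colon \FF_2 \to \ZF$ defined as the identity on the common vertex set of $\calz$-splittings (extended to send edges to edge paths). This map is $1$-Lipschitz because every edge of $\FF_2$ is also an edge of $\ZF$: the edge condition in $\ZF$ is weaker, as it does not require the common elliptic element to be simple. It is also surjective on vertices. Proposition~\ref{KR} will then yield simultaneously that $\ZF$ is Gromov hyperbolic and that $\phi$ is coarsely alignment-preserving.

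To verify the Kapovich--Rafi condition, I would show that whenever $T,T'$ are $\calz$-splittings with $d_{\ZF}(T,T') \leq 1$, the $\phi$-image of any $\FF$-geodesic from $T$ to $T'$ has uniformly bounded diameter in $\ZF$. By hyperbolicity of $\FF$ (Proposition~\ref{fz-ff}), it suffices to exhibit a uniform $\FF$-quasi-geodesic from $T$ to $T'$ whose $\ZF$-image is bounded: the actual $\FF$-geodesic will then stay uniformly close to it, and Lipschitz continuity of $\phi$ will give the required bound. If $T$ and $T'$ are compatible or share a simple nonperipheral elliptic element, then they already lie at $\FF$-distance at most $1$. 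Otherwise, Theorem~\ref{zfq} provides a common nonperipheral elliptic element $g$, not a proper power, which is quadratic: the cyclic JSJ decomposition of $G$ relative to $\calf\cup\{\langle g\rangle\}$ is a geometric decomposition in which $\langle g\rangle$ is the fundamental group of an unused boundary component of the QH orbifold.

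In this quadratic case, the strategy is to construct an optimal folding path from $T$ to $T'$ through trees in which $g$ remains elliptic. First, I would unfold $T$ and $T'$ at bounded $\ZS$-distance to trees $\hat T, \hat T'$ with trivial edge stabilizers and with $g$ still elliptic, by blowing up each vertex stabilizer using an appropriate relative Grushko decomposition (this step plays the role that Lemma~\ref{lem_depliage} played in the proof of Proposition~\ref{fz-ff}, adapted to the fact that $\langle g\rangle$ is not a free factor). An optimal folding path from $\hat T$ to $\hat T'$ in this relative setting preserves the ellipticity of $g$ at every intermediate tree, so its $\ZF$-image has diameter at most $1$. Mimicking Proposition~\ref{fz-ff}, it should be an unparameterized $\FF$-quasi-geodesic with uniform constants: this uses the quasi-geodesicity of folding paths in $\ZS$ from \cite[Theorem~3.3]{Hor14-2}, combined with the coarse alignment preservation of $j\colon \ZS\to\FF$ established in Proposition~\ref{fz-ff}.

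The main obstacle is the quadratic case. Since $g$ is not simple, $\calf\cup\{\langle g\rangle\}$ is not a free factor system, and the standard outer-space machinery for free products does not apply directly. Theorem~\ref{zfq} is the essential input that translates this into a tractable situation governed by a geometric decomposition; the delicate technical step is to set up an appropriate space of $\calz$-splittings in which $g$ is elliptic and to verify that optimal folding paths within it are $\FF$-quasi-geodesics with constants independent of $T$ and $T'$.
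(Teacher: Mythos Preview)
Your overall strategy---applying Kapovich--Rafi's criterion to the identity map $\FF_2\to\ZF$, reducing via Theorem~\ref{zfq} to the quadratic case, and then seeking a folding path whose $\ZF$-image is bounded---matches the paper's approach. However, your proposed execution of the quadratic case contains a genuine gap.

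You write that you would ``unfold $T$ and $T'$ at bounded $\ZS$-distance to trees $\hat T, \hat T'$ with trivial edge stabilizers and with $g$ still elliptic''. This is impossible: a nonperipheral element is elliptic in some $(G,\calf)$-free splitting if and only if it is simple, and in the quadratic case $g$ is precisely \emph{not} simple. So there is no free splitting (let alone a Grushko tree) in which $g$ is elliptic, and no folding path through such trees can exist. You correctly flag that $\calf\cup\{\langle g\rangle\}$ is not a free factor system, but this is not a mere technicality to be patched---it obstructs the entire construction you sketch.

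The paper's resolution replaces ``$g$ elliptic'' by ``$g$ quadratic in the sense of Lemma~\ref{quadratic}'': one passes to Grushko trees $R,R'$ dual to maximal arc systems on the underlying orbifold $\Sigma$ (with endpoints on the unused boundary component), so that a fundamental domain for the axis of $g$ crosses each edge-orbit exactly twice. A volume-versus-translation-length count shows this quadratic property persists along a suitably chosen optimal folding path $R=R_0,\dots,R_n=R'$: since $||g||_{R_i}=2\Vol(R_i/G)$ and $g$ is not simple, each fold drops both quantities by exactly $2$. Corollary~\ref{cor-quadratic} then gives each $R_i$ bounded $\ZF$-distance to a $\calz$-splitting in which $g$ is elliptic, yielding the required diameter bound. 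This combinatorial invariant along the folding path is the missing idea in your proposal.
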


Before proving the theorem, we give an alternative definition of quadratic elements (that justifies the name).

Note that a nonperipheral element $g\in G$ is conjugate to its inverse if and only if it is contained in an infinite dihedral group.
In particular, a quadratic element is not conjugate to its inverse.

\begin{lemma}\label{quadratic} Let $g\in G$ be a nonperipheral element which is not simple and not conjugate to its inverse.
\\ Then $g$ is quadratic if and only if 
there exists a Grushko tree $R$ such that some fundamental domain for the axis of $g$ in $R$ intersects each orbit of edges exactly twice (regardless of orientation).
\\ More precisely, if $R$ is a Grushko tree, then the following are equivalent:
\begin{enumerate}\renewcommand{\theenumi}{(\roman{enumi})}\renewcommand{\labelenumi}{\upshape\theenumi}
\item  there exists a geometric decomposition of $(G,\calf)$ with underlying orbifold $\Sigma$ such that $g$ generates the fundamental group of the unused boundary component $b$ of $\Sigma$, and $R$ is dual to a collection of disjoint properly embedded arcs on $\Sigma$ with endpoints in $b$;
\item some fundamental domain for the axis of $g$ in $R$ intersects each orbit of edges exactly twice.
\end{enumerate}
\end{lemma}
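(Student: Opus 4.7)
The plan is to prove the more precise equivalence (i) $\Leftrightarrow$ (ii) for a given Grushko tree $R$, from which the main biconditional of the lemma follows: if $g$ is quadratic, the defining geometric decomposition with unused boundary $b$ and $\pi_1(b)=\langle g\rangle$ admits a cut system of disjoint arcs with endpoints in $b$ whose complementary pieces have trivial or peripheral relative fundamental group, producing a Grushko tree $R$ satisfying (i), hence (ii); the converse goes through (ii) $\Rightarrow$ (i) and the definition of quadratic.

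\textbf{Direction (i) $\Rightarrow$ (ii).} This is essentially a direct inspection. Let $\Sigma$, $b$ and $\alpha_1,\dots,\alpha_k$ be as in (i). The $G$-tree $R$ dual to the arc system on $\Sigma$ has one orbit of edges per arc $\alpha_i$, all with trivial stabilizer, and vertex orbits corresponding to the complementary pieces of $\Sigma\setminus\bigcup\alpha_i$ together with the peripheral vertex groups of the original geometric decomposition. The axis $L_g \subset R$ is read off by recording the successive intersections of $b\subset\Sigma$ with the $\alpha_i$; a single traversal of $b$, which is exactly a fundamental domain for the action of $\langle g\rangle$ on $L_g$, meets each $\alpha_i$ in its two endpoints and in no interior point, so the fundamental domain crosses each orbit of edges of $R$ exactly twice.

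\textbf{Direction (ii) $\Rightarrow$ (i).} The geometric content of the lemma lies in reconstructing $\Sigma$ from the combinatorics of $R$ and of $L_g$. Consider the quotient graph of groups $\mathcal{G}:=R/G$ and the immersed loop $c:=L_g/\langle g\rangle\subset\mathcal{G}$, which by hypothesis traverses each edge of $\mathcal{G}$ exactly twice. I will build $\Sigma$ as a thickening of $\mathcal{G}$ endowed with a fat-graph (ribbon) structure chosen so that $c$ is one boundary component of the thickening and each edge of $\mathcal{G}$ has $c$ adjacent on both of its sides. Concretely, at each trivial-stabilizer vertex $u$ of $\mathcal{G}$, the passages of $c$ through $u$ partition the edge ends used by $c$ at $u$ into pairs (the ``corners'' of the face $c$); I realize these pairs as consecutive pairs in a cyclic ordering of the edge ends at $u$, inserting any unused edge ends arbitrarily. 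At each peripheral vertex of $\mathcal{G}$, I insert a puncture or cone point carrying the corresponding peripheral subgroup. The resulting orbifold $\Sigma$ has $c$ as a boundary component with $\pi_1(c)=\langle g\rangle$, and the remaining faces of the fat graph together with the peripheral insertions produce a splitting of $G$ with $\pi_1(\Sigma)$ as QH vertex. That this splitting is a geometric decomposition of $(G,\calf)$, that $c$ is unused (since $\langle g\rangle$ is nonperipheral), and that $R$ is indeed dual to the arcs $\alpha_e$ associated to the edges $e$ of $\mathcal{G}$ is then a matter of checking the axioms; the assumption that $g$ is not simple, together with Lemma~\ref{lem_tirer}, ensures that $G$ is freely indecomposable relative to $\calf\cup\{\langle g\rangle\}$, so that the decomposition obtained genuinely comes from an orbifold structure.

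\textbf{Main obstacle.} The subtle step is arranging the cyclic orderings so that the two passages of $c$ through each edge $e$ of $\mathcal{G}$ lie on \emph{opposite} sides of $e$ in the resulting ribbon structure; this is exactly what is needed so that the arc $\alpha_e$ dual to $e$ has both endpoints on $c$ (and not on some other boundary component). The hypothesis that $g$ is not a proper power ensures that $c$ is an embedded, rather than multiply covered, loop in $\mathcal{G}$, and the hypothesis that $g$ is not conjugate to its inverse pins down a coherent orientation on $c$ that is compatible with the $G$-equivariant identifications between edges of $R$ in the same $G$-orbit. Together these hypotheses rule out the degenerate patterns that would force the two passages through some edge onto the same side of it, and so allow the ribbon structure to be assembled coherently across all vertices.
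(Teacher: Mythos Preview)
Your direction (i) $\Rightarrow$ (ii) is fine and matches the paper. The difficulties are all in (ii) $\Rightarrow$ (i), and here your ribbon-graph-downstairs approach, while plausible in spirit, has two genuine gaps.

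\textbf{Peripheral vertices are not handled.} You write ``at each peripheral vertex of $\mathcal{G}$, I insert a puncture or cone point carrying the corresponding peripheral subgroup'', but this is exactly the step that needs work. A peripheral group $G_i$ need not be cyclic, and the edges of $R$ incident to a lift $\tilde v$ form a $G_{\tilde v}$-set whose structure is lost when you pass to the quotient graph $R/G$. What you must check is that the bands passing through $\tilde v$ assemble into a $1$-manifold on which $G_{\tilde v}$ acts with cyclic component stabilizers; only then do you get boundary circles or cone points of an orbifold. The paper does this explicitly by building, upstairs, the square complex $X$ obtained by gluing a band $[0,1]\times\mathbb{R}$ along each translate of the axis $A_g$ (this uses that $\langle g\rangle$ is the full stabilizer of $A_g$, which is where ``$g$ not conjugate to $g^{-1}$'' enters), and then analysing the link of each vertex of $X$ as a $1$-manifold with a free cocompact $G_x$-action. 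Working purely in $R/G$ you have suppressed exactly the data needed for this check.

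\textbf{The ``main obstacle'' is named but not resolved.} You correctly isolate the issue of putting the two passages of $c$ through an edge on opposite sides, but your last paragraph only asserts that the hypotheses ``rule out the degenerate patterns''. In the paper the corresponding obstruction is that the link of a trivial-stabilizer vertex $x$ in $X$ could be disconnected; this link is identified with the Whitehead graph of $g$ at $x$ in $R$, and if it is disconnected then \cite[Proposition~5.1]{GH15-1} shows $g$ is simple, contradicting the hypothesis. That Whitehead-graph argument is the actual content you are missing; without it (or an equivalent), there is no reason your cyclic orders can be chosen coherently across all vertices. Similarly, ``$g$ not a proper power'' is deduced in the paper (if $g=h^2$ then a fundamental domain for $h$ meets every edge-orbit once, so $h$ is simple), rather than assumed.

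In short, the paper's route is to build a canonical band complex upstairs and read the orbifold off from its link structure; your route makes choices downstairs and would need both a careful treatment of peripheral vertices and the Whitehead-graph argument to justify those choices.
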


We will say that a nonsimple element $g$ which is not conjugate to its inverse is quadratic \emph{in $R$} 
whenever $R$ satisfies (ii). 

\begin{proof}
Assume first that $\grp{g}$ is conjugate to the fundamental group of an unused boundary component $b$ of the underlying orbifold $\Sigma$ of some geometric decomposition of $(G,\calf)$.
Let $\cali$ be a maximal collection of disjoint, properly embedded, non-parallel arcs in $\Sigma$ with endpoints in $b$. Then the splitting $R$ dual to $\cali$ is a Grushko splitting, and some fundamental domain for the axis of $g$ in $R$ intersects each orbit of edges exactly twice. 

Conversely, assume that $g$ satisfies (ii), 
and let $R$ be a Grushko tree such that the axis of $g$ in $R$ intersects each orbit of edges exactly twice. 
If $g=h^2$ is the square of some element $h$,
then some fundamental domain for the axis of $h$ in $R$ intersects each orbit of edges once, so 
$h$ is simple, and so is $g$. Therefore $g$ is not a proper power. 
Since $g$ is not contained in an infinite dihedral group, $\grp{g}$ is the full stabilizer of its axis $A_g$.
Let $B=[0,1]\times \bbR$ be a bi-infinite band, and let $X$ be the square complex obtained from $R$ by gluing a copy of $B$ on each translate of $A_g$ along $\{0\}\times \bbR$. 
We call the \emph{boundary} $\partial X$ of $X$ the union of copies of $\{1\}\times \bbR$ in $X$.
Our hypothesis on $g$ shows that $X$ is a pseudo-surface with boundary $\partial X$: every edge is contained in exactly two squares, 
except the edges in $\partial X$.
Since $\grp{g}$ is the stabilizer of $A_g$ and acts freely on $A_g$, every edge and square of $X$ has trivial stabilizer.
Being a $1$-manifold, each connected component $l$ of the link in $X$ of a vertex $x\in X\setminus \partial X$ is a circle or a line,
and the stabilizer $G_l\subseteq G_x$ of $l$ acts freely on $l$.
As there are only finitely many  $G_x$-orbits of edges incident on a vertex $x\in R$, the group $G_l$ acts cocompactly on $l$. 
Therefore $G_l$ is cyclic (finite or infinite) and $l/G_l$ is a circle.

If there exists some vertex $x\in X\setminus\partial X$ with trivial stabilizer
and whose link is not connected (hence a finite union of circles), then $g$ is simple. Indeed, the link of $x$ is naturally identified with the Whitehead graph of $g$ at $x$ in $R$, so by \cite[Proposition~5.1]{GH15-1},
$g$ is simple (alternatively,
blowing up the orbit of $x$ in $X$, one can directly construct a $(G,\calf)$-free splitting in which $g$ is elliptic).

Choose $\eps>0$ small enough, and for each $x\in X\setminus\partial X$ with nontrivial stabilizer, let $S_x$ be the sphere of radius $\eps$ around $x$.
Each connected component of $S_x$ separates $X$ (because $X$ is simply connected). Let $T$ be the simplicial $(G,\calf)$-tree having one vertex $v_C$ for each connected component $C$ of the complement of the union of the spheres $S_x$, two vertices $v_C,v_{C'}$ being joined by an edge if the closures of $C$ and $C'$ intersect.
Either $C$ is the ball of radius $\eps$ around some vertex $x$ with non-trivial stabilizer, in which case the stabilizer of the vertex $v_C$ is the corresponding peripheral group.
Otherwise, $C$ is a surface with boundary, and each boundary component of $C$ is of one of the following types: either it is 
a line in $\partial X$,
or else it is a connected component of some $S_x$, which can be either a circle or a line.
Let $C'$ be the union of $C$ together with the disks bounded by the circles in $\partial C$.
Then $C'$ is a simply connected surface all whose boundary components are lines, and on which $G_C$ acts properly
(the only non-trivial point stabilizers are the
centers of added disks).
Thus, $G_C$ is the fundamental group of the orbifold $C'/G_C$ with conical singularities, 
and $T/G$ is a geometric decomposition of $G$, and $\grp{g}$ is the fundamental group of an unused boundary component of $C'/G_C$.

We finally check that $R$ is dual to a system of arcs $\cali$ in $C'/G_C$. Let $\tilde \cali_B$ be the union of segments
of $B=[0,1]\times \bbR$ of the form $[0,1]\times \{m\}$ such that $(0,m)$ is glued to the midpoint of an edge in $R$.
Let $\tilde \cali$ be the union of translates of $\tilde \cali_B$. 
This is a $G$-invariant family of disjoint arcs, each of which being the union of two translates of arcs in $\tilde \cali_B$.
The tree dual to this family of arcs is isomorphic to $R$, and the image of $\Tilde \cali$ in $C'/G_C$
is a finite disjoint union of properly embedded arcs as required.
\end{proof}

\begin{cor}\label{cor-quadratic} Let $(G,\calf)$ be a nonsporadic free product, let $R$ be a Grushko tree, and let $g$ be a nonsimple element which is quadratic in $R$.
Then there exists a $\calz$-splitting $S$ of $(G,\calf)$ compatible with a collapse of $R$ such that $g$ is elliptic in $S$. 
\\ In particular, we have $d_{\ZF_q}(S,R)\leq 2$.
\end{cor}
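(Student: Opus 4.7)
The plan is to use Lemma~\ref{quadratic}(i) to realize the setup geometrically, and then cut along a simple closed curve parallel to an arc of $\cali$ to produce~$S$.

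First, invoke Lemma~\ref{quadratic}(i) to obtain a geometric decomposition of $(G,\calf)$ with underlying orbifold $\Sigma$, unused boundary $b$ satisfying $\pi_1(b)=\grp{g}$, and a system $\cali$ of disjoint properly embedded arcs in $\Sigma$ with endpoints in $b$ such that $R$ is dual to $\cali$. Since $g$ is non-simple, $b$ is the unique unused boundary, and the nonsporadic assumption on $(G,\calf)$ ensures that $\cali$ contains an arc $\alpha$ separating $\Sigma$ into two pieces, at least one of which has non-cyclic fundamental group.

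Next, pick such an arc $\alpha$ and a simple closed curve $c$ in the interior of $\Sigma$ parallel to $\alpha\cup(\text{sub-arc of }b)$ on the non-cyclic side, so that $c$ is disjoint from $\alpha$ and bounds a sub-orbifold $\Sigma_1$ whose fundamental group is non-cyclic. Then $\pi_1(c)$ is infinite cyclic and injects on both sides of the splitting of $\pi_1(\Sigma)$ dual to $c$. Blowing up the QH vertex of the geometric decomposition by this cyclic splitting and collapsing the peripheral cyclic edges produces a minimal $(G,\calf)$-tree $S$ with a single edge orbit of nonperipheral cyclic stabilizer $\pi_1(c)$; thus $S$ is a $\calz$-splitting. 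Since $c$ is disjoint from $b$, the element $g$ lies in the vertex group of $S$ corresponding to the side of $c$ containing $b$, and so is elliptic in $S$.

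Finally, let $R'$ be the collapse of $R$ obtained by collapsing all edges corresponding to arcs in $\cali\setminus\{\alpha\}$. Then $R'$ is dual to $\{\alpha\}$, and one of its vertex groups is precisely $\pi_1(\Sigma_1)$, which contains $\pi_1(c)$. Since $c$ and $\alpha$ are disjoint in $\Sigma$, the $(G,\calf)$-tree dual to $\{\alpha,c\}$ (after collapsing the peripheral cyclic edges) is a common refinement of $R'$ and $S$, so $S$ is compatible with $R'$. This yields $d_{\ZF_q}(S,R)\leq d_{\ZF_q}(S,R')+d_{\ZF_q}(R',R)\leq 2$. The main obstacle is the complexity assertion in the first step, which amounts to verifying that the nonsporadic hypothesis on $(G,\calf)$ precludes exactly the orbifolds for which no such $\alpha$ exists (e.g., $\Sigma$ a disk with at most $2$ cone points or certain low-complexity surfaces).
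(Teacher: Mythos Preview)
Your overall strategy---realize $R$ via arcs on $\Sigma$ using Lemma~\ref{quadratic}, collapse to a one-arc tree $R'$, and find a simple closed curve $c$ disjoint from that arc---matches the paper's proof. However, the specific way you produce $c$ introduces a genuine gap.

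You require $\cali$ to contain a \emph{separating} arc $\alpha$ (with one complementary piece having non-cyclic fundamental group), so that $c$ can be taken as a push-off of $\alpha\cup(\text{sub-arc of }b)$ bounding that piece. But this separating arc need not exist. Take $(G,\calf)=(F_2,\emptyset)$, which is nonsporadic, and $g=[a,b]$: then $\Sigma$ is a once-punctured torus, $g$ is quadratic in the standard rose, and the arc system $\cali$ consists of two arcs dual to $a$ and $b$. Since $\chi(\Sigma)=-1$ and cutting along an essential separating arc would produce two pieces each with one boundary circle and with Euler characteristics summing to $0$, one piece would have to be a disk---impossible for an essential arc. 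Hence every essential arc on the once-punctured torus with endpoints on $b$ is non-separating, and your first step fails. More generally, whenever $\Sigma$ has positive genus the arc system may consist entirely of non-separating arcs.

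The paper's proof avoids this by not prescribing how $c$ relates to the chosen arc $\gamma$: it simply asserts that \emph{some} essential simple closed curve disjoint from $\gamma$ exists unless $\Sigma$ is a sphere with at most three marked points or a projective plane with at most two, and then checks (Figure~\ref{fig-sporadic}) that those exceptional orbifolds force $(G,\calf)$ to be sporadic. In the once-punctured torus example, cutting along a non-separating arc yields an annulus whose core serves as $c$. Your push-off construction can in fact be adapted to non-separating arcs (the curve $\alpha\cup b_1$ is still simple and essential, and a parallel copy is disjoint from $\alpha$), but you would then need a different argument for why the resulting $c$ gives a nontrivial $\calz$-splitting, since $c$ need not bound a subsurface. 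It is cleaner to drop the explicit construction and argue by complexity of $\Sigma$ as the paper does.
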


\begin{figure}[htb]
\centering
\includegraphics{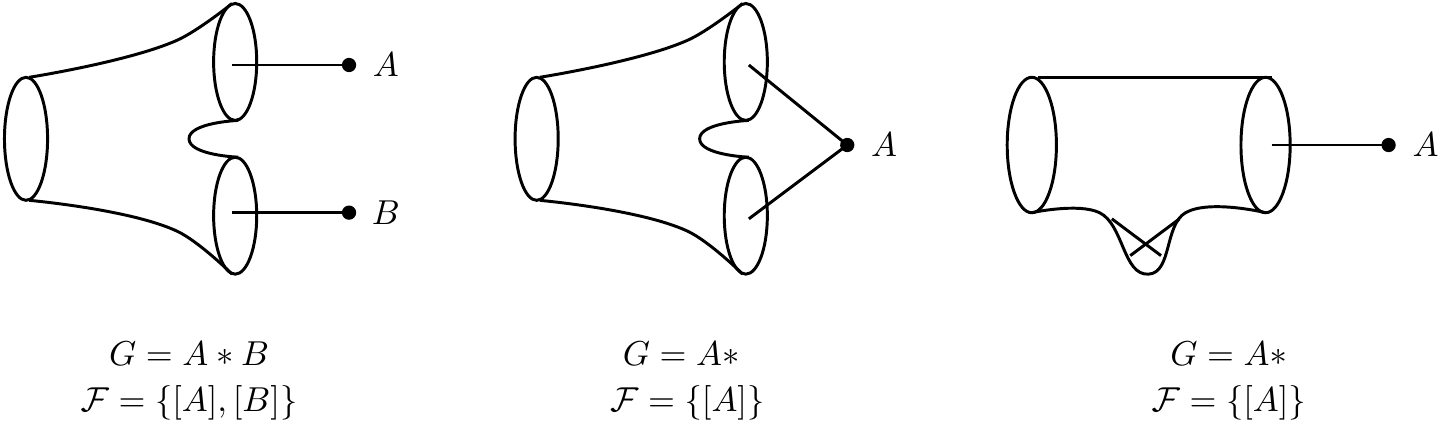}
\caption{Surfaces of small complexity lead to sporadic cases.}
\label{fig-sporadic}
\end{figure}

\begin{proof}
By Lemma \ref{quadratic}, the tree $R$ is dual to a system of arcs on an orbifold $\Sigma$ in a geometric decomposition of $(G,\calf)$. 
As $g$ is nonsimple, the orbifold $\Sigma$ has a single unused boundary component.
Let $R'$ be the collapse of $R$ dual to a single arc $\gamma$ of this collection.
Unless when $\Sigma$ is a sphere with at most 3 punctures or cone points, or when $\Sigma$ is
a projective plane with at most 2 punctures or cone points, then one can find a simple closed curve $c$ disjoint from $\gamma$. The tree $S$ dual to $c$ satisfies the lemma. In the remaining cases, one checks that $(G,\calf)$ is sporadic (see Figure~\ref{fig-sporadic}).
\end{proof}

\begin{proof}[Proof of Theorem~\ref{zf-hyp}]
We will work with the models $\FF_2$ and $\ZF_{q}$. In view of Proposition \ref{KR}, it is enough to prove that if $S,S'$ are two $\calz$-splittings of $(G,\calf)$ at distance $1$ from one another in $\ZF_{q}$, then there exists a geodesic from $S$ to $S'$ in $\FF_2$ whose image in $\ZF_{q}$ has bounded diameter. 
We can assume that $d_{\FF_2}(S,S')\ge 2$, as otherwise this is obvious. In other words $S$ and $S'$ are not compatible, and do not share any nonperipheral simple elliptic element, but they
share a nonperipheral elliptic element $g$. Without loss of generality, $g$ is not a proper power.
By Theorem~\ref{zfq},
the cyclic JSJ decomposition $T_J$ of $G$ relative to $\calf\cup\{\grp{g}\}$
is a geometric decomposition of $(G,\calf)$ in which $\grp{g}$ is conjugate to the fundamental group
of an unused boundary component. In particular, $g$ is not conjugate to its inverse.
The splittings $S$ and $S'$ are dominated by splittings $\Tilde S,\Tilde S'$ dual to curves on the underlying orbifold $\Sigma$. Since edge stabilizers of $\Tilde S$ fix an edge in $S$ and are simple,
$S$ and $\tilde S$ are at distance at most 1 in $\FF_2$. Similarly, $d_{FF_2}(S',\tilde S')\leq 1$.
We can find Grushko trees $R,R'$ which are dual to maximal collections of arcs on $\Sigma$ with all endpoints on the unused boundary curve, such that $R$ (resp.\ $R'$) 
has a collapse that is compatible with $\tilde S$ (resp.\ $\tilde S'$). In view of Proposition~\ref{fz-ff}, it is enough to show that there exists an optimal folding path from $R$ to $R'$ whose image in $\ZF_q$ has bounded diameter.

Let $g$ be a quadratic element that generates the fundamental group of the unused boundary curve of $\Sigma$. Then some fundamental domain of the axis of $g$ crosses every orbit of edges exactly twice in both $R$ and $R'$.

Let $f:R\ra R'$ be an optimal map, and up to subdividing, assume without loss of generality that it sends edge to edge and vertex to vertex. We assign length 1 to each edge. If $f$ is not an isomorphism, then $\Vol(R/G)>\Vol(R'/G)$; since $||g||_R=2\Vol(R/G)$ and $||g||_{R'}=2\Vol(R'/G)$, we get that $||g||_{R}>||g||_{R'}$.
It follows that there are two adjacent edges $e,e'$ in $A_g$ that are folded by $f$.

Let $R_1$ be the Grushko $(G,\calf)$-tree obtained from $R$ by folding these two edges. Let $I_1\subset R_1$ 
be a fundamental domain of the axis of $g$ with endpoints at vertices of $R_1$.
One has $||g||_{R_1}\leq ||g||_{R}-2$, and since $g$ is not simple, $I_1$ has to contain at least two edges in 
each orbit, so $||g||_{R_1}\geq 2\Vol(R_1/G)=||g||_{R}-2$. It follows that $||g||_{R_1}=||g||_{R}-2=2\Vol(R_1/G)$
so that $I_1$ crosses every orbit of edges exactly twice.
Arguing by induction, one constructs an optimal folding path $R=R_0$, $R_1$,\dots, $R_n=R'$ such that
$g$ is quadratic in $R_i$ for all $i$. By Corollary~\ref{cor-quadratic}, the diameter in $\ZF_q$ of $\{R_0,\dots, R_n\}$ is
at most 4.
\end{proof}

\subsection{The $\ZRC$-factor graph}\label{sec-zmax}

A cyclic subgroup $H$ of a free product $(G,\calf)$ is $\ZRC$ if it is 
nonperipheral and root-closed (maybe trivial). A \emph{$\ZRC$-splitting} of $(G,\calf)$ is a splitting of $(G,\calf)$ over $\ZRC$ groups.

In \cite{Hor14-2}, the second author proved that the graph of $\ZRC$-splittings (called $\Zmax$-splittings in \cite{Hor14-2})
is not quasi-isometric to the graph of $\Z$-splittings.
In this section, we show on the contrary that the graph of $\ZRC$-factors is quasi-isometric to the graph of $\Z$-factors.
This section will not be used in the rest of the paper.

\begin{de}[\textbf{\emph{$\ZRC$-factor graph}}]
The \emph{$\ZRC$-factor graph} $\ZRCF$ is the graph whose vertices are the  $\ZRC$ splittings of $(G,\calf)$,
and in which two splittings are joined by an edge if they are compatible or have a common nonperipheral elliptic element.  
\end{de}

There is a natural inclusion map $i:\ZRCF\ra \ZF$ which is clearly $1$-Lipschitz.

\begin{prop}\label{z-zmax} 
The map $i$ is a quasi-isometry between $\ZRCF$ and $ \ZF$.
\end{prop}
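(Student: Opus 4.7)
The plan is to construct a coarse quasi-inverse $\theta\colon \ZF \to \ZRCF$ to the inclusion $i$. Given a $\calz$-splitting $T$, Lemma~\ref{lem_tirer} furnishes a free splitting $\theta(T)$ of $(G,\calf)$ compatible with $T$; since the trivial subgroup is $\ZRC$, we have $\theta(T) \in \ZRCF$, and the compatibility yields $d_{\ZF}(T, i(\theta(T))) \leq 1$ and $d_{\ZRCF}(T, \theta(i(T))) \leq 1$ for $T \in \ZRCF$. All that remains is to check that $\theta$ is coarsely Lipschitz.

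A useful preliminary observation is that any two free splittings $S_1, S_2$ compatible with a common tree $T \in \ZF$ lie at $\ZRCF$-distance at most $2$. Indeed, either $T$ is itself a free splitting (and thus lies in $\ZRCF$), so serves as an intermediate vertex; or $T$ has a nontrivial cyclic edge stabilizer $\grp{g}$, in which case $g$ is an edge stabilizer of any common refinement of $T$ with each $S_i$, hence a common nonperipheral elliptic element of $S_1$ and $S_2$, giving $d_{\ZRCF}(S_1,S_2) \leq 1$.

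To prove coarse Lipschitzness, assume $d_{\ZF}(T_1,T_2) = 1$ and distinguish three cases. If $T_1$ and $T_2$ are compatible, a minimal common refinement is still a $\calz$-splitting (edge stabilizers being cyclic subgroups of edge stabilizers of the $T_i$, and nontriviality preserving nonperipherality by malnormality of peripheral subgroups), and Lemma~\ref{lem_tirer} produces a free splitting $S$ compatible with it, hence with both $T_i$; the preliminary observation bounds $d_{\ZRCF}(\theta(T_i), S) \leq 2$. If $T_1$ and $T_2$ share a simple nonperipheral elliptic element $g$, Lemma~\ref{lem_depliage} yields free splittings $S_i$ compatible with $T_i$ in which $g$ is elliptic; then $S_1,S_2 \in \ZRCF$ share $g$ and satisfy $d_{\ZRCF}(S_1,S_2) \leq 1$, and the preliminary observation closes the gap.

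The main obstacle is the third case, where $T_1$ and $T_2$ share a non-simple nonperipheral common elliptic element but no simple common elliptic element. Replacing the common elliptic by its (elliptic, still non-simple, nonperipheral) root, we may assume it is an element $g$ that is not a proper power; by Theorem~\ref{zfq}, the cyclic JSJ $T_J$ of $G$ relative to $\calf \cup \{\grp{g}\}$ is a geometric decomposition, whose underlying orbifold $\Sigma$ has $\grp{g}$ as the fundamental group of its unique unused boundary component (uniqueness because multiple unused boundaries would force $\grp{g}$ to be simple). For any essential simple closed curve $c$ on $\Sigma$, the one-edge splitting $T_c$ of $(G,\calf)$ dual to $c$ is a $\ZRC$-splitting: if $h \in G$ satisfies $h^n \in \pi_1(c)\setminus\{1\}$, then $h$ is elliptic in $T_J$ and its fixed-point set meets the QH vertex (peripheral edge stabilizers of $T_J$ cannot contain the nonperipheral element $h^n$), so $h \in \pi_1(\Sigma)$, where $\pi_1(c)$ is maximal cyclic. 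Moreover, all $T_c$ share $g$ as a nonperipheral elliptic element (since $g$ lies in a boundary of $\Sigma$, hence in one side of the splitting along $c$), so the family $\{T_c\}$ has $\ZRCF$-diameter $\leq 1$. Following the strategy of the proof of Theorem~\ref{zf-hyp}, each $T_i$ is dominated by some $T_{c_i}$, so $\pi_1(c_i)$ is a common elliptic element of $T_i$ and $T_{c_i}$, and it is simple in $G$: any arc on $\Sigma$ disjoint from $c_i$ dualizes (after collapsing the peripheral edges of $T_J$) to a $(G,\calf)$-free splitting in which $\pi_1(c_i)$ is elliptic. Lemma~\ref{lem_depliage} then produces a free splitting compatible with $T_i$ sharing $\pi_1(c_i)$ with $T_{c_i}$, and the preliminary observation bounds $d_{\ZRCF}(\theta(T_i), T_{c_i})$ uniformly, completing the proof.
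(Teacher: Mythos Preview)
Your approach via Theorem~\ref{zfq} is cleaner than the paper's in the first two cases, but the third case has a genuine gap: the claim that the one-edge splitting $T_c$ dual to an essential simple closed curve $c$ on $\Sigma$ is always a $\ZRC$-splitting fails when $\Sigma$ is non-orientable and $c$ is one-sided. For a one-sided curve, a regular neighbourhood of $c$ is a M\"obius band $M$ with $\pi_1(M)=\grp{c}$ and $\partial M$ representing $c^2$, so the dual graph-of-groups decomposition is $\pi_1(\Sigma\setminus M)\ast_{\grp{c^2}}\grp{c}$; its edge group is $\grp{c^2}$, which is \emph{not} root-closed in $G$ (the element $c$ is a root). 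Your root-closedness argument correctly shows that $\grp{c}$ is maximal cyclic in $G$, but the edge stabilizer of $T_c$ is $\grp{c^2}$, not $\grp{c}$, so this does not help. Since the curve $c_i$ arising from the domination $\tilde S_i\to T_i$ can be one-sided, your path through $T_{c_1},T_{c_2}$ may leave $\ZRCF$.

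The paper confronts exactly this obstruction (via a different route using the Fujiwara--Papasoglu regular neighbourhood rather than Theorem~\ref{zfq}): when $\gamma_i$ is one-sided, it searches for a two-sided curve in $\Sigma\setminus M$, and shows this succeeds unless $\Sigma$ has very small complexity, forcing $(G,\calf)$ to be sporadic or $(F_2,\emptyset)$. Those cases are handled separately at the outset (both graphs are bounded, using e.g.\ that $[a,b]$ is elliptic in every nontrivially-stabilized $\ZRC$-splitting of $F_2$). Your argument is missing both the one-sided curve repair and the treatment of the low-complexity cases; with those additions your Theorem~\ref{zfq}-based strategy can be made to work, and would give a somewhat more direct proof than the paper's case analysis on whether $a_1$ is elliptic in $T_2$.
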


\begin{proof}[Proof of Proposition~\ref{z-zmax}]
  If $(G,\calf)$ is sporadic then $\ZF$ and $\ZRCF$ are bounded because $\FS$ is and every $\calz$-splitting
is compatible with a free splitting (Lemma~\ref{lem_tirer}). The graph $\ZRCF$ is also bounded if $(G,\calf)=(F_2,\es)$. Indeed, taking $F_2=\grp{a,b}$, every $\ZRC$-splitting $S$ is at distance at most 2 from a $\ZRC$-splitting $S'$ whose edge stabilizers are all non-trivial; but the commutator $[a,b]$ is elliptic in all such $S'$, so $\ZRCF$ has finite diameter.
Since $i$ is almost surjective (see below), $\ZF$ is also  bounded.

We now assume that  $(G,\calf)$ is nonsporadic and $G\neq F_2$.
Given $T\in\ZF$, we define 
$\tilde\theta(T)\subset \ZRCF$ as follows: 
if edge stabilizers of $T$ are trivial, $\tilde \theta(T)$ is the set of trees $S\in \ZRCF$ which are compatible with $T$;
otherwise  
$\tilde\theta(T)$ is the set of trees $S\in \ZRCF$ such that every edge group of $T$ is elliptic in $S$.
We note that in both cases, $\tilde \theta(T)$ contains all $\ZRC$ splittings compatible with $T$. The set $\tilde\theta(T)$  has diameter at most $2$ and is nonempty by Lemma \ref{lem_tirer}. We define $\theta(T)$ by choosing some element in $\tilde \theta(T)$.
Since $i\circ \theta(T)$ is at distance $1$ from $T$ in $\ZF$, and since $S\in \tilde\theta( i(S))$, $\theta$ is a quasi-inverse of $i$. 

It suffices to prove that if $T_1,T_2\in\ZF$ are at distance $1$, then $\tilde\theta(T_1)$ and $\tilde\theta(T_2)$ are at bounded distance from each other. 
If $T_1,T_2\in\ZF$ have a common refinement $T\in\ZF$, then $\tilde\theta(T)$ is nonempty and contained in $\tilde\theta(T_1)\cap\tilde\theta(T_2)$ and the result is clear.
Otherwise, there exists a nonperipheral element $g\in G$ which is elliptic in $T_1$ and $T_2$.
If $g$ is simple, then there is a free factor system $\calf_g$ in which 
$g$ and all elements of $\calf$ are peripheral, and such that $T_1$ and $T_2$ are $(G,\calf_g)$-trees.
By Lemma~\ref{lem_tirer},  
there exists a $(G,\calf_g)$-free splitting $S_i$ compatible with $T_i$;
then $S_i\in\tilde\theta(T_i)$ and $S_1$ and $S_2$ are at distance at most $1$ since $g$ is elliptic in both of them.

This shows that we can assume that there is no nontrivial $(G,\calf)$-free splitting in which $g$ is elliptic, and in particular,
all edge stabilizers of $T_i$ are nontrivial.
Replacing $T_i$ by a collapse $T'_i$, and using that $\tilde\theta(T'_i)\supseteq \tilde\theta(T_i)$, we may assume that $T_i$ has a single orbit of edges.
We denote by $a_i$ a generator of an edge stabilizer of $T_i$.
We consider a free splitting $S_i$ compatible with $T_i$; in particular $a_i$ is elliptic in $S_i$ and $S_i\in\tilde\theta(T_i)$.

We first assume that $a_1$ is elliptic in $T_2$. Since edge stabilizers of $S_1$ are trivial, and since $a_1$ is elliptic in both $S_1$ and $T_2$, we can consider a blowup $\hat S_1$ of $S_1$ dominating $T_2$ in which $a_1$ is elliptic
(see \cite[Proposition~2.2]{GL16} for instance). Let $U_1,\dots,U_n=T_2$ be a folding path between a collapse $U_1$ of $\hat S_1$ and $T_2$,
and let $t< n$ be the last time for which $U_i$ has no edge stabilizer commensurable with $a_2$.
Then edge stabilizers of $U_t$ are trivial because any edge in a subdivision of $U_t$
that is mapped to an edge of $T_2$ has a stabilizer contained in a conjugate of $\grp{a_2}$.
Moreover, $a_2$ is elliptic in $U_t$ because  $U_t$ is compatible with $U_{t+1}$ and some power of $a_2$ fixes an edge in $U_{t+1}$. Therefore $U_t$ is at distance at most $1$ from $S_2$.
Since $a_1$ is elliptic in $U_t$ (in fact it is elliptic in all trees $U_1,\dots,U_n$), we deduce that $U_t$ is also at distance at most $1$ from $S_1$. This shows that $\tilde\theta(T_1)$ and $\tilde\theta(T_2)$ are at bounded distance from each other.

By symmetry, if $a_2$ is elliptic in $T_1$, the sets $\tilde\theta(T_1)$ and $\tilde\theta(T_2)$ are at bounded distance from each other.

We now assume that $a_1$ is hyperbolic in $T_2$, and $a_2$ is hyperbolic in $T_1$.
We are going to construct $R_1,R_2\in\ZRCF$ such that $a_i$ and $g$ are elliptic in $R_i$.
This will conclude because $S_1,R_1,R_2,S_2$ is a path of length  at most 3 between $S_1\in\tilde\theta(T_1)$ and $S_2\in\tilde\theta(T_2)$.

The splittings $T_1,T_2$ are cyclic splittings relative to $\calh=\calf\cup\{\langle g\rangle\}$.
Since $G$ has no free splitting relative to $\calh$, one can construct
the regular neighbourhood $R$ of $T_1,T_2$ from Fujiwara-Papasoglu's core of $T_1\times T_2$ (see \cite[Definition~6.23, Lemma~6.22 and Remark~6.10]{GL16}).
We will use the following features of $R$ (see \cite[Proposition~6.25]{GL16}, where $\cala$ is the class of cyclic groups):
its edge groups are cyclic (maybe peripheral),
$R/G$ has a vertex $v$ such that $G_v$ is QH (it cannot be virtually $\bbZ^2$ because it is not peripheral as it acts hyperbolically on $T_1$ and $T_2$)
with an underlying conical orbifold $\Sigma$,
and $T_i$ is dual to some essential simple closed geodesic $\gamma_i$ in $\Sigma$.
Every edge of $R/G$ has exactly one of its endpoints at $v$.
Note that $a_1,a_2,g$ are elliptic in $R$, and that either $g$ is conjugate in $G_w$ with $w\neq v$, or $g$ is 
conjugate to a boundary subgroup of $G_v=\pi_1(\Sigma)$.

Given a nonperipheral cyclic group $G_e$, denote by $\hat G_e$ the unique $\ZRC$ subgroup containing $G_e$ with finite index.
Let $R^\RC$ be the (maybe trivial) tree with $\ZRC$ edge stabilizers obtained from $R$ by identifying 
every edge $e$ whose stabilizer is nonperipheral with all its translates $ge$ for $g\in \hat G_e$,
and by passing to the minimal subtree (see \cite[Lemma~9.27]{GL16} in the case of hyperbolic groups).
If $R^\RC$ has a nonperipheral edge group, then the corresponding one-edge splitting $\bar R$ is a tree in $\ZRCF$
in which $a_1,a_2,g$ are all elliptic, and we can take $R_1=R_2=\ol R$.

Thus we can assume that all edges of $R^\RC$ are peripheral, which means that the only nonperipheral edge groups of $R/G$ are terminal edges
$e$ joining $v$ to a vertex with cyclic stabilizer.

If the geodesic $\gamma_i$ is two-sided, then the splitting $T_i$ is root-closed, and we can take $R_i=T_i$.
Otherwise, $\gamma_i$ has a tubular neighbourhood homeomorphic to a Mobius band $M$, and consider $\Sigma'_i:=\Sigma\setminus \rond M$
(still a hyperbolic orbifold).
If $\Sigma'_i$ contains a two-sided closed geodesic, then one takes for $R_i$ the splitting dual to this geodesic:
it is root closed, and $a_i$ and $g$ are elliptic in $R_i$.
If $\Sigma'_i$ contains no two-sided closed geodesic, then there are few possibilities for $\Sigma$.
By \cite[Section~5.1.4]{GL16}, the only possibilities for $\Sigma'_i$ are
a sphere with at most 3 boundary components or conical points,
or a projective plane with at most 2 boundary components or conical points.
This implies that $\Sigma$ is either a projective plane with at most 2 boundary components or conical points,
or a Klein bottle with exactly one boundary component.
At least one boundary component has to be nonperipheral and not amalgamated to a (terminal) cyclic vertex group
because otherwise $G$ would be one-ended relative to $\calf$.
This forces $(G,\calf)=(F_2,\es)$ in the second case
and 
$(G,\calf)=(F_2,\es)$ or $(G,\calf)=(G_1*\bbZ,\{[G_1]\})$ in the first case.
\end{proof}

\section{Gromov boundaries}\label{sec-bdy}

\subsection{The Gromov boundary of the $\calz$-splitting graph: review}

The Gromov boundary of $\ZS$ was described in \cite{Hor14-2}, as follows. 
An $\bbR$-tree $T\in\overline{\mathcal{O}}$ is \emph{$\mathcal{Z}$-compatible} if it is compatible with some $\mathcal{Z}$-splitting of $(G,\mathcal{F})$. It is \emph{$\mathcal{Z}$-averse} if it is not compatible with any $\mathcal{Z}$-compatible tree $T'\in\overline{\mathcal{O}}$. We denote by $\ZA$ the subspace of $\overline{\mathcal{O}}$ consisting of $\mathcal{Z}$-averse trees. Two trees $T,T'\in\ZA$ are \emph{$\ZA$-equivalent}, which we denote by $T\simZA T'$, if they are both compatible with a common tree in $\overline{\mathcal{O}}$ (although not obvious, it follows from \cite[Theorem~5.1]{Hor14-2} that this is an equivalence relation on $\ZA$). There is an $\text{Out}(G,\calf)$-equivariant map $\psi_{\ZS}:\mathcal{O}\to \ZS$ given by forgetting the metric.

\begin{theo} (\cite{Hor14-2}) \label{boundary-fz}
There is a unique $\text{Out}(G,\mathcal{F})$-equivariant homeomorphism $$\partial{\psi}_{\ZS}:\ZA/\!\raisebox{-.2em}{$\simZA$}\to\partial_{\infty} \ZS,$$ so that for all $T\in\ZA$, and all sequences $(T_n)_{n\in\mathbb{N}}\in \mathcal{O}^{\mathbb{N}}$ converging to $T$, the sequence $(\psi_{\ZS}(T_n))_{n\in\mathbb{N}}$ converges to $\partial{\psi}_{\ZS}(T)$. 
\\ Moreover, if $(T_n)_{n\in\mathbb{N}}\in\mathcal{O}^{\mathbb{N}}$ converges to a tree $T\in\ol\calo\setminus\ZA$, then $\psi_{\ZS}(T_n)$ has no accumulation point in $\partial_\infty \ZS$. 
\end{theo}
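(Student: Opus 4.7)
The plan is to produce the map $\partial\psi_{\ZS}$ by combining two main ingredients: a ``convergence criterion'' saying exactly which sequences in $\calo$ give sequences in $\ZS$ converging to infinity, and an ``equivalence criterion'' saying exactly when two such sequences converge to the same boundary point. I would structure the argument in four steps.

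First, I would prove the key geometric dichotomy: for a sequence $(T_n)\in\calo^{\mathbb{N}}$ converging to some $T\in\overline\calo$, the sequence $\psi_{\ZS}(T_n)$ stays bounded in $\ZS$ if and only if $T$ is $\calz$-compatible. The easy direction is that if $T$ is compatible with some $\calz$-splitting $S$, then for $n$ large enough one can construct $\calz$-splittings close to both $T_n$ and $S$ (using that being compatible is essentially an open condition after appropriate collapsing), so $\psi_{\ZS}(T_n)$ remains at bounded distance from $S$. The converse is the heart of the matter: if $\psi_{\ZS}(T_n)$ stays bounded, passing to a subsequence we may find a fixed $\calz$-splitting $S_0$ at uniformly bounded distance from each $\psi_{\ZS}(T_n)$; unfolding along a uniform sequence of $\calz$-splittings, one extracts a limit $\calz$-splitting compatible with $T$.

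Second, having the dichotomy, I would set up the map. For $T\in\ZA$, pick any sequence $T_n\to T$ and declare $\partial\psi_{\ZS}([T])$ to be the limit of $\psi_{\ZS}(T_n)$ in $\partial_\infty\ZS$ (which exists by the first step and hyperbolicity of $\ZS$, after verifying that the images go to infinity without oscillating, by comparing optimal folding paths between two sequences $(T_n),(T'_n)$ converging to the same $T$ — the folding paths fellow-travel in $\ZS$, which is the point where hyperbolicity enters). To show the map is well defined on $\simZA$-classes, use that if $T\simZA T'$ there is $U\in\overline\calo$ refining both; approximate $U$ by a sequence in $\calo$ and compare its image in $\ZS$ to those of approximating sequences of $T$ and $T'$ via intermediate $\calz$-splittings obtained from $U$.

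Third, I would establish injectivity: assume $T,T'\in\ZA$ give the same point at infinity, and show they are compatible with a common tree in $\overline\calo$. This is where I expect the main obstacle: one needs a structural result saying that two $\calz$-averse trees whose approximating sequences fellow-travel in $\ZS$ must share a common refinement, which probably requires a careful analysis of the structure of $\ZA$ (perhaps using the fact, established in \cite{Hor14-2}, that $\simZA$ is genuinely an equivalence relation via the existence of a canonical ``simplification'' of a $\calz$-averse tree). Surjectivity would come from taking a sequence of vertices $S_n\in\ZS$ converging to a given boundary point $\xi$, lifting each $S_n$ to a nearby Grushko tree $T_n\in\calo$, extracting a subsequential limit $T\in\overline\calo$ by compactness of $\bbP\overline\calo$, and using the dichotomy to conclude $T\in\ZA$.

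Fourth, for topology: continuity of $\partial\psi_{\ZS}$ follows from the convergence statement; the inverse is then automatically continuous since $\ZA/\!\simZA$ inherits a topology in which the natural map from $\ZA$ is a quotient and the source space has good compactness properties (or one argues directly that both sides are naturally sequential and the limit statement gives continuity in both directions). Equivariance is immediate from equivariance of $\psi_{\ZS}$. The ``moreover'' clause is exactly the contrapositive direction of the dichotomy established in the first step, combined with Lemma~\ref{lem_eqv_bord} which ensures that failure to converge to infinity is equivalent to having no accumulation point in $\partial_\infty\ZS$.
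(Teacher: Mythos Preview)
This theorem is not proved in the present paper: it is quoted from \cite{Hor14-2} as background (the section is titled ``review''), so there is no proof here to compare your proposal against. Your outline is in the right spirit for how such a result is established, but since the paper treats it as a black box, any detailed comparison would have to be made against \cite{Hor14-2} itself.

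That said, there is a genuine gap in your Step~1. You state the dichotomy as ``$\psi_{\ZS}(T_n)$ stays bounded if and only if $T$ is $\calz$-compatible'', but this is not the correct condition. A tree $T$ is $\calz$-averse when it is not compatible with \emph{any} $\calz$-compatible tree $T'\in\overline\calo$; the negation of this is strictly weaker than $T$ itself being $\calz$-compatible. There exist trees in $\overline\calo$ which are not $\calz$-compatible (so your ``easy direction'' does not apply to them) and yet are not $\calz$-averse either (so they should \emph{not} correspond to boundary points). Handling this intermediate class is exactly why the equivalence relation $\simZA$ is defined via common compatibility with a third tree rather than via direct compatibility with a $\calz$-splitting, and it is where the real work in \cite{Hor14-2} lies. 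Your sketch of the ``easy direction'' would need to be upgraded to: if $T$ is compatible with some $\calz$-compatible $T'$, then approximating sequences for $T$ have bounded image --- and this requires passing through $T'$, not through $T$ directly.

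A smaller point: in Step~4 you invoke Lemma~\ref{lem_eqv_bord} for the ``moreover'' clause, but that lemma concerns a coarsely alignment-preserving map between two hyperbolic spaces, whereas $\psi_{\ZS}:\calo\to\ZS$ has domain $\calo$, which is not hyperbolic. The ``no accumulation point'' conclusion has to be argued directly (e.g.\ by showing that Gromov products along a subsequence stay bounded), not by appeal to that lemma.
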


\subsection{The Gromov boundary of the free factor graph}\label{sec-6}

We will assume throughout the section that $(G,\calf)$ is nonsporadic, as otherwise the free factor graph $\FF$ has bounded diameter. Theorems \ref{intro-bdy-ff} and \ref{intro-bdy-I} hold trivially in this case since $(G,\calf)$ has no arational tree.

Our first main result gives a description of the Gromov boundary of the free factor graph in terms of arational trees. This description is due to Bestvina--Reynolds \cite{BR13} and Hamenstädt \cite{Ham13} for free groups with empty peripheral structure
(arational trees were introduced by Reynolds in \cite{Rey12} in this context). 

\begin{de}[\textbf{Arational tree}]
A $(G,\mathcal{F})$-tree $T\in\overline{\mathcal{O}}$ is \emph{arational} if $T\in\partial \mathcal{O}$ and for every proper $(G,\mathcal{F})$-free factor $H\subset G$, the factor $H$ is not elliptic in $T$, and the action of $H$ on its minimal subtree $T_H$ is simplicial and relatively free. 
\end{de}   

Equivalently, $T$ is not arational if there is a proper $(G,\calf)$-free factor $H$ containing
non-peripheral elements with arbitrarily small (maybe zero) translation length.

 We recall that the \emph{observers' topology} on an $\mathbb{R}$-tree is the topology for which connected components of the complement of finite sets form a basis. We say that two arational trees $T,T'$ are \emph{$\mathcal{AT}$-equivalent} if they are equivariantly homeomorphic when equipped with the observers' topology, 
i.e.\ if there exist  equivariant alignment-preserving bijections between them; we write $T\simAT T'$ in this case. The following lemma says that this equivalence relation is the restriction of the equivalence relation on $\calz$-averse trees.

\begin{lemma}(\cite[Corollary~13.4]{GH15-1}) \label{atza}
Let $T,T'$ be two arational trees. Then 
$T\simAT T'$ if and only if $T\simZA T'$.
\\ More generally, let $T$ be an arational tree, and $T'\in\overline{\calo}$.
If $T$ and $T'$ are compatible then $T'$ is arational and $T\simAT T'$.
\end{lemma}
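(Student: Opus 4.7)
My plan is to deduce both assertions from the more general rigidity statement: if $T$ is arational and $T'\in\ol\calo$ is compatible with $T$, then $T'$ is arational and $T\simAT T'$. This immediately gives one direction of the main equivalence (arational trees compatible with a common $\hat T\in\ol\calo$ are $\simAT$-equivalent), and for the reverse direction I just need to check that arational trees are $\calz$-averse, so that $T\simAT T'$ forces $T\simZA T'$.

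I would first handle $\calz$-aversion of an arational tree $T$. Suppose towards a contradiction that $T$ is compatible with some $\calz$-splitting $S$, with common refinement $\hat T\in\ol\calo$. By Lemma \ref{lem_tirer}, $S$ is compatible with a $(G,\calf)$-free splitting $F$; let $H$ be a vertex stabilizer of $F$, a proper $(G,\calf)$-free factor. Taking a common refinement of $\hat T$ and $F$ (which exists by transitivity of compatibility for the relevant subfamily), I can read off the action of $H$ on its minimal subtree $T_H$: either $H$ is elliptic in $T$, or $T_H$ inherits, through the alignment-preserving collapses, a nonperipheral arc stabilizer coming from the edge stabilizers of $S$ or from the simplicial structure that $F$ forces on $\hat T$. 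Either way this contradicts the arationality of $T$, which demands that $T_H$ be simplicial \emph{and} relatively free while $H$ is non-elliptic.

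For the rigidity statement itself, I would start with a common refinement $\hat T\in\ol\calo$ and the alignment-preserving collapse maps $f\colon \hat T\to T$ and $f'\colon \hat T\to T'$. The heart of the argument is to show that $f$ is already an equivariant bijection for the observers' topology: any proper $G$-invariant arc collapsed by $f$ would produce a genuine simplicial structure on $\hat T$ whose edge stabilizers, being cyclic or trivial (since $\hat T$ is very small), would be compatible with a free splitting by Lemma \ref{lem_tirer}; chasing this free splitting back, a vertex group would end up elliptic in $T$ or acting non-freely on its minimal subtree, contradicting arationality. Once $f$ is a bijection, composing with $f'$ gives an equivariant alignment-preserving surjection $T\to T'$. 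A symmetric argument applied from the $T'$ side then shows that this map is a bijection, hence $T\simAT T'$, and transporting the free-factor condition from $T$ to $T'$ through this bijection yields that $T'$ is arational.

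The step I expect to be the main obstacle is ruling out the non-trivial collapses in $f\colon \hat T\to T$. One must handle uniformly both the ``relatively free arational'' case (where $T$ has dense orbits and indecomposability gives the rigidity quickly) and the ``surface-type'' case (where $T$ comes from an arational lamination on a $2$-orbifold, so $T_H$ for a proper free factor $H$ can have a genuine simplicial structure sitting inside a geometric piece). The definition of arationality is tailored so that the argument above goes through in both cases, but the extraction of a contradicting free factor from a hypothetical collapsed arc requires a careful combination of Lemma \ref{lem_tirer} with the Levitt graph-of-actions decomposition, and this is where the technical work of \cite{GH15-1} really sits.
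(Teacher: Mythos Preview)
The paper does not prove this lemma; it is quoted verbatim from \cite[Corollary~13.4]{GH15-1}, so there is no in-paper argument to compare against. That said, your outline has genuine gaps that would need to be filled before it could stand on its own.

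First, your argument that an arational $T$ is $\calz$-averse only addresses the weaker claim that $T$ is not compatible with a $\calz$-splitting. The definition of $\calz$-averse is stronger: $T$ must not be compatible with any tree $T'\in\ol\calo$ that is itself $\calz$-compatible. You can bootstrap this from the rigidity statement (if $T$ is compatible with $T'$ then $T'$ is arational, hence not $\calz$-compatible by the weaker claim), but you should say so explicitly. More seriously, in your proof of the weaker claim you invoke ``transitivity of compatibility for the relevant subfamily'' to produce a common refinement of $\hat T$ and the free splitting $F$. Compatibility is not transitive, and you have not explained why the particular chain $\hat T \to S \leftrightarrow F$ yields a common refinement of $\hat T$ and $F$; this step does not go through as written.

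Second, in the rigidity argument itself, the passage from ``$f\colon\hat T\to T$ collapses a nondegenerate arc'' to ``some proper free factor $H$ contradicts arationality of $T$'' is where essentially all the content lies, and your sketch does not supply it. You correctly identify that the Levitt decomposition of $\hat T$ combined with Lemma~\ref{lem_tirer} produces a free splitting, but relating a vertex group of that free splitting back to the action on $T$ (rather than on $\hat T$) requires control that you have not established. The actual proof in \cite{GH15-1} goes through dual algebraic laminations and a unique-duality statement for arational trees, not through the direct tree-combinatorics you sketch; as you yourself note in your final paragraph, this is exactly the place where the real technical work sits, and your outline defers rather than resolves it.
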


We work with the model $\FF_2$ (Definition \ref{dfn_ff2}) whose vertices are $\Z$-splittings, and two vertices are joined by an edge if
they are compatible or have a common nonperipheral simple elliptic element.
Let $\psi_{\FF}:\mathcal{O}\to \FF$ be the map that consists in forgetting the metric (for the model $\FF_2$; actually, the statement below is unaffected if $\psi_{\FF}$ is replaced by a map at bounded distance).
In fact, forgetting the metric also defines a map assigning a point in $\FF_2$ to any simplicial tree in $\ol\calo$;
we also denote this map by $\psi_\FF$.

\begin{theo}\label{boundary-ff}
There is a unique $\text{Out}(G,\mathcal{F})$-equivariant homeomorphism $$\partial\psi_{\FF}:\mathcal{AT}/\!\raisebox{-.2em}{$\simAT$}\to\partial_{\infty}\FF$$ such that for all $T\in\mathcal{AT}$ and all sequences $(T_n)_{n\in\mathbb{N}}\in\mathcal{O}^{\mathbb{N}}$ converging to $T$, the sequence $(\psi_{\FF}(T_n))_{n\in\mathbb{N}}$ converges to $\partial\psi_{\FF}(T)$. 
\\ Moreover, if $(T_n)_{n\in\mathbb{N}}\in\mathcal{O}^{\mathbb{N}}$ converges to a non-arational tree $T$, then $\psi_{\FF}(T_n)$ has no accumulation point in $\partial_{\infty}\FF$.
\end{theo}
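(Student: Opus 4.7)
The plan is to deduce Theorem~\ref{boundary-ff} by combining three ingredients already in place: the identification of $\partial_\infty\ZS$ with $\ZA/\!\simZA$ (Theorem~\ref{boundary-fz}), the coarsely surjective, coarsely alignment-preserving projection $j:\ZS\to\FF_2$ of Proposition~\ref{fz-ff}, and Dowdall--Taylor's Theorem~\ref{dt}. Applied to $j$, Theorem~\ref{dt} gives a homeomorphism $\partial j:\partial_{\FF}\ZS\to\partial_\infty\FF$. Composing with $\partial\psi_{\ZS}^{-1}$ turns the problem into identifying the subset $\calx\subseteq\ZA/\!\simZA$ corresponding to $\partial_{\FF}\ZS$: I must show that $\calx=\mathcal{AT}/\!\simAT$. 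Lemma~\ref{atza} already supplies the natural inclusion $\mathcal{AT}/\!\simAT\hookrightarrow\ZA/\!\simZA$ and the equality of the two equivalence relations on arational trees, so the content reduces to two things: (i) every arational tree lies in $\partial_{\FF}\ZS$, and (ii) every non-arational $\calz$-averse tree does not.

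The main obstacle is (i). Let $T\in\mathcal{AT}$ and $(T_n)\in\calo^{\bbN}$ with $T_n\to T$ in $\overline\calo$; I must prove $\psi_{\FF}(T_n)$ is unbounded in $\FF_2$. Assume for contradiction that, after extracting, $\psi_{\FF}(T_n)$ stays in a ball of radius $D$ around a fixed $\calz$-splitting $S_0$. By the definition of $\FF_2$, each $T_n$ is connected to $S_0$ by a path of length $\leq D$ whose consecutive vertices are either compatible or share a simple nonperipheral elliptic element. This is exactly the set-up for a Kobayashi-type argument of the kind used by Bestvina--Reynolds~\cite{BR13} and Hamenst\"adt~\cite{Ham13}; the role of the dual current is played here by the dual lamination of $T$ and the unique duality result for arational trees established in \cite{GH15-1}. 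The Kobayashi argument then produces in the limit either a $\calz$-splitting compatible with $T$ or a simple nonperipheral element elliptic in $T$. The first alternative contradicts arationality because by Lemma~\ref{atza} a tree compatible with an arational tree is itself arational, whereas a $\calz$-splitting is simplicial with proper factors elliptic; the second contradicts the very definition of arationality, since every proper $(G,\calf)$-free factor acts relatively freely in $T$.

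For (ii), let $T\in\ZA$ be non-arational. Some proper $(G,\calf)$-free factor $H$ fails to act simplicially and relatively freely on its minimal subtree in $T$; in every case $H$ contains a nonperipheral element $h$ whose translation length in $T$ is $0$ or which is limit of elements of arbitrarily small translation length. Since $h$ is simple, I claim that for any $(T_n)\in\calo^{\bbN}$ converging to $T$, the sequence $\psi_{\FF}(T_n)$ is bounded. The argument is a standard folding/surgery: using that $h$ has small translation length in $T_n$, I produce for large $n$ a $\calz$-splitting $S_n$ in which $h$ is elliptic and which is at $\FF$-distance $O(1)$ from $\psi_{\FF}(T_n)$ (either by equivariantly collapsing a short $h$-axis arc in a refinement of $T_n$, or by pulling back an $h$-elliptic free splitting through a Grushko approximation of $T_n$). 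Since the $S_n$ share the simple nonperipheral elliptic element $h$, they lie in a single $1$-ball of $\FF_2$, so $\psi_{\FF}(T_n)$ is bounded. By Lemma~\ref{lem_eqv_bord} this shows $\partial\psi_{\ZS}(T)\notin\partial_{\FF}\ZS$, as required.

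Once $\calx=\mathcal{AT}/\!\simAT$ is established, I define $\partial\psi_{\FF}:=\partial j\circ\partial\psi_{\ZS}|_{\mathcal{AT}/\!\simAT}$; it is an $\Out(G,\calf)$-equivariant homeomorphism because both $\partial j$ and $\partial\psi_{\ZS}$ are. The compatibility with convergence $T_n\to T$ in $\calo$ is inherited directly from the analogous property of $\partial\psi_{\ZS}$ together with the first assertion of Theorem~\ref{dt}. For the moreover clause, if $(T_n)\to T\in\overline\calo\setminus\mathcal{AT}$ then either $T\in\ZA\setminus\mathcal{AT}$, in which case $\psi_{\ZS}(T_n)\to\partial\psi_{\ZS}(T)\in\partial_\infty\ZS\setminus\partial_{\FF}\ZS$, or $T\notin\ZA$, in which case $\psi_{\ZS}(T_n)$ has no accumulation in $\partial_\infty\ZS$ by Theorem~\ref{boundary-fz}; in both scenarios Lemma~\ref{lem_eqv_bord}~(4)$\Rightarrow$(1) (applied contrapositively) ensures that $\psi_{\FF}(T_n)=j(\psi_{\ZS}(T_n))$ has no accumulation in $\partial_\infty\FF$. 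Uniqueness of $\partial\psi_{\FF}$ satisfying the stated convergence condition is immediate from the density of $\calo$ in $\overline\calo$ and the Hausdorff property of $\partial_\infty\FF$.
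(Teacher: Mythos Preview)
Your overall architecture is exactly the paper's: identify $\partial_\infty\FF$ with a subset of $\partial_\infty\ZS\simeq\ZA/\!\simZA$ via Dowdall--Taylor, then prove this subset is $\mathcal{AT}/\!\simAT$. Part (i) is essentially Proposition~\ref{arational-at-infinity}: your Kobayashi sketch is correct in spirit, though the actual inductive step is that each successive limit $T^{i+1}$ along the geodesic is again arational and $\simAT T$ (using Theorem~\ref{gh} for the shared-simple-elliptic edges and closedness of compatibility plus Lemma~\ref{atza} for compatibility edges), so the fixed endpoint $U$ would itself be arational, a contradiction.

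Part (ii), however, has a genuine gap. You assert that for \emph{every} sequence $(T_n)\in\calo^{\bbN}$ converging to a non-arational $\calz$-averse $T$, the images $\psi_{\FF}(T_n)$ are bounded. This is stronger than what is needed (a single sequence suffices by Lemma~\ref{lem_eqv_bord}) and your justification does not go through. The fact that a simple element $h$ has small translation length in the Grushko tree $T_n$ does \emph{not} let you produce a $\calz$-splitting at bounded $\FF$-distance from $T_n$ in which $h$ is elliptic: $\FF$-distance is governed by compatibility or shared ellipticity, which are combinatorial conditions insensitive to metric smallness. Neither ``collapsing a short $h$-axis arc'' nor ``pulling back an $h$-elliptic free splitting through a Grushko approximation'' is a well-defined construction yielding what you claim. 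The paper instead proves the weaker (sufficient) statement: it first shows (Lemma~\ref{lem_ZAminusAT}) that any $\calz$-averse non-arational $T$ has a collapse $\overline T$ in which some proper free factor $A$ is genuinely \emph{elliptic}, then approximates $\overline T$ by Grushko trees in the \emph{relative} outer space $\calo(G,\calf\vee\{[A]\})$ and blows these up to $\calo$. This constructs one explicit sequence converging to some $T'\simZA T$ with bounded $\FF$-image (Proposition~\ref{non-arational-visible}), which is all that is required.
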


Here is a sketch of the proof of Theorem \ref{boundary-ff} given below. The natural map 
$j:\ZS\ra \FF$ induced by the inclusion (using the model $\FF_2$) coarsely preserves alignment by Proposition~\ref{fz-ff}.
So by Theorem \ref{dt}, the Gromov boundary $\partial_\infty \FF$ is identified with a subset $\partial_\FF \ZF\subseteq \partial_\infty \ZS$
by a natural  homeomorphism  
$\partial j:\partial_\FF \ZF \ra \partial_\infty \FF$.
Now by Theorem \ref{boundary-fz}, $\partial_\infty \ZS$ is identified with a quotient of the set $\ZA$ of $\calz$-averse trees in $\overline \calo$,
and the corresponding map $\partial\psi_{\ZS}:\ZA\ra \partial_{\infty} \ZS$ is a continuous extension of $\psi_{\ZS}$ (in the precise sense of Theorem~\ref{boundary-fz}). 
Our main task is therefore to show that the set of (equivalence classes of) $\Z$-averse trees corresponding to
the subset $\partial_{\FF}\ZS$ of $\partial_\infty \ZS$
is the set of (equivalence classes of) arational trees: Proposition~\ref{non-arational-visible} will show that non-arational trees are not in $\partial_{\FF}\ZS$, and Proposition~\ref{arational-at-infinity} will show that arational trees belong to $\partial_{\FF} \ZS$. 

\paragraph*{Non-arational trees are not at infinity of $\FF$.} We recall that a tree $T\in\overline{\calo}$ is \emph{mixing} if given any two segments $I,J\subseteq T$, there exist finitely many elements $g_1,\dots,g_k\in G$ such that $J\subseteq g_1I\cup\dots\cup g_kI$.

\begin{lemma}\label{lem_ZAminusAT}
Let $T\in\overline{\calo}$ be a mixing $\calz$-averse tree.  
If $A\subset G$ is a proper $(G,\calf)$-free factor acting with dense orbits on its minimal subtree in $T$, then $A$ is elliptic.
\\ In particular, any $\Z$-averse tree which is not arational has a collapse in which some proper free factor is elliptic.
\end{lemma}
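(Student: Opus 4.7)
The plan is to argue by contradiction. Assume $T$ is mixing and $\calz$-averse, and that $A$ is a proper $(G,\calf)$-free factor acting with dense orbits on a nondegenerate minimal subtree $T_A\subseteq T$; the strategy is to construct a $\calz$-splitting compatible with $T$, which will contradict $\calz$-averseness.

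First I would show that $T$ itself must act with dense $G$-orbits. Consider the Levitt decomposition of $T$ as a graph of actions, with simplicial skeleton $T_{\text{skel}}$ and collapse map $T\to T_{\text{skel}}$. The edges of $T_{\text{skel}}$ correspond to arcs of $T$ with no interior generalized branch point, so their stabilizers are arc stabilizers of $T$, hence trivial or cyclic and nonperipheral by the very-small condition. Thus $T_{\text{skel}}$ is either trivial or a $\calz$-splitting, and $T$ refines it; in the latter case $T$ would be compatible with a $\calz$-splitting, contradicting $\calz$-averseness. Hence $T_{\text{skel}}$ is trivial and $T$ acts with dense $G$-orbits.

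Next, since $A$ is a proper $(G,\calf)$-free factor, pick a $(G,\calf)$-free splitting $S$ in which $A$ fixes a vertex $v_0$, and form the blow-up $\hat T$ of $S$ by equivariantly replacing the orbit of $v_0$ with the $A$-tree $T_A$ (the attaching data is unconstrained because edges of $S$ incident to $v_0$ have trivial stabilizer). By construction $\hat T$ refines $S$; if one can also produce an equivariant alignment-preserving map $\hat T\to T$ extending the inclusion $T_A\hookrightarrow T$, then $\hat T$ is a common refinement of $T$ and $S$, so $T$ is compatible with the $\calz$-splitting $S$, yielding the desired contradiction. I expect the \textbf{main obstacle} to be the construction of this compatibility map: it reduces to proving that the vertex groups of $S$ other than the conjugates of $A$ are elliptic in $T$, and that their fixed points can be chosen consistently. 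Here the mixing hypothesis is essential: by mixing, every arc of $T$ is covered by finitely many translates of an arc in $T_A$, so $T$ is the $G$-translate closure of $T_A$; combined with the dual lamination machinery developed in \cite{GH15-1} and the very-small structure of arc stabilizers, this rigidity should force the complementary vertex groups of $S$ to fix points of $T$ and the resulting map to preserve alignment.

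For the \emph{in particular} statement, the same skeleton argument applied to the graph-of-actions decomposition of $T$ with mixing vertex actions shows that every $\calz$-averse tree is itself mixing (otherwise the corresponding skeleton would again be a nontrivial $\calz$-splitting compatible with $T$), so the main claim applies to such a $T$. If $T$ is not arational, by definition there is a proper $(G,\calf)$-free factor $A$ that is non-elliptic and whose action on $T_A$ is not simplicial-and-relatively-free; the main claim then forces $T_A$ to have a nontrivial simplicial part. Equivariantly collapsing the $G$-orbits of the dense vertex components inside $T_A$ produces a collapse $T'$ of $T$ in which the stabilizer of one of these collapsed components—which is contained in a proper $(G,\calf)$-free factor by Lemma~\ref{lem_tirer} applied to the simplicial part of $T_A$—becomes elliptic.
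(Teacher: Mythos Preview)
Your approach to the first assertion has a genuine gap, and it is precisely the step you flag as the ``main obstacle''. To build the alignment-preserving map $\hat T\to T$ from your blow-up, you need the vertex groups of $S$ other than $A$ to be elliptic in $T$, and you need the translates of $T_A$ inside $T$ to be arranged so that the simplicial edges of $\hat T$ can be collapsed consistently. Neither of these follows from mixing alone, and the vague appeal to ``dual lamination machinery'' does not supply an argument. In fact, there is no reason for the complementary free factor in a splitting $G=A\ast B$ to be elliptic in $T$.

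The paper's proof avoids this entirely. The key input you are missing is \cite[Corollary~11.8]{GH15-1}: for a proper free factor $A$ acting with dense orbits on $T_A$, one has that $gT_A\cap T_A$ is at most a point for every $g\notin A$. Combined with mixing, this immediately yields that the $G$-translates of $T_A$ form a \emph{transverse covering} of $T$. One then invokes \cite[Proposition~5.20]{Hor14-2}, which says that for a $\calz$-averse tree, the stabilizer of any piece of a transverse covering is not elliptic in any $\calz$-splitting of $(G,\calf)$. Since $A=\Stab(T_A)$ is a proper free factor (hence elliptic in some free splitting), this gives the contradiction directly, with no need to construct a refinement map.

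For the ``in particular'' part, your argument is also off in two places. First, the Levitt-skeleton argument only shows that a $\calz$-averse tree has dense orbits, not that it is mixing; the paper instead cites \cite[Proposition~6.3]{Hor14-2} to pass to a mixing collapse $\overline{T}$. Second, your final collapse construction is muddled: the correct move is to take the Levitt decomposition of $\overline{T}_{A'}$ (whose edge groups are trivial since $\overline{T}$ has trivial arc stabilizers), observe that its vertex groups are $(A',\calf_{|A'})$-free factors acting with dense orbits on their minimal subtrees, and then apply the first assertion to one of these vertex groups inside the mixing tree $\overline{T}$.
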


\begin{proof} Let $T_A$ be the $A$-minimal subtree on which $A$ acts with dense orbits by assumption.
Assume by contradiction that $T_A$ is not reduced to a point. By \cite[Corollary~11.8]{GH15-1}, the intersection $gT_A\cap T_A$ is reduced to a point for all $g\notin A$. Since $T$ is mixing, the $G$-translates of $T_A$ thus form a transverse covering of $T$ (i.e.\ every segment of $T$ can be covered by finitely many subtrees from this family). Since $T$ is $\calz$-averse, \cite[Proposition~5.20]{Hor14-2} implies that the stabilizer of $T_A$ (i.e.\ $A$) is not elliptic in any $\calz$-splitting of $(G,\calf)$, contradicting the fact that $A$ is a $(G,\calf)$-free factor.

We now prove the second part of the lemma. Let $T\in\overline{\calo}$ be a $\calz$-averse tree which is not arational. By \cite[Proposition~6.3]{Hor14-2}, the tree $T$ collapses onto a mixing $\calz$-averse tree $\overline{T}$. 
Since $T$ is not arational, there is a proper $(G,\calf)$-free factor $A'$ 
containing non-peripheral elements with arbitrarily small translation length in $T$, and therefore in $\ol T$. In particular $\overline{T}$ is not arational.
Let  $\overline{T}_{A'}\subseteq \overline{T}$ be the minimal $A'$-invariant subtree and $S$ be the Levitt decomposition of $\overline{T}_{A'}$. 
Since $\overline{T}$ (whence $\overline{T}_{A'}$) has trivial arc stabilizers, every vertex group $A$ of $S$ is an $(A',\calf_{|A'})$-free factor acting with dense orbits (possibly trivially) on its minimal invariant subtree $\overline{T}_{A}\subset \overline{T}$. 
Since $A'$ contains non-peripheral elements with arbitrarily small translation length,
one of these vertex groups $A$ is a proper $(G,\calf)$-free factor.  Since $\overline{T}$ is mixing and $\calz$-averse,
the first part of the lemma implies that $A$ is elliptic in $\overline{T}$.
\end{proof}

\begin{prop}\label{non-arational-visible}
Let $T\in\ZA\setminus\mathcal{AT}$.
Then there exist $T'\simZA T$ and a sequence of simplicial metric trees $S_n\in \calo$
converging to $T'$, such that $\psi_{\FF}(S_n)$ is bounded in $\FF$.
\end{prop}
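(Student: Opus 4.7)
The plan is to apply Lemma~\ref{lem_ZAminusAT} to the $\calz$-averse non-arational tree $T$, producing a collapse $\overline{T}$ that is mixing, $\calz$-averse, and in which some proper $(G,\calf)$-free factor $A$ is elliptic. Set $T':=\overline{T}$: since both $T$ and $\overline{T}$ are $\calz$-averse and admit $T$ itself as a common refinement, we have $T'\simZA T$. The whole task therefore reduces to constructing a sequence $S_n\in\calo$ converging to $\overline{T}$ whose images in $\FF$ stay bounded.

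To produce such a sequence, I first observe that $\overline{T}$ has trivial arc stabilizers. Indeed, if a nontrivial cyclic subgroup $\grp{c}$ fixed an arc of $\overline{T}$, then collapsing everything outside a fundamental $\grp{c}$-invariant sub-arc would produce a $\calz$-splitting compatible with $\overline{T}$, contradicting $\calz$-aversity. Viewed as a $(G,\calf\cup\{[A]\})$-tree, $\overline{T}$ therefore has trivial arc and tripod stabilizers and is very small relative to $\calf\cup\{[A]\}$; by the identification in \cite{Hor14-1} it lies in $\overline{\calo}(G,\calf\cup\{[A]\})$. By density of Grushko trees in this closure, one may pick a sequence $R_n\in\calo(G,\calf\cup\{[A]\})$ with $R_n\to\overline{T}$; each such $R_n$ is simultaneously a free splitting of $(G,\calf)$ in which $A$ is elliptic. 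Fixing a Grushko tree $R_A$ of $(A,\calf_{|A})$ (Kurosh provides one, as $A$ is countable), for each $n$ I blow up $R_n$ equivariantly at the $A$-orbit using a scaled copy $\epsilon_n R_A$, yielding a Grushko $(G,\calf)$-tree $S_n\in\calo$. A diagonal argument in the choice of $\epsilon_n\to 0$ then guarantees $S_n\to\overline{T}$ in $\overline{\calo}$.

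Finally, I verify that $\psi_{\FF}(S_n)$ is bounded in $\FF$, using the model $\FF_2$. On the one hand, $R_n$ is a collapse of $S_n$, so they are compatible and $d_{\FF_2}(\psi_{\FF}(S_n),R_n)\le 1$. On the other hand, the Kurosh decomposition of $A$ furnishes a non-peripheral element $a\in A$ (if the free part of $A$ is trivial, pick a product of elements from two distinct Kurosh factors), and such $a$ is simple since it belongs to the proper $(G,\calf)$-free factor $A$. As $A$ is elliptic in every $R_n$, this single element $a$ is a common nonperipheral simple elliptic element for all pairs, giving $d_{\FF_2}(R_n,R_m)\le 1$ for all $n,m$. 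Combining, $\mathrm{diam}_{\FF_2}\,\{\psi_{\FF}(S_n)\}_{n}\le 3$, as desired. The most delicate point of the plan is the diagonal convergence $S_n\to\overline{T}$: one must verify that the small blow-ups at potentially many $A$-orbits of vertices perturb the translation length of any fixed element of $G$ by at most a controlled amount, which requires picking $\epsilon_n$ relative to the combinatorial complexity of $R_n$ and to the translation lengths of finitely many elements from a countable generating set of $G$.
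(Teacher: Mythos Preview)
Your proof is correct and follows the same overall strategy as the paper: collapse to $\overline{T}$ via Lemma~\ref{lem_ZAminusAT}, view $\overline{T}$ as a very small tree relative to the enlarged free factor system making $A$ peripheral, approximate by Grushko trees $R_n$ in the corresponding relative outer space, blow these up to Grushko $(G,\calf)$-trees $S_n$, and observe that $A$ being elliptic in every $R_n$ keeps the images bounded in $\FF$.

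The one difference is in how $T'$ is produced. You fix $T'=\overline{T}$ from the outset and force $S_n\to\overline{T}$ by a diagonal choice of the scaling factors $\epsilon_n$; this is the step you yourself flag as delicate. The paper avoids this entirely: it takes \emph{any} blowup $S_n$ of $R_n$, passes to a subsequence so that $S_n$ converges projectively to some $T'\in\overline{\calo}$, and then invokes the fact that compatibility is a closed relation \cite[Corollary~A.12]{GL16}: since each $S_n$ is compatible with $R_n$ and $R_n\to\overline{T}$, the limit $T'$ is compatible with $\overline{T}$, hence $T'\simZA T$. This buys you a cleaner argument with no need to control the blowup parameters, at the modest cost of citing one extra fact.
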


\begin{proof}
Let $\ol T$ be a collapse of $T$ in which some proper $(G,\calf)$-free factor $A$ is elliptic (this exists by Lemma~\ref{lem_ZAminusAT}). Since $T$ (whence $\ol T$) has trivial arcs stabilizers, the tree $\ol T$ can be seen as a very small $(G,\calf\vee \{A\})$-tree (where $\calf\vee\{A\}$ is the free factor system induced by $\calf$ and $A$: $g$ is peripheral relative to $\calf\vee\{A\}$ if and only if
it is peripheral relative to $\calf$ or conjugate in $A$).
By \cite{Hor14-1}, the tree $\ol T$ can therefore be approximated by a sequence of simplicial trees $S'_n$ in
the outer space $\calo(G,\calf\vee\{A\})$. Since $A$ is elliptic in each tree $S'_n$, the image in $\FF$ of the sequence $(S'_n)_{n\in\bbN}$ is bounded.
Now consider $S_n\in\calo$ a  Grushko $(G,\calf)$-tree which is a blowup of $S'_n$. 
The images of $S'_n$ and $S_n$ are at distance at most $1$ in $\FF$, so $\psi_{FF}(S_n)$ is bounded.
Up to taking a subsequence, one may assume that $S_n$ converges projectively to some $T'\in\ol\calo$, and since compatibility is a closed relation
by \cite[Corollary~A.12]{GL16}, $T'$ is compatible with $\ol T$, so $T'\simZA T$ by \cite[Theorem~5.2]{Hor14-2}.
\end{proof}

\paragraph*{Arational trees are at infinity of $\FF$.}

\begin{prop}\label{arational-at-infinity}
Let $T\in\mathcal{AT}$, and let $(S_n)_{n\in\mathbb{N}}\in\overline{\mathcal{O}}^{\mathbb{N}}$ be a sequence of simplicial metric trees that converges to $T$. Then the sequence $(\psi_{\FF}(S_n))_{n\in\mathbb{N}}$ is unbounded in $\FF$.
\end{prop}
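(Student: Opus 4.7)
The plan is to argue by contradiction: suppose that, after extracting a subsequence, $\psi_{\FF}(S_n)$ stays in a ball of radius $D$ in $\FF$ around a fixed vertex $v_0$. Working with the model $\FF_2$ of Definition~\ref{dfn_ff2}, each $S_n$ is then joined to $v_0$ by a chain
\[
v_0 = U^n_0, U^n_1, \ldots, U^n_D = \psi_{\FF}(S_n)
\]
of $\calz$-splittings in which consecutive trees are either compatible or share a nonperipheral simple elliptic element. The goal is to extract in the limit geometric data in $T$ that is incompatible with its arationality.

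The first step is to produce, for each $n$, a nonperipheral simple element $g_n \in G$ elliptic in a $\calz$-splitting at bounded distance from $\psi_{\FF}(S_n)$ in $\FF$. This is done by propagating simple-elliptic data along the chain: we use Lemma~\ref{lem_depliage} to convert compatibility edges into shared elliptic elements, and we exploit the finiteness of orbits of proper free factor vertex groups in the fixed splitting $v_0$ to fix a coherent family of choices at the $v_0$-end of the chain. After successive extractions we may assume that the conjugacy classes of the $g_n$, or the free factors carrying them, vary in a controlled way.

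The key step is to argue that the sequence $(g_n)_{n\in\mathbb{N}}$ accumulates, in a suitable sense, on a leaf of the dual lamination $L(T)$ of $T$ that is carried by a proper $(G,\calf)$-free factor. Concretely, we fix a Grushko tree $R\in\calo$ and track the $g_n$ via their axes in $R$; after a further extraction these axes accumulate, in the Hausdorff topology on laminations on $R$, to a sublamination $L_\infty$ of $L(T)$. Since each $g_n$ is simple, each axis is carried by a proper free factor, and we expect a compactness argument to carry this property to $L_\infty$. This would contradict the statement, which is an ingredient of the unique duality established in \cite{GH15-1} (and in the spirit of Lemma~\ref{lem_ZAminusAT}), that for an arational tree no leaf of $L(T)$ is carried by a proper $(G,\calf)$-free factor.

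The hard part will be the passage to this limit lamination $L_\infty$ and its identification as a sublamination of $L(T)$: the conjugacy classes $g_n$ may have unbounded translation length in $S_n$, so one cannot directly identify a limit element in $G$. This is where the machinery of \cite{GH15-1} plays the decisive role, providing a lamination-theoretic substitute for the currents-based arguments of Bestvina--Reynolds \cite{BR13} and Hamenst\"adt \cite{Ham13}, which do not directly generalize to the setting of free products.
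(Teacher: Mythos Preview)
Your approach diverges from the paper's and, as stated, has a genuine gap at the key step.

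The paper's argument is a clean Kobayashi-type propagation. One fixes a basepoint $U$, extracts so that $d_{\FF}(U,\psi_\FF(S_n))=M$ for all $n$, chooses geodesic chains $S_n^0,\dots,S_n^M=U$ in $\FF_2$, and then passes each sequence $(S_n^i)_n$ to a projective limit $T^i\in\mathbb{P}\overline{\calo}$. Along each edge of the chain, either the $S_n^i$ and $S_n^{i+1}$ are compatible (so the limits are compatible by \cite[Corollary~A.12]{GL16}, hence $T^{i+1}\simAT T^i$ by Lemma~\ref{atza}), or they share a simple nonperipheral elliptic element, and then Theorem~\ref{gh} (Corollary~13.3 of \cite{GH15-1}) directly yields $T^{i+1}\in\AT$. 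By induction $T^M=U$ is arational, a contradiction. The point is that Theorem~\ref{gh} is used as a black box; no lamination argument is carried out here.

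Your proposal instead tries to produce a \emph{single} sequence of simple elements $g_n$ and track their axes in a fixed Grushko tree $R$, hoping they accumulate onto a leaf of $L(T)$ carried by a proper free factor. The difficulty is that your $g_n$ are only elliptic in $\calz$-splittings at bounded $\FF$-distance from $\psi_\FF(S_n)$, not in the $S_n$ themselves; bounded distance in $\FF$ gives no control on these auxiliary splittings in $\overline{\calo}$, so there is no a priori link between the $g_n$ and $L(T)$. To create that link you would in effect have to take limits of the intermediate splittings in $\overline{\calo}$ anyway, which is exactly what the paper does. Moreover, the heuristic ``simple elements limit to a leaf carried by a proper free factor'' is essentially the content of Theorem~\ref{gh}; you are outlining a re-proof of that corollary rather than applying it. Once you grant yourself Theorem~\ref{gh}, the paper's chain-and-limits argument is both shorter and avoids the delicate passage to a limit lamination that you flag as ``the hard part''.
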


The proof of Proposition \ref{arational-at-infinity} is a variation over the argument in \cite{Kob88} for proving unboundedness of the curve complex of a compact, connected, oriented surface. It relies on the following statement, which is one of the main results of our previous paper \cite{GH15-1}.

\begin{theo}[{\cite[Corollary~13.3]{GH15-1}}]\label{gh}
Let $(T_n)_{n\in\mathbb{N}},(T'_n)_{n\in\mathbb{N}}\in\ol{\calo}^{\mathbb{N}}$, such that for all $n\in\mathbb{N}$, some nonperipheral simple element $g_n\in G$ is elliptic in both $T_n$ and $T'_n$. 
\\ If $(T_n)_{n\in\mathbb{N}},(T'_n)_{n\in\mathbb{N}}$ converge respectively to $T,T'$ in $\ol{\calo}$,
and if $T\in\mathcal{AT}$, then $T'\in\mathcal{AT}$, and $T'\simAT T$.
\end{theo}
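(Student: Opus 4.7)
The plan is to reduce the statement to the unique duality theorem for arational trees which is the main technical result of \cite{GH15-1}. To each tree $U\in\overline{\calo}$ one associates a dual lamination $L(U)\subseteq \partial^2 G$ (leaves being the ``infinitely thin'' limits of axes in $U$), and the framework of \cite{GH15-1} establishes two properties crucial here: (a) if $g\in G$ is a nonperipheral element elliptic in $U$, then, with respect to a fixed auxiliary Grushko tree $R$, the pair of endpoints of the axis of $g$ in $R$ is a leaf of $L(U)$; and (b) if $T\in\mathcal{AT}$, then $L(T)$ contains a unique minimal $G$-invariant sublamination $L_{\min}(T)$, and any $U\in\overline{\calo}$ with $L_{\min}(T)\subseteq L(U)$ is arational and satisfies $U\simAT T$.

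Given these inputs, the argument runs as follows. Fix a Grushko tree $R$, and for each $n$ let $\ell_n\in\partial^2 G$ be the leaf determined by the axis of $g_n$ in $R$. By property (a), $\ell_n\in L(T_n)\cap L(T'_n)$. Since each $g_n$ is nonperipheral and simple, there is a translate $h_n g_n h_n^{-1}$ whose axis in $R$ crosses a fixed base edge; translating everything by $h_n$ (which does not alter the $\simAT$-classes of the limits $T,T'$) we may assume that all $\ell_n$ meet a fixed compact piece of $R$. Compactness of the space of lines in $R$ through this piece lets us extract a subsequence with $\ell_n\to\ell_\infty\in\partial^2 G$. The next step is to invoke upper semicontinuity of the dual lamination: if $U_n\to U$ in $\overline{\calo}$ and $\ell_n\in L(U_n)$ with $\ell_n\to\ell_\infty$, then $\ell_\infty\in L(U)$. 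Applying this to both sequences places $\ell_\infty$ simultaneously in $L(T)$ and $L(T')$. The orbit closure $L_0=\overline{G\cdot\ell_\infty}$ is then a nonempty $G$-invariant sublamination of $L(T)\cap L(T')$, and any minimal sublamination of $L_0$ is a minimal sublamination of $L(T)$, hence equals $L_{\min}(T)$. Therefore $L_{\min}(T)\subseteq L(T')$, and property (b) gives $T'\in\mathcal{AT}$ with $T'\simAT T$.

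The main obstacle is purely technical and lies in the inputs (a), (b), and upper semicontinuity, all of which require genuine work in the relative setting of free products: currents are ill-suited here (see the discussion in \cite[Section 3]{GH15-1}), so the duality has to be phrased directly in terms of laminations. Most delicate is the unique duality property (b), whose proof in \cite{GH15-1} analyzes mixing properties of arational trees together with a careful study of how the Levitt decomposition interacts with laminations. Once these tools are granted, the deduction above is essentially a compactness-and-semicontinuity argument, and the hypothesis that $g_n$ is \emph{simple} (rather than merely nonperipheral) enters only in (a), to ensure that the leaf $\ell_n$ is actually carried by the dual lamination, via the fact that in a $(G,\calf)$-free splitting the Whitehead graph of a simple element is disconnected.
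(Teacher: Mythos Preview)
The paper does not prove this result; it is quoted from \cite{GH15-1}, so there is no ``paper's own proof'' to compare against. I will therefore assess the proposal on its own merits.

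Your overall architecture (pass to dual laminations, extract a limit leaf by compactness, invoke upper semicontinuity, then apply unique duality) is indeed the strategy of \cite{GH15-1}. However, the argument as written has a genuine gap, and you have mislocated the role of the simplicity hypothesis.

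First, simplicity is \emph{not} needed for property~(a). If $g$ is any nonperipheral element with $\|g\|_U=0$, the pair of endpoints of its axis in $R$ lies in $L(U)$; this is immediate from the definition of the dual lamination and has nothing to do with Whitehead graphs.

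Second, your property~(b) is false for arational surface trees. If $T\in\AT\setminus\FAT$ with underlying orbifold $\Sigma$ and unused boundary curve $c$, then the $G$-orbit of the leaf $\ell_c$ determined by the axis of $c$ in $R$ is already closed in $\partial^2(G,\calf)$ (lifts of a simple closed curve to the universal cover form a locally finite family of disjoint geodesics), hence is itself a minimal sublamination of $L(T)$. It is distinct from the minimal sublamination coming from the arational measured lamination on $\Sigma$, since the latter contains no periodic leaves. Thus $L(T)$ has at least two minimal sublaminations, and your sentence ``any minimal sublamination of $L_0$ is a minimal sublamination of $L(T)$, hence equals $L_{\min}(T)$'' breaks down.

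This is not a cosmetic issue: your argument, which does not use simplicity anywhere in the body, would otherwise prove the statement with ``simple'' deleted, and that statement is false. Take $T_n=T$ an arational surface tree, $T'_n=T'$ the $\calz$-splitting dual to an essential simple closed curve on $\Sigma$, and $g_n=c$. Then $c$ is nonperipheral and elliptic in both, yet $T'$ is simplicial, hence not arational. This is exactly why the companion statement \cite[Corollary~13.2]{GH15-1} (Theorem~\ref{gh-2} here) requires $T\in\FAT$.

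Where simplicity really enters in \cite{GH15-1} is in guaranteeing that the limit leaf $\ell_\infty$ cannot lie in the exceptional sublamination $G\cdot\ell_c$: the argument uses that each $\ell_n$ is carried by a proper $(G,\calf)$-free factor (namely one containing $g_n$), and a separate analysis shows that limits of such leaves in $L(T)$ must meet the ``surface'' minimal sublamination rather than the periodic one. You should locate and invoke that step explicitly.
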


\begin{proof}[Proof of Proposition \ref{arational-at-infinity}]
We take $\FF=\FF_2$ as our model of the free factor graph.
Let $T\in\mathcal{AT}$, and let $(S_n)_{n\in\mathbb{N}}\in\overline{\mathcal{O}}^{\mathbb{N}}$ be a sequence of simplicial (metric) trees that converges to $T$ (for the topology of $\overline{\calo}$). 
We let $S_n^0=\psi_{\FF}(S_n)$ be the point in $\FF$ corresponding to $S_n$.

Assume towards a contradiction that $(S_n^0)_{n\in\mathbb{N}}$ is bounded in $\FF$. 
Fix a base point $U\in \FF$. Up to passing to a subsequence, we can assume that there exists an integer $M\in\mathbb{N}$ so that for all $n\in\mathbb{N}$, one has $d_{\FF}(U,S_n^0)=M$. For each $n\in\mathbb{N}$, we choose a geodesic segment $S_n^0,S_n^1,\dots,S_n^M=U$ from $S_n^0$ to $U$ in $\FF$. Up to passing to subsequences again, we can assume that for every $i\in\{0,\dots,M\}$, the sequence $(S_n^i)_{n\in\mathbb{N}}$ (where the trees are equipped with arbitrary metrics) converges projectively to a tree $T^i\in\mathbb{P}\overline{\mathcal{O}}$. Up to passing to a further subsequence, we can assume that for all $i\in\{0,\dots,M-1\}$, either 
\begin{enumerate}\renewcommand{\theenumi}{(\roman{enumi})}\renewcommand{\labelenumi}{\theenumi}
\item \label{it_simple} for all $n\in\mathbb{N}$, there exists a nonperipheral simple element $g_n^i\in G$ that is elliptic in both $S_n^i$ and $S_n^{i+1}$, or
\item \label{it_compat} for all $n\in\mathbb{N}$, the trees $S_n^i$ and $S_n^{i+1}$ are compatible.
\end{enumerate}

We prove by induction that for all $i\leq M$, the tree $T^i$ is arational (and in fact that $T^i\simAT T$). 
First, $T^0=T$ is arational by assumption.
So assume $T^i$ is arational.
In case \ref{it_simple}, Theorem~\ref{gh} ensures that $T^{i+1}\in\AT$. In case \ref{it_compat}, by taking the limit, we get that the two trees $T^i$ and $T^{i+1}$ are compatible \cite[Corollary~A.12]{GL16}; in particular $T^{i+1}\in\AT$ by Lemma~\ref{atza}. 
We thus deduce that $T^M$ is arational. But on the other hand, $T^M$ is the limit of the constant sequence $S_n^M=U$, so $T_M=U$ is not arational.
This contradiction concludes the proof.
\end{proof}

\paragraph*{End of the proof.}

\begin{proof}[Proof of Theorem \ref{boundary-ff}]
Recall that we denote by $j:\ZS\ra \FF$ the map induced by the inclusion.
It fits in a commutative diagram
$$
 \xymatrix{
   \calo\ar[d]_{\psi_{\ZS}}\ar[dr]^{\psi_{\FF}}& \\
   \ZS\ar[r]^j&\FF.
 }
 $$

Recall that $\partial_{\FF}\ZS$  is the  subset of $\partial_\infty \ZS$ defined as follows:
$\xi\in \partial_\FF \ZS$ if and only if every sequence $(S_n)_{n\in\mathbb{N}}\in\ZS^{\mathbb{N}}$ converging to $\xi$ projects to an unbounded sequence $j(S_n)$ in $\FF$ (see Lemma \ref{lem_eqv_bord}).
By Theorem~\ref{dt} and Proposition~\ref{fz-ff}, $j$ extends to a homeomorphism $\partial j$ from $\partial_{\FF}\ZS\subseteq\partial_\infty \ZS$ to $\partial_\infty \FF$.

By Theorem~\ref{boundary-fz}, there is a homeomorphism  $\partial \psi_{\ZS}$ that allows to identify $\partial_\infty \ZS $ with the set of $\simZA$-classes of $\Z$-averse trees. We claim that $\partial_{\FF}\ZS$ is the image of the set of arational trees under this identification.
Indeed, if $\xi\in\partial_\infty\ZS$ is the image of a tree $T\in\ZA$ which is $\Z$-averse but not arational, 
Proposition \ref{non-arational-visible} provides a sequence of trees $S_n$ in $\calo$ converging to some $T'\simZA T$
whose image in $\FF$ is bounded. 
By the continuity properties of $\partial \psi_{\ZS}$ (Theorem~\ref{boundary-fz}), $\psi_{\ZS}(S_n)$ converges to $\xi$ in $\ZS\cup\partial_\infty\ZS$, 
so $\xi\notin\partial_{\FF}\ZS$ and $\partial_\FF\ZS\subset \partial\psi_\ZS(\AT)$. 
To prove the converse inclusion, consider $\xi=\partial\psi_\ZS(T)$ for some arational tree $T$.
Consider $S_n\in \ZS$ any sequence converging to $\xi$, 
and let $S'_n\in\ZS$ be a metric free splitting compatible with $S_n$ (this exists by Lemma \ref{lem_tirer}). 
It suffices to prove that $j(S'_n)$ hence that $j(S_n)$ is unbounded.
Up to extracting a subsequence, we may assume $(S'_n)_{n\in\bbN}$ converges projectively to some $T'\in\ol\calo$.
By Theorem~\ref{boundary-zf}, $T'$ represents $\xi$ in $\partial_\infty \ZS$, so $T\simZA T'$.
By definition of $\simZA$, this means that $T$ and $T'$ are compatible with a tree  $T''\in\ol\calo$ so $T''$ and $T'$ are arational
by Lemma~\ref{atza}.
By Proposition~\ref{arational-at-infinity}, $j(S'_n)$ is unbounded.
This concludes the proof that $\partial_\FF\ZS$ coincides with the image of the set of arational trees.
By Lemma~\ref{atza}, the equivalence $\simZA$ on $\ZA$ coincides with $\simAT$ in restriction to $\AT$, so 
$$\partial_\infty\FF\simeq \partial_{\FF}\ZS \simeq \AT/\!\raisebox{-.2em}{$\simAT$}.$$ 

The map $\partial\psi_{\FF}$ is obtained by composing the restriction of $\partial\psi_{\ZS}$
with $\partial j$ as in the following diagram:
$$\xymatrix{
  \AT/\!\raisebox{-.2em}{$\simAT$} \ar@{->}[r]^{\simeq} \ar@{-->}[d]^{\partial\psi_{\FF}} 
  &   \AT/\!\raisebox{-.2em}{$\simZA$} \ar@{^(->}[r] \ar@{->}[d]^{(\partial\psi_{\ZS})_{|\AT/_{\simZA}}} 
  &\ZA/\!\raisebox{-.2em}{$\simZA$} \ar[d]^{\partial\psi_{\ZS}} 
\\
\partial_\infty \FF  &\partial_{\FF}\ZS\ar[l]^{\partial j}_{\simeq}\ar@{^{(}->}[r] &\partial_\infty \ZS.
}$$

Let now $(T_n)_{n\in\mathbb{N}}\in\mathcal{O}^\mathbb{N}$ be a sequence that converges to $T\in\mathcal{AT}$.
By Theorem~\ref{boundary-fz}, the sequence $(\psi_{\ZS}(T_n))_{n\in\mathbb{N}}$ converges to the point $\xi\in\partial_\infty \ZS$ corresponding to the equivalence class of $T$. Theorem \ref{dt} then implies that $\psi_{\FF}(T_n)$ converges to $\partial j(T)$, which is equal to $\partial\psi_{\FF}(T)$ by construction.

We finally prove the last assertion of Theorem~\ref{boundary-ff}. Let $(T_n)_{n\in\mathbb{N}}$ be a sequence that converges to a tree $T\in\ol\calo\setminus\AT$. If $T\in\ZA$, then Theorem~\ref{dt} implies that $(\psi_{\FF}(T_n))_{n\in\mathbb{N}}$ has no accumulation point in $\partial_\infty \FF$. If $T\notin\ZA$, then the sequence $(\psi_{\ZS}(T_n))$ has a subsequence along which all Gromov products $(\psi_{\ZS}(T_n)|\psi_{\ZS}(T_m))$ with $n\neq m$ are bounded, and it follows by applying the coarsely alignment preserving map $j$ that the Gromov products $(\psi_{\FF}(T_n)|\psi_{\FF}(T_m))$ are also bounded. This shows that the sequence $(\psi_{\FF}(T_n))$ does not converge in $\partial_\infty \FF$. Since this holds for any subsequence, $(\psi_{\FF}(T_n))$ does not accumulate in $\partial_\infty \FF$.
\end{proof}

\subsection{The Gromov boundary of the $\calz$-factor graph}\label{sec-8}

Again, there is a natural $\Out(G,\calf)$-equivariant map $\psi_{\ZF}:\mathcal{O}\to \ZF$, which just consists in forgetting the metric.
We denote by $\FAT$ the subspace of $\mathcal{AT}$ consisting of relatively free actions. In fact, every arational tree which is not relatively free, is an arational surface tree in the following sense \cite{Rey12,Hor14-3}.

\begin{de}[\emph{\textbf{Arational surface tree}}]\label{dfn_arat-surf}
A tree $T\in\ol\calo$ is an \emph{arational surface tree} if it splits as a graph of actions over a geometric decomposition with a single unused boundary curve (see Figure   \ref{fig-arat-surf}), so that the action corresponding to the vertex associated to the orbifold $\Sigma$ is dual to an arational measured lamination on $\Sigma$. 
\end{de}

It was proved in \cite[Section 4.1]{Hor14-3} that arational surface trees are indeed arational.

\begin{theo}\label{boundary-zf}
There is a unique $\text{Out}(G,\mathcal{F})$-equivariant homeomorphism $$\partial\psi_{\ZF}:\FAT/\!\raisebox{-.2em}{$\simAT$}\to\partial_{\infty}\ZF$$ such that for all $T\in\FAT$ and all sequences $(T_n)_{n\in\mathbb{N}}\in\mathcal{O}^{\mathbb{N}}$ converging to $T$, the sequence $(\psi_{\ZF}(T_n))_{n\in\mathbb{N}}$ converges to $\partial\psi_{\ZF}(T)$. 
\\ Moreover, if $(T_n)_{n\in\mathbb{N}}\in\mathcal{O}^{\mathbb{N}}$ converges to a tree $T\in\overline{\calo}\setminus\FAT$, then $\psi_{\ZF}(T_n)$ has no accumulation point in $\partial_{\infty}\ZF$.
\end{theo}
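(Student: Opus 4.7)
The plan is to mirror the proof of Theorem~\ref{boundary-ff}, with the pair $(\ZS,\FF)$ replaced by $(\FF,\ZF)$. The key ingredients are the coarsely alignment-preserving map $\iota:\FF\to\ZF$ provided by Theorem~\ref{zf-hyp}, Dowdall--Taylor's Theorem~\ref{dt}, and the identification $\partial_\infty\FF\simeq\AT/\simAT$ from Theorem~\ref{boundary-ff}. Together these give a homeomorphism between $\partial_\infty\ZF$ and a subset $\partial_\ZF\FF\subseteq\partial_\infty\FF$, so the task reduces to showing that this subset corresponds to $\FAT/\simAT$. The equivariance, the continuity assertion for convergence $T_n\to T\in\FAT$, and the non-accumulation statement for $T\in\overline\calo\setminus\FAT$ will then follow from the analogous properties in the $\FF$-case combined with Theorem~\ref{dt} and Lemma~\ref{lem_eqv_bord}.

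The core work is two propositions dual to Propositions~\ref{non-arational-visible} and~\ref{arational-at-infinity}. For the first (arational surface trees give bounded sequences in $\ZF$), let $T\in\AT\setminus\FAT$. By Definition~\ref{dfn_arat-surf}, $T$ splits as a graph of actions over a geometric decomposition with orbifold $\Sigma$ and a single unused boundary curve generated by a nonperipheral element $g$, so $g$ is quadratic and elliptic in $T$. Excluding sporadic cases as in Corollary~\ref{cor-quadratic}, one picks an essential simple closed curve $c$ on $\Sigma$ distinct from the unused boundary; extending the $\pi_1(\Sigma)$-splitting dual to $c$ via the geometric decomposition gives a fixed $\calz$-splitting $S$ of $(G,\calf)$ in which $g$ remains elliptic. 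Approximating the arational lamination on $\Sigma$ by simple closed curves $c_n$ and blowing up along the geometric decomposition, one assembles Grushko trees $T_n\in\calo$ converging to some $T'\simAT T$ and each refining a $\calz$-splitting $S_n$ in which $g$ is elliptic; then $d_\ZF(T_n,S)\le d_\ZF(T_n,S_n)+d_\ZF(S_n,S)\le 2$.

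For the second proposition (relatively free arational trees give unbounded sequences in $\ZF$), assume $T\in\FAT$ and $T_n\to T$ in $\calo$ with $\psi_\ZF(T_n)$ bounded. The Kobayashi-style argument of Proposition~\ref{arational-at-infinity} yields, up to subsequences, a chain $T=T^0,T^1,\dots,T^M=U$ of projective limits of vertices on a $\ZF$-geodesic $S_n^0,\dots,S_n^M$ joining $\psi_\ZF(T_n)$ to a fixed base point $U$. The goal is to prove by induction on $i$ that every $T^i$ lies in $\FAT$ with $T^i\simAT T$, contradicting $T^M=U$ being a simplicial $\calz$-splitting. For the inductive step, Theorem~\ref{zfq} gives three possibilities for the $\ZF$-adjacency of $S_n^i$ and $S_n^{i+1}$: compatibility, a common simple nonperipheral elliptic element, or a common quadratic nonperipheral elliptic element. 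The first two cases are settled by Lemma~\ref{atza} and Theorem~\ref{gh} respectively, combined with the observation that $\simAT$-equivalences are equivariant alignment-preserving bijections that preserve point stabilizers and hence preserve the property of being relatively free.

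The main obstacle is the quadratic case. When the conjugacy class of the quadratic element $g_n$ is eventually constant equal to $g$, continuity of translation length in $\overline\calo$ forces $g$ to be elliptic in $T^i\in\FAT$, contradicting relative freeness since $g$ is nonperipheral. The subtle case is a genuinely varying sequence $g_n$; I would handle it either by extending the laminations-based duality theorem of~\cite{GH15-1} from simple to quadratic elements (quadratic elements still satisfy a strong combinatorial constraint on their axes in Grushko trees by Lemma~\ref{quadratic}), or by arguing directly that the existence of a varying quadratic sequence $g_n$ elliptic in $S_n^i\to T^i$ would force $T^i$ to acquire a nontrivial QH vertex group in a limit of the associated geometric decompositions, contradicting $T^i\in\FAT$ being relatively free. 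Once this case is resolved, the inductive chain delivers $U\in\FAT$, producing the required contradiction and completing the identification $\partial_\ZF\FF\leftrightarrow\FAT/\simAT$.
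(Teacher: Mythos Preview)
Your overall architecture is exactly right and matches the paper: apply Dowdall--Taylor to the coarsely alignment-preserving map $\FF\to\ZF$ from Theorem~\ref{zf-hyp}, then identify $\partial_{\ZF}\FF\subseteq\partial_\infty\FF\simeq\AT/\!\simAT$ with $\FAT/\!\simAT$ via two propositions analogous to Propositions~\ref{non-arational-visible} and~\ref{arational-at-infinity}. Your sketch of the first (arational surface trees are not at infinity of $\ZF$) is essentially the paper's Proposition~\ref{non-fat}.

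Where you diverge from the paper is in the ``unbounded'' direction, and here you are making life harder than necessary. You split the inductive step into three cases (compatible / common simple elliptic / common quadratic elliptic) and flag the quadratic case as the ``main obstacle,'' proposing either to extend the duality theorem of \cite{GH15-1} to quadratic elements or to run a limiting argument on geometric decompositions. But the needed extension already exists and is stated in the paper as Theorem~\ref{gh-2} (which is \cite[Corollary~13.2]{GH15-1}): if $T_n\to T\in\FAT$, $T'_n\to T'$, and some \emph{arbitrary} nonperipheral $g_n$ is elliptic in both $T_n$ and $T'_n$, then $T'\in\FAT$ and $T'\simAT T$. No simplicity or quadraticity hypothesis on $g_n$ is required. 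With this in hand, the Kobayashi argument goes through verbatim with a single ``common nonperipheral elliptic'' case replacing your case split, and there is nothing left to do. Your first proposed fix is therefore correct in spirit but already available; your second proposed fix (extracting a QH vertex from a limit of geometric decompositions) is vague and would be delicate to make rigorous, so you should simply invoke Theorem~\ref{gh-2} instead.
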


\begin{rk}
  If $(G,\calf)$ is sporadic or isomorphic to $(F_2,\emptyset)$, then $\ZF$ is bounded and $\FAT$ is empty so the result is valid but empty.
\end{rk}

\begin{proof}
  The proof is the same as the proof of Theorem \ref{boundary-ff}, by
  applying Dowdall--Taylor's criterion (Theorem \ref{dt}) to the
  coarsely alignment-preserving map $\psi:\FF\to \ZF$, and replacing
  Propositions \ref{non-arational-visible} and
  \ref{arational-at-infinity} with Propositions \ref{non-fat} and
  \ref{fat} below.
\end{proof}

\begin{prop}\label{non-fat}
Let $(G,\calf)$ be nonsporadic and not isomorphic to $(F_2,\es)$.
Let $T\in\mathcal{AT}$ be an arational surface tree. 
\\ Then there exists a sequence of simplicial metric trees 
$(S_n)_{n\in\mathbb{N}}\in\mathcal{O}^{\mathbb{N}}$ that converges to a tree $T'\simAT T$, such that the sequence $(\psi_{\ZF}(S_n))_{n\in\mathbb{N}}$ is bounded in $\ZF$.
\end{prop}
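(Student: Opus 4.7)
The plan is to approximate $T$ in $\overline{\calo}$ by a sequence of Grushko trees $R_n$ in which the generator $g$ of the unused boundary component is quadratic, and then invoke Corollary~\ref{cor-quadratic} to produce $\calz$-splittings $S_n$ in $\ZF$ at bounded distance from the $R_n$ that share $g$ as a common nonperipheral elliptic element. Concretely, since $T$ is an arational surface tree, it splits as a graph of actions along a geometric decomposition of $(G,\calf)$ with underlying orbifold $\Sigma$ having a single unused boundary component $b$, whose fundamental group $\langle g\rangle$ is nonperipheral and non-simple; the action attached to the QH vertex is dual to an arational measured lamination $\mathcal{L}$ on $\Sigma$. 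Because $g$ is non-peripheral, it cannot be elliptic in any Grushko tree, so the best we can hope for is to have $g$ quadratic in the approximating $R_n$.

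First I would approximate $\mathcal{L}$ by a sequence of maximal systems of disjoint, properly embedded, pairwise non-parallel arcs $\calA_n$ on $\Sigma$ with all endpoints in $b$, equipped with transverse measures (weights), in such a way that the dual trees $Y_n$ of $\calA_n$ converge equivariantly to the lamination tree $Y$ dual to $\mathcal{L}$. Such an approximation exists by the density of weighted arc systems (with fixed endpoints on $b$) in the space of measured laminations carried by $\Sigma$, combined with Skora's duality between small $\pi_1(\Sigma)$-actions on $\bbR$-trees and measured laminations on $\Sigma$. The assumption that $(G,\calf)$ is nonsporadic and not $(F_2,\emptyset)$ is used here to rule out the orbifolds in which the unique unused boundary has too small a complement to support such an approximation (the cases that already appeared as exceptions in Corollary~\ref{cor-quadratic}).

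Next I would glue these pieces back via the graph of actions structure: replacing the single non-simplicial vertex action $Y$ by its approximation $Y_n$, and keeping the rest of the geometric decomposition intact, produces a sequence of simplicial, relatively free $(G,\calf)$-trees $R_n\in\calo$. Using the closedness of compatibility under limits (\cite[Corollary~A.12]{GL16}) together with the standard continuity of the graph-of-actions construction, one checks that (up to extraction) $R_n$ converges in $\overline{\calo}$ to a tree $T'$ compatible with $T$; by Lemma~\ref{atza}, $T'\simAT T$. By construction, $R_n$ is dual to a maximal system of arcs on $\Sigma$ with endpoints in $b$, so Lemma~\ref{quadratic} ensures that $g$ is quadratic in each $R_n$.

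Finally, applying Corollary~\ref{cor-quadratic} to each $R_n$ yields a $\calz$-splitting $S_n$ with $d_{\ZF}(R_n,S_n)\leq 2$ in which $g$ is elliptic. Since $g$ is a common nonperipheral elliptic element of all the $S_n$, consecutive $S_n$ are at distance at most $1$ in $\ZF$, so $(S_n)$, and hence $(\psi_{\ZF}(R_n))$, is bounded in $\ZF$, which is the desired conclusion. The main obstacle is the first step: arranging the arc-system approximation of the arational lamination $\mathcal{L}$ so that the dual trees $Y_n$ converge to $Y$, as this is where one uses in an essential way that the arational lamination lives on a surface with enough complexity relative to its unused boundary component, which is exactly what the nonsporadic and non-$(F_2,\emptyset)$ hypothesis guarantees.
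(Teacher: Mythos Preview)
Your overall strategy --- approximate $T$ directly by Grushko trees $R_n$ in which $g$ is quadratic (dual to maximal arc systems on $\Sigma$ with endpoints on the unused boundary $b$), then invoke Corollary~\ref{cor-quadratic} to bound $\psi_{\ZF}(R_n)$ --- is a genuinely different route from the paper's. The paper instead approximates the arational lamination $\mathcal{L}$ by weighted simple closed multicurves $\gamma_n$ using the classical density of curves in $\mathcal{PML}(\Sigma)$; the dual trees $T_{\gamma_n}$ are $\calz$-splittings in which the boundary element $c_0$ is elliptic (hence bounded in $\ZF$), and one then passes to Grushko trees through a chain of compatible intermediate splittings (a disjoint two-sided curve $\gamma_n'$, then a compatible free splitting via Lemma~\ref{lem_tirer}, then a Grushko refinement), taking subsequential limits and applying Lemma~\ref{atza} along the chain. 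The nonsporadic, non-$(F_2,\emptyset)$ hypothesis enters in the paper precisely to guarantee the existence of the two-sided curve $\gamma_n'$.

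The gap in your argument is the first step, and while you rightly flag it as the main obstacle, you do not resolve it. The claim that weighted arc systems with endpoints on $b$ are ``dense'' in the space of measured laminations on $\Sigma$ is neither standard nor well-posed as stated: such arc systems are not elements of $\mathcal{ML}(\Sigma)$ (their leaves meet $\partial\Sigma$), so one would first have to specify a larger space containing both and then prove the approximation there. Skora's duality only translates between laminations and trees; it says nothing about density of arc systems. At the level of dual trees the difficulty is concrete: $g$ is elliptic in the lamination tree $Y$ but hyperbolic in every $Y_n$, so projective convergence $Y_n\to Y$ would force $\|g\|_{Y_n}/\|h\|_{Y_n}\to 0$ for every $h$ hyperbolic in $Y$, and exhibiting arc systems achieving this simultaneously for all such $h$ is exactly the missing content. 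The paper's approach sidesteps this entirely by starting from closed curves, for which density in $\mathcal{PML}(\Sigma)$ is classical.
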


\begin{proof}
Let $\Sigma$ be the orbifold associated to $T$, and let $L$ be the corresponding measured lamination as in Definition \ref{dfn_arat-surf}. We denote by $\grp{c_0}$
the fundamental group of the unused boundary curve of $\Sigma$. We denote by $\mathcal{PML}(\Sigma)$ the space of projective measured laminations on $\Sigma$ (for some fixed hyperbolic metric on $\Sigma$). We can find a sequence of weighted essential simple closed geodesic multicurves $(\gamma_n)_{n\in\mathbb{N}}\in\mathcal{PML}(\Sigma)^{\mathbb{N}}$ that converges to $L$. These multicurves $\gamma_n$ are dual to $(G,\mathcal{F})$-trees $T_{\gamma_n}$ with cyclic edge stabilizers, which converge to $T$ in the Gromov--Hausdorff topology.

We claim that there exists an essential two-sided geodesic $\gamma'_n$ disjoint or equal to  one of the geodesics  in $\gamma_n$.
Indeed, we may assume without loss of generality that $\gamma_n$ consists of a single
one-sided geodesic. If one cannot find such a $\gamma'_n$, 
then $\Sigma\setminus \gamma_n$ is either 
 a sphere with at most three punctures or conical points, or a projective plane with at most two punctures or conical points \cite[Section 5.1.4]{GL16}.
Therefore, $\Sigma$ is either a projective plane with at most two punctures or conical points, or a Klein bottle with at most
one puncture or conical point.  In each case, one easily checks that $(G,\calf)$ is sporadic or isomorphic to $(F_2,\es)$, which proves the claim.

Now let $T_{\gamma'_n}$ be the tree dual to such a two-sided geodesic.
This is a very small tree and since $c_0$ is elliptic in all trees $T_{\gamma'_n}$,
the image of this sequence in $\ZF$ is bounded.
Let $S'_n$ be a free splitting compatible with $T_{\gamma'_n}$ (dual to an arc on $\Sigma$ with endpoints
on the unused boundary curve of $\Sigma$, or using Lemma \ref{lem_tirer}),
and $S_n\in\calo$ a Grushko splitting refining $S'_n$.
Since compatible trees are at distance one in $\ZF$, the image of $S_n$ in $\ZF$ is still bounded.
Up to passing to subsequences, we may assume that $T_{\gamma'_n}$, $S'_n$ and $S_n$ respectively 
converge projectively to trees $T_1,T_2,T_3$ in $\ol\calo$. 
Denoting $T_0=T$ to uniformize notation one gets
that $T_i$ is compatible with $T_{i+1}$ for $i\in\{0,1,2\}$
because compatibility is a closed relation \cite[Corollary A.12]{GL16}.
By Lemma \ref{atza}, $T_i\sim_\AT T_{i+1}$, which proves the proposition with $T'=T_3$.
\end{proof}

\begin{prop}\label{fat}
Let $T\in\FAT$, and let $(S_n)_{n\in\mathbb{N}}\in\mathcal{O}^{\mathbb{N}}$ be a sequence that converges to $T$. Then the sequence $(\psi_{\ZF}(S_n))_{n\in\mathbb{N}}$ is unbounded in $\ZF$.
\end{prop}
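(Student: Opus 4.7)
Following the strategy of Proposition~\ref{arational-at-infinity}, I work with the model $\ZF=\ZF_q$ provided by Theorem~\ref{zfq}, in which edges correspond either to compatibility or to sharing a common nonperipheral simple or quadratic elliptic element. The plan is to assume for contradiction that $(\psi_{\ZF}(S_n))_{n\in\bbN}$ is bounded in $\ZF$, and fix a base point $U\in\ZF$ in which some proper $(G,\calf)$-free factor is elliptic (any free splitting of $(G,\calf)$ with a non-peripheral vertex stabilizer works in the nonsporadic setting); in particular $U\notin\AT$. After extracting a subsequence, one has $d_{\ZF}(U,\psi_{\ZF}(S_n))=M$ for some fixed $M$, and for each $n$ we choose a geodesic $S_n^0=\psi_{\ZF}(S_n),S_n^1,\dots,S_n^M=U$ in $\ZF_q$. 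Further extraction guarantees that each sequence $(S_n^i)_n$ converges projectively to some $T^i\in\bbP\overline{\calo}$, and that for every $i\in\{0,\dots,M-1\}$ exactly one of three cases occurs uniformly in $n$: either (i) $S_n^i$ and $S_n^{i+1}$ are compatible, or (ii-a) they share a common nonperipheral simple elliptic element, or (ii-b) they share no common nonperipheral simple elliptic element but do share a common nonperipheral quadratic elliptic element $g_n^i$.

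The plan is to prove by induction on $i$ that $T^i\in\FAT$, which yields the desired contradiction since $T^M=U\notin\AT$. The base case $T^0=T\in\FAT$ is the hypothesis. For the inductive step, assume $T^i\in\FAT$. In case (i), compatibility is a closed relation in $\overline{\calo}$ by \cite[Corollary~A.12]{GL16}, so $T^i$ and $T^{i+1}$ are compatible; Lemma~\ref{atza} then gives $T^{i+1}\in\AT$ with $T^{i+1}\simAT T^i$. Since a $\simAT$-equivalence is realized by an equivariant homeomorphism for the observers' topologies, it preserves point stabilizers, so $T^{i+1}$ is also relatively free and thus lies in $\FAT$. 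In case (ii-a), Theorem~\ref{gh} directly yields $T^{i+1}\in\AT$ with $T^{i+1}\simAT T^i$, and the same point-stabilizer preservation argument gives $T^{i+1}\in\FAT$.

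The main obstacle will be case (ii-b), where Theorem~\ref{gh} does not apply since $g_n^i$ is not simple. I expect to show that this case cannot arise when $T^i\in\FAT$. The key structural input comes from the proof of Theorem~\ref{zfq}: for each $n$, the cyclic JSJ decomposition $\sigma_n$ of $G$ relative to $\calf\cup\{\grp{g_n^i}\}$ is a geometric decomposition with QH vertex group $\pi_1(\Sigma_n)$ in which $\grp{g_n^i}$ generates the unique unused boundary of $\Sigma_n$, and both $S_n^i$ and $S_n^{i+1}$ are collapses of blow-ups of $\sigma_n$ dual to essential simple closed curves in $\Sigma_n$. Since there are only finitely many topological types of orbifolds $\Sigma_n$ compatible with $\xi(G,\calf)$, we may assume after extraction that the type is fixed, and compactness of the space $\mathcal{PML}(\Sigma)$ of projective measured laminations then produces a limit $L_\infty$ of the projective classes of the dual curves. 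The plan is then to deduce that $T^i$ splits as a graph of actions over a geometric decomposition whose QH vertex action is dual to $L_\infty$, so that $T^i$ is an arational surface tree in the sense of Definition~\ref{dfn_arat-surf}, contradicting $T^i\in\FAT$. Making this limit argument rigorous — essentially an extension of Theorem~\ref{gh} to common quadratic elements under the hypothesis that one of the limits lies in $\FAT$ — is the main difficulty of the proof.
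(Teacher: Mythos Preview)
Your overall strategy mirrors the paper's, but there is a genuine gap in case~(ii-b), and you explicitly acknowledge that the limiting argument there is not carried out. The paper avoids this entirely: rather than splitting into simple vs.\ quadratic elliptic elements and attempting to rule out the quadratic case, it invokes a stronger black box, namely \cite[Corollary~13.2]{GH15-1} (stated in the paper as Theorem~\ref{gh-2}). This result says that if $(T_n),(T'_n)\in\ol\calo^{\bbN}$ share a common \emph{nonperipheral} (not necessarily simple) elliptic element for each $n$, and $T_n\to T\in\FAT$, then any limit $T'$ of $(T'_n)$ also lies in $\FAT$ with $T'\simAT T$. With this in hand, one works with the original model of $\ZF$ (edges given by compatibility or a common nonperipheral elliptic element), and the induction step has only two cases: compatibility (handled exactly as you do) and a common nonperipheral elliptic element (handled by Theorem~\ref{gh-2}). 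No separate treatment of quadratic elements is needed.

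Your attempted route in case~(ii-b) is also conceptually shaky, not just incomplete. You aim to show that the existence of common quadratic elliptic elements forces $T^i$ to be arational surface, contradicting $T^i\in\FAT$. But the JSJ/geometric decompositions $\sigma_n$ you extract depend on $n$ (the quadratic element $g_n^i$ varies), so there is no fixed orbifold on which to run a $\mathcal{PML}$ compactness argument; fixing only the topological type does not give a single embedding of $\pi_1(\Sigma)$ into $G$, and without that one cannot make sense of ``$T^i$ splits as a graph of actions over the geometric decomposition with vertex action dual to $L_\infty$''. Even if one could pass to a limit, what this would actually recover is precisely the content of Theorem~\ref{gh-2}. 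In short: the missing idea is to cite the $\FAT$-version of the unique duality theorem rather than the simple-element version (Theorem~\ref{gh}); once you do that, cases (ii-a) and (ii-b) merge and the proof is complete.
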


\begin{proof}
Recall that the graphs $\FF$ (with the model $\FF_2$) and $\ZF$ have the same vertices, and all edges of $\FF$ are edges of $\ZF$.
Additional edges in $\ZF$ occur when  there is a non-simple element $g$ that is elliptic in two trees.
The proof of Proposition~\ref{fat} is exactly the same as the proof of Proposition~\ref{arational-at-infinity}, by using the following variation of Theorem~\ref{gh}, in which the elements $g_n$ are no longer required to be simple.
\end{proof}

\begin{theo}(\cite[Corollary~13.2]{GH15-1})\label{gh-2}
Let $(T_n)_{n\in\mathbb{N}},(T'_n)_{n\in\mathbb{N}}\in\ol{\calo}^{\mathbb{N}}$, such that for all $n\in\mathbb{N}$, some nonperipheral element $g_n\in G$ is elliptic in both $T_n$ and $T'_n$. 
\\ If $(T_n)_{n\in\mathbb{N}},(T'_n)_{n\in\mathbb{N}}$ converge respectively to $T,T'$ in $\ol{\calo}$,
and if $T\in\FAT$, then $T'\in\FAT$, and $T'\sim_{\mathcal{AT}} T$.
\end{theo}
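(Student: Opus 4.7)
My plan is to follow the same strategy as in the proof of the closely related Theorem \ref{gh} (which is Corollary 13.3 of the companion paper \cite{GH15-1}): both statements are corollaries of a single unique duality result for arational trees, and the only difference here is that the common elliptic elements $g_n$ are not required to be simple, while the limit $T$ is strengthened from an arbitrary arational tree to a relatively free one.

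The central technical object is the dual lamination $L(T)$ of a tree $T\in\ol\calo$, as developed in \cite{GH15-1}. Fixing a reference Grushko tree $R\in\calo$, a leaf of $L(T)$ is a pair $(\xi,\eta)\in\partial^2 R$ such that arbitrarily long subsegments of the corresponding biinfinite word in $R$ are collapsed, or folded to short segments, by equivariant morphisms from $R$ to trees close to $T$. The key input I would invoke is the unique duality theorem: a tree $T\in\FAT$ is determined up to $\simAT$ by $L(T)$, in the strong sense that any $T'\in\ol\calo$ whose dual lamination shares a leaf with $L(T)$ already satisfies $T'\simAT T$ and $T'\in\FAT$. Granting this, I would represent each $g_n$ by a cyclically reduced periodic biinfinite word in $R$; since $g_n$ is elliptic in $T_n$ and $T_n\to T$, long fundamental domains of this word must be nearly collapsed by optimal morphisms $R\to T_n$, which, after passing to a subsequence and using compactness of $\partial^2 R$, yields a leaf $\ell$ of $L(T)$ arising as a limit of the pairs $(g_n^{-\infty},g_n^{+\infty})$. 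Running the same compactness extraction on the sequence $(T'_n)_{n\in\mathbb{N}}$ along the same subsequence produces a leaf $\ell'\in L(T')$ which, by construction, equals $\ell$. The unique duality theorem then forces $T'\simAT T$ and $T'\in\FAT$.

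The hard part is the leaf extraction step. When $g_n$ is simple, as in Theorem \ref{gh}, there is a proper $(G,\calf)$-free factor containing $g_n$ and one can localize the entire argument inside the outer space of that factor; the candidate leaf is then automatically honest. For non-simple $g_n$ this localization is unavailable and one must argue globally in $\ol\calo$, where the main danger is that the limit pair $(\xi,\eta)$ could collapse, i.e.\ $\xi=\eta$, or could correspond to a segment of bounded length in $T$ sitting inside a non-trivial point stabilizer. The hypothesis $T\in\FAT$ is precisely what rules out this degeneration: since $T$ has trivial point stabilizers and no non-peripheral elliptic element, a non-simple $g_n$ cannot drift into a point stabilizer of $T$, and the limit leaf is guaranteed to be non-degenerate. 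This is the only place where the full strength $T\in\FAT$ (rather than just $T\in\AT$) is used, and it is exactly what allows the duality machinery of \cite{GH15-1} to be applied verbatim to conclude.
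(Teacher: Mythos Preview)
The paper does not prove this statement: it is quoted verbatim as \cite[Corollary~13.2]{GH15-1} and used as a black box in the proof of Proposition~\ref{fat}. There is therefore no proof in this paper to compare your proposal against.

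That said, your sketch is a reasonable outline of the argument one expects to find in \cite{GH15-1}: the dual-lamination machinery, the extraction of a limiting leaf from the periodic lines of the $g_n$, and the unique-duality theorem for relatively free arational trees are indeed the ingredients that paper develops. One inaccuracy: you write that $T\in\FAT$ ``has trivial point stabilizers''. This is false; relatively free means that the only elliptic subgroups are peripheral, so point stabilizers are exactly the peripheral subgroups (which may be nontrivial). What matters is that each $g_n$ is \emph{nonperipheral}, hence cannot be elliptic in $T$, and this is what prevents the limiting leaf from degenerating into a peripheral point. Your heuristic about why $\FAT$ rather than $\AT$ is needed is on the right track, but the actual obstruction in the arational-surface case is that the non-simple boundary element is elliptic in $T$, so a sequence $g_n$ conjugate to it would produce a degenerate limit; relatively free rules out precisely this.
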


\section{Loxodromic isometries of $\FF$ and $\ZF$}\label{sec-loxo}

We now determine which elements of $\Out(G,\calf)$ act loxodromically on either $\FF$ or $\ZF$. In the case where $G=F_N$ and $\calf=\emptyset$, this is due to Bestvina--Feighn \cite{BF14} and Mann \cite{Man14}. In the case where $G=F_N$ and $\calf$ is arbitrary, loxodromic isometries of the free factor graph were determined by Gupta \cite{Gup2} by a different method.

We recall that an outer automorphism $\Phi\in\Out(G,\calf)$ is \emph{fully irreducible} (with respect to $(G,\calf)$) if none of its powers fixes the conjugacy class of a proper $(G,\calf)$-free factor.

\begin{theo}[see also \cite{Gup2}]\label{loxo}
Let $(G,\calf)$ be non-sporadic.
An automorphism $\Phi\in\text{Out}(G,\calf)$ acts loxodromically on $\FF$ if and only if $\Phi$ is fully irreducible (otherwise $\Phi$ acts elliptically on $\FF$).
\end{theo}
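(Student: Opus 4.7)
The plan is to treat the two directions separately, using Theorem~\ref{boundary-ff} as the main ingredient for the converse.

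For the easy direction, assume $\Phi$ is not fully irreducible, so some nonzero power $\Phi^k$ preserves the conjugacy class $[A]$ of a proper $(G,\calf)$-free factor $A$. Since $A$ is proper, it is not peripheral and hence contains nonperipheral (and therefore simple) elements. Fix any $\calz$-splitting $S$ of $(G,\calf)$ in which $A$ is elliptic (such an $S$ exists by definition of a free factor). For every $n \in \bbZ$, the iterate $\Phi^{nk}\cdot S$ has a conjugate of $A$ elliptic, so $S$ and $\Phi^{nk}\cdot S$ share a nonperipheral simple elliptic element, hence lie at distance at most $1$ in $\FF=\FF_2$. The $\langle \Phi^k\rangle$-orbit of $S$ is therefore bounded, and so is the $\langle \Phi\rangle$-orbit (which differs from the $\Phi^k$-orbit by at most $k-1$ isometric translates). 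Consequently $\Phi$ acts elliptically on $\FF$, and in particular not loxodromically.

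For the converse, assume $\Phi$ is fully irreducible. The first step is to produce an attracting and a repelling fixed tree $T_+, T_- \in \overline{\calo}$ with $T_+\cdot\Phi = \lambda_+ T_+$ and $T_-\cdot\Phi^{-1} = \lambda_- T_-$ for some $\lambda_+, \lambda_- > 1$, so that $\Phi$ has North--South type dynamics on $\mathbb{P}\overline{\calo}$: for every $T \in \calo$ one has $\Phi^n T \to T_+$ and $\Phi^{-n} T \to T_-$ projectively. This is the relative analogue of classical results for $\Out(F_N)$ (Bestvina--Feighn--Handel, Levitt--Lustig) and is available for free products via relative train-track technology. Full irreducibility forces $T_\pm \in \AT$: if some proper $(G,\calf)$-free factor $H$ failed the arationality condition in $T_+$, one could extract from $\Phi$-invariance a periodic conjugacy class of proper free factors, contradicting full irreducibility. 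Distinct scaling factors preclude $T_+ \simAT T_-$: any equivariant alignment-preserving bijection would have to intertwine $\lambda_+$-scaling under $\Phi$ with $\lambda_-^{-1}$-scaling under $\Phi$, which is impossible.

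Set $\xi_\pm := \partial\psi_\FF(T_\pm) \in \partial_\infty \FF$. By Theorem~\ref{boundary-ff}, $\xi_+ \neq \xi_-$ and for any $T \in \calo$ the sequences $\psi_\FF(\Phi^n T)$ and $\psi_\FF(\Phi^{-n} T)$ converge respectively to $\xi_+$ and $\xi_-$ in $\FF \cup \partial_\infty \FF$. Coarse $\Out(G,\calf)$-equivariance of $\psi_\FF$ then shows that the $\langle \Phi\rangle$-orbit of $\psi_\FF(T)$ in $\FF$ has two distinct boundary limit points, which is incompatible with elliptic or parabolic behaviour on a Gromov-hyperbolic space. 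Hence $\Phi$ acts loxodromically on $\FF$. The main obstacle is supplying the input about the attracting/repelling trees and verifying both arationality and $\simAT$-distinctness; arationality is precisely where full irreducibility is crucially used, and distinctness can be handled either by the stretch-factor argument above or a posteriori via the stabilizer analysis of arational trees developed in Section~\ref{sec-5}, which rules out a single $\simAT$-class simultaneously scaled by two different factors under $\Phi$.
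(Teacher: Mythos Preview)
Your treatment of the easy direction is fine and matches the paper's argument.

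For the converse, your route differs from the paper's. The paper does not use North--South dynamics with two fixed trees; it takes a single $\Phi$-invariant folding line $L:\bbR\to\calo$ (from \cite[Theorem~8.23]{FM}), observes that $L$ projects to an unparametrized quasi-geodesic in $\FF$ by Proposition~\ref{fz-ff}, shows that $L(t)$ has a limit $T\in\overline\calo$ as $t\to+\infty$, and then proves $T$ is arational exactly by the canonical-reduction argument you sketch. Theorem~\ref{boundary-ff} then gives unboundedness of the projected line, and a $\Phi$-invariant unbounded quasi-geodesic forces $\Phi$ to be loxodromic. This sidesteps any need to compare $T_+$ and $T_-$.

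Your approach has a genuine gap at the step $T_+\not\simAT T_-$. The stretch-factor argument only shows $[T_+]\neq[T_-]$ in $\mathbb{P}\overline\calo$: the relation $\simAT$ is defined via equivariant alignment-preserving \emph{bijections}, not homotheties, so nothing prevents $T_+$ and $T_-$ from sitting as two distinct points of the same finite-dimensional simplex of length measures (Proposition~13.5 of \cite{GH15-1}), fixed by $\Phi$ with different eigenvalues $\lambda_+$ and $\lambda_-^{-1}$. A linear map on a simplex can perfectly well have two projectively fixed points with distinct eigenvalues. Your fallback to Section~\ref{sec-5} does not help either: Lemma~\ref{lem_cyclic} and Corollary~\ref{cor_totalement_reductible} concern $\Stab([T])$ for a single homothety class, and give no obstruction to $\Phi$ fixing two $\simAT$-equivalent projective classes with reciprocal scaling. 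Without $\xi_+\neq\xi_-$ you cannot rule out parabolic behaviour, so the argument as written does not conclude loxodromicity. The cleanest fix is precisely the paper's: use that optimal folding paths are uniform unparametrized quasi-geodesics in $\FF$ (Proposition~\ref{fz-ff}), which gives you a $\Phi$-invariant quasi-axis directly.
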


\begin{proof}
If $\Phi$ is not fully irreducible, then some power of $\Phi$ preserves the conjugacy class of a proper $(G,\calf)$-free factor, so $\Phi$ acts elliptically on $\FF$.

If $\Phi$ is fully irreducible, \cite[Theorem~8.23]{FM} ensures that $\Phi$ has an invariant folding line $L:\bbR\ra \calo$, and this line projects to a $\Phi$-invariant (unparametrized) quasi-geodesic in $\FF$.
It suffices to prove that the projection of $L$ in $\FF$ is unbounded.
There exists an element $g\in G$ whose axis in $L(0)$ isometrically embeds in $L(t)$ for all $t\geq 0$.
Since the length function of $L(t)$ is non-increasing, 
this implies that $L(t)$ has a limit $T\in\ol\calo$ as $t\ra\infty$.
It follows that $T$ is $\Phi$-invariant. Since $\Phi$ is fully irreducible and $(G,\calf)$ is nonsporadic, this implies
that $T$ is arational because if $T$ is not arational, then one can associate to $T$
a canonical hence $\Phi$-invariant nonempty finite family of proper $(G,\calf)$-free factors
(see \cite{Rey12,Hor14-3}).
By Theorem~\ref{boundary-ff}, this implies that the projection of $L$ in $\FF$ is unbounded.
\end{proof}

 An automorphism $\Phi\in\text{Out}(G,\mathcal{F})$ is \emph{atoroidal} if no power $\Phi^k$ (with $k\neq 0$) fixes a nonperipheral conjugacy class.

\begin{theo}
Let $(G,\calf)$ be non-sporadic.
An automorphism $\Phi\in\text{Out}(G,\calf)$ acts loxodromically on $\ZF$ if and only if $\Phi$ is fully irreducible and atoroidal (otherwise $\Phi$ acts elliptically on $\ZF$).
\end{theo}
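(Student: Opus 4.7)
The plan is to mimic the proof of Theorem~\ref{loxo}, replacing $\FF$ by $\ZF$ and using Theorem~\ref{boundary-zf} in place of Theorem~\ref{boundary-ff}. The key additional input needed is a characterization of when the attracting tree of a fully irreducible element lies in $\FAT$: namely, that this happens precisely when $\Phi$ is atoroidal.

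For the easy direction, if $\Phi$ is not fully irreducible, then Theorem~\ref{loxo} says that $\Phi$ acts elliptically on $\FF$. The coarsely $\Out(G,\calf)$-equivariant, coarsely Lipschitz map $\FF\to\ZF$ of Theorem~\ref{zf-hyp} then implies that $\Phi$ has bounded orbits in $\ZF$, so acts elliptically there as well. It thus remains to treat the case where $\Phi$ is fully irreducible, where the dichotomy to establish becomes: $\Phi$ is atoroidal if and only if $\Phi$ acts loxodromically on $\ZF$.

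So assume $\Phi$ is fully irreducible, and invoke \cite[Theorem~8.23]{FM} to produce a $\Phi$-invariant folding line $L:\bbR\to\calo$. Exactly as in the proof of Theorem~\ref{loxo}, $L(t)$ converges in $\overline{\calo}$ as $t\to\pm\infty$ to $\Phi$-invariant arational trees $T^\pm\in\AT$ (on which $\Phi$ acts by scaling by $\lambda^{\pm 1}$ for some $\lambda>1$, the expansion factor). I claim that $T^+\in\FAT$ if and only if $\Phi$ is atoroidal. For one direction, if $T^+\notin\FAT$, then by \cite{Rey12,Hor14-3} it is an arational surface tree in the sense of Definition~\ref{dfn_arat-surf}; since the associated geometric decomposition is canonically attached to $T^+$, the map $\Phi$ must preserve it, hence permute its finitely many boundary components, so some power of $\Phi$ fixes the conjugacy class of the generator $g$ of the unused boundary, which is non-peripheral. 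For the converse, if $T^+\in\FAT$ and $\Phi^k$ fixed the conjugacy class of some non-peripheral $g\in G$, then $\|g\|_{T^+}=\|\Phi^k\cdot g\|_{T^+}=\lambda^k\|g\|_{T^+}$ would force $\|g\|_{T^+}=0$, contradicting the fact that non-peripheral elements act with positive translation length on the relatively free arational tree $T^+$.

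With this equivalence in hand, Theorem~\ref{boundary-zf} finishes the argument: if $\Phi$ is fully irreducible and atoroidal, then $T^\pm\in\FAT$, so $\psi_\ZF(L(t))$ converges to $\partial\psi_\ZF(T^\pm)$ in $\ZF\cup\partial_\infty\ZF$ as $t\to\pm\infty$, yielding loxodromicity; if $\Phi$ is fully irreducible but not atoroidal, then $T^+\notin\FAT$, so the second part of Theorem~\ref{boundary-zf} prevents $\psi_\ZF(L(t))$ from having any accumulation point in $\partial_\infty\ZF$, which is incompatible with $\Phi$ acting loxodromically on $\ZF$ (its orbit would have to converge to an attracting fixed point at infinity). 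The main technical point I expect to require care is the uniqueness and $\Phi$-invariance of the geometric decomposition associated to $T^+$ in the non-atoroidal case, which identifies a canonical non-peripheral conjugacy class fixed by a power of $\Phi$; beyond that, the argument is a direct transcription of the proof of Theorem~\ref{loxo} to the $\ZF$ setting.
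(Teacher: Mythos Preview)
Your proof is correct and follows the same approach as the paper. Two small comments.

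First, your concern about the ``uniqueness and $\Phi$-invariance of the geometric decomposition'' is unnecessary. The paper bypasses this entirely: since $\Phi$ preserves $[T^+]$, it preserves the set of conjugacy classes of nonperipheral point stabilizers of $T^+$; for an arational surface tree this set is a \emph{single} conjugacy class (that of the unused boundary curve), so $\Phi$ itself fixes it---no need to invoke the geometric decomposition, no need to pass to a power.

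Second, for the case where $\Phi$ is fully irreducible but not atoroidal, your argument via the ``moreover'' clause of Theorem~\ref{boundary-zf} shows $\Phi$ is not loxodromic but does not immediately rule out parabolic. To conclude ellipticity you should add that the image of $L$ in $\ZF$ is an unparametrized quasigeodesic (it is one in $\FF$ by Proposition~\ref{fz-ff}, and the map $\FF\to\ZF$ is coarsely alignment-preserving by Theorem~\ref{zf-hyp}); an unbounded quasigeodesic ray must accumulate at infinity, so the absence of accumulation forces the image of $L$ to be bounded, and a $\Phi$-invariant bounded set gives ellipticity. Alternatively, once you know $\Phi$ fixes the nonperipheral conjugacy class $[c]$ of the unused boundary, observe that the set of $\calz$-splittings in which $c$ is elliptic is nonempty (take any splitting dual to a curve on $\Sigma$) and has diameter at most~$1$ in $\ZF$; this $\Phi$-invariant bounded set gives ellipticity directly.
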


\begin{proof}
The proof is the same as the proof of Theorem~\ref{loxo}, by noticing that the limiting tree $T$ cannot be arational surface, as otherwise $\Phi$ would fix the unique conjugacy class of nonperipheral cyclic point stabilizers of $T$. One then uses Theorem~\ref{boundary-zf} in place of Theorem~\ref{boundary-ff}.
\end{proof}

\section{Subgroups with bounded orbits in $\FF$ or $\ZF$}\label{sec-bdd}

The goal of the present section is to prove the following proposition, following arguments from \cite{Hor14-5,Hor14-3}, see also \cite{KM96}.

\begin{prop}\label{bounded-finite-ff}
Assume that $(G,\calf)$ is nonsporadic, and let $H\subseteq \text{Out}(G,\calf)$ be a subgroup. 
\\ If $H$ has bounded orbits in $\FF$, then $H$ virtually fixes the conjugacy class of a proper $(G,\calf)$-free factor.
\end{prop}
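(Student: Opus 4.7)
The plan is to run the stationary-measure argument of \cite{Hor14-5,Hor14-3} on the compact projectivized outer space $\bbP\ol\calo$. Fix any probability measure $\mu$ on $\Out(G,\calf)$ whose support generates $H$ as a semigroup. Since $\bbP\ol\calo$ is compact, a standard Ces\`aro-averaging and weak-$*$ extraction argument produces a $\mu$-stationary probability measure $\nu$ on $\bbP\ol\calo$, i.e.\ $\mu*\nu=\nu$. The subset $\AT\subseteq\bbP\ol\calo$ of projective classes of arational trees is $\Out(G,\calf)$-invariant, and the argument splits according to whether or not $\nu(\AT)>0$.

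Suppose first that $\nu(\AT)>0$. Normalizing the restriction of $\nu$ to $\AT$ yields a $\mu$-stationary probability measure on $\AT$. By Theorem~\ref{boundary-ff}, the extension $\partial\psi_{\FF}$ is an $\Out(G,\calf)$-equivariant homeomorphism onto $\partial_\infty\FF$, and sequences in $\calo$ converging to an arational tree project to sequences converging in $\FF\cup\partial_\infty\FF$. Pushing forward gives a $\mu$-stationary probability measure on $\partial_\infty\FF$, and a standard random-walk convergence argument on the Gromov hyperbolic graph $\FF$ (as in \cite{Hor14-5}) then implies that $\mu$-random trajectories in $\FF$ started at a fixed base point are almost surely unbounded, contradicting the hypothesis that $H$-orbits in $\FF$ are bounded.

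Otherwise $\nu$ is concentrated on the complement of $\AT$. The key geometric ingredient is an $\Out(G,\calf)$-equivariant Borel map sending every non-arational $T\in\bbP\ol\calo$ to a nonempty finite set $\mathfrak{F}(T)$ of conjugacy classes of proper $(G,\calf)$-free factors. This uses the structure theory of non-arational very small $(G,\calf)$-trees (Lemma~\ref{lem_ZAminusAT} together with arguments from \cite{Rey12,Hor14-3}): if $T$ is non-arational, then some proper $(G,\calf)$-free factor is either elliptic or acts on its minimal subtree in a way that fails to be simplicial and relatively free, and among such factors one can canonically select a finite collection. Averaging the uniform counting measure on $\mathfrak{F}(T)$ against $\nu$ yields a $\mu$-stationary probability measure $\overline{\nu}$ on the countable set $\mathcal{X}$ of conjugacy classes of proper $(G,\calf)$-free factors. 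The set $\mathcal{X}_{\max}\subseteq\mathcal{X}$ of classes with maximal $\overline{\nu}$-mass is nonempty and finite. Writing $\overline{\nu}=\sum_{g}\mu(g)\,g_*\overline{\nu}$ and comparing masses at a maximizer forces $\mathcal{X}_{\max}$ to be invariant under $\mathrm{supp}(\mu)$, hence under $H$; thus $H$ virtually fixes the conjugacy class of a proper $(G,\calf)$-free factor.

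The main obstacle is the construction of the canonical equivariant Borel assignment $T\mapsto\mathfrak{F}(T)$ in the non-arational case, which is the geometric heart of the argument; once this and the boundary description in Theorem~\ref{boundary-ff} are in place, the random walk manipulations are by now fairly standard.
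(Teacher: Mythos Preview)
Your strategy is the paper's strategy, and your handling of the non-arational case (pushing $\nu$ forward along the equivariant finite-set-of-factors map $T\mapsto\Red(T)$ and picking out the atoms of maximal mass) matches it essentially verbatim.

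The one place you diverge is the arational case. The paper's argument there is shorter and avoids a soft spot in yours. Because $\nu$ is constructed as a weak-$*$ limit of the Ces\`aro averages of $\mu^{\ast k}\ast\delta_{S_0}$, it is automatically supported on $\overline{H\cdot S_0}$; so $\nu(\bbP\AT)>0$ immediately gives $\overline{H\cdot S_0}\cap\bbP\AT\neq\emptyset$, and then Theorem~\ref{boundary-ff} says some sequence $h_nS_0$ has $\psi_\FF(h_nS_0)\to\partial\psi_\FF(T)\in\partial_\infty\FF$, so the $H$-orbit in $\FF$ is unbounded. Your version instead pushes $\nu_{|\AT}$ forward to a $\mu$-stationary measure on $\partial_\infty\FF$ and then invokes a ``standard random-walk convergence argument'' to conclude unboundedness. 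As written this is loose: the mere existence of a $\mu$-stationary probability measure on the Gromov boundary of a hyperbolic graph does not, without further hypotheses (nonelementarity of the action, or identification of the measure as a hitting measure), force sample paths started at a basepoint to be unbounded. You never used that $\nu$ lives on the orbit closure, which is precisely what makes the paper's one-line argument work.
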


Using the model $\FF_3$ for the free factor graph, this can be restated by saying that every subgroup with bounded orbits in $\FF_3$ has a finite orbit (which is not immediate because $\FF_3$ is not locally finite). 

\begin{proof}
Let $H\subseteq\Out(G,\calf)$ be a subgroup. We will show that either $H$ virtually fixes the conjugacy class of a proper $(G,\calf)$-free factor, or else $H$-orbits in $\FF$ are unbounded. 

Let $\mu$ be a probability measure on $H$ which gives positive measure to every element of $H$. Since $\mathbb{P}\overline{\calo}$ is compact and metrizable, the space of all probability measures on $\mathbb{P}\overline{\calo}$ is weakly compact. Let $\nu$ be a probability measure on $\mathbb{P}\overline{\calo}$ obtained as a weak limit of the Cesàro averages of the measures $\mu^{\ast k}\ast\delta_{S_0}$ as $k$ goes to $+\infty$, where $\delta_{S_0}$ is the Dirac mass at a point $S_0\in\mathbb{P}\calo$. Then the measure $\nu$ is $\mu$-stationary, i.e.\ for every measurable subset $A\subseteq\mathbb{P}\overline{\calo}$, one has $$\nu(A)=\sum_{h\in H}\mu(h)\nu(h^{-1}A).$$ We will discuss two cases, depending on whether $\nu$ gives full measure to the projectivized set $\mathbb{P}\AT$ of  arational trees. 

We first assume that $\nu$ gives positive measure to $\mathbb{P}\AT$. Since $\nu(\overline{H\cdot S_0})=1$ by construction,  
it follows that $\overline{H\cdot S_0}\cap\mathbb{P}\AT\neq\emptyset$. Theorem~\ref{boundary-ff} thus implies that the $H$-orbit of $\psi_{\FF}(S_0)$ accumulates to a point in $\partial_\infty \FF$, so $H$-orbits in $\FF$ are unbounded.

We now assume that $\nu$ gives positive measure to $\mathbb{P}\overline{\calo}\setminus\mathbb{P}\AT$, and we will show that $H$ virtually fixes the conjugacy class of a proper $(G,\calf)$-free factor. Let $\cald$ be the set of all finite nonempty collections of proper $(G,\calf)$-free factors. By \cite{Rey12,Hor14-3}, there exists a measurable $\Out(G,\calf)$-equivariant map that assigns to every non-arational tree $T$ a nonempty finite set $\mathrm{Red}(T)$ of proper $(G,\calf)$-free factors. Therefore, by pushing forward the measure $\nu$, we get a finite nonzero $\mu$-stationary measure $\overline{\nu}$ on the countable set $\cald$. The collection of all elements of $\cald$ with maximal $\overline{\nu}$-measure is then finite (as $\overline{\nu}$ is a finite measure) and $H$-invariant (because $\overline{\nu}$ is stationary and $\mu$ gives positive measure to every element of $H$). This yields an $H$-periodic conjugacy class of proper $(G,\calf)$-free factor, as desired. 
\end{proof}

Using the same method, we also prove an analogous statement for subgroups of $\Out(G,\calf)$ with bounded orbits in the $\calz$-factor graph. We refer to Definition~\ref{def-quadratic} for the definition of quadratic elements.

\begin{prop}\label{bounded-finite-zf}
Assume that $(G,\calf)$ is nonsporadic, and let $H\subseteq\Out(G,\calf)$ be a subgroup.
\\ If $H$ has bounded orbits in $\ZF$, then $H$ virtually preserves the conjugacy class of a proper $(G,\calf)$-free factor, or a quadratic conjugacy class.
\end{prop}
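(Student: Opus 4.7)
The plan is to imitate the argument used for Proposition~\ref{bounded-finite-ff}, replacing $\FF$ by $\ZF$ and the relevant arational set $\AT$ by $\FAT$. Let $\mu$ be a probability measure on $H$ giving positive mass to every element, fix some $S_0\in\mathbb{P}\calo$, and produce a $\mu$-stationary Borel probability measure $\nu$ on $\mathbb{P}\overline{\calo}$ as a weak limit of Cesàro averages of the convolutions $\mu^{\ast k}\ast\delta_{S_0}$, using the compactness of $\mathbb{P}\overline{\calo}$. By construction, $\nu$ is supported on the closure of the $H$-orbit of $S_0$. The argument will split according to whether $\nu$ charges the projectivization $\mathbb{P}\FAT$ of the space of relatively free arational trees.

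Suppose first that $\nu(\mathbb{P}\FAT)>0$. Then $\overline{H\cdot S_0}\cap\mathbb{P}\FAT\neq\emptyset$, so Theorem~\ref{boundary-zf} implies that the $H$-orbit of $\psi_{\ZF}(S_0)$ in $\ZF$ accumulates on a point of $\partial_\infty\ZF$, contradicting the assumption that $H$ has bounded orbits in $\ZF$. Hence $\nu$ is concentrated on $\mathbb{P}\overline{\calo}\setminus\mathbb{P}\FAT$, which is the union of the set of non-arational trees and the set of arational surface trees (Definition~\ref{dfn_arat-surf}).

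On this complement I will construct an $\Out(G,\calf)$-equivariant Borel map $\mathrm{Red}'$ to the countable set $\cald'$ whose elements are finite nonempty collections, each consisting of conjugacy classes of proper $(G,\calf)$-free factors or of quadratic conjugacy classes. On non-arational trees I take $\mathrm{Red}'=\Red$, the map provided by \cite{Rey12,Hor14-3}. On an arational surface tree $T$, I assign the conjugacy class of the cyclic subgroup generated by its unused boundary curve, as furnished by Definition~\ref{dfn_arat-surf}; this class does not depend on choices because the Levitt decomposition of $T$ is canonical, and it is a quadratic conjugacy class (Definition~\ref{def-quadratic}) since arationality forces the unused boundary generator to be non-simple. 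Pushing $\nu$ forward through $\mathrm{Red}'$ produces a finite $\mu$-stationary measure $\overline{\nu}$ on the countable set $\cald'$. The sub-collection of elements of $\cald'$ of maximal $\overline{\nu}$-mass is then finite (by finiteness of $\overline{\nu}$) and $H$-invariant (by stationarity and the fact that $\mu$ charges every element of $H$), which yields an $H$-periodic conjugacy class of a proper $(G,\calf)$-free factor or of a quadratic element, as required.

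The main point to pin down is the last construction: canonicity, equivariance, and Borel measurability of the assignment that sends an arational surface tree to its unused boundary conjugacy class, together with the fact that this class is always non-simple. Canonicity and equivariance should follow from the uniqueness of the Levitt decomposition of $T$, which singles out the orbit of the surface vertex together with its peripheral structure; measurability will then be a routine consequence of the continuity of the length functions of peripheral elements. The non-simplicity of the unused boundary generator can be obtained from the fact that an arational tree admits no proper $(G,\calf)$-free factor whose minimal subtree is a point or carries arbitrarily short non-peripheral elements, which rules out the possibility that the unused boundary curve lies in a proper free factor.
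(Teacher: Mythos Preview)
Your proof is correct and follows essentially the same strategy as the paper's: the stationary-measure argument on $\mathbb{P}\overline{\calo}$, the use of Theorem~\ref{boundary-zf} when $\nu(\mathbb{P}\FAT)>0$, the map $\Red$ on non-arational trees, and the assignment of the unused boundary conjugacy class to arational surface trees. The only difference is organizational: the paper treats the cases $\nu(\mathbb{P}\overline{\calo}\setminus\mathbb{P}\AT)>0$ and $\nu(\mathbb{P}\AT\setminus\mathbb{P}\FAT)>0$ separately rather than combining them into a single map $\Red'$, and it describes the quadratic class more directly as ``the unique conjugacy class of nonperipheral point stabilizer in $T$'', which makes canonicity and equivariance immediate without invoking the Levitt decomposition.
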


\begin{proof}
We follow the same strategy as in the proof of Proposition~\ref{bounded-finite-ff}. We will show that either $H$ virtually preserves the conjugacy class of a proper $(G,\calf)$-free factor, or $H$ virtually fixes a quadratic conjugacy class, or $H$-orbits in $\ZF$ are unbounded.

Let $\mu$ be a probability measure on $H$ such that every element has positive $\mu$-measure, let $S_0\in\mathbb{P}\calo$, and let $\nu$ be a $\mu$-stationary measure on $\mathbb{P}\overline{\calo}$ supported on the closure of the $H$-orbit of $S_0$. 

We first assume that $\nu$ gives positive measure to $\mathbb{P}\FAT$. By projecting to $\ZF$, and using the description of $\partial_\infty\ZF$ in terms of free arational trees given by Theorem~\ref{boundary-zf}, this implies that $H$-orbits in $\ZF$ are unbounded.

We now assume that $\nu$ gives positive measure to $\mathbb{P}\overline{\calo}\setminus\mathbb{P}\AT$. The same argument as in the above proof then shows that $H$ virtually fixes the conjugacy class of a proper $(G,\calf)$-free factor.

We finally assume that $\nu$ gives positive measure to $\mathbb{P}\mathcal{AT}\setminus\mathbb{P}\FAT$. Let $\cald$ be the countable set of all finite collections of conjugacy classes of cyclic subgroups generated by quadratic elements. We define an $\Out(G,\calf)$-equivariant measurable map $\theta:\mathbb{P}\mathcal{AT}\setminus\mathbb{P}\FAT\ra\cald$ by letting $\theta(T)$ be the unique conjugacy class of nonperipheral point stabilizer in $T$ (recall that $\mathbb{P}\mathcal{AT}\setminus\mathbb{P}\FAT$ is the space of projective classes of arational surface trees). By forward-pushing the measure $\nu$, we get a finite nonzero $\mu$-stationary measure $\overline{\nu}$ on $\cald$. The set of elements of $\cald$ of maximal $\overline{\nu}$-measure is then finite and $H$-invariant, which shows that $H$ virtually fixes a quadratic conjugacy class.
\end{proof}

\section{Stabilizers of arational trees}\label{sec-5}

Throughout the section, we assume that $(G,\calf)$ is nonsporadic. The goal of this section is to establish the dichotomy in Theorem~1 from the introduction in the particular case of subgroups of $\text{Out}(G,\mathcal{F})$ which stabilize the homothety class of an arational tree in the closure of the relative outer space.  

Given $T\in\ol\calo$, we distinguish two stabilizers.
The \emph{homothetic stabilizer} $\Stab([T])$ 
is the stabilizer of $[T]$ for the action of $\Out(G,\calf)$ on the projectivized outer space $\bbP\ol\calo$.
Equivalently, $\Phi\in\Out(G,\calf)$ lies in $\Stab([T])$ if there exists a lift $\Tilde\Phi\in\Aut(G)$ representing $\Phi$,
and a homothety $I_{\tilde \Phi}:T\ra T$ which is $\tilde \Phi$-equivariant (i.e.\ $I_{\tilde\Phi}(gx)=\tilde\Phi(g)I_{\tilde\Phi}(x)$ for all $g\in G$ and all $x\in T$).
The homothety $I_{\tilde\Phi}$ is unique, and its existence does not depend on the choice of a lift $\tilde\Phi$ of $\Phi$ (indeed, if $\ad_g$ denotes the inner automorphism $h\mapsto ghg\m$, 
then $I_{\ad_g\circ\Tilde\Phi}$ is the map $x\mapsto g.I_{\Tilde\Phi}(x)$).
In particular, the scaling factor of $I_{\Tilde\Phi}$ does not depend on the choice of the lift of $\Phi$, and we denote it by $\lambda_T(\Phi)$.
The map $\Phi\mapsto\lambda_T(\Phi)$ is a morphism $\Stab([T])\ra \bbR_+^*$.
The kernel of this morphism is called the \emph{isometric stabilizer} of $T$
which we denote by $\Stabis(T)$.
It is the stabilizer of $T$ for the action of $\Out(G,\calf)$ on unprojectivized outer space $\ol\calo$. 

The goal of this section is to prove the following statement.

\begin{theo}\label{thm_stabT}
Let $T\in\AT$ be an arational $(G,\calf)$-tree, and let $H\subseteq\Stab([T])$. Then
\begin{enumerate}
\item either $H\subseteq\Stab^{\mathrm{is}}(T)$, or else there exists a fully irreducible element $\Phi\in H$ such that $H=H^1\rtimes\langle \Phi\rangle$, with $H^1\subseteq\Stab^{\mathrm{is}}(T)$;
\item if $H\subseteq\Stab^{\mathrm{is}}(T)$, then $H$ contains no fully irreducible and no atoroidal element; if additionally $H$ has finite fix type (see Definition~\ref{dfn_fft} below), then $H$ virtually fixes a free splitting of $(G,\calf)$, and there is a nonperipheral subgroup $F\subseteq G$ which is not virtually cyclic and such that $H$ is virtually contained in $\Out(G,F^{(t)})$.
\end{enumerate}
\end{theo}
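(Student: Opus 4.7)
The key tool is the stretch-factor homomorphism $\lambda_T\colon\Stab([T])\to\bbR_+^*$ with kernel $\Stabis(T)$, together with the structural results on stabilizers of arational trees from Guirardel--Levitt \cite{GL} and the boundary description of $\partial_\infty\FF$ from Theorem~\ref{boundary-ff}. Throughout, one distinguishes the free arational case $T\in\FAT$ from the arational surface case.

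\textbf{Part (1).} If $\lambda_T|_H$ is trivial we are in the first alternative. Otherwise, pick $\Phi_0\in H$ with $\lambda:=\lambda_T(\Phi_0)>1$. The equivariant homothety $I_{\tilde\Phi_0}\colon T\to T$ of ratio $\lambda$ yields $\|\tilde\Phi_0^{-n}(g)\|_T=\lambda^{-n}\|g\|_T$, so for any $S\in\calo$ the iterates $\Phi_0^nS$ converge projectively to $[T]$ in $\bbP\ol\calo$. Since $T\in\AT$, Theorem~\ref{boundary-ff} gives $\psi_\FF(\Phi_0^nS)\to\xi_T:=\partial\psi_\FF([T])\in\partial_\infty\FF$, so $\Phi_0$ acts loxodromically on $\FF$ and is fully irreducible by Theorem~\ref{loxo}. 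The structure of $\Stab([T])$ from \cite{GL} then forces $\lambda_T(\Stab([T]))$, hence $\lambda_T(H)$, to be infinite cyclic. Picking $\Phi\in H$ with $\lambda_T(\Phi)>1$ generating $\lambda_T(H)$ gives $H=H^1\rtimes\langle\Phi\rangle$ with $H^1=H\cap\Stabis(T)$; full irreducibility of $\Phi$ follows from the same argument.

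\textbf{Part (2), absence of fully irreducibles and atoroidals.} Assume $H\subseteq\Stabis(T)$. Any fully irreducible $\Phi\in H$ would, by the argument of part~(1), satisfy $\lambda_T(\Phi)\ne 1$, contradicting $\Phi\in\Stabis(T)$. For atoroidality, split on cases: if $T$ is arational surface, the unused boundary component yields a canonical nonperipheral cyclic point-stabilizer conjugacy class fixed by $\Stab([T])$, so no element of $H$ is atoroidal; if $T\in\FAT$, \cite{GL} gives that $\Stabis(T)$ is finite, so $H$ is finite and every element is torsion, hence not atoroidal.

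\textbf{Finite fix type case.} In the free arational case $H$ is finite, so both conclusions hold trivially. In the arational surface case, let $\Sigma$ be the underlying orbifold and $F:=\pi_1(\Sigma)$; then $F$ is nonperipheral and not virtually cyclic by nonsporadicity, and $[F]$ is $\Stab([T])$-invariant (hence $H$-invariant) as it comes from the unique orbifold piece of the graph of actions decomposition of $T$. Elements of $H$ induce orbifold mapping classes of $\Sigma$ fixing the dual measured lamination isometrically; the condition $H\subseteq\Stabis(T)$ combined with the Nielsen--Thurston-type description of the stabilizer of an arational measured lamination in the orbifold mapping class group (finite modulo the pseudo-Anosov dilatation, which is trivial here) forces a finite-index subgroup $H_0\subseteq H$ to act on $F$ by inner automorphisms of $G$, giving $H_0\subseteq\Out(G,F^{(t)})$. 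The $H_0$-invariant free splitting is then produced from a simple closed curve on $\Sigma$ with endpoints on the unused boundary (after passing to a further finite-index subgroup using Nielsen realization) which is fixed by $H_0$; the finite-fix-type hypothesis ensures that such a curve actually has $H_0$-finite orbit. The main obstacle is this last step: rigorously extracting a specific $H_0$-invariant curve dual to a free splitting and verifying that the finite-fix-type hypothesis yields the required containment in $\Out(G,F^{(t)})$ virtually, both of which require a careful combination of \cite{GL} with orbifold mapping class group combinatorics that I have only sketched.
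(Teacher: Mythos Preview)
There are two genuine gaps in your argument.

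\textbf{Part (1): the convergence claim is unjustified.} From $\lambda_T(\Phi_0)=\lambda>1$ you only obtain $\|\tilde\Phi_0^{-n}(g)\|_T=\lambda^{-n}\|g\|_T$; this says nothing about $\|\tilde\Phi_0^{-n}(g)\|_S$ for an arbitrary $S\in\calo$, so the assertion $\Phi_0^nS\to[T]$ does not follow. You are effectively assuming north--south dynamics for $\Phi_0$ on $\bbP\ol\calo$, which is exactly what one proves \emph{after} establishing full irreducibility, not before. The paper avoids this entirely: Proposition~\ref{stabilizers_homothetie} argues directly that if some $\Phi^k$ preserved a proper free factor $A$, then arationality forces $A$ to act simplicially and relatively freely on its minimal subtree, giving a positive lower bound on translation lengths in $A$; but iterating $\Phi^{\pm k}$ on a hyperbolic $g\in A$ produces conjugacy classes in $A$ with translation length $\lambda^{\mp kn}\|g\|_T\to 0$, a contradiction. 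This is elementary and needs no dynamics on $\FF$. Your reversal of this implication in Part~(2) (``fully irreducible $\Rightarrow\lambda_T\ne1$'') is likewise not justified by your Part~(1).

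\textbf{Part (2): the finiteness of $\Stabis(T)$ for $T\in\FAT$ is not available.} You assert that \cite{GL} gives $\Stabis(T)$ finite when $T$ is relatively free arational; this is what holds for $(F_N,\emptyset)$, but the result cited from \cite{GL} in the paper (Theorem~\ref{thm_pG}) gives only that a finite-fix-type subgroup of $\Stabis(T)$ is virtually \emph{uniformly piecewise-$G$} with respect to some transverse covering---a far weaker conclusion, and Remark~\ref{rk_vcyclique} explicitly warns that the virtually-cyclic conclusion fails for general free products. Your treatment of both the atoroidal claim and the finite-fix-type conclusions in the $\FAT$ case collapses without this. The paper's approach is uniform: from the transverse covering $\caly$ one builds the skeleton tree, shows its edge stabilizers are peripheral (using arationality), blows it up along a Grushko tree of a piece $G_Y$, and collapses to obtain the $H^0$-invariant free splitting; the subgroup $F$ is then $G_Y$, and the piecewise-$G$ property directly gives $\tilde\alpha_{|F}=\mathrm{id}$ (Proposition~\ref{stabilizers_isom}). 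The ``no fully irreducible / no atoroidal'' assertions then follow by applying this to cyclic $H$ (Corollary~\ref{cor_totalement_reductible}), with no case split needed.
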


We prove intermediate lemmas before proving the theorem.

\begin{lemma}[\cite{GL}]\label{lem_cyclic} 
  For any $T\in \ol\calo$, the image of the morphism $\lambda_T$ is a cyclic (maybe trivial) subgroup of $\bbR_+^*$.
  \\ In particular, if $H\subseteq\Stab([T])$, then either $H\subseteq \Stabis(T)$ or
  $H=H^1\rtimes \grp{\Phi}$ for some $\Phi\in H$ with $\lambda_T(\Phi)\neq 1$
  and $H^1=H\cap\Stab^{\mathrm{is}}(T)$.
\end{lemma}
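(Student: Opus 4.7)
My plan is to deduce the ``in particular'' statement from the main assertion by a short formal argument, and then to address the main assertion by showing $\Lambda:=\lambda_T(\Stab([T]))$ is discrete in $\bbR_+^*$ (hence cyclic).

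For the ``in particular'' part, the argument is essentially formal. The image $\lambda_T(H)$ is a subgroup of the cyclic group $\Lambda$, hence itself cyclic; either it is trivial, so $H\subseteq\Stabis(T)$ by definition, or $\lambda_T(H)\cong\bbZ$. In the latter case, I would pick $\Phi\in H$ whose image generates $\lambda_T(H)$, and observe that the short exact sequence
$$1\longrightarrow H^1\longrightarrow H\xra{\lambda_T}\lambda_T(H)\longrightarrow 1$$
splits through the section $\lambda_T(\Phi)^n\mapsto\Phi^n$, yielding $H=H^1\rtimes\grp{\Phi}$ with $H^1=H\cap\Stabis(T)$.

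To prove discreteness of $\Lambda$, my plan is to exploit the canonical Levitt decomposition of $T$, which is preserved by $\Stab([T])$: any $\Phi\in\Stab([T])$ with scaling factor $\lambda$ must rescale this decomposition uniformly. If $T$ has a nonempty simplicial part with finitely many orbits of edges, then $\Phi$ induces a permutation $\sigma$ on these orbits with $\lambda\cdot\ell(e)=\ell(\sigma(e))$, so $\sigma$ has finite order and thus $\lambda^{\mathrm{ord}(\sigma)}=1$, forcing $\lambda=1$. Otherwise, $T$ has a nontrivial vertex action with dense orbits, and the scaling factor should be pinned down by Perron--Frobenius data associated to a train-track representative of $\Phi$; a classical Perron--Frobenius-type argument then shows the set of such scaling factors is discrete.

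The main obstacle I anticipate is precisely this discreteness analysis for dense-orbit vertex actions, which is delicate and for which my plan is to invoke the results of \cite{GL} rather than reprove them; the rest of the argument is combinatorial bookkeeping together with the formal short-exact-sequence splitting above.
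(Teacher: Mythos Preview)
The paper gives no proof of this lemma; it is stated with a bare citation to \cite{GL}. So there is no in-paper argument to compare your proposal against. Your deduction of the ``in particular'' clause from the main assertion is correct and standard (any subgroup of a cyclic group is cyclic; a short exact sequence with $\bbZ$ quotient splits). Your treatment of the case where the Levitt decomposition of $T$ has a nontrivial simplicial part is also correct, and in fact shows more than needed: it forces $\lambda=1$ for every $\Phi$, so $\Lambda$ is trivial there.

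The only genuine issue is your dense-orbits case. Perron--Frobenius data attaches an eigenvalue to a \emph{single} automorphism admitting a train track; it does not, on its own, show that the full image $\Lambda=\lambda_T(\Stab([T]))\subseteq\bbR_+^*$ is discrete. An arbitrary $\Phi\in\Stab([T])$ need not be fully irreducible, and even if each element contributes an algebraic eigenvalue, a group generated by such numbers can be dense. So the sketch as written is not a proof. That said, your stated fallback --- invoking \cite{GL} for precisely this point --- is exactly what the paper itself does, so your proposal is adequate for the purposes of this paper.
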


The following proposition applies when $\Stab([T])$ contains a homothety with nontrivial scaling factor.

\begin{prop}\label{stabilizers_homothetie} 
Let $T\in\mathcal{AT}$, and let $\Phi\in\Stab([T])$ with $\lambda_T(\Phi)\neq 1$.
\\ Then $\Phi$ is fully irreducible.
\end{prop}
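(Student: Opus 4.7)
The plan is to argue by contradiction. Suppose $\Phi$ is not fully irreducible; then some power $\Phi^k$ preserves the conjugacy class of a proper $(G,\calf)$-free factor $A$. Replacing $\Phi$ by $\Phi^k$, I may assume $\Phi$ itself preserves $[A]$, noting that $\lambda_T(\Phi)$ is merely replaced by its $k$-th power, which remains different from $1$. Pick a lift $\tilde\Phi\in\Aut(G)$ of $\Phi$ with $\tilde\Phi(A)=A$, and let $I_{\tilde\Phi}\colon T\to T$ be the associated $\tilde\Phi$-equivariant homothety of scaling factor $\lambda:=\lambda_T(\Phi)\neq 1$. Since $I_{\tilde\Phi}(T_A)$ is both $A$-invariant and minimal for the $A$-action, by uniqueness of the minimal invariant subtree $I_{\tilde\Phi}(T_A)=T_A$. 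Hence $I_{\tilde\Phi}$ restricts to a $\tilde\Phi|_A$-equivariant homothety of $T_A$ with the same scaling factor $\lambda$.

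Arationality enters crucially at the next step. Since $T\in\AT$ and $A$ is a proper $(G,\calf)$-free factor, the action of $A$ on $T_A$ is simplicial and relatively free, and its edge stabilizers must be trivial (any non-trivial cyclic edge stabilizer would be non-peripheral in $A$, contradicting relative freeness of $T_A$). Hence $T_A$ is a Grushko tree for $(A,\calf_{|A})$. Because $A$ is a proper free factor of a free product of complexity $(k+N,N)$, its Kurosh rank relative to $\calf_{|A}$ is finite, bounded by $k+N$; consequently the quotient graph of groups $T_A/A$ has only finitely many orbits of edges.

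The contradiction is then produced by a finite-orbit argument. The homothety $I_{\tilde\Phi}$ preserves the simplicial structure of $T_A$ (as a homeomorphism it sends branch points to branch points), and since $\tilde\Phi(A)=A$ it maps $A$-orbits of edges to $A$-orbits of edges. Acting as a permutation on the finite set of orbits, some iterate $I_{\tilde\Phi}^m$ fixes each orbit individually. Since all edges in a single $A$-orbit have a common length, $I_{\tilde\Phi}^m$ preserves every edge length; on the other hand, by construction it scales all lengths by $\lambda^m$. So $\lambda^m=1$, and since $\lambda>0$ this forces $\lambda=1$, contradicting the hypothesis. The subtle point I foresee is ensuring that $T_A/A$ is genuinely a \emph{finite} graph of groups: this is where the finite Kurosh rank of a proper $(G,\calf)$-free factor is essential, for otherwise a self-homothety with $\lambda\neq 1$ on a quotient with infinitely many edge orbits would not automatically be ruled out.
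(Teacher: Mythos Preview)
Your proof is correct and follows essentially the same strategy as the paper: both reduce to the fact that arationality forces the $A$-minimal subtree $T_A$ to be simplicial and relatively free, which is incompatible with a nontrivial self-homothety. The paper extracts the contradiction more directly by observing that the iterates $\Phi^{\pm kn}(g)$ of a hyperbolic $g\in A$ remain (conjugate into) $A$ and have translation length $\lambda^{\pm kn}\|g\|_T\to 0$, contradicting the equivalent characterization of arationality; your route via finiteness of edge orbits in $T_A/A$ and a pigeonhole argument on edge lengths reaches the same endpoint with a little more scaffolding.
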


\begin{rk}\label{rk_scaling}
  Corollary \ref{cor_totalement_reductible} will show that the converse also holds: if $\Phi$  stabilizes $[T]$ and is fully irreducible, then $\lambda_T(\Phi)\neq 1$. 
\end{rk}

\begin{proof}
Assume towards a contradiction that there exists $k\neq 0$ such that $\Phi^k$ preserves the conjugacy class of a proper $(G,\mathcal{F})$-free factor $A$. 
Since $T$ is arational, the action of $A$ on $T$ is simplicial and relatively free.
Let $g\in A$ be hyperbolic in $T$. By applying powers of $\Phi^{\pm k}$ to $g$, we build a sequence of conjugacy classes of elements in $A$ 
with arbitrary small translation length in $T$, a contradiction. 
\end{proof}

Given a group $P$ and a subgroup $H\subseteq\Aut(P)$, we denote by $\Fix_P(H)$ the subgroup of $P$ consisting of all elements $p\in P$ fixed by all automorphisms in $H$. We say that a subgroup of $P$ of the form $\Fix_P(H)$ for some subgroup $H\subseteq\Aut(P)$ is a \emph{fixed group}.

  Let $T\in\ol\calo$ with trivial arc stabilizers, and let $\eta$ be a direction
  at a point $v\in T$. 
  If $\alpha \in \Stabis(T)$, one says that $\alpha$ preserves the orbit of $v$ (resp.\ of $\eta$)
  if for all lifts $\tilde \alpha\in\Aut(G)$ of $\alpha$ (equivalently for some lift),
  $I_{\tilde\alpha}$ preserves the orbit of $v$ (resp.\ of $\eta$).
  If $H\subseteq \Stabis(T)$, let $H_v$ (resp.\ $H_\eta$) be the finite index subgroup
  of $H$ consisting of all $\alpha$ preserving the orbit of $v$ (resp.\ of $\eta$).
  There is a natural map $r_v:H_v\ra \Out(G_v)$ obtained by choosing a lift $\tilde\alpha$ of $\alpha\in H_v$
preserving $G_v$, and looking at the restriction $\tilde\alpha_{|G_v}$ which is well defined up to an inner automorphism of $G_v$. 
  Associated to the direction $\eta$, the restriction $r_{v|H_\eta}:H_\eta\ra \Out(G_v)$ 
has a natural lift $\tilde r_\eta:H_\eta\ra \Aut(G_v)$ defined as follows:
  $\tilde r_\eta(\alpha)=\tilde \alpha_{|G_v}$ where $\tilde \alpha\in\Aut(G)$ is the unique lift of $\alpha$ such
  that $I_{\tilde\alpha}$ fixes the direction $\eta$.

We denote by $\Stab^{\mathrm{is},0}(T)$ the finite index subgroup of $\Stabis(T)$ preserving each orbit of direction at branch points of $T$.

\begin{de}\label{dfn_fft_wrt_T}
  Let $T\in\ol\calo$ with trivial arc stabilizers, and let $H\subseteq\Stabis(T)$ be a subgroup.
  Let $H'=H\cap\Stab^{\mathrm{is},0}(T)$. 
\\  We say that $H$ has \emph{finite fix type with respect to $T$} if for
every direction $\eta$ at a branch point of $T$, 
there exists a finitely generated subgroup $H'_0\subseteq H'$ such that $\Fix_{G_v} (\tilde r_\eta(H'_0))=\Fix_{G_v} (\tilde r_\eta(H'))$.
\end{de}

Equivalently, writing $H'$ as an increasing union of finitely generated groups $H'_k$,
$H$ has finite fix type if for any $\tilde r_\eta$, the descending sequence
of subgroups $\Fix_{G_v} (\tilde r_\eta(H'_k))$ is stationary. 

\begin{de}\label{dfn_fft}
  A subgroup $H\subseteq\Out(G,\calf)$ \emph{has finite fix type} if for every arational $(G,\calf)$-tree $T$ such that $H$ is virtually contained
in $\Stab^{\mathrm{is}}(T)$,
  $H\cap \Stab^{\mathrm{is}}(T)$ has finite fix type with respect to $T$.
\end{de}

Recall that a toral relatively hyperbolic group is a torsion-free group which is hyperbolic relative to a finite collection of finitely generated abelian subgroups. The following lemma gives concrete situations of subgroups of $\Out(G,\calf)$ of finite fix type.

\begin{lemma}\label{lem_FFT_RH_FG}
Let $H\subseteq\Out(G,\calf)$ be a subgroup. If $H$ is finitely generated, or if $G$ is a toral relatively hyperbolic group, then $H$ has finite fix type. 
\end{lemma}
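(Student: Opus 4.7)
The plan is to verify Definition~\ref{dfn_fft_wrt_T} directly. Fix an arational $(G,\calf)$-tree $T\in\ol\calo$ such that $H$ is virtually contained in $\Stabis(T)$, set $H' := H\cap\Stab^{\mathrm{is},0}(T)$ (a finite-index subgroup of $H$), and fix a direction $\eta$ at a branch point $v$ of $T$. Writing $H' = \bigcup_{k} H'_k$ as an increasing union of finitely generated subgroups, the task is to show that the descending chain $(\Fix_{G_v}(\tilde r_\eta(H'_k)))_{k}$ of subgroups of $G_v$ is eventually constant.

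When $H$ is finitely generated, the finite-index subgroup $H'$ is itself finitely generated (Schreier's lemma), so one may take $H'_0 := H'$ and the condition holds trivially.

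When $G$ is toral relatively hyperbolic, the first step is to identify the possible vertex groups $G_v$. Since $T$ is very small (arc stabilizers trivial or cyclic nonperipheral root-closed, tripod stabilizers trivial) and $T$ is arational (proper $(G,\calf)$-free factors are not elliptic), a case analysis on point stabilizers shows that $G_v$ is either trivial, infinite cyclic and nonperipheral, or conjugate to one of the peripheral subgroups $G_i$. The toral rel hyp hypothesis forces the $G_i$ to be finitely generated (as recalled in the introduction), so $G_v$ is always finitely generated, and being a finitely generated subgroup of a toral rel hyp group it is itself toral relatively hyperbolic (Osin). In the subcases where $G_v$ is trivial or infinite cyclic, $\Aut(G_v)$ is so small that the poset of possible fix subgroups has at most two elements, and the conclusion is immediate.

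The remaining substantive subcase is that $G_v$ is conjugate to some peripheral $G_i$, i.e.\ an arbitrary finitely generated toral relatively hyperbolic group. Here I appeal to a descending chain condition on fix subgroups of such a group: for any ascending chain $A_1\subseteq A_2\subseteq\dots$ of finitely generated subgroups of $\Aut(G_v)$, the chain $(\Fix_{G_v}(A_k))_k$ is eventually constant. This is an algebraic consequence of the equational Noetherianness of finitely generated toral relatively hyperbolic groups, due to Groves: once one fixes a finite generating set of $G_v$, the conditions $\{\varphi(x)=x\}_{\varphi\in A}$ on $x\in G_v$ translate into a family of one-variable equations with constants in $G_v$, and equational Noetherianness guarantees that a finite subfamily cuts out the same solution set. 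Applying this to $A_k := \tilde r_\eta(H'_k)$ then produces the required $H'_0$. The hardest step is precisely this input: translating the automorphism-theoretic statement into an equational one (or, alternatively, proving the DCC directly by a Bestvina--Paulin compactness argument on limiting $\bbR$-trees) is the crucial technical step, and it is here that the toral rel hyp hypothesis on $G$ is essential.
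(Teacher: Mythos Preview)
Your treatment of the finitely generated case is fine and matches the paper. The issue is in the toral relatively hyperbolic case.

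Your main gap is the equational Noetherianness step. The condition $\varphi(x)=x$ for a fixed $\varphi\in\Aut(G_v)$ and a \emph{variable} $x\in G_v$ is \emph{not} an equation over $G_v$ in the sense of equational Noetherianness: an equation is a word $w(x,g_1,\dots,g_r)=1$ in the variable $x$ and constants $g_j\in G_v$, whereas $x\mapsto\varphi(x)$ cannot be expressed as such a word (try $G_v=F_2=\langle a,b\rangle$ with $\varphi(a)=ab$, $\varphi(b)=b$: the function $x\mapsto\varphi(x)x^{-1}$ is not a word map in $x$). So Groves' result does not apply as you claim, and the deduction of the DCC on $\Fix_{G_v}$ collapses. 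A secondary issue: ``a finitely generated subgroup of a toral rel hyp group is toral rel hyp (Osin)'' is false in general; relative hyperbolicity does not pass to arbitrary finitely generated subgroups. Free factors of a toral rel hyp group \emph{are} toral rel hyp, but that requires a separate argument.

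The paper's route avoids both problems by working in $G$ rather than in $G_v$. The map $\tilde r_\eta$ is defined via a canonical lift $\tilde\alpha\in\Aut(G)$ of each $\alpha\in H'$ (the one whose associated isometry fixes $\eta$), and $\tilde r_\eta(\alpha)=\tilde\alpha_{|G_v}$; hence $\Fix_{G_v}(\tilde r_\eta(H'_0))=\Fix_G(\tilde K_0)\cap G_v$ where $\tilde K_0\subseteq\Aut(G)$ is the corresponding set of lifts. One then cites the known bound on lengths of increasing chains of fixed subgroups in a toral relatively hyperbolic group $G$ (Guirardel--Levitt, building on Martino--Ventura for free groups): this directly gives a finitely generated $\tilde K_0\subseteq\tilde K$ with $\Fix_G(\tilde K_0)=\Fix_G(\tilde K)$, and intersecting with $G_v$ finishes. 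No structural analysis of $G_v$ is needed.
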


\begin{proof}
If $H$ is finitely generated, the lemma is obvious.

Assume that $G$ is a toral relatively hyperbolic group.
It was proved in \cite{GL_McCool} that there is a bound on the length of any increasing chain of fixed subgroups 
in $G$ (the case where $G$ is a free group was proved earlier by Martino--Ventura \cite{MV}). 
In particular, for any subgroup $\tilde K\subset\Aut(G)$,
there is a finitely generated subgroup $\tilde K_0\subset \tilde K$
such that $\Fix_{G}(\tilde K_0)=\Fix_G(\tilde K)$ hence
$\Fix_{G}(\tilde K_0)\cap G_v=\Fix_{G}(\tilde K)\cap G_v$ for any subgroup $G_v\subseteq G$.
It follows that for any arational $(G,\calf)$-tree $T$, and for any subgroup $H$ virtually contained in $\Stabis(T)$, $H$ has finite fix type with respect to $T$.
\end{proof}

Here is another example. Say that a group $P$ satisfies the \emph{descending chain condition for centralizers}
if for any ascending sequence of subgroups $P_1\subseteq P_2\subseteq \cdots $,
the sequence of centralizers $Z_P(P_1)\supseteq Z_P(P_2)\supseteq \cdots $ is stationary.
This condition holds for any linear group for instance. If it holds for a group $P$, it also holds for any subgroup $P'\subset P$.
We also note that $G$ satisfies the descending chain condition of centralizers if and only every
peripheral subgroup $G_i$ does.

\begin{lemma}\label{lem_FFT_centralisateurs}
Assume that all peripheral subgroups $G_i$ satisfy the descending chain condition for centralizers.

Then any subgroup of $\Out(G,\calf^{(t)})$ has finite fix type.
\end{lemma}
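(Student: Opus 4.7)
The plan is to reduce the verification of the finite fix type condition at each branch point to the descending chain condition on centralizers in the peripheral factors. Let $H\subseteq\Out(G,\calf^{(t)})$, let $T\in\AT$ be arational with $H$ virtually contained in $\Stabis(T)$, and set $H':=H\cap\Stab^{\mathrm{is},0}(T)$. Fix a branch point $v$ of $T$ with stabilizer $G_v$ and a direction $\eta$ at $v$; I must produce a finitely generated subgroup $H'_0\subseteq H'$ with $\Fix_{G_v}(\tilde r_\eta(H'_0))=\Fix_{G_v}(\tilde r_\eta(H'))$.

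First I will recall, from the graph-of-actions description of arational trees, that $G_v$ is either peripheral (a conjugate $gG_ig^{-1}$ of some $G_i$) or infinite cyclic nonperipheral (generated by an unused boundary curve in the arational surface case). In the latter case $\Aut(G_v)$ is finite (of order at most $2$), so $\tilde r_\eta(H')$ is finite and any finite $H'_0\subseteq H'$ realizing each value of $\tilde r_\eta$ already gives the same fix set in $G_v$ as all of $H'$.

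The central case is when $G_v=gG_ig^{-1}$ is peripheral. The key step will be to show that $\tilde r_\eta(\alpha)\in\Inn(G_v)$ for every $\alpha\in H'$. For this I take a lift $\tilde\alpha_0\in\Aut(G)$ witnessing $\alpha\in\Out(G,\calf^{(t)})$, so that $\tilde\alpha_0|_{G_i}$ is conjugation by some $c\in G_i$ (using that in a free product $N_G(G_i)=G_i$ for any nontrivial factor). The direction-fixing lift $\tilde\alpha$ used to define $\tilde r_\eta(\alpha)$ differs from $\tilde\alpha_0$ by some $\ad_k$, and a direct calculation on an element $gxg^{-1}\in G_v$ shows $\tilde\alpha|_{G_v}$ is conjugation by $h_\alpha:=k\tilde\alpha_0(g)cg^{-1}\in G$. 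Since $\tilde\alpha$ preserves $G_v$ (its homothety fixes $v$), one has $h_\alpha\in N_G(G_v)=G_v$, so $\tilde r_\eta(\alpha)$ is indeed inner, given by conjugation by $h_\alpha\in G_v$.

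Consequently $\Fix_{G_v}(\tilde r_\eta(H'))=Z_{G_v}(K)$, where $K:=\grp{h_\alpha\mid\alpha\in H'}\subseteq G_v$. Writing $H'=\bigcup_nH'_n$ as an ascending union of finitely generated subgroups and setting $K_n:=\grp{h_\alpha\mid\alpha\in H'_n}$ produces an ascending chain in $G_v$ with corresponding descending chain of centralizers $Z_{G_v}(K_n)$. Since $G_v\simeq G_i$ satisfies the descending chain condition on centralizers by hypothesis, this chain stabilizes: for $n$ large enough, $Z_{G_v}(K_n)=Z_{G_v}(K)$, so $H'_0:=H'_n$ is the desired finitely generated subgroup. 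The only delicate point is the inner-automorphism claim, which combines the $(t)$-condition with the computation of normalizers of peripheral subgroups in a free product; everything else is formal bookkeeping plus the centralizer chain hypothesis.
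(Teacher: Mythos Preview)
Your proof is correct and follows essentially the same approach as the paper's: split into the cyclic and peripheral cases for $G_v$, observe that the $(t)$-condition forces $\tilde r_\eta(H')\subseteq\Inn(G_v)$ in the peripheral case, and then invoke the descending chain condition on centralizers. The paper's proof is more terse, simply asserting the inclusion $\tilde r_\eta(H')\subseteq\Inn(G_v)$ from $H'\subseteq\Out(G,\calf^{(t)})$, whereas you spell out the normalizer computation explicitly; otherwise the arguments are identical.
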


\begin{proof}
Let $H$ be a subgroup of $\Out(G,\calf^{(t)})$.
Let $T$ be an arational $(G,\calf)$-tree such that $H$ is virtually contained
in $\Stab^{\mathrm{is}}(T)$.
Let $H'=H\cap\Stab^{\mathrm{is},0}(T)$.
Let $\eta$ be a direction at a branch point $v\in T$.
Let $\tilde r_\eta:H'\ra \Aut(G_v)$ be the corresponding morphism.
Since $T$ is arational, $G_v$ is either a group in $\calf$,
or a cyclic group. If $G_v$ is cyclic, its fixed subgroups are obvious, and there is nothing to do.
Since $H'\subset\Out(G,\calf^{(t)})$, $\tilde r_\eta(H')$ is contained
in $\Inn(G_v)$. Since fixed subgroups of inner automorphisms are centralizers of elements of $G_v$, the descending chain condition on centralizers
implies that $\Fix_{G_v}(\tilde r_\eta(H'))=\Fix_{G_v}(\tilde r_\eta(H'_0))$ 
for some finitely generated subgroup $H'_0\subset H'$.
\end{proof}

Recall that a \emph{transverse covering} of an $(G,\calf)$-tree $T$  is a $G$-invariant collection $\caly$ of nondegenerate subtrees of $T$ such that any two distinct trees in $\caly$ intersect in at most one point, and every segment in $T$ is covered by finitely many subtrees from the family $\caly$.

\begin{theo}[\cite{GL}]\label{thm_pG}
Let $T$ be an arational $(G,\calf)$-tree, and $H\subseteq\Stab^{\mathrm{is}}(T)$.

If $H$ has finite fix type with respect to $T$, then $H$ has a finite index subgroup $H^0\subseteq H$ which is uniformly piecewise $G$ in the following sense:
there exists a transverse covering $\caly$ of $T$ such that
for every $Y\in\caly$ and every $\tilde\alpha$ in the preimage  $\Tilde H^0$ of $H^0$ in $\Aut(G)$, 
there exists $g\in G$ such that for every $x\in Y$, one has $I_{\tilde \alpha}(x)=gx$.
\end{theo}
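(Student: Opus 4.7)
The plan is to derive the transverse covering $\caly$ from the Levitt decomposition of $T$, pass to a finite index subgroup $H^0\subseteq H$ preserving this combinatorial structure, and then use the finite fix type hypothesis on each piece to produce the required element of $G$ uniformly.

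First I would analyze the structure of $T$. Since $T$ is arational, every proper $(G,\calf)$-free factor acts simplicially and freely on its minimal subtree, which combined with the Levitt decomposition of $T$ as a graph of actions forces a dichotomy. Either $T$ is relatively free with dense $G$-orbits on all of $T$, in which case I take $\caly=\{T\}$; or $T$ is an arational surface tree in the sense of Definition~\ref{dfn_arat-surf}, whose Levitt decomposition has a unique non-peripheral vertex action, dual to an arational measured lamination on an orbifold $\Sigma$, and whose other vertex actions are peripheral fixed points. In both cases, let $\caly$ be the collection of $G$-translates of these vertex subtrees; the fact that they meet pairwise in at most one point follows from the standard structure of graphs of actions. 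Since the Levitt decomposition is finite modulo $G$, $T$ has finitely many orbits of branch points, and each branch point has finitely many orbits of directions (as $T$ is very small), I then pass to $H\cap\Stab^{\mathrm{is},0}(T)$, which is of finite index in $H$, and further to a finite index subgroup $H^0$ stabilizing each $G$-orbit of pieces of $\caly$.

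The heart of the proof is the piecewise rigidity step. Fix $Y\in\caly$ with stabilizer $G_Y$, a point $v\in Y$, and a direction $\eta$ at $v$ along which the lift $\tilde r_\eta$ is defined; for $\tilde\alpha\in\tilde H^0$, after pre-multiplying by a suitable $g_0\in G$ I may assume that $I_{\tilde\alpha}$ stabilizes $Y$ and fixes $v$. The finite fix type hypothesis provides a finitely generated subgroup $H^0_0\subseteq H^0$ such that
$K:=\Fix_{G_Y}\bigl(\tilde r_\eta(H^0_0)\bigr)=\Fix_{G_Y}\bigl(\tilde r_\eta(H^0)\bigr)$, and by construction every element of $K$ commutes with $I_{\tilde\alpha}$ once $\tilde\alpha$ is normalized via the lift singled out by $\tilde r_\eta$. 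The remaining task is to show that $K$ acts on $Y$ with orbits that cover $Y$ (in the simplicial case) or are dense in $Y$ (in the surface case): once this is known, any isometry of $Y$ commuting with the $K$-action is determined by its value at $v$, so $I_{\tilde\alpha}|_Y$ coincides with the action of any $g\in G$ satisfying $gv=I_{\tilde\alpha}(v)$. Since this $g$ is then forced on all $Y$ and depends on $\tilde\alpha$ but not on further choices, this is precisely the uniformly piecewise $G$ property.

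The main obstacle is establishing the orbit-coverage property of $K$ on each piece. For peripheral pieces, $G_Y$ acts trivially (if $Y$ is a point) or simplicially and relatively freely, and $K$ can be read off directly from the combinatorics of the directions fixed by $H^0$, giving the claim after possibly enlarging $H^0_0$. For a surface-type $Y$, the image of $\tilde r_\eta(H^0)$ in $\Mod(\Sigma)$ preserves the arational measured lamination $L$, so by Nielsen--Thurston theory this image is virtually cyclic generated by a pseudo-Anosov $\Psi$ of $\Sigma$; one then needs to identify the subgroup of $\pi_1(\Sigma)$ fixed by (a lift of) $\Psi$ and verify that its action on the dual tree $Y$ has dense orbits, which rests on the fact that the leaves of $L$ are themselves dense in $\Sigma$. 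Putting together the peripheral and the surface analyses yields the desired element $g\in G$ for each piece $Y$ and each $\tilde\alpha\in\tilde H^0$.
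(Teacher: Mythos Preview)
First, a contextual point: the paper does not prove this statement; it is quoted from \cite{GL} and used as a black box. So there is no ``paper's own proof'' to compare against, and your task amounts to reconstructing a nontrivial external result.

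Your proposal has a genuine gap at the very first step. For a relatively free arational tree $T$, the $G$-action has dense orbits on all of $T$ and the Levitt decomposition is trivial; your choice $\caly=\{T\}$ then says that every $\tilde\alpha\in\tilde H^0$ agrees with a single $g\in G$ on the whole tree, i.e.\ that $\tilde\alpha$ is inner and $H^0=\{1\}$ in $\Out(G,\calf)$. That is not what the theorem asserts, and it would render the downstream application (Proposition~\ref{stabilizers_isom}, where the skeleton of $\caly$ is used to produce an $H^0$-invariant free splitting) vacuous. The transverse covering in \cite{GL} is not the Levitt decomposition of $T$: it is built \emph{from $H$}, essentially by cutting $T$ along the locus where the isometries $I_{\tilde\alpha}$ fail to locally coincide with the $G$-action, and showing that the resulting pieces form a transverse covering. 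This is where the finite fix type hypothesis actually enters, to control that locus.

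The second gap is in your ``rigidity'' step. You need the fixed subgroup $K=\Fix_{G_Y}(\tilde r_\eta(H^0))$ to act with dense orbits on $Y$, but this is exactly backwards: fixed subgroups are typically small. In your own surface analysis, the image of $H^0$ in $\Mod(\Sigma)$ preserves the \emph{measured} (not just projective) lamination since $H^0\subseteq\Stab^{\mathrm{is}}(T)$, hence that image is finite; after passing to finite index the image is trivial and $\tilde r_\eta(H^0)$ lands in $\Inn(\pi_1\Sigma)$, so $K$ is an intersection of centralizers in a surface group, which is cyclic or trivial and certainly does not act with dense orbits on the dual tree. The argument that ``an isometry commuting with a group with dense orbits is determined by one value'' is fine, but you have not produced such a group, and the finite fix type hypothesis does not give you one: it only says the fixed subgroup stabilizes under enlarging $H'_0$, not that it is large.
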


\begin{prop}\label{stabilizers_isom} 
Let $T\in\mathcal{AT}$, $H\subseteq\Stabis(T)$, and assume that $H$ has finite fix type.
\\ Then $H$ virtually fixes a $(G,\calf)$-free splitting, and in particular a proper $(G,\mathcal{F})$-free factor. 
\\ Moreover, there is a non-peripheral subgroup $F\subseteq G$ which is not virtually cyclic such that $H$ is virtually contained in $\Out(G,F^{(t)})$.
\end{prop}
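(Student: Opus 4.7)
My plan is as follows. The proof rests on Theorem \ref{thm_pG}: apply it to obtain a transverse covering $\caly$ of $T$ and a finite-index subgroup $H^0\subseteq H$ that is uniformly piecewise $G$ with respect to $\caly$. Thus, for each lift $\tilde\alpha\in\tilde H^0$ of an element of $H^0$ and each $Y\in\caly$, there exists $g=g(Y,\tilde\alpha)\in G$ with $I_{\tilde\alpha}(y)=g\cdot y$ for every $y\in Y$. I extract two consequences. First, $I_{\tilde\alpha}(Y)=g\cdot Y\in\caly$, so $H^0$ permutes the (finitely many) $G$-orbits of subtrees in $\caly$ and, after passing to a further finite-index subgroup, preserves each such orbit. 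Second, applying $I_{\tilde\alpha}$ to $h\cdot y$ for $h\in\Stab(Y)$ in two different ways gives $\tilde\alpha(h)=ghg^{-1}$, so that $H^0\subseteq\Out(G,\Stab(Y)^{(t)})$ for every $Y\in\caly$.

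Next I form the skeleton $S$ of the transverse covering $\caly$: the simplicial bipartite $(G,\calf)$-tree whose vertices are the subtrees $Y\in\caly$ together with the intersection points of $\caly$ (points of $T$ lying in at least two subtrees from $\caly$), with an edge joining $Y$ to each intersection point $p\in Y$. The first consequence above implies that $S$ is preserved, up to $(G,\calf)$-equivariant isomorphism, by $H^0$. Peripheral subgroups of $G$ are elliptic in $S$ since they are elliptic in $T$, and edge stabilizers of $S$ are contained in point stabilizers of $T$; since $T$ is arational (hence very small) and its arc stabilizers are trivial, these point stabilizers are trivial, peripheral, or nonperipheral cyclic (coming from conical points of the underlying orbifold in the arational surface case). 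In particular the edge stabilizers of $S$ fall within the scope of Lemma \ref{lem_tirer} (possibly finite or peripheral cyclic), so that lemma produces an $H^0$-invariant $(G,\calf)$-free splitting $S'$ compatible with $S$. Its nonperipheral vertex stabilizers are the desired proper $(G,\calf)$-free factors virtually fixed by $H$.

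Finally I produce the subgroup $F$. When there is a $Y\in\caly$ whose setwise stabilizer $\Stab(Y)$ is both nonperipheral and not virtually cyclic, I set $F=\Stab(Y)$; the second consequence above then gives $H^0\subseteq\Out(G,F^{(t)})$. In the arational surface case, the subtree of $\caly$ containing the orbifold vertex provides such a $Y$ with $F=\pi_1(\Sigma)$. Otherwise---that is, if every $Y\in\caly$ has peripheral or virtually cyclic stabilizer, which in particular occurs when $\caly$ reduces to a single $G$-orbit whose stabilizer is $G$ itself---the piecewise $G$ property forces every $\tilde\alpha\in\tilde H^0$ to act as an inner automorphism of $G$, so $H^0$ is trivial, $H$ is finite, and any proper nonperipheral non-virtually-cyclic subgroup of $G$ (which exists because $(G,\calf)$ is nonsporadic) can be taken as $F$. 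The main obstacle I anticipate is the careful treatment of the edge stabilizers of $S$ and the case analysis producing $F$; both depend on how the transverse covering from Theorem \ref{thm_pG} meshes with the Levitt decomposition of the arational tree $T$.
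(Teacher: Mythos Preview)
Your overall strategy---apply Theorem~\ref{thm_pG}, form the skeleton $S$ of the transverse covering, and use it to produce an $H^0$-invariant free splitting---is the same as the paper's. However, there is a genuine gap at the key step.

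You write that Lemma~\ref{lem_tirer} ``produces an $H^0$-invariant $(G,\calf)$-free splitting $S'$ compatible with $S$''. But Lemma~\ref{lem_tirer} only asserts the \emph{existence} of some compatible free splitting; it says nothing about invariance under $H^0$, and there is no canonical choice. The paper does not appeal to that lemma here. Instead it first proves that the edge stabilizers of $S$ are \emph{peripheral} (not merely cyclic): if some edge group were nonperipheral cyclic, then a collapse of $S$ would be a $\calz$-splitting in which $G_Y$ is elliptic, and \cite[Proposition~11.5]{GH15-1} would force $G_Y$ to act simplicially on its minimal subtree in $T$, contradicting that $T$ is nonsimplicial. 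You skip this step, saying the nonperipheral cyclic stabilizers ``come from conical points''---which is incorrect (conical groups are peripheral; the nonperipheral cyclic stabilizers come from the unused boundary curve). Once edge groups are known to be nontrivial and peripheral, the paper blows up $S$ at each $v_Y$ by the $G_Y$-minimal subtree $Z$ of a fixed Grushko tree $R$, attaching each incident edge at the unique fixed point of its (peripheral) stabilizer, and checks directly that the $\tilde\alpha$-twisted automorphism $J_{\tilde\alpha}$ of $S$ extends to $\hat S$ by $x\mapsto gx$ on each copy of $Z$. Collapsing the edges of $\hat S$ with nontrivial stabilizer then yields the desired $H^0$-invariant free splitting. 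This explicit construction is what supplies the invariance you are missing.

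Your argument for the ``moreover'' part is essentially correct: the computation $\tilde\alpha(h)=ghg^{-1}$ for $h\in\Stab(Y)$ does give $H^0\subseteq\Out(G,\Stab(Y)^{(t)})$. But the case split afterwards is muddled: you say the ``otherwise'' case (every $\Stab(Y)$ peripheral or virtually cyclic) ``in particular occurs when $\caly$ reduces to a single $G$-orbit whose stabilizer is $G$ itself''---but $G$ is neither peripheral nor virtually cyclic, so this does not belong to that case. In fact the dichotomy is unnecessary: since $T$ is mixing, all $Y\in\caly$ lie in a single $G$-orbit, and $G_Y$ acts on $Y$ with dense orbits, hence $G_Y$ is nonperipheral and not virtually cyclic. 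So one may always take $F=G_Y$ (the paper does exactly this, choosing the lift with $g=1$ so that $\tilde\alpha_{|F}=\mathrm{id}$).
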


\begin{rk}
We do not know whether it is possible to remove the hypothesis on $H$ in the statement.

The last assertion of the proposition can be reformulated as follows: 
there is a finite index subgroup $H^0\subseteq H$ such that
each element $\alpha\in H^0$ has a representative $\tilde\alpha\in\Aut(G)$ whose restriction to $F$ is the identity.
In particular, $H^0$ preserves the conjugacy class of every element of $F$.
\end{rk} 

\begin{proof}
By Theorem \ref{thm_pG}, there is a finite index subgroup $H^0\subseteq H$ which is uniformly piecewise-$G$. 
Let $\caly$ be a transverse covering of $T$
such that for every $Y\in\caly$ and every $\tilde\alpha$ in the preimage  $\Tilde H^0$ of $H^0$ in $\Aut(G)$, 
there exists $g\in G$ such that for every $x\in Y$, one has $I_{\tilde \alpha}(x)=gx$.

As $T$ is arational, it is mixing \cite{Rey12,Hor14-3}, and therefore all the subtrees in $\caly$ are in the same $G$-orbit.
Let $S$ be the skeleton of this transverse covering, as defined in \cite[Definition~4.8]{Gui_limit}: this is the simplicial tree having one vertex $v_Y$ for every subtree $Y\in\caly$, one vertex $v_x$ for every point $x\in T$ that belongs to at least two subtrees in $\caly$, and an edge between $v_x$ and $v_Y$ whenever $x\in Y$. Since $I_{\tilde\alpha}$ preserves the transverse covering, it induces an $\tilde\alpha$-equivariant automorphism $J_{\tilde \alpha}$ of $S$.

We claim that edge stabilizers of $S$ are peripheral: each edge $\eps$ of $S$ corresponds to a pair $(x,Y)$ with $Y\in\caly$ and $x\in Y$, so $G_\eps\subset G_x$
and the claim is clear if $T$ is relatively free. 
So assume $T$ is arational surface as in Definition \ref{dfn_arat-surf}. Now, if $\eps$ is an edge of $S$ whose stabilizer is nonperipheral, then it is cyclic (because the only nonperipheral point stabilizers of $T$ are cyclic).
Therefore, some collapse of $S$ yields a  $\calz$-splitting of $(G,\calf)$ in which the groups $G_{Y}$ (with $Y\in\caly$) are elliptic. 
Since $T$ is arational, \cite[Proposition~11.5]{GH15-1} therefore implies that the action of each $G_{Y}$ on its minimal subtree in $T$ is simplicial. This implies that $T$ itself is simplicial, a contradiction.

Let $R$ be any Grushko tree. 
Let $Z\subseteq R$ be the minimal $G_{Y}$-invariant subtree of $R$.
Since $G_{Y}$ is nonperipheral, this is a nontrivial tree with trivial edge stabilizers.
Let $\hat{S}$ be the simplicial tree obtained by blowing up $S$ at the vertex $v_Y$ into $Z$,
and by attaching each incident edge to its unique fixed point (this is possible because all edge stabilizers are nontrivial and peripheral).

We claim that the automorphism $J_{\tilde \alpha}$ of $S$ extends to an automorphism $\hat{J}_{\tilde \alpha}$ of $\hat{S}$.
Indeed, if $I_{\tilde \alpha}$ agrees with the element $g$ on $Y$, then $J_{\tilde \alpha}$ agrees with $g$ on the vertex $v_Y$, and also on all incident edges since they are of the form $(x,Y)$ with $x\in Y$. We can then define $\hat{J}_{\tilde\alpha}$  by sending any point $x\in Z$ to $gx\in gZ$.

The splitting of $G$ obtained from $\hat{S}$ by collapsing every edge with non-trivial stabilizer yields a $H^0$-invariant free splitting of $(G,\calf)$.

For the moreover part, consider $F$ the global stabilizer of $Y$. 
Since $H_0$ is piecewise-$G$,
every $\alpha\in H_0$ has a preimage $\tilde \alpha\in\Aut(G)$ such that the restriction of $I_{\tilde \alpha}$ to $Y$ is the identity.
In particular, for every $g\in F$ and every $x\in Y$, one has $gx=I_{\tilde\alpha}(gx)=\tilde{\alpha}(g)I_{\tilde{\alpha}}(x)=\tilde{\alpha}(g)x$. It follows that for every $g\in F$, the element $g\m\tilde\alpha(g)$ fixes $Y$, so $\tilde\alpha_{|F}$ is the identity.
The proposition follows.
\end{proof}

Without any finite fix type assumption we still get the following statement which gives a converse to Proposition~\ref{stabilizers_homothetie}.

\begin{cor}\label{cor_totalement_reductible}
Let $T\in\AT$. Then $\Stabis(T)$ does not contain any fully irreducible automorphism, and no atoroidal automorphism.
\end{cor}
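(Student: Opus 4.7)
The plan is to deduce the corollary directly from Theorem~\ref{thm_stabT}(2) applied to the cyclic subgroup generated by a single element $\Phi \in \Stabis(T)$. The point is that although the corollary is stated for arbitrary elements (no finite-generation hypothesis on a larger group), the obstruction ``finite fix type'' from Proposition~\ref{stabilizers_isom} is automatically verified for a single automorphism via Lemma~\ref{lem_FFT_RH_FG}.

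More precisely, I would fix $\Phi \in \Stabis(T)$ and set $H := \langle \Phi \rangle$. Since $H$ is finitely generated (even cyclic), Lemma~\ref{lem_FFT_RH_FG} says that $H$ has finite fix type. Since $T$ is arational and $H \subseteq \Stabis(T)$, we may apply Proposition~\ref{stabilizers_isom} to $H$. This yields two conclusions that I will use in parallel.

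First, Proposition~\ref{stabilizers_isom} gives that $H$ virtually fixes a $(G,\calf)$-free splitting, hence virtually preserves the conjugacy class of a proper $(G,\calf)$-free factor. Thus some nonzero power $\Phi^k$ preserves the conjugacy class of a proper $(G,\calf)$-free factor, so by definition $\Phi$ is not fully irreducible. Second, the ``moreover'' part of Proposition~\ref{stabilizers_isom} furnishes a non-peripheral, non-virtually-cyclic subgroup $F \subseteq G$ such that $H$ is virtually contained in $\Out(G, F^{(t)})$. By definition of $\Out(G, F^{(t)})$, some nonzero power $\Phi^\ell$ admits a representative $\tilde\Phi^\ell \in \Aut(G)$ that acts on $F$ as conjugation by some element of $G$; in particular $\Phi^\ell$ preserves the $G$-conjugacy class of every element of $F$. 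Since $F$ is non-peripheral and not virtually cyclic, it contains at least one nontrivial non-peripheral element $f$, and the conjugacy class of $f$ is fixed by $\Phi^\ell$. Hence $\Phi$ is not atoroidal.

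Combining the two conclusions for an arbitrary $\Phi \in \Stabis(T)$ yields the corollary. No new geometric input is needed; the only subtle point is to observe that the ``finite fix type'' hypothesis in Proposition~\ref{stabilizers_isom} is free of charge for cyclic subgroups, so the corollary holds with no assumption on $G$.
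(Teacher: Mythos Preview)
Your argument is correct and is essentially identical to the paper's: apply Proposition~\ref{stabilizers_isom} to the cyclic group $H=\langle\Phi\rangle$, which has finite fix type because it is finitely generated. One caveat: your opening sentence says you will deduce the corollary from Theorem~\ref{thm_stabT}(2), but that would be circular, since the proof of Theorem~\ref{thm_stabT} invokes this very corollary; fortunately your actual argument bypasses Theorem~\ref{thm_stabT} and goes straight through Proposition~\ref{stabilizers_isom}, exactly as the paper does.
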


\begin{proof} This immediately follows from Proposition~\ref{stabilizers_isom} applied to
a cyclic subgroup $H\subseteq\Stabis(T)$ which is obviously of finite fix type because it is finitely generated.
\end{proof}

We are now in position to complete our proof of Theorem~\ref{thm_stabT}.

\begin{proof}[Proof of Theorem~\ref{thm_stabT}]
By Lemma~\ref{lem_cyclic}, either $H\subseteq\Stab^{\mathrm{is}}(T)$, or there exists $\Phi\in H$ with $\lambda_T(\Phi)\neq 1$ such that $H=H^1\rtimes\langle\Phi\rangle$ with $H^1=H\cap\Stabis(T)$. In the latter case, Proposition~\ref{stabilizers_homothetie} implies that $\Phi$ is fully irreducible, completing the proof of the first assertion of the theorem.

We now assume that $H\subseteq\Stab^{\mathrm{is}}(T)$. Corollary~\ref{cor_totalement_reductible} shows that $H$ contains no fully irreducible and no atoroidal element. If $H$ has finite fix type with respect to $T$, the last conclusion follows from Proposition~\ref{stabilizers_isom}. 
\end{proof}

\section{Classification of subgroups of $\text{Out}(G,\calf)$}\label{sec-appli}

The following theorem generalizes a theorem of Handel--Mosher \cite{HM20}, both to infinitely generated subgroups of $\text{Out}(F_N)$, and to finitely generated subgroups of relative outer automorphism groups. 
Theorem \ref{intro-alt-1} from the introduction is a particular case.

Recall that groups of finite fix type were defined in Definition \ref{dfn_fft}. See Lemmas \ref{lem_FFT_RH_FG} and \ref{lem_FFT_centralisateurs}
for the given examples of situations satisfying this condition.

\begin{theo}\label{alternative-HM}
Let $(G,\calf)$ be a nonsporadic free product. 
Let $H$ be a subgroup of $\text{Out}(G,\mathcal{F})$. Assume that $H$ has finite fix type (e.g.\ $H$ is finitely generated, or $G$ is a toral relatively hyperbolic group, or $H\subseteq\Out(G,\calf^{(t)})$ and $G$ satisfies the descending chain condition on centralizers).
\\ Then either
\begin{enumerate} 
\item[(1)] $H$ virtually preserves the conjugacy class of a proper $(G,\mathcal{F})$-free factor, or else 
\item[(2)] $H$ contains a fully irreducible automorphism. In this case, either
\begin{itemize}
\item[(2a)] $H$ contains a noncyclic free subgroup $H'$ such that every nontrivial element of $H'$ is fully irreducible, or else
\item[(2b)] $H$ contains a fully irreducible outer automorphism $\Phi$, and $H$ has a finite-index subgroup $H^0$ that splits as a semi-direct product $H^0=H^1\rtimes \langle\Phi\rangle$, where $H^1$  contains no fully irreducible element and no atoroidal element.
\end{itemize}
\end{enumerate}
\end{theo}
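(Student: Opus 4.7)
The approach is to analyze the action of $H$ on the Gromov-hyperbolic graph $\FF$ (Proposition~\ref{fz-ff}) via Gromov's classification of isometric group actions, using Theorem~\ref{loxo} (loxodromic isometries of $\FF$ are exactly the fully irreducible elements), the description of $\partial_\infty\FF$ as equivalence classes of arational trees (Theorem~\ref{boundary-ff}), and the structure theorem for stabilizers of arational trees (Theorem~\ref{thm_stabT}).

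If $H$ has bounded orbits in $\FF$, Proposition~\ref{bounded-finite-ff} directly yields conclusion~(1). Otherwise, Gromov's classification produces two subcases. In the non-elementary subcase, $H$ contains two independent loxodromic elements, and a standard ping-pong argument produces a noncyclic free subgroup $H' \subseteq H$ all of whose nontrivial elements act loxodromically on $\FF$, and hence are fully irreducible by Theorem~\ref{loxo}; this is conclusion~(2a). Otherwise, $H$ virtually fixes a point or a pair of points in $\partial_\infty\FF$, and after passing to a finite-index subgroup $H^0 \subseteq H$ (which inherits finite fix type), $H^0$ fixes some $\xi \in \partial_\infty\FF$. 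By Theorem~\ref{boundary-ff}, $\xi$ corresponds to a $\simAT$-class of some arational $(G,\calf)$-tree $T$.

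A further finite-index reduction arranges that $H^0$ preserves the projective class $[T] \in \bbP\ol\calo$: the set of projective classes inside a fixed $\simAT$-class is either a single class (in the relatively free arational case, by the uniqueness results of~\cite{GH15-1}) or the finite-dimensional simplex of transverse measures on the underlying arational lamination on the canonical $2$-orbifold (in the arational-surface case); in the latter case the finitely many extremal (ergodic) projective classes are permuted by $H^0$, so passing to a finite-index subgroup we may assume $H^0 \subseteq \Stab([T])$ for some ergodic representative $T$.

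Applying Theorem~\ref{thm_stabT} then yields two possibilities. If $H^0 \subseteq \Stab^{\mathrm{is}}(T)$, the finite fix type hypothesis combined with Theorem~\ref{thm_stabT}(2) ensures that $H^0$ virtually preserves a $(G,\calf)$-free splitting, and hence a proper $(G,\calf)$-free factor, giving conclusion~(1). Otherwise, some $\Phi \in H^0$ satisfies $\lambda_T(\Phi)\neq 1$; Proposition~\ref{stabilizers_homothetie} yields that $\Phi$ is fully irreducible, and Theorem~\ref{thm_stabT}(1) provides a semidirect decomposition $H^0 = H^1 \rtimes \langle\Phi\rangle$ with $H^1 \subseteq \Stab^{\mathrm{is}}(T)$, while Corollary~\ref{cor_totalement_reductible} ensures that $H^1$ contains no fully irreducible and no atoroidal element, which is conclusion~(2b). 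The main technical obstacle is the finite-index reduction from an invariant $\simAT$-class to an invariant projective class in the arational-surface case, which requires extracting a subgroup fixing an ergodic transverse measure; the remaining steps follow cleanly from the structural results already established earlier in the paper.
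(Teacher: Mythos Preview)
Your proof follows essentially the same route as the paper's: Gromov's trichotomy for the $H$-action on $\FF$, Proposition~\ref{bounded-finite-ff} for the bounded-orbit case, and Theorem~\ref{thm_stabT} for the case of a virtual fixed point at infinity. The only substantive difference is in the passage from a fixed $\simAT$-class to a fixed projective class $[T]$.

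Here your case split is slightly off. You assert that in the relatively free arational case the $\simAT$-class contains a single projective class; this is not true in general, since non-uniquely ergometric relatively free arational trees do exist (already for $F_N$ with $\calf=\emptyset$). The paper avoids this by treating both cases uniformly: \cite[Proposition~13.5]{GH15-1} shows that for \emph{every} arational tree $T$, the set of projective classes in $\mathbb{P}\AT$ lying in the $\simAT$-class of $T$ is a finite-dimensional simplex, so one passes to a finite-index subgroup $H^0$ fixing one of its finitely many extreme points. Once you replace your dichotomy by this single citation, your argument coincides with the paper's.
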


\begin{rk}
A version of this result has been proved by Clay and Uyanik for sporadic decompositions of the free group leading to a proof that 
atoroidal subgroups of $\Out(F_N)$ contain an atoroidal automorphism \cite{CU}.
\end{rk}

\begin{rk}\label{rk_vcyclique}
If $(G,\calf)=(F_N,\es)$ and $H$ satisfies Assertion (2b) then $H$ is virtually cyclic.
This can fail however in the general case of free products, as shown by the following example. 
Let $S$ be a compact orientable surface with one boundary component and consider the free group $G=\pi_1(S)$.
Let $c$ be an essential simple closed curve on $S$ that decomposes $S$ into two connected components $A$ and $B$, where $\partial S\subseteq A$. 
Then $\pi_1(B)$ is a proper free factor of $G$, we let $\calf=\{[\pi_1(B)]\}$. 
Let $\Phi\in\Out(G,\calf)$ be induced by a diffeomorphism of $S$ fixing $B$ and whose restriction to $A$ is pseudo-Anosov.
Then $\Phi$ is fully irreducible relative to $\calf$ and centralizes the subgroup $\Mod(B)\subset \Mod(S)$ made of all diffeomorphisms acting as the identity on $A$.
The subgroup $H=\text{Mod}(B)\times\langle\Phi\rangle$ of $\Out(G,\calf)$ which satisfies Assertion (2b) from Theorem~1.
One can also build an example where $H$ acts by a global conjugation on each subgroup in $\calf$, by taking for $H^1$ a group of twists about the curve $c$.  
\end{rk}

\begin{proof}
The group $\text{Out}(G,\calf)$ (and its subgroup $H$) acts on the free factor graph $\FF$. By a theorem of Gromov \cite[Section~8.2]{Gro87}, either $H$ contains a non-cyclic free group whose nontrivial elements are all loxodromic isometries of $\FF$, or $H$ has bounded orbits in $\FF$, or $H$ virtually fixes a point in $\partial_\infty \FF$.  In the first case, we are done because elements of $\Out(G,\calf)$ acting loxodromically on $\FF$ are fully irreducible. In the second case, Proposition~\ref{bounded-finite-ff} shows that $H$ virtually fixes the conjugacy class of a proper $(G,\calf)$-free factor. 
In the third case, $H$ virtually fixes a point $\xi\in\partial_{\infty}\FF$, and therefore it virtually preserves the finite-dimensional simplex of all trees in $\mathbb{P}\mathcal{AT}$ which project to $\xi$ (see \cite[Proposition~13.5]{GH15-1} for the fact that this set is a finite-dimensional simplex). Therefore $H$ has a finite index subgroup $H^0\subseteq H$ that fixes an extreme point $[T]$ of this simplex. The conclusion then follows from Theorem~\ref{thm_stabT}: indeed, if $H^0\subseteq\Stab^{\mathrm{is}}(T)$, then $H^0$ virtually fixes a free splitting of $(G,\calf)$, so it virtually fixes the conjugacy class of a proper $(G,\calf)$-free factor; otherwise $H^0$ is a semi-direct product as in Assertion~(2b). 
\end{proof}

We now provide a classification of subgroups of $\text{Out}(G,\calf)$ containing fully irreducible elements. This statement does not assume
that $H$ is of finite fix type.

\begin{theo}\label{alternative-hyperbolic}
Let $(G,\calf)$ be a nonsporadic free product.
Let $H\subseteq\text{Out}(G,\mathcal{F})$ be a subgroup that contains a fully irreducible outer automorphism.
\\ Then either 
\begin{itemize}
\item[(1)] $H$ virtually fixes a quadratic conjugacy class in $(G,\calf)$ (see Definition \ref{def-quadratic}), or else 
\item[(2)] $H$ contains an atoroidal fully irreducible element. In this case, either
\begin{itemize}
\item[(2a)] $H$ contains a non-cyclic free group whose non-trivial elements are all atoroidal and fully irreducible, or
\item[(2b)] $H$ has a finite-index subgroup which is a semi-direct product $H^0=H^1\rtimes\langle \Phi\rangle$
with $\Phi$ atoroidal and fully irreducible outer automorphism
and $H^1$ contains no fully irreducible element and no atoroidal element.
\end{itemize}
\end{itemize}
\end{theo}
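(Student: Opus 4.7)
The plan is to run an argument parallel to that of Theorem~\ref{alternative-HM}, but using the action of $H$ on the $\calz$-factor graph $\ZF$ (which is hyperbolic by Theorem~\ref{zf-hyp}) rather than on $\FF$. Fix a fully irreducible element $\Phi_0\in H$. Since no power of $\Phi_0$ preserves a proper $(G,\calf)$-free factor, neither does any finite-index subgroup of $H$; in particular, $H$ cannot virtually preserve the conjugacy class of any proper $(G,\calf)$-free factor. I will then apply Gromov's classification \cite[Section~8.2]{Gro87} to the action of $H$ on $\ZF$: either $H$ has bounded orbits, or $H$ contains two independent loxodromic isometries of $\ZF$, or a finite-index subgroup $H^0\subseteq H$ fixes a point $\xi\in\partial_\infty\ZF$.

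In the first case, Proposition~\ref{bounded-finite-zf} will force $H$ to virtually preserve a proper $(G,\calf)$-free factor or a quadratic conjugacy class; the former being excluded, this gives assertion~(1). In the second case, a standard ping-pong argument yields a non-cyclic free subgroup $H'\subseteq H$ whose non-trivial elements all act loxodromically on $\ZF$; by the characterization of loxodromic isometries of $\ZF$ from Section~\ref{sec-loxo}, every such element is fully irreducible and atoroidal, giving assertion~(2a).

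The substantive case is when $H^0$ fixes $\xi\in\partial_\infty\ZF$. By Theorem~\ref{boundary-zf}, $\xi$ corresponds to a $\simAT$-class of some $T\in\FAT$; the set of projective classes of trees $\simAT$-equivalent to $T$ is a finite-dimensional simplex in $\bbP\ol\calo$ by \cite[Proposition~13.5]{GH15-1}, all of whose members still lie in $\FAT$ since $\simAT$-equivalence preserves which elements are elliptic. Passing to a further finite-index subgroup $H^{00}\subseteq H^0$, I may assume that $H^{00}$ fixes $[T]$ for some $T\in\FAT$. Applying Theorem~\ref{thm_stabT}(1) to $H^{00}\subseteq\Stab([T])$, the alternative $H^{00}\subseteq\Stabis(T)$ is impossible because some power of $\Phi_0$ lies in $H^{00}$ and is fully irreducible, while $\Stabis(T)$ contains no fully irreducible element by Corollary~\ref{cor_totalement_reductible}. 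Hence $H^{00}=H^1\rtimes\grp{\Phi}$ with $\Phi$ fully irreducible, $\lambda:=\lambda_T(\Phi)\neq 1$, and $H^1\subseteq\Stabis(T)$.

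The main obstacle---the step with no analog in the proof of Theorem~\ref{alternative-HM}---will be to upgrade this ``fully irreducible'' $\Phi$ to ``fully irreducible and atoroidal''. I plan a translation length argument. If $\Phi^k$ preserved some non-peripheral conjugacy class $[g]$ with $k\neq 0$, then relative freeness of $T\in\FAT$ gives $\|g\|_T>0$, while the homothety $I_{\tilde\Phi}$ of scaling factor $\lambda$ associated to a lift $\tilde\Phi$ of $\Phi$ yields $\|\Phi^k(g)\|_T=\lambda^k\|g\|_T$; combined with $\Phi^k[g]=[g]$ this forces $\lambda^k=1$, hence $\lambda=1$ since $\lambda>0$, contradicting $\lambda\neq 1$. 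Thus $\Phi$ is atoroidal, and Corollary~\ref{cor_totalement_reductible} then guarantees that $H^1$ contains no fully irreducible and no atoroidal element, completing assertion~(2b). The crucial input making this final step work is the relative freeness of $T\in\FAT$, which in turn comes from working with $\ZF$ instead of $\FF$.
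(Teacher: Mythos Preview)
Your proposal is correct and follows essentially the same route as the paper: Gromov's trichotomy for the $H$-action on $\ZF$, Proposition~\ref{bounded-finite-zf} in the bounded case, the simplex argument from \cite[Proposition~13.5]{GH15-1} to pass to a fixed $[T]\in\bbP\FAT$, and then the scaling-factor argument using relative freeness to show $\Phi$ is atoroidal. Your final step is the contrapositive of the paper's (you derive $\lambda^k=1$ from $\|g\|_T>0$, while the paper deduces $\|g\|_T=0$ from $\lambda\neq 1$), and your explicit remark that $\simAT$-equivalent trees share the same elliptic elements---so the extremal $[T]$ still lies in $\FAT$---is a detail the paper leaves implicit.
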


\begin{proof}
Notice that since $H$ contains a fully irreducible automorphism, $H$ does not virtually preserve the conjugacy class of any proper $(G,\calf)$-free factor. By considering the $H$-action on $\ZF$, it follows from \cite[Section~8.2]{Gro87} that either $H$ contains a noncyclic free subgroup whose elements all act loxodromically on $\ZF$, or $H$ has bounded orbits in $\ZF$, or $H$ virtually fixes a point in $\partial_\infty \ZF$. In the first case, we are done because elements of $\Out(G,\calf)$ acting loxodromically on $\ZF$ are fully irreducible atoroidal. In the second case, Proposition~\ref{bounded-finite-zf} shows that $H$ virtually fixes a quadratic conjugacy class (recall indeed that  no finite index subgroup of $H$ preserves the conjugacy class of a proper $(G,\calf)$-free factor).   

In the third case, arguing as in the proof of Theorem~\ref{alternative-HM}, we obtain that $H$ has a finite index subgroup $H^0$ which fixes the homothety class of a relatively free arational tree. 
Let $H^1=H^0\cap\Stabis(T)$ be the isometric stabilizer of $T$ in $H^0$.
By Corollary~\ref{cor_totalement_reductible}, $H^1$  contains no fully irreducible element and no atoroidal element.
Thus $H^0\neq H^1$ (by Lemma~\ref{lem_cyclic}), and 
 $H^0=H^1\rtimes\langle \Phi\rangle$
 for some fully irreducible outer automorphism $\Phi\in H^0$.

There remains to prove that $\Phi$ is atoroidal. 
Otherwise, there would exist a nonperipheral element $g\in G$ and $k>0$ such that $\Phi^k([g])=[g]$. 
On the other hand, since the scaling factor $\lambda_T(\Phi)$ is not $1$,
 this implies that $g$ is elliptic in $T$. 
This contradicts the fact that $T$ is relatively free, and concludes the proof.
\end{proof}

Theorem \ref{intro-alt-2} from the introduction is a particular case
of the following statement.

\begin{theo}\label{thm_hyperbolic2}
Let $(G,\calf)$ be a nonsporadic free product, and let $H\subseteq\text{Out}(G,\mathcal{F})$ be a subgroup. Assume either that $H$ has finite fix type (e.g.\ $H$ is finitely generated, or $G$ is a toral relatively hyperbolic group, or $H\subseteq\Out(G,\calf^{(t)})$ and $G$ satisfies the descending chain condition on centralizers).
\\ Then either 
\begin{enumerate}
\item $H$ virtually preserves a nonperipheral conjugacy class of $G$, or the conjugacy class of a proper $(G,\calf)$-free factor, or else 
\item $H$ contains a fully irreducible atoroidal outer automorphism; in this case, either 
\begin{enumerate}
\item[(2a)] $H$ contains a nonabelian free subgroup in which all nontrivial elements are fully irreducible and atoroidal, or else
\item[(2b)] $H$ is virtually a semidirect product $H^1\rtimes\langle\Phi\rangle$, where $\Phi$ is fully irreducible and atoroidal,
and $H^1$ contains no fully irreducible element and no atoroidal element.
\end{enumerate}
\end{enumerate}
\end{theo}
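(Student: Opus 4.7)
The plan is to obtain Theorem~\ref{thm_hyperbolic2} as a direct consequence of the two previous classification results, Theorem~\ref{alternative-HM} and Theorem~\ref{alternative-hyperbolic}, by threading their alternatives together. First I would apply Theorem~\ref{alternative-HM} to $H$, which is legitimate because $H$ has finite fix type by assumption. If the first alternative of that theorem holds, then $H$ virtually preserves the conjugacy class of a proper $(G,\calf)$-free factor, which is precisely the second possibility of case~(1) in Theorem~\ref{thm_hyperbolic2}, and we are done. Otherwise, $H$ contains a fully irreducible outer automorphism.

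In that latter case I would then apply Theorem~\ref{alternative-hyperbolic}, whose hypothesis is exactly that $H$ contains a fully irreducible element and which requires no finite fix type condition. Its first alternative says that $H$ virtually fixes a quadratic conjugacy class. By Definition~\ref{def-quadratic}, a quadratic element is in particular nonperipheral (it is moreover non-simple), so its conjugacy class is a nonperipheral conjugacy class of $G$; thus we fall into case~(1) of Theorem~\ref{thm_hyperbolic2} (first possibility). If instead the second alternative of Theorem~\ref{alternative-hyperbolic} holds, then $H$ contains an atoroidal fully irreducible element, and the sub-alternatives (2a) and (2b) of that theorem coincide verbatim with (2a) and (2b) of Theorem~\ref{thm_hyperbolic2}.

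There is essentially no obstacle beyond checking that the dichotomies line up, and the only subtle point is the remark that a quadratic conjugacy class qualifies as a nonperipheral conjugacy class, which is immediate from the definition. No new geometric input is required: the two input theorems already do all the heavy lifting (they rely on the hyperbolicity of $\FF$ and $\ZF$, the boundary descriptions of Sections~\ref{sec-6} and \ref{sec-8}, the stabilizer analysis of Section~\ref{sec-5}, and the bounded orbit results of Section~\ref{sec-bdd}). The proof can thus be written in a few lines once the matching of alternatives is spelled out.
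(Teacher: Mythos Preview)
Your proposal is correct and follows exactly the same two-step reduction as the paper's proof: first apply Theorem~\ref{alternative-HM} to either land in case~(1) or obtain a fully irreducible element, then apply Theorem~\ref{alternative-hyperbolic} and observe that a quadratic conjugacy class is nonperipheral. The paper's argument is slightly terser (it does not spell out the quadratic-implies-nonperipheral remark), but the content is identical.
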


\begin{proof}
Assume that $H$ does not virtually preserve the conjugacy class of any proper $(G,\calf)$-free factor. By Theorem~\ref{alternative-HM}, $H$ contains a fully irreducible element. The conclusion therefore follows from Theorem~\ref{alternative-hyperbolic}. 
\end{proof}

In the particular case of subgroups of $\text{Out}(F_N)$ containing a fully irreducible automorphism, we recover a theorem due to Uyanik \cite[Theorem 5.4]{Uya14}. The \emph{extended mapping class group} of a surface is the group of isotopy classes of diffeomorphisms of $S$ (that may reverse orientation in case $S$ is orientable) that preserve each boundary curve of the surface. 
Any identification of the fundamental group of the surface with $F_N$ gives an embedding 
of the extended mapping class group of $S$ into $\text{Out}(F_N)$.
We call such a subgroup of $\Out(F_N)$ an \emph{extended mapping class subgroup}.

\begin{theo}(Uyanik \cite[Theorem 5.4]{Uya14})\label{thm_Uyanik}
Let $H\subseteq\Out(F_N)$ be a subgroup that contains a fully irreducible automorphism. Then either $H$ contains an atoroidal fully irreducible outer automorphism, or else $H$ is contained in an extended mapping class subgroup of $\Out(F_N)$.  
\end{theo}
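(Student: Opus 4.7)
The plan is to invoke Theorem~\ref{alternative-hyperbolic} applied to $H$. Since $H$ contains a fully irreducible element and $(F_N,\emptyset)$ is nonsporadic, this alternative yields either that $H$ contains an atoroidal fully irreducible element, which is the first conclusion of Theorem~\ref{thm_Uyanik}, or else that some finite-index subgroup $H_0\subseteq H$ fixes the conjugacy class $[g]$ of a quadratic element $g$. I focus on the second case and aim to show that all of $H$ lies in an extended mapping class subgroup.

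I would first unpack what being quadratic means when $\calf=\emptyset$: since peripheral now means trivial but edge stabilizers of a geometric decomposition must be both nontrivial cyclic and peripheral, the only possibility is the trivial splitting. Hence $F_N\cong\pi_1(\Sigma)$ for a compact surface $\Sigma$ with exactly one boundary component, and $g$ is conjugate to its boundary generator. By the classical Dehn--Nielsen--Baer theorem in this setting, the extended mapping class subgroup of $\Sigma$ inside $\Out(F_N)$ is precisely the stabilizer of the unordered pair $\{[g],[g^{-1}]\}$, so it suffices to prove that $H$ stabilizes this pair.

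Now I would exploit the fully irreducible element $\Phi\in H$. We may assume $\Phi$ is not atoroidal, otherwise we land in the first conclusion. By the classical dichotomy for fully irreducibles in $\Out(F_N)$ (atoroidal versus geometric), $\Phi$ is then realized by a pseudo-Anosov of a compact surface $\Sigma_\Phi$ with one boundary component and $\pi_1(\Sigma_\Phi)\simeq F_N$, and its only periodic conjugacy classes are $[g_\Phi]$ and $[g_\Phi^{-1}]$, where $g_\Phi$ generates $\pi_1(\partial \Sigma_\Phi)$. Some positive power of $\Phi$ lies in $H_0$ and thus fixes $[g]$, so $[g]$ is periodic for $\Phi$; by uniqueness, $[g]\in\{[g_\Phi],[g_\Phi^{-1}]\}$, and in particular the surface structure attached to $g$ coincides with $\Sigma_\Phi$.

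To bootstrap from $H_0$ to $H$, I would pick any $h\in H$ and apply the same analysis to $h\Phi h^{-1}\in H$, which is again fully irreducible and non-atoroidal, realized on the surface $h\cdot\Sigma_\Phi$, with unique periodic conjugacy classes $\{[h(g_\Phi)],[h(g_\Phi^{-1})]\}$. Some positive power of $h\Phi h^{-1}$ lies in $H_0$ and hence fixes $[g]$, so $[g]$ is periodic for $h\Phi h^{-1}$ as well, forcing $[g]\in\{[h(g_\Phi)],[h(g_\Phi^{-1})]\}$; combined with the previous paragraph, this gives $h\cdot[g]\in\{[g],[g^{-1}]\}$. Hence $H$ preserves $\{[g],[g^{-1}]\}$ and lies in the extended mapping class subgroup of $\Sigma$, as desired. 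The main subtlety will be to invoke cleanly the geometric/atoroidal dichotomy for fully irreducibles of $F_N$ and the identification of the extended mapping class subgroup of a once-bounded surface with the stabilizer of an unordered pair of boundary conjugacy classes in $\Out(F_N)$; once these classical facts are in hand, the bootstrap from $H_0$ to $H$ is essentially automatic.
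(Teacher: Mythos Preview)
Your proof is correct and follows essentially the same approach as the paper: both invoke Theorem~\ref{alternative-hyperbolic} to obtain a finite-index subgroup fixing a quadratic class, identify the corresponding extended mapping class subgroup via Dehn--Nielsen--Baer, and then use that a pseudo-Anosov has a unique periodic conjugacy class (up to inversion) to bootstrap from the finite-index subgroup to all of $H$. The only stylistic difference is that the paper replaces $H_0$ by a normal finite-index subgroup and uses that the set of $H_0$-invariant cyclic-subgroup conjugacy classes is then $H$-invariant, whereas you conjugate the given fully irreducible by each $h\in H$ directly; these are equivalent packagings of the same idea.
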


\begin{proof}
Assume that $H$ contains no atoroidal fully irreducible automorphism.
Theorem~\ref{alternative-hyperbolic} shows that $H$ has a finite index subgroup $H^0$ fixing a quadratic conjugacy class $c$.
 By the Dehn--Nielsen--Baer theorem (see \cite{Fuj02} for an argument in the non-orientable case),
 the stabilizer $M$ of the conjugacy class $\grp{c}$ is in an extended mapping class subgroup corresponding to a surface $S$ with a single boundary component.

We are left showing that $H$ itself embeds in $M$. 
Up to passing to a finite index subgroup of $H^0$, we can assume that $H^0$ is normal in $H$. Since $H$ contains a fully irreducible automorphism of $F_N$, so does $H^0$, which means that $H^0$ contains a pseudo-Anosov diffeomorphism of $S$. The only conjugacy class  of cyclic subgroup of $F_N$  preserved by $H^0$ is the conjugacy class of $\grp{c}$. 
It is $H$-invariant because $H^0$ is normal in $H$, so $H\subseteq M$.
\end{proof}

\bibliographystyle{alpha}
\bibliography{GH-bib}

\begin{thebibliography}{Man14b}

\bibitem[BF14]{BF14}
M.~Bestvina and M.~Feighn.
\newblock Hyperbolicity of the complex of free factors.
\newblock {\em Adv. Math.}, 256:104--155, 2014.

\bibitem[BR15]{BR13}
M.~Bestvina and P.~Reynolds.
\newblock The boundary of the complex of free factors.
\newblock {\em Duke Math. J.}, 164(11):2213--2251, 2015.

\bibitem[CM87]{CM87}
M.~Culler and J.W. Morgan.
\newblock Group actions on $\mathbb{R}$-trees.
\newblock {\em Proc. London Math. Soc.}, 55(3):571--604, 1987.

\bibitem[CU20]{CU}
M.~Clay and C.~Uyanik.
\newblock Atoroidal dynamics of subgroups of $\mathrm{{O}ut}({F}_{N})$.
\newblock {\em J. Lond. Math. Soc.}, 2020.
\newblock https://doi.org/10.1112/jlms.12339.

\bibitem[DT17]{DT16}
S.~Dowdall and S.J. Taylor.
\newblock The co-surface graph and the geometry of hyperbolic free group
  extensions.
\newblock {\em J. Topol.}, 10(2):447--482, 2017.

\bibitem[FM15]{FM}
S.~Francaviglia and A.~Martino.
\newblock Stretching factors, metrics and train tracks for free products.
\newblock {\em Illinois J. Math.}, 59(4):859--899, 2015.

\bibitem[Fuj02]{Fuj02}
K.~Fujiwara.
\newblock On the outer automorphism group of a hyperbolic group.
\newblock {\em Israel J. Math.}, 131:277--284, 2002.

\bibitem[GH19]{GH15-1}
V.~Guirardel and C.~Horbez.
\newblock Algebraic laminations for free products and arational trees.
\newblock {\em Algebr. Geom. Topol.}, 19(5):2283--2400, 2019.

\bibitem[GL07]{GL07}
V.~Guirardel and G.~Levitt.
\newblock The outer space of a free product.
\newblock {\em Proc. London Math. Soc.}, 94(3):695--714, 2007.

\bibitem[GL15]{GL_McCool}
V.~Guirardel and G.~Levitt.
\newblock Mc{C}ool groups of toral relatively hyperbolic groups.
\newblock {\em Alg. Geom. Topol.}, 15(6):3485--3534, 2015.

\bibitem[GL17]{GL16}
V.~Guirardel and G.~Levitt.
\newblock {\em {JSJ} decompositions of groups}, volume 395 of {\em
  Astérisque}.
\newblock 2017.

\bibitem[GLon]{GL}
V.~Guirardel and G.~Levitt.
\newblock in preparation.

\bibitem[Gro87]{Gro87}
M.~Gromov.
\newblock Hyperbolic groups.
\newblock In S.M. Gersten, editor, {\em Essays in {G}roup {T}heory}, volume~8
  of {\em MSRI Publications}, pages 75--265. Springer-Verlag, 1987.

\bibitem[Gui04]{Gui_limit}
V.~Guirardel.
\newblock Limit groups and groups acting freely on $\mathbb{R}^n$-trees.
\newblock {\em Geom. Topol.}, 8(3):1427--1470, 2004.

\bibitem[Gup18]{Gup2}
R.~Gupta.
\newblock Loxodromic elements for the relative free factor complex.
\newblock {\em Geom. Dedic.}, 196(1):91--121, 2018.

\bibitem[Ham14]{Ham13}
U.~Hamenstädt.
\newblock The boundary of the free splitting graph and of the free factor
  graph.
\newblock {\em arXiv:1211.1630v5}, 2014.

\bibitem[HM09]{HM09}
M.~Handel and L.~Mosher.
\newblock Subgroup classification in $\text{Out}({F}_n)$.
\newblock {\em arXiv:0908.1255v1}, 2009.

\bibitem[HM13]{HM12}
M.~Handel and L.~Mosher.
\newblock The free splitting complex of a free group {I}: {H}yperbolicity.
\newblock {\em Geom. Topol.}, 17(3):1581--1670, 2013.

\bibitem[HM14]{HM14}
M.~Handel and L.~Mosher.
\newblock Relative free splitting and free factor complexes {I}:
  {H}yperbolicity.
\newblock {\em arXiv:1407.3508v1}, 2014.

\bibitem[HM20]{HM20}
M.~Handel and L.~Mosher.
\newblock Subgroup decomposition in $\mathrm{Out}({F}_n)$.
\newblock {\em Mem. Amer. Math. Soc.}, 264(1280), 2020.

\bibitem[Hor14]{Hor14-3}
C.~Horbez.
\newblock The {T}its alternative for the automorphism group of a free product.
\newblock {\em arXiv:1408.0546v2}, 2014.

\bibitem[Hor16a]{Hor14-5}
C.~Horbez.
\newblock A short proof of {H}andel and {M}osher's alternative for subgroups of
  $\text{{O}ut}({F}_{N})$.
\newblock {\em Groups Geom. Dyn.}, 10(2):709--721, 2016.

\bibitem[Hor16b]{Hor14-4}
C.~Horbez.
\newblock Spectral rigidity for primitive elements of ${F}_{{N}}$.
\newblock {\em J. Group Theory}, 19(1):55--123, 2016.

\bibitem[Hor17a]{Hor14-1}
C.~Horbez.
\newblock The boundary of the outer space of a free product.
\newblock {\em Israel J. Math.}, 221(1):179--234, 2017.

\bibitem[Hor17b]{Hor14-2}
C.~Horbez.
\newblock Hyperbolic graphs for free products, and the {G}romov boundary of the
  graph of cyclic splittings.
\newblock {\em J. Topol.}, 9(2):401--450, 2017.

\bibitem[HV98]{HV}
A.~Hatcher and K.~Vogtmann.
\newblock The complex of free factors of a free group.
\newblock {\em Quart. J. Math. Oxford}, 49(2):459--468, 1998.

\bibitem[Iva92]{Iva92}
N.V. Ivanov.
\newblock {\em Subgroups of {T}eichmüller {M}odular {G}roups}, volume 115 of
  {\em Translations of Mathematical Monographs,}.
\newblock A.M.S., 1992.

\bibitem[KL09]{KL09}
I.~Kapovich and M.~Lustig.
\newblock Geometric intersection number and analogues of the curve complex for
  free groups.
\newblock {\em Geom. Topol.}, 13(3):1805--1833, 2009.

\bibitem[KL11]{KL}
I.~Kapovich and M.~Lustig.
\newblock Stabilizers of $\mathbb{R}$-trees with free isometric actions of
  ${F}_{N}$.
\newblock {\em J. Group Theory}, 14(5):673--694, 2011.

\bibitem[KM96]{KM96}
V.A. Kaimanovich and H.~Masur.
\newblock The {P}oisson boundary of the mapping class group.
\newblock {\em Invent. Math.}, 125(2):221--264, 1996.

\bibitem[Kob88]{Kob88}
T.~Kobayashi.
\newblock Heights of simple loops and pseudo-{A}nosov homeomorphisms.
\newblock In {\em Braids (Santa Cruz, CA, 1986)}, volume~78 of {\em Contemp.
  Math.}, pages 327--338, Amer. Math. Soc., Providence, 1988.

\bibitem[KR14]{KR14}
I.~Kapovich and K.~Rafi.
\newblock On hyperbolicity of free splitting and free factor complexes.
\newblock {\em Groups Geom. Dyn.}, 8(2):391--414, 2014.

\bibitem[Kur34]{Kur}
A.G. Kurosh.
\newblock Die {U}ntergruppen der freien {P}rodukte von beliebigen {G}ruppen.
\newblock {\em Math. Ann.}, 109(1):647--660, 1934.

\bibitem[Lev94]{Lev94}
G.~Levitt.
\newblock Graphs of actions on $\mathbb{R}$-trees.
\newblock {\em Comment. Math. Helv.}, 69(1):28--38, 1994.

\bibitem[Man14a]{Man12}
B.~Mann.
\newblock Hyperbolicity of the cyclic splitting graph.
\newblock {\em Geom. Dedic.}, 173(1):271--280, 2014.

\bibitem[Man14b]{Man14}
B.~Mann.
\newblock {\em Some hyperbolic $\text{{O}ut}({F}_{N})$-graphs and nonunique
  ergodicity of very small ${F}_{N}$-trees}.
\newblock PhD thesis, University of Utah, 2014.

\bibitem[MV04]{MV}
A.~Martino and E.~Ventura.
\newblock Fixed subgroups are compressed in free groups.
\newblock {\em Comm. Alg.}, 10:3921--3935, 2004.

\bibitem[Rey12]{Rey12}
P.~Reynolds.
\newblock Reducing systems for very small trees.
\newblock {\em arXiv:1211.3378v1}, 2012.

\bibitem[Uya15]{Uya14}
C.~Uyanik.
\newblock Generalized north-south dynamics on the space of geodesic currents.
\newblock {\em Geom. Dedic.}, 177(1):129--148, 2015.

\end{thebibliography}

 \begin{flushleft}
 Vincent Guirardel\\
Univ Rennes, CNRS, IRMAR - UMR 6625, F-35000 Rennes, France.\\
 \emph{e-mail: }\texttt{vincent.guirardel@univ-rennes1.fr}\\[8mm]
 \end{flushleft}

\begin{flushleft}
Camille Horbez\\
CNRS\\
Laboratoire de Math\'ematique d'Orsay, Univ. Paris-Sud, CNRS, Universit\'e Paris-Saclay,  
F-91405 ORSAY\\
\emph{e-mail: }\texttt{camille.horbez@math.u-psud.fr}
\end{flushleft}

\end{document}